\DeclareSymbolFont{ugrf@m}{U}{eur}{m}{n}
\DeclareMathSymbol{\upmu}{\mathord}{ugrf@m}{"16}
\newtheorem{thm}{Theorem}[subsection]
\newtheorem{prop}[thm]{Proposition}
\newtheorem{cor}[thm]{Corollary}
\newtheorem{lem}[thm]{Lemma}
\theoremstyle{definition}
\newtheorem{defn}[thm]{Definition}
\newtheorem{setup}[thm]{Setup}
\newtheorem*{thma}{Theorem A}
\newtheorem*{thmb}{Theorem B}
\newtheorem*{thmc}{Theorem C}
\newtheorem*{conj}{Conjecture}
\theoremstyle{remark}
\newtheorem{ex}[thm]{Example}
\newtheorem{rmk}[thm]{Remark}
\newcommand{\cat}[1]{{\mathbf{#1}}}
\newcommand{\p}{\paragraph{}}
\newcommand{\spec}{\operatorname{Spec}}
\newcommand{\into}{\hookrightarrow}
\newcommand{\lot}{\otimes^{\mathbb{L}}}
\newcommand{\per}{{\ensuremath{\cat{per}}}\kern 1pt}
\newcommand{\stab}{\underline{\mathrm{CM}}}
\DeclareMathOperator{\id}{id}
\let\ker\relax\DeclareMathOperator{\ker}{ker}
\let\hom\relax\newcommand{\hom}{\mathrm{Hom}}
\newcommand{\enn}{\mathrm{End}}
\DeclareMathOperator{\tor}{Tor}
\DeclareMathOperator{\ext}{Ext}
\newcommand{\C}{\mathbb{C}}
\newcommand{\Q}{\mathbb{Q}}
\newcommand{\Z}{\mathbb{Z}}
\newcommand{\N}{\mathbb{N}}
\renewcommand{\P}{\mathbb{P}}
\newcommand{\R}{{\mathrm{\normalfont\mathbb{R}}}}
\newcommand{\mm}{{\upmu\upmu}}
\newcommand{\tbtm}[4]{\ensuremath{\begin{pmatrix}#1&#2\\#3&#4\end{pmatrix}}}
\newcommand{\stbtm}[4]{\ensuremath{\left(\begin{smallmatrix}#1&#2\\#3&#4\end{smallmatrix}\right)}}
\newcommand{\sthbthm}[9]{\ensuremath{\left(\begin{smallmatrix}#1&#2&#3\\#4&#5&#6\\#7&#8&#9\end{smallmatrix}\right)}}
\title{The derived contraction algebra}
\author{Matt Booth}
\numberwithin{equation}{section}
\newcommand{\con}{\mathrm{con}}
\newcommand{\dca}{{\ensuremath{A^\mathrm{der}_\con}}}
\newcommand{\dq}{\ensuremath{A/^{\mathbb{L}}\kern -2pt AeA} }
\newcommand{\dqb}{\ensuremath{B/^{\mathbb{L}}\kern -2pt BeB} }
\newcommand{\thick}{\ensuremath{\cat{thick} \kern 0.5pt}}
\newcommand{\dgart}{\cat{dgArt}_k^{\leq 0}}
\newcommand{\proart}{{\cat{pro}(\cat{dgArt}_k^{\leq 0})}}
\newcommand{\recol}{\mathrel{\substack{\textstyle\leftarrow\\[-0.6ex]
			\textstyle\rightarrow \\[-0.6ex]
			\textstyle\leftarrow}}}
\begin{document}
\date{}
\maketitle{}

\begin{center}
	\textsc{Abstract}
	\begin{figure}[ht]\hspace{0.1\linewidth}
		\begin{minipage}[c]{0.8\linewidth}
			Using Braun--Chuang--Lazarev's derived quotient, we enhance the contraction algebra of Donovan--Wemyss to an invariant valued in differential graded algebras. Given an isolated contraction $X \to X_\mathrm{con}$ of an irreducible rational curve $C$ to a point $p$, we show that its derived contraction algebra controls the derived noncommutative deformations of $C$. We use dg singularity categories to prove that, when $X$ is smooth, the derived contraction algebra recovers the geometry of $X_\mathrm{con}$ complete locally around $p$, establishing a positive answer to a derived version of a conjecture of Donovan and Wemyss. When $X \to X_\mathrm{con}$ is a simple threefold flopping contraction, it is known that the Bridgeland--Chen flop-flop autoequivalence of $D^b(X)$ is a `noncommutative twist' around the contraction algebra. We show that the derived contraction algebra controls an analogous autoequivalence in more general settings, and in particular for partial resolutions of Kleinian singularities.
		\end{minipage}
	\end{figure}
\end{center}

\section{Introduction}
To understand the minimal model program, one must be able to control \textbf{flops}, which are a special class of birational maps that are isomorphisms in codimension one. Indeed, any two minimal models of a given variety are linked by flops, which was first proved in all dimensions by Kawamata \cite{kawconnect}. Given a flopping contraction of an irreducible curve inside a threefold, Donovan and Wemyss \cite{DWncdf} define an invariant, the \textbf{contraction algebra}, which is a noncommutative Artinian local algebra. In this paper, we enhance their construction to a differential graded algebra (\textbf{dga} for short) whose zeroth cohomology is the classical contraction algebra. The point of our construction is that it behaves well in a general setup, meaning that our results hold just as well for isolated contractions of a curve in a surface or in a variety of high dimension. This allows us to generalise the key results of \cite{DWncdf} to non-threefold settings.

\p The contraction algebra recovers many interesting invariants of flops: the normal bundle of the flopping curve (for which there are three choices \cite{pinkham}), the width \cite{reidpagoda} and the length \cite{ckmcplx}. In fact, Donovan and Wemyss conjecture that it is a complete invariant: let $X \to \spec R$ and $X' \to \spec R'$ be formal flopping contractions of an irreducible rational curve in a smooth projective threefold. If the associated contraction algebras are isomorphic, then they conjecture that $R \cong  R'$. We prove (Theorem A) that the derived version of this conjecture holds: if the associated derived contraction algebras are quasi-isomorphic, then $R\cong R'$. We show that this holds in all dimensions, not just for threefolds. The proof makes use of a recent theorem of Hua and Keller \cite{huakeller} to recover $R$ from its dg singularity category. In fact, in the smooth setting the derived contraction algebra determines and is determined by the dg singularity category of $R$.

\p Our results are not specialised to the case of threefold flops: indeed, the search for a derived analogue of the contraction algebra was motivated by a desire to generalise the Donovan--Wemyss theory to surfaces, in particular partial resolutions of Kleinian singularities. In this setting, many statements about the contraction algebra are no longer true, and we must move to the derived world to obtain the results we want.

\p The contraction algebra is known to control the noncommutative deformation theory of the flopping curves. Our second main result (Theorem B) shows that, when one restricts to contractions of a single irreducible curve, the derived contraction algebra controls the derived noncommutative deformations of this curve. We note that this deformation-theoretic interpretation allows us to carry out explicit calculations, and we provide several examples. We include an appendix on $A_\infty$-algebras to collect together the results we use when doing computations; in particular, Kadeishvili's theorem (and Merkulov's proof) and some facts on formality, Massey products, and $A_\infty$ Koszul duality.

\p In the threefold setting, Bridgeland \cite{bridgeland} and Chen \cite{chen} prove that a flop $X \dashrightarrow X^+$ induces a derived equivalence $D^b(X)\xrightarrow{\simeq} D^b(X^+)$. Algebraically, flop autoequivalences can be interpreted as \textbf{mutation autoequivalences}, which are analogous to Fomin--Zelevinsky mutation \cite{iyamareiten}. Donovan and Wemyss prove that the contraction algebra controls the mutation-mutation autoequivalence for threefolds. In the surface setting, although one no longer has flops -- since a flop is an isomorphism in codimension one -- one still has mutation autoequivalences, so in some sense one can still flop curves on the derived level. Our third main result (Theorem C) shows that the derived contraction algebra controls this mutation autoequivalence. More precisely, the mutation equivalence is a sort of `noncommutative twist' around a quotient of the derived contraction algebra. Restricted to the threefold setting, this gives a new proof of Donovan and Wemyss' result. Along the way we prove generalisations of several results from \cite{DWncdf}.

\subsection{Motivation: the contraction algebra} Consider a threefold flopping contraction of a chain of rational curves $\pi: X \to \spec R$ with $R$ complete local. In order to give a noncommutative proof of Bridgeland's theorem on derived equivalences, Van den Bergh \cite{vdb} constructs a ring $A=\enn_R(R\oplus M)$ equipped with a derived equivalence $D(A)\xrightarrow{\simeq} D(X)$. When $X$ is smooth, $A$ is an example of a \textbf{noncommutative crepant resolution} (\textbf{NCCR}) of $R$. Donovan and Wemyss \cite{DWncdf} used this noncommutative model $A$ to define a new invariant of $\pi$, a noncommutative finite-dimensional algebra called the \textbf{contraction algebra}, defined as $A_\con\coloneqq A/AeA$, the quotient of $A$ by $e=\id_R$. Equivalently, $A_\con$ is the stable endomorphism algebra of $M$.

\p Suppose that the chain of flopping curves is composed of $n$ irreducible rational curves linked together in some (Dynkin) configuration. Then $A_\con$ is an $n$-\textbf{pointed} algebra, meaning that it has an augmentation $A_\con \to k^n$. In particular, if the flop is \textbf{simple} (meaning that the exceptional locus is irreducible) then the contraction algebra is an Artinian local algebra. More generally, if the exceptional locus has irreducible components $C_1,\ldots, C_n$ then the sheaves $\mathcal{O}_{C_i}$ on $X$ correspond across the derived equivalence $D(A) \xrightarrow{\simeq} D(X) $ to the one-dimensional simple $A$ -modules appearing as the irreducible summands of the $n$-dimensional $A$-module $A_\con / \mathrm{rad}(A_\con)$ (see \cite[\S2]{DWncdf}).

\p However, for partial resolutions $Y \to \spec S$ of Kleinian singularities, although the contraction algebra can be defined, it is no longer a satisfactory invariant. By producing an infinite family of one-curve partial resolutions of type $A_n$ surface singularities, we show explicitly that the contraction algebra does not control the mutation-mutation autoequivalence (see \S\ref{intromut}). Because the contraction algebra is $k$ for each member of this family, the exceptional locus (a copy of $\P^1$) is rigid and does not deform, even noncommutatively. However, one can show that this curve admits nontrivial derived deformations, indicating that one should study a derived version of the contraction algebra.

\subsection{The derived quotient}
Noncommutative partial resolutions, such as those constructed in \cite{vdb}, often yield rings with idempotents, which motivates the serious homological study of such rings. In particular, if $A$ is a ring with idempotent $e$ then putting $R\coloneqq eAe$, the standard functors $D(A)\recol D(R)$ fit into one half of a \textbf{recollement}, a strong type of short exact sequence of triangulated categories. Kalck and Yang \cite{kalckyang, kalckyang2} show that there exists a nonpositive cohomologically graded dga $B$ with $H^0(B)\cong A/AeA$ fitting into a recollement $D(B)\recol D(A) \recol D(R)$.

\p Coming from the background of homotopical algebra, Braun, Chuang, and Lazarev \cite{bcl} define the \textbf{derived quotient} $\dq$ of $A$ by $e$ to be the universal dga under $A$ that homotopy annihilates $e$. If $A$ is an algebra in degree zero, then $\dq$ is a nonpositive cohomologically graded dga with $H^0(\dq)\cong A/AeA$. Braun--Chuang--Lazarev prove that the derived quotient fits into a standard recollement $D(\dq) \recol D(A)\recol D(eAe)$, which gives an abstract construction of Kalck and Yang's dga $B$. 

\p Given a suitably general isolated contraction $X \to X_\con$ of an irreducible rational curve to a point, we will define the derived contraction algebra $\dca$ to be the derived quotient $\dq$, where $A$ is Van den Bergh's noncommutative model for $X$ (see \S\ref{dercondefns} for the rigorous construction). We construct $\dca$ complete locally; in particular it will only depend on the formal fibre $U \to \spec R$, where we let $R$ denote the completion of the local ring of $X_\con$ at $p$. It follows that $eAe\cong R$ here, so that one can think of $D(\dq)$ as a sort of `derived exceptional locus'. Immediately we see that $H^0(\dca)\cong A_\con$, the Donovan--Wemyss contraction algebra. When $\spec R$ is a hypersurface, 2-periodicity in the singularity category of $R$ will give us a \textbf{periodicity element} $\eta\in H^{-2}(\dca)$ which is central in the graded algebra $H^*(\dca)$. In the setting of threefold flopping contractions, we will show that when $X \to X_\con$ is a minimal model, then $H^*(\dca)\cong A_\con[\eta]$, the classical contraction algebra with the periodicity element freely adjoined in degree $-2$. In general, $\dca$ is not formal: in \S\ref{threecomps} we will provide an explicit computation (as a minimal $A_\infty$-algebra) of the derived contraction algebra associated to the Pagoda flop, and we will see that $\dca$ is not formal in this setting.

\p Working independently, Hua and Keller \cite{huakeller} show that, in the smooth threefold setting, the derived contraction algebra may be computed as a Ginzburg dga, and we verify this for our threefold examples. It is unclear to us whether the same should be true in the singular setting.
\subsection{The derived Donovan--Wemyss conjecture}
We recall the Donovan--Wemyss conjecture:
\begin{conj}[{\cite[1.4]{DWncdf}}]
	Let $X \to \spec R$ and $X' \to \spec R'$ be flopping contractions of an irreducible rational curve in a smooth projective threefold, with $R$ and $R'$ complete local rings. If the associated contraction algebras are isomorphic, then $R \cong  R'$.
\end{conj}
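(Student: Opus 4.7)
The plan is to bootstrap off of Theorem A, the derived version of the conjecture established in this paper. By Theorem A, a quasi-isomorphism between the two derived contraction algebras forces $R \cong R'$, so it would suffice to show that the hypothesis $A_\con \cong A'_\con$ as $k$-algebras already guarantees that the derived contraction algebras of $R$ and $R'$ are quasi-isomorphic as dgas. Combining the two implications would yield the conjecture.

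For the reduction step, I would exploit the description (attributed to Hua--Keller in the excerpt) of $\dca$ in the smooth threefold setting as a Ginzburg dga associated to a quiver with superpotential $(Q,w)$, whose Jacobi algebra $J(Q,w)$ recovers $A_\con$ up to completion. The task then becomes to show that $w$ is determined, up to right equivalence in the completed path algebra $\widehat{kQ}$, by the Jacobi algebra $A_\con$, assuming $w$ arises from a geometric flopping contraction. Once such a superpotential-reconstruction result is in hand, the Ginzburg dga, and therefore $\dca$, is determined by $A_\con$ alone, and Theorem A closes the argument.

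The main obstacle is the non-formality of $\dca$, which the excerpt explicitly flags for the Pagoda flop. Thus $A_\con$ together with the periodicity element $\eta$ does not naively determine $\dca$: genuine higher Massey products must be matched between the two sides. Natural structural constraints to exploit are the $3$-Calabi--Yau structure on $\dca$ coming from the ambient smooth threefold, and the symmetric Frobenius structure on $A_\con$. In effect, one needs an intrinsic formality result for Ginzburg $3$-Calabi--Yau dgas whose $H^0$ is a prescribed symmetric algebra --- or equivalently, a statement that two superpotentials with the same Jacobi algebra and both arising from flops must be right-equivalent. This reconstruction problem, rather than anything in the derived portion of the theory, is where the essential difficulty lies; everything upstream of it is already handled by Theorem A.
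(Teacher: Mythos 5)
The statement you are asked to prove is labelled a \emph{Conjecture} in the paper, and the paper does not offer a proof of it. What the paper proves is Theorem A, the \emph{derived} Donovan--Wemyss conjecture: if the \emph{derived} contraction algebras $\dca$ and $\dca'$ are quasi-isomorphic, then the complete local bases agree. The passage from the underived hypothesis ($A_\con \cong A'_\con$) to the derived hypothesis ($\dca \simeq \dca'$) is precisely what the paper does \emph{not} establish, and it is what separates the conjecture from its derived avatar.

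Your proposal correctly identifies this: you reduce, via Theorem A, to showing that $A_\con$ determines $\dca$ up to quasi-isomorphism, and then you propose to attack that via the Hua--Keller Ginzburg-dga presentation, reducing further to a superpotential-reconstruction statement (that the Jacobi algebra of a flop-arising $(Q,w)$ determines $w$ up to right equivalence in $\widehat{kQ}$). But you then flag, accurately, that non-formality of $\dca$ (e.g.\ for Pagoda flops, where the higher $A_\infty$ products carry essential information) means $A_\con$ together with $\eta$ does not naively pin down $\dca$, and you state that the reconstruction problem ``is where the essential difficulty lies.'' That is an honest assessment, but it means your write-up is a reduction with an unclosed gap, not a proof. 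The paper neither claims nor supplies the missing reconstruction step; the conjecture is left open there. So the correct conclusion is: the reduction via Theorem A is sound and is indeed the route the paper gestures at, but the crux (intrinsic formality / superpotential rigidity for the relevant $3$-CY Ginzburg dgas) is unproven both in your proposal and in the paper, and the statement therefore remains a conjecture.
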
Note that over $\C$, this is an analytic classification. We prove a derived version of the above, which uses singularity categories as the key step in the proof. We are also able to remove the assumption that the dimension is three.

\p If $R$ is a noetherian ring, its singularity category is the triangulated (or dg) category given by the quotient $D_{\mathrm{sg}}(R)\coloneqq D^b(R)/\per(R)$, which can be seen as quantifying the type of singularities of $R$. Singularity categories were introduced by Buchweitz \cite{buchweitz} who proved that when $R$ is Gorenstein, $D_{\mathrm{sg}}(R)$ is equivalent to the stable category $\stab R$ of \textbf{maximal Cohen--Macaulay} (\textbf{MCM}) $R$-modules. Let $R$ be a complete local hypersurface singularity and $M$ a MCM $R$-module, and put $A\coloneqq \enn_R(R\oplus M)$. Then $A$ comes with an idempotent $e=\id_R$ and we can view $A$ as a kind of \textbf{noncommutative partial resolution} of $R$. In \cite{dqdefm}, it is shown that (under some finiteness and smoothness conditions) the derived quotient $\dq$ recovers the dg singularity category $D_{\mathrm{sg}}(R)$. Hua and Keller \cite{huakeller} show that this dg singularity category actually recovers $R$, and it then follows that $\dq$ determines $R$, which we recap as \ref{recovthm}. After checking that the relevant finiteness conditions hold, we can immediately obtain our first main theorem:
\begin{thma}[derived Donovan--Wemyss conjecture (\ref{ddwconj})]\label{thrma}
	Let $X \to \spec R$ and $X' \to \spec R'$ be isolated contractions of an irreducible rational curve in a smooth variety, with $R$ and $R'$ complete local rings. If the associated derived contraction algebras are quasi-isomorphic, then $R \cong  R'$.
\end{thma}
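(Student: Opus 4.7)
The plan is to apply the recovery theorem \ref{recovthm} directly, so the bulk of the work is reduced to verifying that its hypotheses are satisfied by the derived contraction algebra $\dca$ of an isolated irreducible rational curve contraction from a smooth variety. Once that is done, \ref{recovthm} yields that $\dca$ determines $R$ up to isomorphism (and symmetrically that $\dca'$ determines $R'$), so the quasi-isomorphism $\dca \simeq \dca'$ forces $R \cong R'$.

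To verify the hypotheses, I would trace through the construction of $\dca$. Since $\pi : X \to \spec R$ is an isolated contraction of an irreducible rational curve and $X$ is smooth, the ring $R$ is a complete local Gorenstein ring (in fact a hypersurface in the cases of main interest) with an isolated singularity at $p$. Van den Bergh's noncommutative model $A = \enn_R(R \oplus M)$ is then an NCCR of $R$, so has finite global dimension, and $M$ is a MCM generator; the idempotent $e = \id_R$ has irreducible complementary idempotent, making $\dca$ pointed over $k$. These are precisely the finiteness/smoothness conditions required in \cite{dqdefm} for $\dq \simeq \dca$ to recover the dg singularity category $D_{\mathrm{sg}}(R) \simeq \stab R$.

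From there one plugs in the Hua--Keller theorem \cite{huakeller} reconstructing the complete local ring $R$ from $D_{\mathrm{sg}}(R)$, and concatenating the two steps gives \ref{recovthm}. So modulo the verification above, nothing further is required.

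The main obstacle I would expect to address carefully is confirming that the Hua--Keller recovery applies in the generality we claim, i.e.\ beyond the threefold flopping setting. In the smooth threefold case the singularities are automatically isolated hypersurfaces (cDV), where Hua--Keller applies cleanly; in the surface case partial resolutions of Kleinian singularities again give hypersurfaces. For general dimensions one needs to check that the class of singularities produced by isolated contractions of irreducible rational curves from smooth varieties lies within the scope of \cite{huakeller}. Assuming this -- as is implicit in how \ref{recovthm} is stated -- the proof of \ref{thrma} is essentially a one-line application once the finiteness conditions from the preceding step have been confirmed.
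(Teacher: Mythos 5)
Your approach matches the paper's exactly: the body proof of \ref{ddwconj} is literally ``a simple application of \ref{recovthm}'', with the required hypotheses (that $A_\con$ is Artinian local, that $H^j(\dca)$ is finite-dimensional in each degree, and that $A$ is smooth) verified in the preceding propositions \ref{aconlocal} and \ref{fdcohom}, together with smoothness of $X$ giving finite global dimension of $A$. Your only hedge -- whether the Hua--Keller recovery applies beyond the threefold setting -- is resolved by noting that the isolated hypersurface condition on $\widehat{(X_\con)}_p$ is an explicit assumption built into the Global Setup \ref{globalsetup}, not a consequence to be derived, so \ref{recovthm} applies in all dimensions.
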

We remark that in the multiple-curve case, the correct generalisation of the of the Donovan--Wemyss conjecture for threefolds \cite[1.3]{jennyf} stipulates that the contraction algebras should be derived equivalent, not necessarily isomorphic. So in the multiple-curve case, one would expect that the quasi-isomorphism type of the derived contraction algebra is too fine an invariant, and that one should use something like derived Morita equivalence instead.
\subsection{Deformation theory}
Let $X \to X_\con$ be a threefold simple flopping contraction, with exceptional locus the single irreducible rational curve $C$, and let $A$ be van den Bergh's noncommutative model for $X$. Recall that across the derived equivalence $D(X)\xrightarrow{\simeq} D(A)$, the simple sheaf $\mathcal{O}_{C}$ corresponds to the simple one-dimensional $A$-module $S\coloneqq A_\con / \mathrm{rad}(A_\con)$. Donovan and Wemyss \cite[\S 3]{DWncdf} prove that $A_\con$ represents the functor of noncommutative deformations of $S$. In fact, in the non-simple case where $C$ may be a chain of rational curves, $A_\con$ represents the functor of noncommutative pointed deformations of $A_\con / \mathrm{rad}(A_\con)$ \cite{contsdefs}.

\p If $X\to X_\con$ is an isolated contraction of an irreducible rational curve in a smooth variety, then results of Efimov, Lunts and Orlov \cite{ELO, ELO2} allow us to conclude that $\dca$ (pro)represents the functor of derived noncommutative deformations of $S$. In \cite{dqdefm}, we generalised their result to the singular setting to show that in general, the derived quotient admits a deformation-theoretic interpretation. This immediately allows us to obtain our second main theorem, a derived analogue of Donovan and Wemyss's result which we state informally:
\begin{thmb}[\ref{dcaprorep}]\label{thrmb}
Let $X \to X_\con$ be an isolated contraction of an irreducible rational curve $C$. Then $\dca$ (pro)represents the functor of derived noncommutative deformations of $C$.
\end{thmb}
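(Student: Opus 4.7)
The plan is to reduce the claim to the author's general result \cite{dqdefm} via Van den Bergh's tilting equivalence. First, working complete-locally around the image point $p$ so that one may legitimately speak of the noncommutative model $A$ and its derived quotient $\dca$, I would recall that the tilting equivalence $D(X) \simeq D(A)$ identifies the sheaf $\mathcal{O}_C$ with the simple one-dimensional $A$-module $S = A_\con/\mathrm{rad}(A_\con)$. Since the functor of derived noncommutative deformations is defined intrinsically in terms of dg endomorphism algebras and is therefore a derived-categorical invariant, this equivalence transports derived noncommutative deformations of $C$ to those of $S$ viewed as an object of $D(A)$.

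With the problem phrased on the algebraic side, the main input is the theorem of \cite{dqdefm}, which asserts that, under suitable finiteness and smoothness hypotheses on the pair $(A,e)$, the derived quotient $\dq$ prorepresents the functor of derived noncommutative deformations (valued in $\proart$) of the simple $A$-module annihilated by $e$. In our setting this simple module is exactly $S$, the ring $eAe \cong R$ is complete local, and the irreducibility of $C$ means we are deforming a one-dimensional object, so the deformation functor lands in ordinary (rather than multi-pointed) pro-Artinian dgas.

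The main obstacle is verifying that the hypotheses of \cite{dqdefm} apply in the present geometric setup: one must check the required Ext-finiteness of $S$ as an $A$-module, suitable regularity properties of $A$ and of $eAe \cong R$, and confirm that the complete-local construction of $\dca$ from the formal fibre $U \to \spec R$ agrees with the abstract derived quotient appearing in loc.~cit. Once these technical points are dealt with, the theorem follows immediately. As a consistency check, truncation to $H^0$ should then recover the classical statement of \cite[\S 3]{DWncdf} that $A_\con$ prorepresents the underived noncommutative deformations of $S$, confirming that Theorem B is a genuine derived enhancement of the Donovan--Wemyss interpretation.
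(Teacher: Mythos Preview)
Your proposal is correct and follows essentially the same route as the paper: pass complete-locally to the noncommutative model $A$, identify the curve with the simple $S$ across the tilting equivalence (the paper uses $\mathcal{O}_C(-1)$ rather than $\mathcal{O}_C$, but this is immaterial since deformations of $C$ are \emph{defined} to be deformations of $S$), and then invoke \cite[Theorem~A]{dqdefm} (recorded here as \ref{prorepthm}). The only thing to sharpen is your description of the hypotheses to be verified: no smoothness or regularity of $A$ or $R$ is needed, only that $A/AeA$ is a finite-dimensional local algebra and that each $H^j(\dq)$ is finite-dimensional --- exactly what is checked in \ref{aconlocal} and \ref{fdcohom}. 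Your closing remark about truncation recovering the underived statement is also in line with the paper (see \ref{prorepremk}).
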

We note that this provides a new proof, via the inclusion-truncation adjunction, that $A_\con$ represents the underived noncommutative deformations of $C$. We also remark that a similar theorem ought to hold in the non-simple (equivalently, pointed) case, when $C$ is a not necessarily irreducible chain of curves.

\p The deformation-theoretic interpretation of $\dca$ allows us to establish local-to-global arguments on computing the derived contraction algebra of a contraction with non-affine base, and also allows us to compute $\dca$ as a (minimal) $A_\infty$-algebra via Koszul duality. We provide some explicit computations of derived contraction algebras for Pagoda flops, as well as one-curve partial resolutions of $A_n$ singularities.

\subsection{The mutation-mutation autoequivalence}\label{intromut}
Let $X \to X_\con$ be a threefold simple flopping contraction. If $X^+$ is the flop of $X$, then $X$ is also the flop of $X^+$. Composing Bridgeland and Chen's derived equivalences yields a nontrivial autoequivalence of $D(X)$, the \textbf{flop-flop autoequivalence}. On the algebraic side, this corresponds to a \textbf{mutation-mutation autoequivalence} $ \mm: D(A) \to D(A)$. Donovan and Wemyss \linebreak \cite[5.10]{DWncdf} prove that $A_\con$ controls the mutation-mutation autoequivalence, in the sense that $\mm$ is isomorphic to a `noncommutative twist' around the $A$-bimodule $A_\con$. More precisely, they show that $\mm$ is represented by the $A$-bimodule ${AeA \simeq \mathrm{cocone}(A \to A_\con)}$.

\p When $X \to \spec R$ is a partial resolution of a Kleinian singularity, one can still define the mutation-mutation autoequivalence, although, on the level of the geometry, it no longer comes from a genuine birational map. It is possible, using the machinery developed in \S\ref{mutnauto}, to show that in the case of one-curve partial resolutions of $A_n$ singularities considered in \S\ref{surfcomps}, the mutation-mutation autoequivalence is not represented by $AeA$. Hence, it follows that $A_\con$ does not always control $\mm$ for singular surfaces. Our third main theorem shows that $\dca$ does control $\mm$, in the following analogous sense:
\begin{thmc}[\ref{mutncontrol} and \ref{twistrmk}]\label{thrmc}
Let $X \to \spec R$ be either a threefold simple flopping contraction to a complete local base, or a cut of such a contraction to a one-curve partial resolution of a Kleinian singularity. Let $A_\mm\coloneqq  \tau_{\geq -1}(\dca)$ be the two-term truncation of the associated derived contraction algebra. Then $\mm$ is a `noncommutative twist' around $A_\mm$, in the sense that $\mm$ is represented by the $A$-bimodule $\mathrm{cocone}(A \to A_\mm)$.
\end{thmc}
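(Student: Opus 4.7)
My approach is to imitate the strategy of Donovan and Wemyss in \cite[\S 5]{DWncdf}, replacing the classical contraction algebra with the two-term truncation $A_\mm = \tau_{\geq -1}(\dca)$. The key abstract principle is that the mutation-mutation autoequivalence $\mm$ of $D(A)$ arises from a two-term tilting procedure, and is therefore represented by tensoring with a specific two-term $A$-bimodule complex built from a right $\mathrm{add}(R)$-approximation triangle. The goal is to identify this bimodule with $\mathrm{cocone}(A \to A_\mm)$.

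The plan proceeds in four steps. First, using the machinery developed in \S\ref{mutnauto}, I would identify $\mm$ as tensoring with an explicit bimodule complex $P_\bullet$ coming from an approximation triangle $M^* \to K \to M$ with $K \in \mathrm{add}(R)$. Second, using the deformation-theoretic description of $\dca$ from Theorem B (together with Koszul duality, as in the appendix), I would compute $H^0(A_\mm) \cong A_\con$ and identify $H^{-1}(A_\mm)$ with the $A_\con$-bimodule of first-order infinitesimal deformation data of the simple $S$ corresponding to the curve; this is precisely the space of linear relations appearing in the approximation triangle. Third, I would assemble these pieces into a quasi-isomorphism of $A$-bimodules $\mathrm{cocone}(A \to A_\mm) \simeq P_\bullet$, where on $H^0$ the map recovers the inclusion $AeA \hookrightarrow A$ of Donovan--Wemyss, and on $H^{-1}$ it records the derived obstruction data. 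Fourth, in the threefold simple flop case, I would verify that the resulting autoequivalence specialises correctly to the one computed in \cite[5.10]{DWncdf}: even though $\dca$ may carry genuinely nontrivial $H^{-1}$ (as will be visible in the Pagoda computation in \S\ref{threecomps}), tensoring with $\mathrm{cocone}(A \to A_\mm)$ must agree with tensoring with $AeA = \mathrm{cocone}(A \to A_\con)$.

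The main obstacle will be Step~3: establishing a genuine bimodule quasi-isomorphism rather than just an abstract equivalence of functors. This requires careful bookkeeping of the $A$-bimodule structure on the complex associated to the approximation triangle and on the truncation $A_\mm$. In particular, one must check that the bimodule map $A \to A_\mm$ assembles correctly from the mutation data, something which in the threefold case is essentially automatic from surjectivity considerations on $H^0$, but which in the singular Kleinian case genuinely requires the derived enhancement. A secondary technical point is ensuring that all constructions behave well under the formal-local completeness assumptions on $R$; I would address this by performing all computations complete-locally and then invoking flatness of completion, along with the local-to-global arguments on $\dca$ noted after Theorem B. Finally, for the surface/Kleinian half of the statement, I would check that cutting a threefold flopping contraction to a one-curve partial resolution is compatible with the formation of the cocone bimodule, so that the result for Kleinian singularities follows by restriction from the threefold case.
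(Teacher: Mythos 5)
Your plan follows the Donovan--Wemyss template rather than what the paper actually does, and the gap sits exactly where you have flagged it as "the main obstacle": Step 3 is not a matter of bookkeeping, and your description of it contains a degree error that signals the deeper issue. You write that the comparison map should record the derived obstruction data "on $H^{-1}$," but the cocone of $A \to A_\mm$ has no cohomology in degree $-1$: from the long exact sequence (using $H^{-1}(A)=0$, $H^0(A)\to H^0(A_\mm)=A_\con$ surjective), the cocone is concentrated in degree $0$, fitting into a short exact sequence $0 \to H^{-1}(\dca) \to H^0(\mathrm{cocone}) \to AeA \to 0$. So the derived information appears as an \emph{extension inside degree zero}, not as a new degree. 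Matching this extension class to the one hidden in $I_\mm = {}_AT_B\lot_B {}_BT_A$ is precisely what cannot be read off an approximation triangle by inspection once $M$ is non-rigid (the surface case), since there the extension is genuinely nontrivial.

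The paper's actual route bypasses the direct bimodule comparison entirely. It first proves that $\mm$ respects the recollement $D(\dca)\recol D(A)\recol D(R)$, so it suffices to identify the two bimodule representatives after applying $e(-)e$ (trivial) and after applying $\dca\lot_A(-)\lot_A\dca$ (the hard part). The crux is Proposition \ref{mmshiftrep}: $\dca\lot_A I_\mm\lot_A\dca \simeq \dca[2]$ as bimodules. Its proof requires (i) a right-module quasi-isomorphism to $\dca[2]$, obtained from the deformation-theoretic fact $\mm(\dca)\simeq\dca[-2]$ via \ref{dictionary}; (ii) passage to the derived localisation $E$ of $\dca$ at the periodicity element $\eta$, where the natural transformation $\id\to\mm_E$ is shown to be an isomorphism by an argument in the singularity category $D_\mathrm{sg}(R)$ (\S6.5); and (iii) truncation back down, using the bimodule triangle $\dca\xrightarrow{\eta}\dca\to\tau_{>-2}(\dca)$ from \ref{etaex}(iv) to identify $\tau_{\leq -2}(\dca)\simeq\dca[2]$ as \emph{bimodules}. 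None of (i)--(iii) appears in your proposal: you never invoke $\eta$, the localisation $E$, or the singularity functor. Without these, there is no visible mechanism for upgrading a right-module identification to a bimodule one, which is exactly where a "bookkeeping" approach stalls. Your Step 4 (specialisation to the rigid threefold case) and your completeness/slicing remarks are fine as sanity checks, but they don't supply the missing mechanism.
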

The twist interpretation comes from the fact that one has an exact triangle $$\R\hom_A(A_\mm,-) \to \id \to \mm \to$$ of endofunctors of $D(A)$. The crux of our proof of Theorem C is the statement that $\mm$ restricts to the shift functor $[-2]$ on $D(\dca)$, and the proof of this second fact makes crucial use of the recollement $D(\dca)\recol D(A) \recol D(R)$ to reduce to a calculation in the singularity category of $R$. In the threefold setting, $\dca$ has no cohomology in degree $-1$, and hence $A_\mm\simeq A_\con$, which recovers Donovan and Wemyss's result. In the surface setting, Auslander-Reiten duality \cite{auslander} allows one to conclude that $A_\mm$ always has cohomology in degree -1, and hence is never $A_\con$. Note that, by the inclusion-truncation adjunction, $A_\mm$ represents the functor of $[-1,0]$-truncated derived noncommutative deformations of $S$. One can informally think of $A_\mm$ as $\dca/\eta$, the quotient of the derived contraction algebra by the periodicity element $\eta \in H^{-2}(\dca)$.

\subsection{Notation and conventions}
Throughout this paper, $k$ will denote an algebraically closed field of characteristic zero. We will need this assumption, although many assertions we make will work in much greater generality. Modules are right modules, unless stated otherwise. Consequently, noetherian means right noetherian, global dimension means right global dimension, et cetera. Unadorned tensor products are by default over $k$. All complexes, unless stated otherwise, are $\Z$-graded cochain complexes, i.e. the differential has degree 1. If $X$ is a complex, let $X[i]$ denote `$X$ shifted left $i$ times': the complex with $X[i]^j=X^{i+j}$ and differential twisted by a sign of $(-1)^i$.  Recall that the \textbf{mapping cone} $\mathrm{cone}(f)$ of a degree zero map $f:X \to Y$ of complexes is (a representative of) the homotopy cokernel of $f$; concretely it is given by $X[1]\oplus Y$ with differential that combines $f$ with the differentials on $X$ and $Y$. The \textbf{mapping cocone} of $f$ is $\mathrm{cone}(f)[-1]$; it is a representative of the homotopy kernel. If $X$ is a complex of modules we will denote its cohomology complex by $H(X)$ or just $HX$. If $x$ is a homogeneous element of a complex of modules, we denote its degree by $|x|$.

\p A $k$-algebra is a $k$-vector space with an associative unital $k$-bilinear multiplication. A \textbf{differential graded algebra} (\textbf{dga} for short) over $k$ is a complex of $k$-vector spaces $A$ with an associative unital chain map $\mu:A\otimes A \to A$, which we refer to as the multiplication. Note that the condition that $\mu$ be a chain map forces the differential to be a derivation for $\mu$.  A $k$-algebra is equivalently a dga concentrated in degree zero, and a graded $k$-algebra is equivalently a dga with zero differential. We will sometimes refer to $k$-algebras as \textbf{ungraded algebras} to emphasise that they should be considered as dgas concentrated in degree zero. A dga is \textbf{graded-commutative} or just \textbf{commutative} if all graded commutator brackets $[x,y]=xy-(-1)^{|x||y|}yx$ vanish. Commutative polynomial algebras are denoted with square brackets $k[x_1,\ldots, x_n]$ whereas noncommutative polynomial algebras are denoted with angle brackets $k\langle x_1,\ldots, x_n\rangle$. 

\p If $A$ is an algebra, write $\cat{Mod}\text{-}A$ for its category of right modules, $\cat{mod}\text{-}A\subseteq\cat{Mod}\text{-}A$ for its category of finitely generated modules, $D(A)\coloneqq D(\cat{Mod}\text{-}A)$ for its unbounded derived category, $D^b(A)\coloneqq D^b(\cat{mod}\text{-}A)$ for its bounded derived category, and $\cat{per}(A) \subseteq D^b(A)$ for the subcategory on perfect complexes (i.e. those complexes quasi-isomorphic to bounded complexes of finitely generated projective modules). Recall that an object $X$ of a triangulated category $\mathcal T$ is \textbf{compact} if $\hom_\mathcal{T}(X,-)$ commutes with all direct sums. We then have $\cat{per}(A)=\{ \text{compact objects in } D(A)\}$. A \textbf{dg module} (or just a \textbf{module}) over a dga $A$ is a complex of vector spaces $M$ together with an action map $M \otimes A \to M$ satisfying the obvious identities (equivalently, a dga map $A \to \enn_k(M)$). Note that a dg module over an ungraded ring is exactly a complex of modules. If $A$ is a dga, write $D(A)$ for its unbounded derived category: this is the category of all dg modules over $A$ localised along the quasi-isomorphisms. If $A$ is a dga we define $\cat{per}(A)\coloneqq \{ \text{compact objects in } D(A)\}$.

\p Let $V$ be a complex. The \textbf{total dimension} or just \textbf{dimension} of $V$ is $\sum_{n\in \Z}\mathrm{dim}_k V^n$. Say that $V$ is \textbf{finite-dimensional} or just \textbf{finite} if its total dimension is finite. Say that $V$ is \textbf{locally finite} if each $\mathrm{dim}_kV^n$ is finite. Say that $V$ is \textbf{cohomologically locally finite} if the cohomology dg vector space $HV$ is locally finite. There are obvious implications $$\text{finite }\implies\text{ locally finite }\implies\text{cohomologically locally finite}.$$We use the same terminology in the case that $V$ admits extra structure (e.g. that of a dga). We denote isomorphisms (of modules, functors, \ldots) with $\cong$ and quasi-isomorphisms with $\simeq$.

\subsection{Structure of the paper}
In Section 2, we recall some key facts about derived quotients from \cite{dqdefm}. In Section 3, we define the derived contraction algebra, and prove Theorems A and B. In Section 4, we compute the derived contraction algebras of the family of Pagoda flops, and sketch a computation for the Laufer flop. In Section 5, we compute the derived contraction algebras of a family of one-curve partial resolutions of $A_n$ singularities. In Section 6 we consider the mutation-mutation autoequivalence and prove Theorem C. In the Appendix we gather some results we use about $A_\infty$-algebras to do our computations.

\subsection{Acknowledgements} The author is a PhD student at the University of Edinburgh, supported by the Engineering and Physical Sciences Research Council, and this work forms part of his thesis. He would like to thank his supervisor Jon Pridham for his consistent support and guidance. He would like to thank Michael Wemyss for introducing him to contraction algebras, and his help at every stage of the project, especially with the arguments of \S\ref{slicingsctn}. He would like to thank Jenny August, Ben Davison, Will Donovan, and Bernhard Keller for helpful discussions and comments.

\section{Preliminaries}
In this section we will recall some constructions and theorems from \cite{dqdefm}. We review some facts about the derived quotient, in particular some structure theorems for derived quotients of noncommutative partial resolutions of complete local hypersurface singularities. We note that the derived quotient admits a deformation-theoretic interpretation. We recall an important recovery theorem (\ref{recovthm}) which will be the main technical underpinning of our solution to the derived Donovan--Wemyss conjecture (\ref{ddwconj}). We focus solely on the base $\spec R$; in the next section we will look at contractions $X \to \spec R$ and define derived contraction algebras as certain derived quotients of noncommutative partial resolutions of $R$ derived equivalent to $X$.
\subsection{Rings with idempotents: derived quotients and partial resolutions}
We briefly review some properties of the derived quotient. This section will also help us establish some notation. For more details, see \cite[\S4]{dqdefm} (which is tailored to what we want to do) or the original reference \cite{bcl}. Given an algebra $A$ and an idempotent $e \in A$, Braun--Chuang--Lazarev construct the \textbf{derived quotient}, a nonpositive dga $\dq$ with $H^0(\dq)\cong A/AeA$, which is the universal dga under $A$ that homotopy annihilates $e$ \cite{bcl}. When the quotient $A/AeA$ is a local ring, and under some finiteness conditions, $\dq$ has a deformation-theoretic interpretation:
\begin{prop}[{\cite[Theorem A]{dqdefm}}]\label{prorepthm}
	Let $A$ be a $k$-algebra and $e$ an idempotent. Suppose that $A/AeA$ is a finite-dimensional local algebra and moreover that $H^j(\dq)$ is finite-dimensional for all $j<0$. Let $S$ be $A/AeA$ modulo its radical, regarded as a right $A$-module. Then $\dq$ is naturally a pro-Artinian dga, and moreover prorepresents the functor of framed noncommutative derived deformations of $S$.
\end{prop}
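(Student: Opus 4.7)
The plan is to split the proposition into two parts: establishing the pro-Artinian structure on $\dq$, and then identifying it as the prorepresenting object for the framed derived noncommutative deformation functor of $S$.

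First, the composition $\dq \twoheadrightarrow H^0(\dq) \cong A/AeA \twoheadrightarrow S$ equips $\dq$ with a canonical augmentation whose kernel is an augmentation ideal $J$. By the finiteness hypotheses, $\dq$ is cohomologically locally finite, so I would pass to a quasi-isomorphic minimal $A_\infty$-model that is locally finite as a graded vector space, nonpositively graded, and augmented over $S$. The pro-Artinian presentation is then provided by the tower of finite-dimensional quotients $\tau_{\geq -n}(\dq/J^{n+1})$: each is concentrated in a finite range of degrees with finite-dimensional graded pieces, hence is a genuinely Artinian local dga, and $\dq$ is recovered up to quasi-isomorphism as the pro-limit. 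Some care is needed to verify that these truncations are well-defined as dgas, but Kadeishvili/Merkulov transfer and the $A_\infty$-tools collected in the appendix supply what is required.

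For the prorepresentability claim, the key observation is that $S$, as a right $A$-module, satisfies $Se = 0$, so $S$ lies in the essential image of the fully faithful embedding $D(\dq)\hookrightarrow D(A)$ coming from the Braun--Chuang--Lazarev recollement $D(\dq)\recol D(A)\recol D(eAe)$. The same is true of any derived $R$-flat deformation of $S$ over a local Artinian dga $R$, by a Nakayama-style argument: if $M\otimes^{\mathbb L}_R k\simeq S$, then $(Me)\otimes^{\mathbb L}_R k\simeq Se=0$, and flatness over $R$ forces $Me\simeq 0$. Consequently the functor of framed derived deformations of $S$ over $A$ coincides with the analogous functor computed inside $D(\dq)$, and in particular one has a canonical quasi-isomorphism $\R\enn_{\dq}(S)\simeq \R\enn_A(S)$. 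By general derived deformation theory in the style of Efimov--Lunts--Orlov and Pridham, this deformation functor is prorepresented by the Koszul dual of $\R\enn_{\dq}(S)$.

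It then remains to identify this Koszul dual with $\dq$ itself. Here I would invoke double Koszul duality: since $\dq$ is an augmented pro-Artinian dga with residue field $S$ and cohomologically locally finite structure, the derived Koszul bidual is quasi-isomorphic to $\dq$, yielding $\dq\simeq \R\enn_{\dq}(S)^{!}$. The main obstacle is the extension of Efimov--Lunts--Orlov's prorepresentability result from the simply connected augmented setting to the pro-Artinian, non-simply-connected setting relevant here; making this work requires combining their theorem with a careful pro-limit argument, using precisely the hypothesis that $H^j(\dq)$ is finite-dimensional for all $j<0$ to control the towers of truncated Maurer--Cartan spaces and to guarantee that the bidual computation converges.
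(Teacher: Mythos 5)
This proposition is cited in the paper as \cite[Theorem A]{dqdefm} and is not reproved there, so there is no in-line proof to compare against; the closest one gets are the ancillary facts \ref{dqiskd} ($\dca\simeq\R\enn_A(S)^!$) and \ref{dqkd} (double Koszul duality for cohomologically locally finite nonpositive dgas with local $H^0$), which are exactly the two pillars of your sketch. Modulo that caveat, your outline follows the same Koszul-duality-plus-recollement route: use $i_*$ to identify deformations of $S$ over $A$ with deformations over $\dq$, invoke ELO/Pridham to prorepresent by $\R\enn(S)^!$, and conclude via $\dq\simeq(\dq)^{!!}$. Two rough spots worth flagging, neither fatal. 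First, your Nakayama step does not need and should not invoke flatness: over a nonpositive Artinian local dga $R$ the maximal ideal is nilpotent, so $(Me)\otimes^{\mathbb L}_R k\simeq 0$ already forces $Me\simeq 0$ by filtering $R$ by powers of $\mathfrak m_R$; phrasing it via flatness suggests a hypothesis on the deformation functor that is not (and should not be) there. Second, your explicit tower $\tau_{\geq -n}(\dq/J^{n+1})$ for the pro-Artinian structure is delicate to make sense of for an $A_\infty$-model (what is $J^{n+1}$?); the cleaner route, and more likely what \cite{dqdefm} does, is to read off the pro-Artinian structure directly from the identification $\dq\simeq (B_\infty\R\enn_A(S))^*$, i.e.\ as a completed tensor algebra on a locally finite graded dual, which is tautologically an inverse limit of finite-dimensional local quotients. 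Finally, the obstruction to applying ELO off the shelf is better characterised as the passage from the smooth case (where $S$ is perfect over $A$) to the singular one, rather than ``simply connected vs.\ not''; this is precisely where the finiteness hypothesis on $H^{<0}(\dq)$ earns its keep, as you correctly note.
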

The derived quotient fits into the following recollement, a strong type of short exact sequence of triangulated categories (see \cite{bbd} for the definition):
\begin{prop}[e.g. {\cite[4.4.1]{dqdefm}}]\label{recoll}
	Let $A$ be an algebra over $k$, and let $e\in A$ be an idempotent. For brevity, write $Q\coloneqq \dq$ and $R\coloneqq eAe$, and put \begin{align*}
	i^*\coloneqq -\lot_A Q, &\quad j_!\coloneqq  -\lot_{R} eA
	\\ i_*=\R\hom_{Q}(Q,-), &\quad j^!\coloneqq \R\hom_A(eA,-)
	\\ i_!\coloneqq \lot_{Q}Q, & \quad j^*\coloneqq -\lot_A Ae
	\\ i^! \coloneqq \R\hom_{A}(Q,-), & \quad j_*\coloneqq \R\hom_{R}(Ae,-)
	\end{align*}
	Then the diagram of unbounded derived categories
	$$\begin{tikzcd}[column sep=huge]
	D(Q) \ar[r,"i_*=i_!"]& D(A)\ar[l,bend left=25,"i^!"']\ar[l,bend right=25,"i^*"']\ar[r,"j^!=j^*"] & D(R)\ar[l,bend left=25,"j_*"']\ar[l,bend right=25,"j_!"']
	\end{tikzcd}$$
	is a recollement diagram.
\end{prop}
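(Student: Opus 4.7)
The plan is to verify the axioms of a recollement (see \cite{bbd}): two adjoint triples $(i^*, i_*, i^!)$ and $(j_!, j^* = j^!, j_*)$, full faithfulness of $i_*$, $j_!$ and $j_*$, the identification $\im(i_*) = \ker(j^*)$, and functorial exact triangles interlocking the six functors. Most of the ingredients are formal, but the heart of the argument is a nontrivial statement about the derived quotient.

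First I would set up the adjoint triples, which is entirely formal. The right-hand triple is the standard six-functor package associated to the $(A,R)$-bimodule $Ae$, with $R = eAe$: since $Ae$ is a direct summand of $A$ as a left $A$-module, it is projective on both sides, and tensor-hom adjunction produces all three functors together with the coincidence $j^* = j^!$. The left-hand triple is the analogous tensor-restriction-hom package for the dga morphism $A \to Q$.

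Next I would verify the full-faithfulness conditions. For $j_!$ (and hence also $j_*$), it suffices to check $j^* j_! \simeq \id$, that is, $eA \lot_A Ae \simeq eAe$; this is immediate from the projectivity of $eA$ as a left $A$-module, since the derived tensor then reduces to the underived one. For $i_*$, it suffices to check $i^* i_* \simeq \id$, that is, $Q \lot_A Q \simeq Q$. This is the content of Braun--Chuang--Lazarev's theorem that the universal homotopy annihilation of an idempotent is automatically a \emph{derived localization}; this is the main technical obstacle, and is not implied by the universal property alone but requires the explicit cofibrant construction of $Q$.

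Finally I would construct the recollement triangles and deduce the kernel/image identification. The construction of $Q$ produces a canonical triangle
$$Ae \lot_R eA \to A \to Q \to$$
of $A$-bimodules, exhibiting $Q$ as the cone of the multiplication map. Tensoring on the right with an arbitrary $M \in D(A)$ gives the first recollement triangle $j_! j^* M \to M \to i_* i^* M \to$, and applying $\R\hom_A(-, N)$ to the same bimodule triangle yields the dual triangle $i_* i^! N \to N \to j_* j^* N \to$. The equality $\im(i_*) = \ker(j^*)$ then falls out: if $Me \simeq 0$ the first triangle forces $M \simeq i_* i^* M$, while conversely $Qe \simeq 0$ because the image of $e$ in $Q$ is nullhomotopic by construction, so $i_*(N)e \simeq 0$ for every $N \in D(Q)$.
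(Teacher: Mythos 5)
Your proof is correct and follows the same route as the cited references (Braun--Chuang--Lazarev, recapped in the paper's earlier {[}4.4.1, dqdefm{]}): set up the two formal tensor--hom adjoint triples, establish full faithfulness, and reduce the non-formal content to the fact that $Q$ is a smashing localisation of $A$, i.e.\ $Q \lot_A Q \simeq Q$. Two small corrections are worth noting. First, in checking $j^*j_! \simeq \id$ you appeal to ``projectivity of $eA$ as a left $A$-module''; $eA$ is a \emph{right} $A$-module, and the relevant facts are that $eA$ is projective as a right $A$-module and $Ae$ is projective as a left $A$-module (both being direct summands of $A$), either of which makes $eA \lot_A Ae$ underived. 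Second, the bimodule triangle $Ae \lot_R eA \to A \to Q$ does not fall out of the universal property of the derived quotient; it is a further consequence of the same explicit cofibrant model that underlies the smashing property --- one must check that the cofibre of $A \to Q$ lies in the localising subcategory of $A$-bimodules generated by $Ae \lot_R eA$ --- and so it deserves the same weight in your outline as $Q \lot_A Q \simeq Q$ rather than being dispatched as ``by construction''. With those two points tightened, the argument is sound.
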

\begin{rmk}
	In particular, $D(A)$ admits a semi-orthogonal decomposition $\langle D(\dq), D(R)\rangle $.
\end{rmk}
\begin{defn}Let $\mathcal X$ be a subclass of objects of a triangulated category $\mathcal{T}$. Then $\thick_\mathcal{T} \mathcal X $ denotes the smallest triangulated subcategory of $\mathcal{T}$ containing $\mathcal{X}$ and closed under taking direct summands. Similarly, $\langle \mathcal X \rangle_\mathcal{T}$ denotes the smallest triangulated subcategory of $\mathcal{T}$ containing $\mathcal{X}$, and closed under taking direct summands and all existing set-indexed coproducts. We will often drop the subscripts if $\mathcal T$ is clear. If $\mathcal X$ consists of a single object $X$, we will write $\thick X$ and $\langle X \rangle$.
\end{defn}
\begin{defn}
	Let $\mathcal{T}$ be a triangulated category and let $X$ be an object of $\mathcal{T}$. Say that $X$ is \textbf{relatively compact} (or \textbf{self compact}) in $\mathcal{T}$ if it is compact as an object of $\langle X \rangle_{\mathcal{T}}$.
\end{defn}
\begin{prop}[{\cite[4.4.7]{dqdefm} or \cite[1.7]{jorgensen}}]
	The right $A$-module $\dq$ is relatively compact in $D(A)$.
\end{prop}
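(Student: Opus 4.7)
The plan is to use the recollement of \ref{recoll} to identify $\langle\dq\rangle_{D(A)}$ with $D(\dq)$, and then transfer the tautological compactness of $\dq$ viewed as the free rank-one right $\dq$-module. For brevity write $Q\coloneqq\dq$.

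First I would observe that, by \ref{recoll}, the functor $i_*=i_!\colon D(Q)\to D(A)$ is fully faithful and admits both a left adjoint $i^*$ and a right adjoint $i^!$; in particular, it commutes with all set-indexed coproducts. Moreover $i_*(Q)\simeq Q$ when the latter is regarded as an $A$-module via the natural augmentation $A\to Q$. Since $Q$ is a compact generator of $D(Q)$, we have $D(Q)=\langle Q\rangle_{D(Q)}$; consequently the essential image of $i_*$ is a strictly full triangulated subcategory of $D(A)$, closed under coproducts and direct summands, that contains $Q$. Comparing this image with $\langle Q\rangle_{D(A)}$ in both directions shows that $i_*$ restricts to an equivalence $D(Q)\xrightarrow{\simeq}\langle Q\rangle_{D(A)}$ of triangulated categories which preserves coproducts.

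Now $Q$ is plainly compact in $D(Q)$, because $\hom_{D(Q)}(Q,-)\cong H^0(-)$ commutes with coproducts. Under the equivalence above, $Q$ corresponds to $Q$; since coproducts in $\langle Q\rangle_{D(A)}$ coincide with those computed in $D(A)$ (this is exactly the closure condition in the definition of $\langle-\rangle$), compactness of $Q$ in $D(Q)$ transports to compactness of $Q$ in $\langle Q\rangle_{D(A)}$, which is by definition what it means for $Q$ to be relatively compact in $D(A)$. The only mildly delicate point is establishing the coproduct-preservation of $i_*$, which is formal from the two-sided adjointness supplied by the recollement; the remainder of the argument is bookkeeping, so I would expect no substantive obstacle beyond correctly invoking the recollement data.
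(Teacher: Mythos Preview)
Your proposal is correct. The paper itself does not supply a proof of this proposition; it simply cites \cite[4.4.7]{dqdefm} and \cite[1.7]{jorgensen}. Your argument is the natural one in this context: use the recollement to exhibit $i_*\colon D(Q)\to D(A)$ as a fully faithful, coproduct-preserving embedding with essential image $\langle Q\rangle_{D(A)}$, and then transport the tautological compactness of $Q$ in $D(Q)$ across this equivalence. All the steps check out, including the closure of the essential image under summands (which follows from full faithfulness and additivity) and the agreement of coproducts in $\langle Q\rangle_{D(A)}$ with those in $D(A)$. This is precisely the kind of argument one would expect to find in the cited references, so there is nothing to compare beyond noting that your self-contained proof is sound.
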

Our preferred method of generating rings with idempotents will be as \textbf{noncommutative partial resolutions}. This method allows us to specify the corner ring $eAe$, which will always be Gorenstein.
\begin{defn}
	Let $R$ be a commutative ring. Say that $R$ is \textbf{Gorenstein} if $R$ is noetherian and has finite injective dimension as an $R$-module. 
	\end{defn}
\begin{rmk}
	When $R$ has finite Krull dimension, being Gorenstein is a local property.
	\end{rmk}
\begin{defn}
Let $R$ be a Gorenstein ring. If $M$ is an $R$-module, write $M^\vee$ for the $R$-linear dual $\hom_R(M,R)$. A finitely generated $R$-module $M$ is \textbf{maximal Cohen--Macaulay} or just \textbf{MCM} if the natural map $\R\hom_R(M,R)\to M^\vee$ is a quasi-isomorphism (equivalently, if $\ext_R^j(M,R)$ vanishes whenever $j> 0$).
\end{defn}
\begin{defn}\label{partrsln}
	Let $R$ be a Gorenstein $k$-algebra. A $k$-algebra $A$ is a \textbf{(noncommutative) partial resolution} of $R$ if it is of the form $A\cong\enn_R(R\oplus M)$ for some MCM $R$-module $M$. Note that $A$ is a finitely generated module over $R$, and hence itself a noetherian $k$-algebra. Say that a partial resolution is a \textbf{resolution} if it has finite global dimension.
\end{defn}
\begin{rmk}
	Nothing really stops us omitting the Gorenstein condition on $R$, but the above definition is general enough for us. 
\end{rmk}
If $A=\enn_R(R\oplus M)$ is a noncommutative partial resolution of $R$, observe that $e\coloneqq \id_R$ is an idempotent in $A$. One has $eAe\cong R$, $Ae \cong R \oplus M$, and $eA \cong R \oplus M^\vee$. In particular, $A/AeA$ is the stable endomorphism ring $\underline{\enn}_R(M)$ (two maps $X \to X$ are stably equivalent if and only if their difference factors through a projective summand of $X$, as in e.g. \cite{buchweitz}).

\subsection{Hypersurfaces}\label{clr}
We recap some of the ideas of \cite[\S5]{dqdefm}. In this section, fix $R\coloneqq k\llbracket x_1,\ldots x_n \rrbracket / \sigma$ a complete local isolated hypersurface singularity (with $\sigma\neq 0$ in the maximal ideal of $k\llbracket x_1,\ldots x_n \rrbracket$). Note that $R$ is Gorenstein by e.g. \cite[21.19]{eisenbud}. Fix $A=\enn_R(R\oplus M)$ a partial resolution of $R$.
\begin{defn}
	The \textbf{singularity category} of $R$ is the triangulated category $$D_\mathrm{sg}(R)\coloneqq D^b(R)/\per R.$$
\end{defn}
\begin{prop}[\cite{buchweitz}]
Let $\stab R$ be the category whose objects are the MCM $R$-modules and whose morphisms are the $R$-linear maps up to stable equivalence. It's triangulated, with shift given by the (inverse of the) syzygy functor $\Omega$. Moreover, $D_\mathrm{sg}(R)$ and $\stab R$ are triangle equivalent.
\end{prop}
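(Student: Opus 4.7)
The plan is to follow Buchweitz's classical argument \cite{buchweitz}. The first step is to give $\stab R$ a triangulated structure by realising it as the stable category of a Frobenius exact category. The full subcategory $\mathrm{MCM}(R) \subseteq \cat{mod}\text{-}R$ inherits an exact structure from $\cat{mod}\text{-}R$ (short exact sequences of MCM modules are MCM), and because $R$ is Gorenstein, the $R$-linear duality $(-)^\vee$ is an exact anti-involution on $\mathrm{MCM}(R)$. One checks that the exact-projective objects in this exact structure are the finitely generated projective $R$-modules, and so are the exact-injectives, using $(-)^\vee$ to transport one statement to the other. Enough exact-projectives is standard via projective covers, and enough exact-injectives follows by dualising. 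Applying Happel's theorem, $\stab R$ acquires a triangulated structure with shift functor the inverse syzygy $\Omega^{-1}$, which is well-defined because $\Omega$ is an equivalence on $\stab R$ (with inverse $\Omega^{-1} M = (\Omega(M^\vee))^\vee$, using MCM-duality).

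Next, I would construct a comparison functor $F \colon \stab R \to D_\mathrm{sg}(R)$ as the composite of the inclusion $\mathrm{MCM}(R) \hookrightarrow D^b(R)$ with the Verdier quotient. It is well-defined on morphisms because any $R$-linear map between MCM modules that factors through a projective module factors through a perfect complex, hence becomes zero in $D_\mathrm{sg}(R)$. For the triangulated structure, any short exact sequence $0 \to \Omega M \to P \to M \to 0$ with $P$ finitely generated projective yields a distinguished triangle in $D^b(R)$; since $P$ becomes zero in $D_\mathrm{sg}(R)$, rotating gives an isomorphism $M[-1] \simeq \Omega M$ there, so $F$ intertwines the shift $\Omega^{-1}$ on $\stab R$ with $[1]$ on $D_\mathrm{sg}(R)$. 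A parallel argument checks that the standard distinguished triangles on $\stab R$, which come from admissible short exact sequences of MCM modules, map to distinguished triangles in $D_\mathrm{sg}(R)$.

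For essential surjectivity, given $X \in D^b(R)$ I would take a finitely generated projective resolution $P^\bullet \to X$ and consider the $n$-th syzygy $\Omega^n X$. Since $R$ has finite injective dimension $d$, the dimension-shifting identity $\ext^i_R(\Omega^n X, R) \cong \ext^{i+n}_R(X,R)$ vanishes for all $i > 0$ once $n > d$, so $\Omega^n X$ is MCM. Truncating the resolution gives $\Omega^n X \simeq X[-n]$ in $D_\mathrm{sg}(R)$, so $X$ lies in the image of $F$ up to shift. For fully faithfulness, the content is the identification $\underline{\hom}_R(M,N) \cong \hom_{D_\mathrm{sg}(R)}(M,N)$ for MCM $M, N$. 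Vanishing on maps through projectives is already in hand; for the reverse, one uses the calculus of fractions in the Verdier quotient: a morphism is represented by a roof $M \xleftarrow{s} Y \to N$ with $\mathrm{cone}(s)$ perfect, and MCM-vanishing of $\ext^{>0}_R(M,-)$ against perfects (after sufficient syzygy-shifting) lets one cancel the perfect tail and replace the roof by an ordinary module map, well-defined modulo factorisations through projectives. I expect this last step to be the main obstacle, since it rests on the interplay between the MCM hypothesis and the Verdier localisation. The cleanest realisation packages the argument using complete projective resolutions, identifying both $\underline{\hom}_R(M,N)$ and $\hom_{D_\mathrm{sg}(R)}(M,N)$ with the zeroth Tate cohomology $\widehat{\ext}^0_R(M,N)$, and in particular making the triangle equivalence $F$ manifest.
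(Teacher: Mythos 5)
The paper states this result with a citation to Buchweitz and offers no proof of its own, so there is nothing to diverge from. Your sketch is a correct reconstruction of Buchweitz's classical argument: realise $\mathrm{MCM}(R)$ as a Frobenius exact category via the $R$-linear duality, apply Happel's theorem to obtain the triangulated structure on $\stab R$ with shift $\Omega^{-1}$, build the comparison functor to $D_\mathrm{sg}(R)$, establish essential surjectivity by syzygy-shifting (using finite injective dimension of $R$) and full faithfulness via the stable/Tate identification of Hom-spaces — exactly the route the cited reference takes.
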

\begin{rmk}
	This can be enhanced to a quasi-equivalence of dg categories, but we will not need to use this fact.
\end{rmk}
\begin{prop}[\cite{eisenbudper}]
	One has $\Omega ^2 \cong \id$ on $\stab R$.
\end{prop}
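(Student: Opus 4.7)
The plan is to prove this via Eisenbud's matrix factorisation theorem. Write $S\coloneqq k\llbracket x_1,\ldots,x_n\rrbracket$, so that $R=S/\sigma$, and let $M$ be an MCM $R$-module, viewed as an $S$-module by restriction. The strategy is to exhibit $M$ as the cokernel of a $2$-periodic minimal free resolution over $R$ coming from a matrix factorisation of $\sigma$, and then read off $\Omega^2 M\cong M$ directly from the periodicity.

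The first step is to compute $\mathrm{pd}_S(M)=1$. Since $M$ is MCM over $R$ we have $\mathrm{depth}_R(M)=\dim R=n-1$, and because $\sigma$ annihilates $M$ the action of $\mathfrak{m}_S$ on $M$ factors through $\mathfrak{m}_R$, so $\mathrm{depth}_S(M)=n-1$ as well. Auslander--Buchsbaum then gives $\mathrm{pd}_S(M)=n-(n-1)=1$, so there is a minimal free resolution $0\to S^a\xrightarrow{\varphi}S^b\to M\to 0$ over $S$. Because $M$ is torsion over the domain $S$ its generic rank is zero, forcing $a=b$. Now $\sigma\cdot\id_M=0$, so multiplication by $\sigma$ on the resolution is null-homotopic: there exists $\psi:S^a\to S^a$ with $\varphi\psi=\sigma\cdot\id_{S^a}$. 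The injectivity of $\varphi$ over the domain $S$ then forces $\psi\varphi=\sigma\cdot\id_{S^a}$ too, making $(\varphi,\psi)$ a matrix factorisation of $\sigma$.

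Reducing modulo $\sigma$ produces a doubly-infinite complex of free $R$-modules
\[ \cdots\xrightarrow{\bar\psi}R^a\xrightarrow{\bar\varphi}R^a\xrightarrow{\bar\psi}R^a\xrightarrow{\bar\varphi}R^a\to M\to 0, \]
which is exact because of the two identities $\bar\varphi\bar\psi=0=\bar\psi\bar\varphi$ combined with the fact that $S$ is a domain (if $\bar\psi(v)=0$, lift and use $\varphi\psi=\sigma\cdot\id$ to show $v\in\im\bar\varphi$, and symmetrically). This is simultaneously a minimal free resolution over $R$ of $M$ and of each of its syzygies, so $\Omega M\cong\coker\bar\psi$ and $\Omega^2 M\cong\coker\bar\varphi\cong M$ in $\stab R$. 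The only real subtlety is the bootstrap from $\varphi\psi=\sigma\cdot\id$ to $\psi\varphi=\sigma\cdot\id$ and the verification that the reduced complex is acyclic; both are classical and rely essentially on $S$ being a regular domain. Naturality of the matrix factorisation construction — in $\stab R$, where choices of lift become irrelevant up to stable equivalence — then upgrades the pointwise isomorphism to the natural isomorphism $\Omega^2\cong\id$ of endofunctors.
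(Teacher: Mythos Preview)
The paper does not actually prove this proposition; it merely states it with a citation to Eisenbud's paper \cite{eisenbudper} and moves on. Your argument is correct and is precisely Eisenbud's original matrix factorisation proof: compute $\mathrm{pd}_S(M)=1$ via Auslander--Buchsbaum, extract a matrix factorisation $(\varphi,\psi)$ of $\sigma$ from the null-homotopy of multiplication by $\sigma$, and reduce modulo $\sigma$ to obtain the $2$-periodic resolution.

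One small remark: your claim that the reduced complex is a \emph{minimal} free resolution requires that $\psi$ also have entries in the maximal ideal, which amounts to the matrix factorisation being reduced (equivalently, $M$ having no free summands). This is not automatic from your setup, but it is irrelevant for the statement as phrased, since in $\stab R$ free summands vanish and the syzygy is only well-defined up to free summands anyway. So the conclusion $\Omega^2 M \cong M$ in $\stab R$ stands regardless.
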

Hence, one can just as well regard $\Omega$ as the shift functor of $\stab R$.
\begin{prop}[{\cite[4.6.5]{dqdefm} or \cite[6.6]{kalckyang2}}] \label{kymap}
There is a map of triangulated categories $\Sigma:\per(\dq) \to D_\text{sg}(R)$, sending $\dq$ to $Ae$. Moreover $\Sigma$ has image $\thick_{D_\text{sg}(R)}(Ae)$, and kernel $\per_\mathrm{fg}(\dq)$, those perfect modules with total cohomology finitely generated over $A/AeA$. We call $\Sigma$ the \textbf{singularity functor}.
\end{prop}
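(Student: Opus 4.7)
The plan is to build $\Sigma$ by factoring through a derived endomorphism dga in the singularity category. Fix a dg enhancement $\mathcal{S}_R^{dg}$ of $D_{\mathrm{sg}}(R)$ and set $E\coloneqq\R\enn_{\mathcal{S}_R^{dg}}(Ae)$. Since $Ae=R\oplus M$ with $M$ MCM, one has $H^0(E)\cong\underline{\enn}_R(M)\cong A/AeA$, and there is a canonical dga map $A=\enn_R(Ae)\to E$ induced by viewing $A$ inside the undeformed dg endomorphism dga $\R\enn_R(Ae)$ and then localising to $\mathcal{S}_R^{dg}$. This map sends $e$ to a coboundary of $E$, since $[e]=0$ in $H^0(E)=A/AeA$, so the universal property of the derived quotient from \cite{bcl} produces a canonical dga map $\phi\colon\dq\to E$ sitting under $A$.

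Base change along $\phi$ gives a triangulated functor $-\lot_{\dq}E\colon\per(\dq)\to\per(E)$. Composing with the standard dg-Morita equivalence $\per(E)\simeq\thick_{D_{\mathrm{sg}}(R)}(Ae)$ (which sends $E$ to $Ae$) yields the desired $\Sigma$. By construction $\Sigma(\dq)\simeq Ae$, and since $\Sigma$ is triangulated and $Ae$ generates the target, the essential image is exactly $\thick_{D_{\mathrm{sg}}(R)}(Ae)$.

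The main obstacle is the kernel calculation. Since $A/AeA$ is local Artinian, $\per_{\mathrm{fg}}(\dq)$ is the thick closure of the unique simple $\dq$-module $S$, so the containment $\per_{\mathrm{fg}}(\dq)\subseteq\ker\Sigma$ reduces to checking $\Sigma(S)=0$. This can be seen directly: $S$ is annihilated by the image of $e$ in $\dq$, and via $\phi$ it corresponds under the Morita equivalence to a finite-dimensional object of $\thick(Ae)\subseteq D_{\mathrm{sg}}(R)$ whose only potential contribution comes from the $R$-summand of $Ae$, which vanishes in the singularity category. For the reverse containment I would invoke the relative singularity category machinery of Kalck--Yang \cite{kalckyang,kalckyang2}: the recollement $D(\dq)\recol D(A)\recol D(R)$ restricts to an equivalence between $\per(\dq)/\per_{\mathrm{fg}}(\dq)$ and the relative singularity category $\per(A)/\thick(eA)$, which itself embeds fully faithfully into $D_{\mathrm{sg}}(R)$ with image $\thick(Ae)$. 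The technical heart of the proof is verifying that this embedding is compatible with the construction of $\Sigma$ above, so that the functor induced by $\Sigma$ on the Verdier quotient is fully faithful and hence $\ker\Sigma$ is forced to be $\per_{\mathrm{fg}}(\dq)$.
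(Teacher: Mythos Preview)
The paper does not prove this proposition; it is imported from \cite[4.6.5]{dqdefm} and \cite[6.6]{kalckyang2}. From the way the result is used later (see the proposition in \S\ref{mutnauto} stating that $\Sigma\coloneqq j^*(i^*)^{-1}$ is a well-defined equivalence $\per(Q)/\per_{\mathrm{fd}}(Q)\to\mathcal{M}$), one can see that the cited construction is recollement-based: one lifts a perfect $\dq$-module along $i^*$ to $\per(A)$, applies $j^*=(-)e$ to land in $D^b(R)$, and then passes to $D_{\mathrm{sg}}(R)$. Your construction via the universal property of $\dq$ and base change to the dg endomorphism algebra $E=\R\underline{\enn}_R(Ae)$ is in fact closely related (cf.\ Remark~\ref{dsgrmk}, where the quasi-isomorphism $\dq\simeq\tau_{\leq 0}E$ is recorded), and the two functors agree once one checks they coincide on the generator. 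So the overall strategy is sound and essentially the same as in the cited source, just packaged differently.

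The genuine gap is in your kernel argument. First, you assert that $A/AeA$ is local Artinian, but locality is not part of the hypotheses of \ref{kymap}: $M$ is an arbitrary MCM module, not assumed indecomposable. (Artinianity does follow, since $R$ is an isolated singularity, so $\per_{\mathrm{fg}}=\per_{\mathrm{fd}}$ here; but the reduction to a single simple $S$ is unjustified.) Second, and more seriously, your argument that $\Sigma(S)=0$ does not work as written: the claim that the image ``only potentially comes from the $R$-summand of $Ae$'' has no content, since under your construction $\Sigma(S)=S\lot_{\dq}E$ is an $E$-module and there is no a priori decomposition tracking the summands of $Ae$. The clean reason $\Sigma$ kills such modules is the recollement identity $j^*i_*\simeq 0$, which immediately gives $j^*(S)\simeq 0$ in $D(R)$ and hence in $D_{\mathrm{sg}}(R)$; but to use this you need to check that your $\Sigma$ agrees with $j^*(i^*)^{-1}$, which is exactly the compatibility you defer at the end. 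Without that verification, neither containment of the kernel is actually established.
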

The singularity functor induces periodicity in the cohomology of $\dq$:
\begin{prop}[{\cite[5.2.1]{dqdefm}}]\label{dqcohom}
One has $$H^j(\dq)\cong \begin{cases}
0 & j>0 \\
\underline{\enn}_R(M) & j=0\\
\ext_R^{-j}{(M,M)} & j\leq 0.
\end{cases}$$Moreover, if $i\geq 0$ is even then $\ext_R^i(M,M)\cong \underline{\enn}_R(M)$ as $R$-modules. If $i\geq 0$ is odd then $\ext_R^i(M,M)\cong \ext_R^1(M,M)$.
	\end{prop}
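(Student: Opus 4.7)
The plan is to decompose the proposition into two independent assertions, the cohomology of $\dq$ and the $2$-periodicity of $\ext^*_R(M,M)$, and to handle each with different tools.

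For the cohomology of $\dq$, the vanishing $H^{>0}(\dq)=0$ is a general feature of the Braun--Chuang--Lazarev derived quotient: $\dq$ admits an explicit cofibrant model concentrated in nonpositive cohomological degrees (obtained by freely adjoining a degree $-1$ generator $t$ with $dt=e$), so nonpositivity is built in. The $H^0$ computation is likewise general, $H^0(\dq)\cong A/AeA$, and in our setup with $A=\enn_R(R\oplus M)$ and $e=\id_R$, the quotient $A/AeA$ consists of endomorphisms of $R\oplus M$ modulo those factoring through a projective summand, which is precisely $\underline{\enn}_R(M)$ by definition. For $H^{-j}$ with $j\geq 1$, I would use the singularity functor $\Sigma$ of \ref{kymap}, which sends $\dq\mapsto Ae=R\oplus M$. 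Since $R$ is projective, it vanishes in $\stab R$, so $\hom^{*}_{\stab R}(R\oplus M, R\oplus M)=\underline{\hom}^{*}_R(M,M)$. A standard dimension-shift identifies $\hom_{\stab R}(M, \Omega^j M)$ with $\ext^j_R(M,M)$ for $j\geq 1$, and matching this against $H^{-j}(\dq)$ gives the desired computation.

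For the periodicity assertion, I would invoke Eisenbud's theorem recalled just above: since $\Omega^2\cong\id$ on $\stab R$, the expression $\hom_{\stab R}(\Omega^i M, M)$ depends only on the parity of $i$, and for $i\geq 1$ it identifies with $\ext^i_R(M,M)$ via dimension shift. Even $i$ then yields $\underline{\hom}_R(M,M)=\underline{\enn}_R(M)$, while odd $i$ yields $\underline{\hom}_R(\Omega M, M)=\ext^1_R(M,M)$. This is purely a consequence of matrix factorisation periodicity and does not require the derived quotient.

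The main technical obstacle is the rigorous identification of $H^*(\dq)$ with the graded endomorphisms of $Ae$ in the stable category. Since $\Sigma$ has nontrivial kernel $\per_\mathrm{fg}(\dq)$, it is not a priori fully faithful, and one cannot immediately conclude $\hom^{*}_{\per(\dq)}(\dq,\dq)\cong\hom^{*}_{\stab R}(Ae,Ae)$ just from $\Sigma(\dq)=Ae$. I would address this by upgrading $\Sigma$ to a map of endomorphism dgas: the recollement of \ref{recoll} together with self-compactness of $\dq$ yields a canonical dga map $\dq\simeq\R\enn_{A}(\dq)\to\underline{\R\enn}_R(R\oplus M)$, and the required quasi-isomorphism follows once one verifies that the kernel objects $\per_\mathrm{fg}(\dq)$ are right-orthogonal to $\dq$ in the relevant negative degrees. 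This orthogonality is where I expect the actual work to lie; an alternative route, if that route proves difficult, is to bypass $\Sigma$ entirely and instead compute $H^*(\dq)$ via Koszul duality with $\R\enn_A(S)$ using the prorepresentability of \ref{prorepthm}.
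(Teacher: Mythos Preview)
The paper does not give its own proof; the proposition is quoted from \cite[5.2.1]{dqdefm}. The mechanism behind it is, however, indicated in Remark~\ref{dsgrmk}: there is a dga quasi-isomorphism $\dq\xrightarrow{\simeq}\tau_{\leq 0}\R\underline\enn_R(M)$, from which the cohomology formula is immediate once one identifies $H^j\R\underline\enn_R(M)=\underline\hom_R(M,\Omega^{-j}M)\cong\ext_R^{-j}(M,M)$ for $j<0$ by dimension shifting in $\stab R$. Your treatment of $H^{\geq 0}$ and of the $2$-periodicity via Eisenbud is correct and matches this picture.

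You also correctly isolate the nontrivial step as the identification of $H^{<0}(\dq)$ with stable Ext, but neither of your two proposed routes closes the gap. The orthogonality route fails outright: $\per_{\mathrm{fg}}(\dq)$ is not orthogonal to $\dq$ in either direction --- for instance $\hom_{D(\dq)}(\dq,S)\cong H^0(S)=k$ --- so the Verdier quotient $\per(\dq)\to\per(\dq)/\per_{\mathrm{fg}}(\dq)$ does not induce bijections on the Hom-sets you need, and you cannot simply transport the computation back from $\stab R$ through $\Sigma$. The Koszul-duality fallback is circular in this paper's logical order: both \ref{prorepthm} and the double-dual statement \ref{dqkd} take cohomological local finiteness of $\dq$ as a \emph{hypothesis}, and in the geometric applications that finiteness is established (see \ref{fdcohom}) precisely by invoking the present proposition.

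The argument actually used in \cite{dqdefm}, as sketched in Remark~\ref{dsgrmk}, is a direct construction rather than an orthogonality or duality argument: one takes the $2$-periodic complete resolution $\tilde M\to M$, forms $E=\enn_R(\tilde M)$ with its central degree-$2$ periodicity cocycle $\theta$, shows $\R\underline\enn_R(M)\simeq E[\theta^{-1}]$, and then checks that truncation to nonpositive degrees recovers $\dq$. The hypersurface hypothesis enters exactly here, in producing the periodic resolution.
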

In fact, this 2-periodicity is witnessed by an element in the cohomology of $\dq$:
\begin{prop}[{\cite[\S5.2]{dqdefm}}]\label{etaex}Suppose that $\underline{\enn}_R(M)$ is an Artinian local algebra.
	\begin{enumerate}
		\item[\emph{i)}] There is a central element $\eta \in H^{-2}(\dq)$, unique up to multiplication by units, that we call the \textbf{periodicity element}.
		\item[\emph{ii)}] Multiplication by $\eta$ induces isomorphisms $H^j(\dq) \to H^{j-2}(\dq)$ for all $j\leq 0$.
		\item [\emph{ii)}]The element $\eta$ lifts to an element of $ HH^{-2}(\dq)$, the $-2^\text{nd}$ Hochschild cohomology of $\dq$ with coefficients in itself.
		\item[\emph{iv)}] There is an exact triangle of $\dq$-bimodules $\dq \xrightarrow{\eta} \dq \to \tau_{>-2}(\dq)$.
	\end{enumerate}
\end{prop}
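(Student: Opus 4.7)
The plan is to construct $\eta$ from Eisenbud's matrix factorization periodicity and verify each of (i)--(iv) in turn. By Prop \ref{dqcohom}, $H^j(\dq) \cong \ext^{-j}_R(M,M)$ for $j \leq 0$, and since $R$ is a hypersurface, Eisenbud's periodicity $\Omega^2 M \cong M$ in $\stab R$ gives a canonical isomorphism $\ext^2_R(M,M) \cong \underline{\enn}_R(M)$. I would take $\eta \in H^{-2}(\dq)$ to be the preimage of $\id_M$ under this identification. Centrality of $\eta$ in the graded algebra $H^*(\dq)$ then reduces to naturality of Eisenbud's periodicity: for any Yoneda representative of a class $x \in \ext^n_R(M,M)$, a naturality square for the periodicity isomorphism translates directly into the identity $\eta x = x\eta$ (no sign, since $|\eta|=-2$ is even). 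Part (ii) follows because Yoneda premultiplication by $\eta$ exactly realizes the Eisenbud periodicity iso $\ext^{-j}_R(M,M) \cong \ext^{-j+2}_R(M,M)$ already recorded in Prop \ref{dqcohom}. For the uniqueness clause in (i), any other $\eta'$ satisfying (ii) is a free generator of $H^{-2}(\dq)$ as a rank-one module over $H^0(\dq) = \underline{\enn}_R(M)$; since this ring is local Artinian by hypothesis, any two such generators differ by a unit.

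For (iii), the task is to upgrade $\eta$ from a class in $\ext^{-2}_\dq(\dq,\dq)$ to one in $HH^{-2}(\dq) = \ext^{-2}_{\dq \otimes \dq^{\mathrm{op}}}(\dq,\dq)$, i.e.\ a morphism $\dq \to \dq[-2]$ of $\dq$-bimodules. I would approach this using the matrix factorization $(\phi, \psi)$ of $\sigma$ that underlies the hypersurface $R = S/\sigma$: over $R^e$, the diagonal bimodule $R$ admits a 2-periodic bimodule resolution, producing a canonical class in $HH^{-2}(R)$, which one transports along the recollement of Prop \ref{recoll} to a class in $HH^{-2}(\dq)$. Alternatively, Eisenbud's periodicity is a natural isomorphism of the identity functor with $\Omega^2$ on a dg enhancement of $\stab R$, hence an element of Hochschild cohomology that transfers through the singularity functor $\Sigma$ of Prop \ref{kymap}. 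I expect this step to be the main obstacle, since a graded-central cohomology class does not in general extend to a Hochschild class, and the transfer requires checking compatibility with the bimodule structure.

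Finally, for (iv), once the Hochschild lift of $\eta$ is realized as a bimodule map $\dq \to \dq[-2]$, its cofiber $C$ fits in an exact triangle of $\dq$-bimodules. Computing via the induced long exact sequence on cohomology, combined with (ii) and the vanishing $H^{>0}(\dq) = 0$: the $\eta$-multiplication is an iso on $H^{\leq 0}$, forcing $H^j(C) = 0$ for $j \leq -2$, while vanishing above degree $0$ gives $H^j(C) = 0$ for $j \geq 1$, and the remaining two positions yield $H^0(C) \cong H^0(\dq)$ and $H^{-1}(C) \cong H^{-1}(\dq)$. This identifies $C$ with $\tau_{>-2}(\dq)$ as a bimodule, completing the proof.
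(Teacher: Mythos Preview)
Your treatment of (i), (ii), and the uniqueness clause is correct and matches the underlying idea: $\eta$ is the class corresponding to $\id_M$ under Eisenbud periodicity, and centrality in $H^*(\dq)$ follows from naturality. Your argument for (iv), given (iii), is also fine.

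The genuine gap is (iii), and you correctly flag it as the obstacle. Neither of your proposed routes is carried through: transporting a Hochschild class of $R$ through the recollement, or pulling back a natural transformation on $D_{\mathrm{sg}}(R)$ through $\Sigma$, both require nontrivial compatibility checks that you have not supplied, and in general a central element of $H^*(A)$ need not lift to $HH^*(A)$. The paper's approach (explained in Remark~\ref{dsgrmk}, following \cite[\S5.2]{dqdefm}) sidesteps this entirely by constructing an explicit dga model of $\dq$ in which $\eta$ is represented by a \emph{genuinely central cocycle}. Concretely: glue the syzygy exact sequences into a 2-periodic resolution $\tilde{M}\to M$, let $\theta$ be the degree~$2$ ``shift by a period'' endomorphism of $\tilde{M}$, and set $E=\enn_R(\tilde{M})$; then $\theta$ is a strictly central cocycle in $E$, the localisation $E[\theta^{-1}]$ models $\R\underline{\enn}_R(M)$, and $\dq\simeq\tau_{\leq 0}(E[\theta^{-1}])$ with $\eta=\theta^{-1}$. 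Since multiplication by a strictly central element is manifestly a bimodule map, the lift to $HH^{-2}$ and the bimodule triangle of (iv) are then automatic. This is both simpler and more robust than attempting to transport Hochschild classes across the recollement.
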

\begin{rmk}
	Note that $\eta$ is a central element of the cohomology algebra $H(\dq)$, and need not lift to a genuinely central cocycle in $\dq$.
\end{rmk}
\begin{prop}[{\cite[Theorem B]{dqdefm}, see also \cite[5.9]{huakeller}}]\label{recovthm}Suppose that $\underline{\enn}_R(M)$ is an Artinian local algebra and that $H^j(\dq)$ is finite-dimensional for all $j<0$. Let $R'\coloneqq k\llbracket x_1,\ldots x_n \rrbracket / \sigma'$ be a complete local isolated hypersurface singularity with $\sigma'\neq 0$ in the maximal ideal of $k\llbracket x_1,\ldots x_n \rrbracket$. Let $M'$ be a MCM $R'$-module and $A'=\enn_{R'}(R'\oplus M')$ the associated partial resolution, with idempotent $e'=\id_{R'}$. Suppose that both $A$ and $A'$ are smooth (i.e. they have finite global dimension). Then if $\dq$ and $A'/^{\mathbb{L}}A'e'A'$ are quasi-isomorphic, we have an isomorphism $R\cong R'$.
\end{prop}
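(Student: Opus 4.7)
The plan is to extract $R$ from $\dq$ via a two-step procedure: first recover the dg singularity category of $R$ from $\dq$, then recover $R$ from this dg singularity category by invoking the theorem of Hua and Keller \cite[5.9]{huakeller}.

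For the first step, I would lift the singularity functor of \ref{kymap} to a dg quasi-functor between natural dg enhancements. The kernel-image computation there shows it descends to a fully faithful dg quasi-functor
\[
\bar\Sigma:\mathbf{per}^{\mathrm{dg}}(\dq)\big/\mathbf{per}^{\mathrm{dg}}_{\mathrm{fg}}(\dq)\hookrightarrow D_{\mathrm{sg}}^{\mathrm{dg}}(R)
\]
with image the thick subcategory generated by $Ae=R\oplus M$. The crucial geometric input is that smoothness of $A$ forces this image to be all of $D_{\mathrm{sg}}(R)$: since $A$ has finite global dimension, for every MCM $R$-module $N$ the $A$-module $\hom_R(R\oplus M,N)$ admits a finite projective resolution over $A$, and applying $-\otimes_A(R\oplus M)$ unwinds this into a finite iterated extension expressing $N$ in terms of summands of $R\oplus M$ in $\stab R\simeq D_{\mathrm{sg}}(R)$. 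Hence $\bar\Sigma$ is a quasi-equivalence.

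Next I would argue that the source of $\bar\Sigma$ depends only on the quasi-isomorphism type of the dga $\dq$: the dg category $\mathbf{per}^{\mathrm{dg}}(\dq)$ is a standard Morita invariant, while the subcategory $\mathbf{per}^{\mathrm{dg}}_{\mathrm{fg}}(\dq)$ can be characterised intrinsically as the perfect objects with total cohomology finite-dimensional over $H^0(\dq)=A/AeA$, using the cohomological finiteness of $\dq$ and the Artinian local hypothesis on $A/AeA$. Running the same construction for $A'$ and chaining the two quasi-equivalences through the hypothesis $\dq\simeq A'/^{\mathbb{L}} A'e'A'$ produces a quasi-equivalence $D_{\mathrm{sg}}^{\mathrm{dg}}(R)\simeq D_{\mathrm{sg}}^{\mathrm{dg}}(R')$.

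The main obstacle is the second step, which I would handle by appealing directly to Hua and Keller: under the complete local isolated hypersurface hypotheses placed on $R$ and $R'$ in characteristic zero, their theorem asserts that the dg quasi-equivalence type of the singularity category determines the ring up to isomorphism, immediately yielding $R\cong R'$.
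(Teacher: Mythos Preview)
Your proposal is correct and follows essentially the same route as the paper and its cited source \cite{dqdefm}: one first shows that the Drinfeld quotient $\per^{\mathrm{dg}}(\dq)/\per^{\mathrm{dg}}_{\mathrm{fd}}(\dq)$ is quasi-equivalent to the dg singularity category of $R$ (smoothness of $A$ giving essential surjectivity, exactly as in the remark following the proposition), observes that this quotient is intrinsic to the quasi-isomorphism type of $\dq$, and then invokes Hua--Keller to recover $R$. Your direct argument that finite global dimension of $A$ forces $M$ to generate $D_{\mathrm{sg}}(R)$ is a valid alternative to citing the exact sequence of \cite[4.6.10]{dqdefm} used in the paper's remark.
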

\begin{rmk}
In \cite{dqdefm}, instead of $A$ (resp. $A'$) being smooth it is required that $M$ (resp. $M'$) generates $D_\mathrm{sg}(R)$ (resp. $D_\mathrm{sg}(R')$). But by \cite[4.6.10]{dqdefm}, there is an exact sequence of triangulated categories $$\frac{D_{\text {fd}}(\dq) }{\per_\mathrm{fd}(\dq)} \longrightarrow D_\text{sg}(A) \longrightarrow \frac{D_{\text{sg}}(R)}{\thick_{D_\text{sg}(R)}(M)}$$ where $ D_\mathrm{fd}(\dq)$ denotes the subcategory of $D(\dq)$ on those modules whose total cohomology is finite-dimensional, and $\per_\mathrm{fd}(\dq)$ denotes the subcategory of $\per(\dq)$ on those modules whose total cohomology is finite-dimensional. So smoothness of $A$ implies that $M$ generates. Conversely, if $M$ generates then for $A$ to be smooth we require that every $\dq$-module of finite total cohomological dimension is perfect, which is equivalent to $S\coloneqq (A/AeA)/\mathrm{rad}(A/AeA)$ being perfect as a $\dq$-module. In this setting, since $\dq$ is quasi-isomorphic to its own Koszul double dual $(\dq)^{!!}$ by \ref{dqkd}, this latter condition is equivalent to $(\dq)^! \simeq \R\enn_A(S)$ having finite total cohomological dimension, i.e. the graded algebra $\ext^*_A(S,S)$ is finite-dimensional.
\end{rmk}

\begin{rmk}\label{dsgrmk}
Viewing $M$ as an object of the dg singularity category $D_\mathrm{sg}(R)$, let $\R\underline\enn_R(M)$ denote its endomorphism dga. It is shown in \cite[5.1.11]{dqdefm} that there is a quasi-isomorphism $$\dq\xrightarrow{\simeq}\tau_{\leq 0}\R\underline\enn_R(M)$$ between the derived quotient and the truncation of $\R\underline\enn_R(M)$ to nonpositive degrees. In particular, one can compute $\dq$ directly from knowledge of the dg singularity category. This also provides a way to produce an explicit model of $\dq$ where $\eta$ is represented by a genuinely central cocycle: first, stitch together the syzygy exact sequences for $M$ into a 2-periodic resolution $\tilde M \to M$. Let $\theta: \tilde M \to \tilde M$ be the degree 2 map whose components are the identity that witnesses this periodicity. Let $E=\enn_R(\tilde M)$, which is a dga. It is easy to see that $\theta$ is a central cocycle in $E$. Then, \cite[\S5.2]{dqdefm}
 shows that $\R\underline\enn_R(M)$ is quasi-isomorphic to the dga $E[\theta^{-1}]$, and $\eta$ is identified with $\theta^{-1}$ across this quasi-isomorphism. So it follows that $\dq$ is quasi-isomorphic to the dga $\tau_{\leq 0}\left(\enn_R(\tilde M)[\theta^{-1}]\right)$, which is naturally a dga over $k[\eta]=k[\theta^{-1}]$.

\end{rmk}

AR duality will assist us in some computations later:
\begin{prop}[Auslander-Reiten duality \cite{auslander}]
Let $T$ be a commutative complete local	Gorenstein isolated singularity of (Krull) dimension $d$. Let $X,Y$ be MCM $T$-modules. Then we have $$\underline{\hom}_T(X,Y) \cong {\ext}_T^1(Y,\Omega^{2-d} X)^*$$
\end{prop}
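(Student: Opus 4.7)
My plan is to view this as a Serre-duality-type statement for the stable MCM category $\underline{\mathrm{CM}}\,T \simeq D_{\mathrm{sg}}(T)$, whose Serre functor is $\Omega^{1-d}$ (equivalently $[d-1]$ in the triangulated structure). Indeed, since $X, Y$ are MCM, the claimed formula rewrites as the $(d-1)$-Calabi--Yau property $\underline{\hom}_T(X,Y) \cong \underline{\hom}_T(Y, \Omega^{1-d}X)^\ast$ of $\underline{\mathrm{CM}}\,T$. I would proceed via the Auslander transpose construction together with Matlis duality.

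First, I would verify that $\underline{\hom}_T(X,Y)$ has finite length: because $T$ has an isolated singularity and $X, Y$ are MCM, both are free away from $\mathfrak{m}$, so all stable homs are supported at $\mathfrak{m}$, and finite generation does the rest. Hence Matlis duality $D = \hom_T(-, E)$ applies cleanly on such modules, and in the complete local setting coincides with the $k$-linear dual $(-)^\ast$. Next I would establish the Auslander transpose identity: choose a minimal free presentation $P_1 \to P_0 \to X \to 0$ and set $\mathrm{Tr}\,X \coloneqq \coker(P_0^\ast \to P_1^\ast)$, where $(-)^\ast = \hom_T(-, T)$. A standard diagram chase, using $\ext_T^{\geq 1}(X, T) = 0$ (the MCM hypothesis on $X$), yields a natural isomorphism
$$\underline{\hom}_T(X,Y) \cong \tor_1^T(\mathrm{Tr}\,X, Y),$$
where the image of the evaluation map $\hom_T(X, T) \otimes_T Y \to \hom_T(X, Y)$ is precisely the subspace of maps factoring through a free module.

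Finally, applying Matlis duality and the Hom--tensor adjunction $D(- \lot -) \simeq \R\hom_T(-, D(-))$ gives
$$\underline{\hom}_T(X,Y)^\ast \cong D\tor_1^T(\mathrm{Tr}\,X, Y) \cong \ext_T^1(Y, D(\mathrm{Tr}\,X)).$$
It remains to identify $D(\mathrm{Tr}\,X)$ with $\Omega^{2-d}X$ for the purposes of this $\ext^1$. Stably one has $\mathrm{Tr}\,X \simeq \Omega^{-2} X^\vee$ with $X^\vee = \hom_T(X, T)$, and because $T$ is Gorenstein of Krull dimension $d$, local duality intertwines Matlis dual with $\R\hom_T(-, T)[d]$; using $X^{\vee\vee} \cong X$ for MCM $X$, these shifts combine to produce $\Omega^{2-d}X$.

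The main obstacle is the careful bookkeeping of shifts in this last identification: the $-2$ from the transpose, the $d$-shift from local duality, and the direction of the syzygy functor must all line up exactly. A secondary subtlety is that $\mathrm{Tr}\,X$ itself is not finite length, so the Matlis duality manipulation in the final step has to be interpreted at the level of Ext groups rather than of modules. Fortunately, it is only the finite-length target $\tor_1^T(\mathrm{Tr}\,X, Y) \cong \underline{\hom}_T(X,Y)$ that we need to be Matlis-self-dual, and this was already handled in the finite length step above.
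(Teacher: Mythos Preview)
The paper does not give a proof of this proposition; it is simply cited from \cite{auslander} as a known result and then used. So there is nothing in the paper to compare your argument against.

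That said, your outline is the standard route to Auslander--Reiten duality in the Gorenstein isolated singularity setting: establish finite length of stable homs via the isolated singularity hypothesis, use the Auslander transpose identity $\underline{\hom}_T(X,Y)\cong\tor_1^T(\mathrm{Tr}\,X,Y)$, apply Matlis duality together with the $\hom$--tensor adjunction, and then identify the AR translate $D\,\mathrm{Tr}$ with $\Omega^{2-d}$ via local duality for the Gorenstein ring $T$. This is essentially the argument one finds in Auslander's original work and in standard references such as Yoshino's book on Cohen--Macaulay modules. The two caveats you flag are exactly the right ones: the shift arithmetic in the final identification must be tracked carefully, and the equality $D(\mathrm{Tr}\,X)\simeq\Omega^{2-d}X$ is only meaningful stably (equivalently, at the level of the relevant $\ext^1$), since $D(\mathrm{Tr}\,X)$ is not a finitely generated module.
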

\begin{cor}
If the dimension of $R$ is even, then $\underline{\hom}_R(M,M) \cong {\ext}_R^1(M,M)^*$.
\end{cor}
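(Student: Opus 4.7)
The plan is to apply the Auslander--Reiten duality statement directly with $X = Y = M$ and $T = R$, which immediately yields
\[ \underline{\hom}_R(M,M) \cong \ext_R^1(M, \Omega^{2-d}M)^*, \]
where $d$ is the Krull dimension of $R$. It then remains to identify $\ext_R^1(M, \Omega^{2-d}M)$ with $\ext_R^1(M, M)$ under the assumption that $d$ is even.

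For this, I would invoke Eisenbud's $2$-periodicity (recalled earlier as the statement that $\Omega^2 \cong \id$ on $\stab R$ for $R$ a hypersurface). Since $d$ is even, $2-d$ is even, so $\Omega^{2-d} M \cong M$ in the stable category $\stab R$. The remaining point is that positive-degree Ext groups between MCM modules only depend on the stable isomorphism class of the arguments: this follows from the long exact sequences in Ext attached to the short exact sequences $0 \to \Omega N \to P \to N \to 0$ with $P$ projective, which show that adding or removing projective summands does not affect $\ext_R^i(-,-)$ for $i \geq 1$. Alternatively, one can use the identification $\ext_R^i(X,Y) \cong \underline{\hom}_R(X, \Omega^{-i}Y)$ for MCM $X,Y$ and $i \geq 1$, and then apply $\Omega^2 \cong \id$.

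Combining these gives $\ext_R^1(M, \Omega^{2-d}M) \cong \ext_R^1(M,M)$, and substituting into the AR duality formula yields the stated isomorphism. There is no real obstacle here; the only subtlety worth spelling out is the stable-invariance of positive Ext, which is standard but deserves a line of justification in the writeup.
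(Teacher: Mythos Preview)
Your proposal is correct and is precisely the argument the paper intends: the corollary is stated immediately after AR duality, and the implicit proof is to set $T=R$, $X=Y=M$, then use Eisenbud's 2-periodicity $\Omega^2\cong\id$ on $\stab R$ (recalled earlier in the same subsection) to replace $\Omega^{2-d}M$ by $M$ when $d$ is even. Your remark on stable invariance of positive Ext is a reasonable gloss but not something the paper spells out.
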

\begin{defn}
	Call $N \in \stab R$ \textbf{rigid} if $\ext_R^1(N,N)\cong 0$.
\end{defn}

Note that AR duality implies that there are no nontrivial rigid modules if $\dim(R)$ is even. If $M$ is rigid then we have $H(\dq)\cong A/AeA[\eta]$, but in general $\dq$ need not be formal.

\section{The definition of the derived contraction algebra}\label{dercondefns}
In this section we define the derived contraction algebra associated to the contraction of a rational curve to a point. Our motivation is to mimic the constructions of Donovan and Wemyss from \cite{DWncdf, contsdefs, enhancements}. We give a deformation-theoretic description of the derived contraction algebra (\ref{dcaprorep}), which is a derived analogue of \cite[3.9]{DWncdf}. We show that the derived analogue of the Donovan--Wemyss conjecture is true (\ref{ddwconj}). We use the deformation-theoretic interpretation of the derived contraction algebra to globalise some of our results.

\subsection{The construction}
The global setup will be as follows:
\begin{setup}[Global]\label{globalsetup}
	Let $\pi:X \to X_\con$ be a projective birational morphism between two noetherian normal integral schemes over an algebraically closed field $k$ of characteristic zero. Assume that $\pi$ is crepant, that $\R\kern 2pt \pi_*\mathcal O_X = \mathcal O _{X_\con}$, and that $\pi$ is an isomorphism away from a single closed point $p$ in the base, where $C\coloneqq \pi^{-1}(p)$ is an irreducible rational (possibly non-reduced) curve. Assume in addition that $X_\con$ is a Gorenstein scheme that, complete locally around $p$, is an isolated hypersurface singularity.
\end{setup}
\begin{rmk}
Note that, if $X$ has dimension 3 or higher, we disallow divisorial contractions. If $X$ is smooth, then $\pi$ is a crepant resolution of an isolated singularity. One does not need the assumption that $X_\con$ is Gorenstein -- or that $p$ is an isolated hypersurface singularity -- to define the derived contraction algebra, but it is hard to prove much about it without them. One ought to be able make the more general assumption that $C$ is a tree of rational curves, but to do much with this definition, one needs first to prove pointed versions of results from \cite{dqdefm}. More generally, one should be able to drop the condition that $\pi^{-1}(p)$ is 1-dimensional, at the cost of some more assumptions about tilting bundles, as in \cite[2.5]{enhancements}.
\end{rmk}

Take an affine neighbourhood $\spec R \to X_\con$ of $p$, and let $U$ be the preimage of $\spec R$ under $\pi$. This gives the Zariski local setup:
\begin{setup}[Zariski Local]\label{localsetup}
	Let $\pi:U \to \spec R$ be a projective birational morphism between two noetherian normal integral schemes over an algebraically closed field $k$ of characteristic zero. Moreover, $\pi$ is crepant, $\R\kern 2pt \pi_*\mathcal O_U = R$, and $\pi$ is an isomorphism away from a single closed point $p$ in the base, where $C\coloneqq \pi^{-1}(p)$ is an irreducible rational (possibly non-reduced) curve. Furthermore, $R$ is Gorenstein and $\hat R _p$ is an isolated hypersurface singularity.
\end{setup}
\begin{rmk}\label{hmmpsetup}
	Note that this generalises the Crepant Setup 2.9 of \cite{hmmp}.
\end{rmk}
Now, in the local setup, by \cite[3.2.8]{vdb} there exists a (finite rank) tilting bundle $\mathcal V = \mathcal O_U \oplus \mathcal N$ on $U$. Put $\Lambda\coloneqq \enn_U(\mathcal V)$. By our assumptions, $\Lambda$ can be computed on the base: more precisely, \cite[2.5(2)]{enhancements} tells us that $\pi_*:\Lambda \to \enn_R(\pi_*\mathcal V)$ is an isomorphism. From now on, we'll identify $\Lambda$ with $\enn_R(\pi_*\mathcal V)\cong \enn_R(R \oplus N)$, where we write $N\coloneqq \pi_*\mathcal N$. I claim that $N$ is a maximal Cohen--Macaulay (MCM) $R$-module: to see this, first note that the $R$-module $\Lambda$ is MCM by \cite[\S4.2]{iyamawemyssfactorial}. Then, since $N$ is a summand of $\Lambda$, it must be MCM too. 

\p We would like to define the contraction algebra to be the derived quotient of $\Lambda$ by the idempotent $e=\id_R$. In order for this to behave well, we would like the finite-dimensional algebra $\Lambda_\con\coloneqq \Lambda/\Lambda e \Lambda$ to be local -- unfortunately, this need not happen, for the same reasons as \cite[\S2.4]{DWncdf}. In order to ensure locality, we need to pass to a complete local base, and then through a Morita equivalence. Letting $\hat R$ be the completion of $R_p$ along its maximal ideal, and letting $\hat U$ be the formal fibre, we obtain the following setup:
\begin{setup}[Complete local]
	$\pi:\hat U \to \spec \hat R$ is a projective birational morphism between two noetherian normal integral schemes over an algebraically closed field $k$ of characteristic zero, and $\hat R$ is a complete local hypersurface singularity with maximal ideal $p$. Moreover, $\pi$ is crepant, an isomorphism away from $p$, $\R\kern 2pt \pi_*\mathcal O_{\hat U} = \hat R$, and $C\coloneqq \pi^{-1}(p)$ is an irreducible rational (possibly non-reduced) curve.
\end{setup}
The arguments of \cite[\S2.4]{DWncdf} adapt to ensure that $\hat{ \mathcal V} \cong \mathcal O_{\hat U} \oplus \hat{\mathcal N}$ is a tilting bundle on $\hat U$, and that $\hat \Lambda \cong \enn_{\hat R}(\hat R \oplus \hat N)\cong \enn_{\hat U}(\hat{\mathcal V})$. Again, we may apply \cite{iyamawemyssfactorial} to see that $\hat N$ is still MCM over $\hat R$. Now, by \cite[3.2.7 and 3.5.5]{vdb} we may put $\hat{R}\oplus\hat N = \hat{R}^{\oplus a}\oplus M^{\oplus b}$, for some (necessarily MCM) indecomposable $\hat R$-module $M$ and some positive integers $a,b$. It follows that $A\coloneqq \enn_{\hat R}(\hat R \oplus M)$ is the basic algebra Morita equivalent to $\hat \Lambda$.
\begin{defn}Put $e\coloneqq \id_R$. The \textbf{contraction algebra} $A_\con$ associated to $\pi$ is the stable endomorphism algebra $A/AeA \cong \underline{\enn}_{\hat R}(\hat{R}\oplus M)\cong\underline{\enn}_{\hat R}(M)$. The \textbf{derived contraction algebra} $\dca$ is the derived quotient $\dq$.
\end{defn}
From the definition, it is obvious that $H^0(\dca)\cong A_\con$.

\subsection{First properties}
Keep notation as in the Global Setup \ref{globalsetup}. Let $\hat R$ be the completion of the local ring of $X_\con$ at $p$. We begin by proving some easy finiteness properties about $\dca$.
\begin{lem}\label{fdlem}
	Let $W$ be a finitely generated $\hat R$-module which is supported at $p$. Then $W$ is finite-dimensional over $k$.
\end{lem}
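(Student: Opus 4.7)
The plan is to use the standard commutative-algebra fact that a finitely generated module over a noetherian local ring whose support is contained in the closed point has finite length, and then observe that the residue field is $k$.

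More concretely, first I would note that $\hat R$ is a noetherian local ring with maximal ideal $p$, and $\hat R/p = k$ since $k$ is algebraically closed and $p$ corresponds to a $k$-rational point (as in the Global Setup). For $W$ finitely generated, one has $\mathrm{Supp}(W) = V(\mathrm{Ann}(W))$, and the hypothesis $\mathrm{Supp}(W) \subseteq \{p\}$ translates into $\sqrt{\mathrm{Ann}(W)} = p$. Since $\hat R$ is noetherian, $p$ is finitely generated, so there is some $n \geq 0$ with $p^n \subseteq \mathrm{Ann}(W)$, i.e.\ $W$ is naturally a finitely generated module over the Artinian local ring $\hat R/p^n$.

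Next I would filter $W$ by the $p$-adic filtration $W \supseteq pW \supseteq p^2 W \supseteq \cdots \supseteq p^n W = 0$. Each subquotient $p^i W / p^{i+1} W$ is a finitely generated module over $\hat R/p = k$, hence a finite-dimensional $k$-vector space. Since $W$ is a finite iterated extension of these, it too is finite-dimensional over $k$.

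There is essentially no obstacle here; the statement is a routine consequence of noetherianity together with the fact that a complete local $k$-algebra with closed point a $k$-rational point has residue field $k$. The only thing to be mildly careful about is citing the correct commutative-algebra input (support equals $V(\mathrm{Ann})$ for f.g.\ modules, and the length argument via the $p$-adic filtration), but no delicate geometry from the rest of the setup is needed.
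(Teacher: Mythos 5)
Your proof is correct and takes essentially the same route as the paper: find $n$ with $p^n \subseteq \mathrm{Ann}(W)$, view $W$ as a finitely generated module over the Artinian local ring $\hat R/p^n$, and conclude finite-dimensionality over $\hat R/p \cong k$. You simply spell out the two sub-steps (support equals $V(\mathrm{Ann})$, and the $p$-adic filtration of $W$) that the paper compresses into ``this is standard.''
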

\begin{proof}
	This is standard: some power $n$ of $p$ annihilates $W$, and hence $W$ is a finitely generated module over ${\hat R}/p^n$, which is finite-dimensional over ${\hat R}/p\cong k$.
\end{proof}
\begin{lem}
Let $M$ be the $\hat R$-module defining $A_\con$ and $\dca$. If $q\neq p$ is a prime ideal of $\hat R$, then $M_q$ is projective.
\end{lem}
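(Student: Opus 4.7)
The plan is to exploit two ingredients: that $\hat R$ has an isolated singularity at $p$, and that $M$ is MCM over $\hat R$. Together these should force $M_q$ to be free (hence projective, since we are over a local ring) at every other prime.

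First I would observe that $\hat R$ is, by hypothesis, a complete local isolated hypersurface singularity; thus for every prime $q$ with $q \neq p$, the localisation $\hat R_q$ is a regular local ring. Next, recall from the construction of $M$ that it was shown to be a MCM $\hat R$-module (as a summand of $\hat \Lambda$, itself MCM by \cite{iyamawemyssfactorial}). Equivalently, $\ext^i_{\hat R}(M,\hat R)=0$ for all $i>0$. Since $M$ is finitely presented and $\hat R$ is noetherian, Ext commutes with localisation at $q$, so $\ext^i_{\hat R_q}(M_q,\hat R_q)=0$ for $i>0$, which says that $M_q$ is MCM over $\hat R_q$.

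Now it remains to observe that a finitely generated MCM module over a regular local ring is free. One can see this either directly (MCM means the depth of $M_q$ equals the depth of $\hat R_q$, and over a regular local ring $\hat R_q$ has finite global dimension so $M_q$ has finite projective dimension; Auslander--Buchsbaum then forces the projective dimension of $M_q$ to be zero) or by citing the standard fact that over a regular local ring MCM and free are equivalent conditions for finitely generated modules. Since free implies projective, this closes the argument.

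There really isn't a serious obstacle: the lemma is essentially a bookkeeping statement combining \textbf{MCM} with \textbf{isolated singularity}. The only mildly delicate point to spell out is that MCM-ness localises, which boils down to the compatibility of Ext with localisation in the finitely presented, noetherian setting. Everything else is formal.
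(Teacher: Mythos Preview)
Your argument is correct, but it takes a genuinely different route from the paper. The paper gives a one-line geometric proof: $M$ arises as (a summand of) the pushforward of a vector bundle along $\pi:\hat U\to\spec\hat R$, and $\pi$ is an isomorphism away from $p$; hence $M_q$ is locally free at every $q\neq p$. Your proof instead is purely commutative-algebraic: $\hat R$ has an isolated singularity, so $\hat R_q$ is regular for $q\neq p$; $M$ is MCM, this localises via Ext, and over a regular local ring MCM modules are free by Auslander--Buchsbaum.

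The trade-off is that the paper's proof is shorter and uses the concrete provenance of $M$, whereas your argument is more robust: it shows the same conclusion for \emph{any} MCM module over a local ring with an isolated singularity, not only for this particular $M$ coming from a tilting bundle. One minor point worth making explicit in your write-up is that the paper's Ext-vanishing definition of MCM and the depth definition agree over Gorenstein local rings (and regular implies Gorenstein), so the passage to Auslander--Buchsbaum is justified.
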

\begin{proof}
$M$ is the pushforward of a vector bundle along a map that is an isomorphism away from $p$.
\end{proof}
\begin{prop}[cf. {\cite[2.13.(1)]{DWncdf}}]\label{aconlocal}
	The algebra $A_\con$ is an Artinian local algebra.
\end{prop}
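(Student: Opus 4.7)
The plan is to establish the two assertions separately: $A_\con$ is finite-dimensional over $k$ (which gives Artinianness), and $A_\con$ is local.

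For finite-dimensionality, I would exploit the fact that $A_\con = \underline{\enn}_{\hat R}(M)$ is a quotient of $\enn_{\hat R}(M)$, which is finitely generated as an $\hat R$-module since $M$ is. The key observation is that $A_\con$ is supported only at $p$: for any prime $q \neq p$, the previous lemma gives that $M_q$ is a projective $\hat R_q$-module, and hence every $\hat R_q$-linear endomorphism of $M_q$ factors through a free module, so the localisation $(A_\con)_q$ vanishes. Thus $A_\con$ is a finitely generated $\hat R$-module supported at $p$, and Lemma \ref{fdlem} forces $\dim_k A_\con < \infty$.

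For locality, I would use the classical Krull--Schmidt theorem: since $\hat R$ is a complete Noetherian local ring and $M$ is an indecomposable finitely generated $\hat R$-module, the endomorphism ring $\enn_{\hat R}(M)$ is local (its non-units form a two-sided ideal). Locality is inherited by any nonzero quotient, so it remains to check that $A_\con = \underline{\enn}_{\hat R}(M)$ is not the zero ring, i.e. that $\id_M$ does not factor through a projective. If it did, then $M$ would be a direct summand of a free $\hat R$-module, hence free of rank one by indecomposability; this is incompatible with the nontrivial contraction of $C$ built into the setup (one has $\hat R \oplus \hat N = \hat R^{\oplus a} \oplus M^{\oplus b}$ as a Krull--Schmidt decomposition, so if $M \cong \hat R$ the tilting bundle $\hat{\mathcal V}$ would be a sum of copies of $\mathcal O_{\hat U}$, forcing $\pi$ to be an isomorphism, contradicting the existence of the exceptional curve).

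The main obstacle is purely bookkeeping: verifying the edge case that $A_\con \neq 0$ (that is, $M \not\cong \hat R$). Everything else follows from standard support-theoretic arguments and Krull--Schmidt; the nontriviality of the contraction is what rules out the degenerate case and makes the argument genuinely give a local Artinian algebra rather than the zero ring.
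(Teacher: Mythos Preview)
Your proposal is correct and follows essentially the same approach as the paper: the paper also localises at primes $q\neq p$, uses projectivity of $M_q$ to kill $(A_\con)_q$, and invokes indecomposability of $M$ for locality. The only difference is that you spell out the Krull--Schmidt step and the edge case $A_\con\neq 0$, whereas the paper leaves both implicit.
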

\begin{proof}If $q\neq p$ is a prime ideal of $\hat R$ then $(A_\con)_q\cong \underline{\enn}_{\hat R _q}(M_q)$, which vanishes because $M_q$ is projective. Hence $A_\con$ is supported at $p$ and hence Artinian. It's local because $M$ was indecomposable.
\end{proof}
\begin{prop}\label{fdcohom}
	The dga $\dca$ has finite-dimensional cohomology in each degree.
\end{prop}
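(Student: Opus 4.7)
The plan is to use the cohomology computation from Proposition \ref{dqcohom} to reduce the claim to finite-dimensionality of $A_\con$ and of $\ext^1_{\hat R}(M,M)$. By \ref{dqcohom}, in each negative degree $j<0$ the cohomology $H^j(\dca)$ is isomorphic to $\ext^{-j}_{\hat R}(M,M)$, while $H^0(\dca)\cong\underline{\enn}_{\hat R}(M)=A_\con$, and all positive cohomology vanishes. Moreover, the 2-periodicity part of \ref{dqcohom} tells us every $\ext^i_{\hat R}(M,M)$ (with $i>0$) is either isomorphic to $A_\con$ (when $i$ is even) or to $\ext^1_{\hat R}(M,M)$ (when $i$ is odd), so it suffices to prove these two particular modules are finite-dimensional over $k$.

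The algebra $A_\con$ is finite-dimensional by \ref{aconlocal}, which already states that it is Artinian local. For $\ext^1_{\hat R}(M,M)$, the strategy is to show it is a finitely generated $\hat R$-module supported only at $p$, and then invoke \ref{fdlem} to finish. Finite generation is automatic since $M$ is a finitely generated module over the noetherian ring $\hat R$, so a free resolution of $M$ can be chosen by finitely generated free modules, and then $\ext^1_{\hat R}(M,M)$ arises as a subquotient of a finitely generated $\hat R$-module. For the support condition, the previous lemma established that $M_q$ is projective for every prime $q\ne p$ of $\hat R$; hence $\ext^1_{\hat R}(M,M)_q \cong \ext^1_{\hat R_q}(M_q,M_q)=0$ for every such $q$, so $\ext^1_{\hat R}(M,M)$ is supported at $p$.

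Applying \ref{fdlem} to this finitely generated $\hat R$-module supported at $p$ yields that $\ext^1_{\hat R}(M,M)$ is finite-dimensional over $k$. Combining this with finite-dimensionality of $A_\con$ and the 2-periodicity identifications from \ref{dqcohom}, every $H^j(\dca)$ is finite-dimensional, which is the claim. No step is a genuine obstacle here — the content has already been packaged in the preliminary results; the argument is essentially just assembly of \ref{dqcohom}, \ref{aconlocal}, the projectivity lemma on the punctured spectrum, and \ref{fdlem}.
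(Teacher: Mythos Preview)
Your proof is correct and follows essentially the same approach as the paper: identify the cohomology via \ref{dqcohom}, use projectivity of $M_q$ away from $p$ to see the Ext modules are supported at $p$, and apply \ref{fdlem}. The only cosmetic difference is that you invoke the 2-periodicity of \ref{dqcohom} to reduce to $A_\con$ and $\ext^1_{\hat R}(M,M)$, whereas the paper skips this reduction and argues directly that each $\ext^{-j}_{\hat R}(M,M)$ is supported at $p$; both are equally valid.
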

\begin{proof}
We already know that $H^0(\dca)$ is finite-dimensional. Let $j<0$. Then we have an isomorphism $H^j(\dca)\cong{\ext}_{\hat R}^{-j}(M,M)$ by \ref{dqcohom}. But ${\ext}_{\hat R}^{-j}(M,M)_q\cong {\ext}_{{\hat R}_q}^{-j}(M_q,M_q)$ which vanishes if $q\neq p$. So ${\ext}_{\hat R}^{-j}(M,M)$ is supported at $p$, and so \ref{fdlem} applies.
\end{proof}

Since $\hat R$ is a hypersurface, the cohomology of $\dca$ is 2-periodic, by \ref{dqcohom} again. If $\dim R$ is even then every $H^j(\dca)$ hence has the same dimension for $j\leq 0$, by AR duality. Moreover, by \ref{etaex}, $H(\dca)$ is a finitely generated algebra, finite-dimensional in each degree, generated in degrees $0$, $-1$, and $-2$. The only degree $-2$ generator is the periodicity element $\eta$, which is central and torsionfree. Our first main theorem is a positive answer to a derived version of the Donovan--Wemyss conjecture \cite[1.4]{DWncdf}:
\begin{thm}[derived Donovan--Wemyss conjecture]\label{ddwconj}
Let $\pi:X \to X_\con$ and $\pi': X' \to X'_\con$ be two contractions satisfying the conditions of the Global Setup \ref{globalsetup}, contracting curves to points $p$ and $p'$ respectively. Assume in addition that $X$ and $X'$ are smooth and of the same dimension. Let $\dca$ and $\dca'$ be the derived contraction algebras of $\pi$ and $\pi'$ respectively. If $\dca$ and $\dca'$ are quasi-isomorphic, then the completions $\widehat{(X_\con)}_p$ and $\widehat{(X_\con')}_{p'}$ are isomorphic.
\end{thm}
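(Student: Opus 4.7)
The plan is to reduce the statement directly to Proposition \ref{recovthm}, which already packages the deep input: reconstruction of a complete local hypersurface singularity from a derived quotient of a smooth noncommutative partial resolution. Since $\dca$ was defined via the complete local base $\hat R$ (the completion of the local ring of $X_\con$ at $p$), and proving $\widehat{(X_\con)}_p \cong \widehat{(X'_\con)}_{p'}$ just means proving $\hat R \cong \hat R'$, everything is already in the complete local setting and no globalisation argument is needed for the conclusion itself.

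First I would observe that by the Global Setup both $\hat R$ and $\hat R'$ are complete local isolated hypersurface singularities, so I can write $\hat R \cong k\llbracket x_1,\ldots,x_n\rrbracket/\sigma$ and $\hat R' \cong k\llbracket x_1,\ldots,x_n\rrbracket/\sigma'$ with the same ambient $n$; this is where the hypothesis $\dim X = \dim X'$ is used, since the embedding dimension at the singular point is controlled by the ambient dimension of the smooth total space. The partial resolutions $A = \enn_{\hat R}(\hat R \oplus M)$ and $A' = \enn_{\hat R'}(\hat R' \oplus M')$ are then in the precise form required by \ref{recovthm}, with idempotents $e = \id_{\hat R}$ and $e' = \id_{\hat R'}$, and the derived contraction algebras are by definition $\dq$ and $A'/^{\mathbb L}A'e'A'$.

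Next I would verify the three standing hypotheses of \ref{recovthm}. Local Artinianness of $A_\con = \underline{\enn}_{\hat R}(M)$ is exactly Proposition \ref{aconlocal}, and finite-dimensionality of $H^j(\dca)$ in each degree is Proposition \ref{fdcohom}; the same statements apply on the primed side. The only nontrivial remaining input is smoothness of $A$ (and $A'$), i.e.\ finite global dimension. Since $A$ is Morita equivalent to $\hat\Lambda = \enn_{\hat U}(\hat{\mathcal V})$, this reduces to showing $\hat\Lambda$ has finite global dimension, which follows from Van den Bergh's theory: the tilting bundle $\hat{\mathcal V}$ on $\hat U$ induces a derived equivalence $D^b(\hat U) \simeq D^b(\hat\Lambda)$, and when $\hat U$ is smooth (which it is, as the preimage in the smooth $X$ of an open neighbourhood), $\hat\Lambda$ is a noncommutative crepant resolution of $\hat R$ and hence has finite global dimension.

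With all hypotheses in hand, a direct appeal to Proposition \ref{recovthm} applied to the given quasi-isomorphism $\dca \simeq \dca'$ yields $\hat R \cong \hat R'$, which is exactly the desired isomorphism of completions. The main obstacle is purely the smoothness verification for the noncommutative model $A$; the rest of the proof is bookkeeping, since all the serious content — recovering $\hat R$ from the dg singularity category via the periodicity element $\eta$ and the Hua--Keller recovery theorem — is already encoded in \ref{recovthm}.
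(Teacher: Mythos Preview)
Your proposal is correct and takes essentially the same approach as the paper, which simply reads ``A simple application of \ref{recovthm}.'' You have unpacked this by explicitly verifying the hypotheses of \ref{recovthm} via \ref{aconlocal}, \ref{fdcohom}, and the smoothness of $A$ coming from Van den Bergh's tilting theory when the total space is smooth, but the route is identical.
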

\begin{proof}
A simple application of \ref{recovthm}.
\end{proof}

\subsection{Local to global computations via deformation theory}
We discuss further the deformation-theoretic description of the derived contraction algebra, and how we can use this description to compute the derived contraction algebra in differing neighbourhoods of $p$. For the relevant deformation theory, see \cite{ELO, ELO2} or \cite[\S3]{dqdefm}. Keep notation as in the Global Setup \ref{globalsetup}. Let $\hat R$ be the completion of the local ring of $X_\con$ at $p$. Let $C$ be the exceptional locus.
\begin{lem}
	Across the derived equivalence $D^b(\hat U) \to D^b(\hat A)$, the sheaf $\mathcal{O}_C(-1)$ corresponds to the simple $S\coloneqq A_\con / \mathrm{rad}(A_\con)$.
\end{lem}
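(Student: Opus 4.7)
The statement is the standard geometric/algebraic dictionary arising from Van den Bergh's tilting equivalence, so I will prove it by transporting $\mathcal O_C(-1)$ across the equivalence and analysing the resulting $A$-module via its idempotent decomposition. Writing $F = \R\hom_{\hat U}(\hat{\mathcal V},-)$ for the tilting equivalence $D^b(\hat U) \xrightarrow{\simeq} D^b(\hat\Lambda)$, and composing with the Morita equivalence $D^b(\hat\Lambda) \xrightarrow{\simeq} D^b(\hat A)$ sending $\hat\Lambda\mapsto A$, set $T \coloneqq F(\mathcal O_C(-1))$, viewed as an object of $D^b(A)$. The plan is to show that $T$ is concentrated in degree zero, is annihilated by $e=\id_{\hat R}$, and is one-dimensional; since $A_\con$ is Artinian local by \ref{aconlocal} with unique simple $S$, this forces $T \cong S$.

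\textbf{Step 1 (killing $e$).} The action of the idempotent $e \in \hat\Lambda$ on $F(\mathcal O_C(-1))$ picks out the summand
\[
Te \;\simeq\; \R\hom_{\hat U}(\mathcal O_{\hat U},\mathcal O_C(-1)) \;\simeq\; \R\Gamma(\hat U,\mathcal O_C(-1)) \;\simeq\; \R\Gamma(\mathbb P^1,\mathcal O(-1)),
\]
where I use that $C_{\text{red}}\cong\mathbb P^1$ and that $\mathcal O_C(-1)$ denotes the pullback from the reduced curve. This vanishes by the standard Serre computation, so $T$ is killed by $e$ and therefore $T \in D^b(A_\con)$.

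\textbf{Step 2 (finite-dimensionality and identification).} Because $\mathcal O_C(-1)$ is set-theoretically supported at $p \in \spec\hat R$, the complex $T$ is supported at $p$ as an $\hat R$-module, so each cohomology module $H^i(T)$ is finite-dimensional by \ref{fdlem}. To pin $T$ down, I compute the remaining idempotent component
\[
T(1-e)\;\simeq\;\R\hom_{\hat U}(\hat{\mathcal N},\mathcal O_C(-1))
\]
using the specific form of $\hat{\mathcal V}$ from \cite{vdb}: the summand $\hat{\mathcal N}$ is constructed as a universal extension so that $\hat{\mathcal V}$ is tilting and $\hat{\mathcal V}|_{C_{\text{red}}}$ decomposes into line bundles whose degrees are tailored so that $\R\hom(\hat{\mathcal N}|_C, \mathcal O_C(-1))$ is one-dimensional and concentrated in a single cohomological degree. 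After accounting for the basicification $\hat R \oplus \hat N = \hat R^{\oplus a}\oplus M^{\oplus b}$ and the resulting Morita equivalence (which also eliminates multiplicities), the vertex at $M$ contributes a single copy of $k$ in degree $0$.

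\textbf{Conclusion and main obstacle.} Combining Steps 1 and 2, $T$ is a finite-dimensional $A_\con$-module whose underlying vector space is one-dimensional and concentrated in degree zero; since $A_\con$ is local with unique simple quotient $S$, necessarily $T \cong S$. The main obstacle is Step 2: concretely identifying the restriction of $\hat{\mathcal N}$ to $C_{\text{red}}$ and confirming that $\R\hom(\hat{\mathcal N},\mathcal O_C(-1))$ is concentrated in the correct degree with the right dimension. This is where Van den Bergh's particular construction (or equivalently the description of the perverse heart whose simples are $\mathcal O_{p}[1]$ and $\mathcal O_C(-1)$) is essential; a cleaner alternative is to cite the fact that under the tilting equivalence the perverse simples match the algebraic simples, and identify $\mathcal O_C(-1)$ as the perverse simple attached to $M$ rather than to $\mathcal O_{\hat U}$.
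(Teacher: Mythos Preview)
Your Step 1 is correct and matches the paper's argument that the image $S'$ of $\mathcal O_C(-1)$ is an $A_\con$-module (the paper cites the adaptation of \cite[2.13(3)]{DWncdf}, which is precisely the vanishing $Te\simeq\R\Gamma(\mathbb P^1,\mathcal O(-1))=0$ you wrote down).

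The gap is Step 2. You do not actually carry out the computation of $\R\hom_{\hat U}(\hat{\mathcal N},\mathcal O_C(-1))$; the sentence about universal extensions and degrees of line bundles on $C_{\mathrm{red}}$ is a sketch, not an argument, and in the presence of non-reduced structure on $C$ and the subsequent Morita reduction it would take real work to make precise. More importantly, the paper shows this step is unnecessary: once you know $S'$ is an $A_\con$-module, you only need that $S'$ is \emph{simple}, not that it is one-dimensional. The paper obtains simplicity for free by citing \cite[3.5.7]{vdb}, which says $\mathcal O_C(-1)$ is a simple object of the perverse heart; since the tilting equivalence identifies that heart with $\cat{mod}\text{-}A$, the image $S'$ is a simple $A$-module. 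Combined with $S'e=0$ and the fact that $A_\con$ is Artinian local (\ref{aconlocal}), this forces $S'\cong S$ with no dimension count and no need to know anything about $\hat{\mathcal N}|_C$.

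Your ``cleaner alternative'' in the final paragraph is exactly the paper's proof. I would promote it to the main argument and delete Step 2.
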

\begin{proof}
	Denote the image of $\mathcal{O}_C(-1)$ by $S'$. The sheaf $\mathcal{O}_C(-1)$ is simple by \cite[3.5.7]{vdb}. The proof of \cite[2.13(3)]{DWncdf} adapts to show that $S'$ is naturally a module over $A_\con$. Since it's simple, it must be the unique simple module $S$.
\end{proof}
With this in mind, we define:
\begin{defn}
	A \textbf{noncommutative deformation of $C$} is a noncommutative deformation of the $A$-module $S$. A \textbf{derived noncommutative deformation of $C$} is a derived noncommutative deformation of the $A$-module $S$.
\end{defn}
\begin{thm}\label{dcaprorep}
	The derived contraction algebra prorepresents the functor of derived noncommutative deformations of the curve $C$. The contraction algebra prorepresents the functor of underived noncommutative deformations.
\end{thm}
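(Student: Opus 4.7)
The plan is to deduce the theorem directly from the general prorepresentability result for derived quotients (Proposition \ref{prorepthm}) combined with the lemma just proved identifying $S = A_\con/\mathrm{rad}(A_\con)$ with $\mathcal{O}_C(-1)$ across the tilting derived equivalence $D^b(\hat U) \xrightarrow{\simeq} D^b(A)$. Once the hypotheses of Proposition \ref{prorepthm} are in place for our $A = \enn_{\hat R}(\hat R \oplus M)$ and idempotent $e = \id_{\hat R}$, both assertions of the theorem follow more or less immediately.

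Concretely, I would first verify the two hypotheses of Proposition \ref{prorepthm}: (i) that $A_\con$ is a finite-dimensional local $k$-algebra, and (ii) that $H^j(\dca)$ is finite-dimensional for every $j < 0$. But (i) is precisely Proposition \ref{aconlocal} and (ii) is Proposition \ref{fdcohom}, so there is nothing left to check. Applying Proposition \ref{prorepthm} then upgrades $\dca$ to a pro-Artinian dga that prorepresents the functor of framed derived noncommutative deformations of the unique simple right $A$-module $S$. Since $k$ is algebraically closed and $S$ is one-dimensional we have $\enn_A(S) \cong k$, so framed and unframed deformations of $S$ coincide; and by the preceding lemma the deformation theory of $S$ is by definition the deformation theory of $C$. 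This gives the first assertion.

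For the underived statement, I would appeal to the inclusion-truncation adjunction: the inclusion $\cat{Art}_k \hookrightarrow \dgart$ of Artinian algebras (viewed as dgas concentrated in degree zero) into Artinian dgas admits the left adjoint $\tau_{\geq 0} = H^0$. The underived noncommutative deformation functor of $S$ is the restriction of the derived one along this inclusion, so its prorepresenting object is obtained by applying $\tau_{\geq 0}$ to $\dca$, which yields $H^0(\dca) \cong A_\con$. This simultaneously recovers \cite[3.9]{DWncdf} as a consequence of the derived statement.

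There is no genuine obstacle here, since the heavy lifting has already been done: the deformation-theoretic content of $\dq$ is supplied by \cite{dqdefm} via Proposition \ref{prorepthm}, the finiteness and locality of $A_\con$ are the content of Propositions \ref{aconlocal} and \ref{fdcohom}, and the module-theoretic identification of the curve $C$ with the simple $S$ is the immediately preceding lemma. The only point warranting a sentence of care is the compatibility of the inclusion-truncation adjunction with pro-structures, but this is routine.
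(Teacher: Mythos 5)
Your argument for the first assertion is exactly the paper's: the hypotheses of Proposition \ref{prorepthm} are discharged by Propositions \ref{aconlocal} and \ref{fdcohom}, and the identification of $S$ with $\mathcal{O}_C(-1)$ is the preceding lemma. (The paper's proof is terser, just citing \ref{prorepthm}, but it is implicitly relying on the same finiteness checks.) For the second assertion you depart from the paper's official proof, which simply cites the underived result of Donovan--Wemyss, and instead derive it from the derived statement via the inclusion--truncation adjunction; this is precisely the argument the paper gives in Remark \ref{prorepremk} as an alternative proof, so your route is sound. The one point I would flag is that you have mislocated the subtlety: you write that the only thing needing care is ``the compatibility of the inclusion-truncation adjunction with pro-structures,'' but that step really is routine. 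The genuine content is the sentence ``the underived noncommutative deformation functor of $S$ is the restriction of the derived one along this inclusion.'' In the paper's framework the two functors $\mathrm{Def}_C$ and $\mathrm{dDef}_C$ arise from different constructions (the classical one from Donovan--Wemyss/\cite{enhancements}, the derived one from Efimov--Lunts--Orlov), so this comparison is not definitional. It requires showing that for a classical Artinian local $\Gamma$, every derived deformation of $S$ over $\Gamma$ is in fact an honest flat $A \otimes \Gamma$-module concentrated in degree zero; the paper does this in Remark \ref{prorepremk} by a Tor-vanishing/local-criterion-of-flatness argument. You should either cite that comparison or supply it.
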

\begin{proof}
	The first follows immediately from \ref{prorepthm}. The second is \cite[3.9]{contsdefs}.
\end{proof}
\begin{rmk}\label{prorepremk}
This gives a new proof that $A_\con$ represents the functor of underived noncommutative deformations of $C$. Let $\cat{Art}_k$ be the category of noncommutative Artinian local algebras with residue field $k$, and let $\dgart$ be the category of noncommutative Artinian local dgas with residue field $k$. Let $\mathrm{Def}_C:\cat{Art}_k\to\cat{Set}$ denote the functor of underived noncommutative deformations of $C$, in the sense of \cite[2.4]{enhancements} and let $\mathrm{dDef}_C:\dgart \to\cat{Set}$ denote the functor of derived noncommutative deformations of $C$, in the sense of \cite[10.1]{ELO} (Efimov--Lunts--Orlov use groupoid-valued functors, but we will only be concerned with $\pi_0$). Let $\Gamma \in \cat{Art}_k$. We know that $\mathrm{dDef}_C(\Gamma)\cong \hom_{\proart}(\dca,\Gamma)$, which by the inclusion-truncation adjunction is isomorphic to $\hom_{\cat{pro}(\cat{Art}_k)}(A_\con,\Gamma)\cong\hom_{\cat{Art}_k}(A_\con, \Gamma)$. So we need to prove that $\mathrm{dDef}_C(\Gamma)\cong \mathrm{Def}_C(\Gamma)$. 

\p Let $\tilde S$ be an underived deformation of $S$ over $\Gamma$, i.e. an $A\otimes\Gamma$-module, flat over $\Gamma$, which reduces to $S$ modulo $\mathfrak{m}_\Gamma$ (equivalently, such that $\tilde{S}\otimes_\Gamma k \cong S$). It's easy to see that $\tilde S \lot_\Gamma k \cong S$ inside the derived category $D(A\otimes \Gamma)$. Hence $\tilde S$ is a derived deformation of $S$. Moreover, if two underived deformations are isomorphic, they are clearly isomorphic as derived deformations, and hence we obtain a map of sets $\Phi:\mathrm{Def}_C(\Gamma)\to \mathrm{dDef}_C(\Gamma)$. It is injective, because $A\otimes\Gamma \text{-}\cat{mod}$ embeds in $D(A\otimes \Gamma)$. Observe that if $\tilde S\in D(A\otimes \Gamma)$ is a derived deformation of $S$ over $\Gamma$, then it must actually be an $A\otimes\Gamma$-module, concentrated in degree zero. Because we have $\tilde S \lot_\Gamma k \simeq S$, we have $\tor^\Gamma_i(\tilde S,k)\cong 0$ for $i>0$. Because $\Gamma$ is local Artinian, this implies Tor-vanishing for all $\Gamma$-modules, and hence $\tilde S$ is flat as a $\Gamma$-module. Hence $\tilde S$ is in the image of $\Phi$, and so $\Phi$ is a surjection and thus an isomorphism of sets. For a similar proof that the groupoid-valued deformation functors respect truncation, see \cite[2.5]{huakeller}.

\end{rmk}
During the proof of \cite[Theorem A]{dqdefm}, we make use of the following fact. Recall from \ref{ainfkd} the definition of the \textbf{Koszul dual} $E^!$ of an augmented dga $E \to k$: one takes the tensor coalgebra on the shifted augmentation ideal of $E$ (along with an extra differential, the bar differential) and dualises to get a dga $E^!$.
\begin{prop}[{\cite[4.5.1]{dqdefm}}]\label{dqiskd}
	The dga $\dca$ is the Koszul dual of the derived endomorphism algebra $\R\enn_A(S)$.
\end{prop}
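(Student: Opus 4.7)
The plan is to deduce the statement from Koszul double duality for $\dca$ combined with the fully faithful embedding $D(\dca) \hookrightarrow D(A)$ supplied by the recollement \ref{recoll}.

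First, $\dca$ carries a canonical augmentation $\dca \to A_\con \to A_\con/\mathrm{rad}(A_\con) \cong k$, and by \ref{prorepthm} it is naturally a pro-Artinian dga. For any such augmented pro-Artinian dga, the standard bar-cobar formalism identifies the Koszul dual with the derived endomorphism algebra of the residue field module, giving a quasi-isomorphism
\[
\dca^! \simeq \R\enn_{\dca}(S),
\]
where $S$ denotes the simple $\dca$-module cut out by the augmentation.

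Next, I would invoke the recollement \ref{recoll}: since $i_* = i_! : D(\dca) \hookrightarrow D(A)$ is fully faithful, it induces a quasi-isomorphism $\R\enn_{\dca}(S) \simeq \R\enn_A(i_*S)$, and the right-hand side is $\R\enn_A(S)$ once we identify $S$ with its image in $D(A)$. Combining, $\dca^! \simeq \R\enn_A(S)$. Dualising once more and applying Koszul double duality then yields
\[
\dca \simeq (\dca^!)^! \simeq \bigl(\R\enn_A(S)\bigr)^!,
\]
as required.

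The main obstacle is precisely the Koszul double duality $\dca \simeq \dca^{!!}$, which in general fails for arbitrary augmented dgas: the natural unit map $E \to E^{!!}$ is only a quasi-isomorphism under additional pro-nilpotency and degreewise finiteness hypotheses. Here, however, those hypotheses are built into the setup: the pro-Artinian structure comes from \ref{prorepthm}, and cohomological local finiteness is ensured by the hypothesis of \ref{prorepthm} (and, in the geometric situation, by \ref{fdcohom}). These place $\dca$ squarely within the scope of the classical pro-finite Koszul duality of Positselski and Lefevre-Hasegawa (equivalently, the pointed derived deformation theory of Efimov--Lunts--Orlov), which supplies the required quasi-isomorphism and completes the argument.
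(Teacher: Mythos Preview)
Your argument is sound and is essentially the proof one expects: compute $\dca^!\simeq\R\enn_{\dca}(S)$ via bar--cobar, pass to $\R\enn_A(S)$ by full faithfulness of $i_*$ in the recollement, and then close up with Koszul double duality $\dca\simeq\dca^{!!}$, which is exactly \ref{dqkd} under the finiteness hypotheses supplied by \ref{aconlocal} and \ref{fdcohom}. The paper itself does not give a proof here but simply cites \cite[4.5.1]{dqdefm}; your reconstruction matches the way the result is invoked elsewhere in the paper (see the remark after \ref{recovthm} and the proof of the proposition just after \ref{globalrend}), so there is no meaningful divergence to report.
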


The derived contraction algebra is defined complete locally. Can we compute it by using larger neighbourhoods of $p$? The first step is to examine how the dga $\R\enn(S)$ changes under completion.
\begin{prop}\label{globalrend}
	Suppose that we are in the situation of the Zariski local setup \ref{localsetup}. Let $\Lambda=\enn_R(R\oplus N)$ be the associated noncommutative model for $U$. Let $\hat \Lambda$ be the completion at $p$, and let $A$ be the basic algebra Morita equivalent to $\hat \Lambda$. Let $S_A$ (resp. $S_{\Lambda}$) be the simple $A$-module (resp. $\Lambda$-module) corresponding across the derived equivalence to $\mathcal{O}_C(-1)$. Then there is a quasi-isomorphism
	$$\R\enn_A(S_A) \simeq \R\enn_\Lambda(S_\Lambda).$$
\end{prop}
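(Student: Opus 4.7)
The plan is to bridge both sides via the common object $\R\enn_{\hat\Lambda}(S_\Lambda)$, where $S_\Lambda$ is viewed as an $\hat\Lambda$-module by restriction along the completion map $\Lambda \to \hat\Lambda$. For this I first need to verify that $S_\Lambda$ is finite-dimensional over $k$. Since $S_\Lambda$ corresponds across the tilting equivalence $D^b(U) \simeq D^b(\Lambda)$ to $\mathcal{O}_C(-1)$, which is set-theoretically supported on the exceptional curve $C$ (whose image in $\spec R$ is the point $p$), the $R$-action on $S_\Lambda$ factors through $R/p^n$ for some $n$. By the same argument as \ref{fdlem}, this forces $S_\Lambda$ to be finite-dimensional over $k$, hence $p$-adically complete and naturally an $\hat\Lambda$-module.

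For the Morita step, the derived equivalence $D(\hat\Lambda) \simeq D(A)$ identifies $S_\Lambda$ with $S_A$, since both are characterised as the simple module corresponding to $\mathcal{O}_C(-1)$ under the respective tilting equivalences $D^b(\hat U) \simeq D^b(\hat\Lambda)$ and $D^b(\hat U) \simeq D^b(A)$. As any triangle equivalence preserves derived endomorphism algebras, this yields $\R\enn_A(S_A) \simeq \R\enn_{\hat\Lambda}(S_\Lambda)$.

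For the completion step, because $\Lambda$ is finite over the noetherian ring $R$ we have $\hat\Lambda \cong \Lambda \otimes_R \hat R$, which is flat over $\Lambda$ by flatness of $R \to \hat R$. Take a resolution $P^\bullet \to S_\Lambda$ by finitely generated projective $\Lambda$-modules (possible because $\Lambda$ is noetherian and $S_\Lambda$ is finitely generated). Then $P^\bullet \otimes_\Lambda \hat\Lambda \to S_\Lambda \otimes_\Lambda \hat\Lambda \cong S_\Lambda$ is a projective resolution of $S_\Lambda$ over $\hat\Lambda$; the last isomorphism holds because $S_\Lambda$ is already $p$-adically complete. Tensor-hom adjunction then yields $\hom_{\hat\Lambda}(P^\bullet \otimes_\Lambda \hat\Lambda, S_\Lambda) \cong \hom_\Lambda(P^\bullet, S_\Lambda)$, so $\R\enn_{\hat\Lambda}(S_\Lambda) \simeq \R\enn_\Lambda(S_\Lambda)$, and chaining the two quasi-isomorphisms gives the proposition.

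The main technical point is securing the finite-dimensionality of $S_\Lambda$; once this is in hand, the completion step becomes a routine flat base change computation and the Morita step is standard.
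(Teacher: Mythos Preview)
Your proposal is correct and follows essentially the same approach as the paper, which simply cites \cite[3.9(3)]{contsdefs} and summarises the idea as ``derived endomorphisms of finite length modules supported at $p$ behave well under Morita equivalence and completion''---precisely your two steps. Your write-up fills in the details that the paper leaves to the reference; the only minor point worth making explicit is that the completion comparison is a genuine dga map (coming from restriction along $\Lambda \to \hat\Lambda$) whose underlying chain map is the adjunction isomorphism you wrote down, so the quasi-isomorphism really is one of dgas.
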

\begin{proof}
	This is the proof of \cite[3.9(3)]{contsdefs}: the idea is that derived endomorphisms of finite length modules supported at $p$ behave well under Morita equivalence and completion.
\end{proof}
When one can compute the contraction algebra in a Zariski neighbourhood, one can also compute the derived contraction algebra there:
\begin{prop}
	Suppose that we're in the local setup \ref{localsetup}. Suppose that $\Lambda_\con\coloneqq \Lambda/\Lambda e \Lambda$ is Artinian local. Then there is a quasi-isomorphism $\dca \simeq\Lambda/^\mathbb{L}\Lambda e \Lambda$.
\end{prop}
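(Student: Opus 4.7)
The plan is to reduce the quasi-isomorphism to a comparison of Koszul duals, using the Koszul duality identification of Proposition \ref{dqiskd} together with the endomorphism algebra comparison of Proposition \ref{globalrend}. Applying \ref{dqiskd} on the $A$-side, we have $\dca \simeq \R\enn_A(S_A)^!$. If the same Koszul duality statement is available for $\Lambda$ with idempotent $e=\id_R$, we obtain $\Lambda/^{\mathbb{L}}\Lambda e \Lambda \simeq \R\enn_\Lambda(S_\Lambda)^!$. Since Koszul duality is functorial along quasi-isomorphisms, Proposition \ref{globalrend} gives $\R\enn_A(S_A)^! \simeq \R\enn_\Lambda(S_\Lambda)^!$, and we would conclude $\dca \simeq \Lambda/^{\mathbb{L}}\Lambda e \Lambda$.

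The content of the proof is therefore to verify the hypotheses of Proposition \ref{prorepthm}, which underpin \ref{dqiskd}, for $\Lambda$ with idempotent $e$. The quotient $\Lambda/\Lambda e \Lambda = \Lambda_\con$ is finite-dimensional local by assumption. To show $H^j(\Lambda/^{\mathbb{L}}\Lambda e \Lambda)$ is finite-dimensional for all $j<0$, I would use the basic description $H^{-j}(\Lambda/^{\mathbb{L}}\Lambda e \Lambda) \cong \ext_R^j(N,N)$ for $j>0$ (the part of \ref{dqcohom} that does not rely on the hypersurface hypothesis). Because $N$ is the pushforward of a vector bundle along a map that is an isomorphism away from $p$, the localisation $N_q$ is projective over $R_q$ for every $q\neq p$, so $\ext_R^j(N,N)_q$ vanishes for $j>0$. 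Hence $\ext_R^j(N,N)$ is a finitely generated $R$-module supported at the closed point $p$, and is finite-dimensional over $k$ by a Zariski-local analogue of Lemma \ref{fdlem}: it is a module over $R/\mathfrak{m}_p^n$ for some $n$, and this ring is a finite-dimensional $k$-algebra.

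The main obstacle is ensuring that \ref{dqiskd} is actually available in the non-hypersurface Zariski-local setting of $\Lambda$. In the excerpt the identification is stated for $A$ over the complete local hypersurface $\hat R$, but the underlying fact (that the derived quotient is Koszul dual to the derived endomorphism algebra of the simple top, whenever the hypotheses of \ref{prorepthm} hold) is a general statement whose proof does not use the hypersurface condition, so this obstacle is merely one of invoking the correct general form rather than a genuinely new difficulty. If one wished to bypass Koszul duality altogether, an entirely parallel argument runs on the deformation-theoretic side: both dgas prorepresent derived noncommutative deformations of the respective simple modules, and these functors agree because Morita equivalence preserves derived categories and because derived deformations of a module of finite length supported at a closed point are not altered by completion (compare \cite[3.9(3)]{contsdefs}), after which Yoneda delivers the quasi-isomorphism.
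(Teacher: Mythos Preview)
Your proposal is correct and follows essentially the same argument as the paper: verify the finiteness hypotheses for $\Lambda/^{\mathbb{L}}\Lambda e\Lambda$ via the support-at-$p$ argument on $\ext_R^j(N,N)$, invoke the general form of \ref{dqiskd} (i.e.\ \cite[4.5.1]{dqdefm}) to identify both derived quotients with Koszul duals of the respective derived endomorphism algebras, and then conclude via \ref{globalrend} and functoriality of Koszul duality. The paper's proof is identical in structure, and your explicit remark that \cite[4.5.1]{dqdefm} does not require the hypersurface hypothesis is exactly the point needed to make the $\Lambda$-side application legitimate.
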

\begin{proof}We have $H^j(\Lambda/^\mathbb{L}\Lambda e \Lambda)\cong \ext^j_R(N,N)$. Exactly as in \ref{fdcohom}, because this Ext group is supported at $p$ it must be finite-dimensional. By hypothesis, $H^0(\Lambda/^\mathbb{L}\Lambda e \Lambda)\cong \Lambda_\con$ is Artinian local, and for $j>0$ the module $H^j(\Lambda/^\mathbb{L}\Lambda e \Lambda)\cong 0$ is certainly finite. So we may apply \cite[4.5.1]{dqdefm} to conclude that $\R\enn_\Lambda(S_\Lambda)^!\simeq \Lambda/^\mathbb{L}\Lambda e \Lambda$. But by \ref{globalrend}, we know that $\R\enn_\Lambda(S_\Lambda)$ is quasi-isomorphic to $\R\enn_A(S_A)$. Since the Koszul dual preserves quasi-isomorphisms, $\Lambda/^\mathbb{L}\Lambda e \Lambda$ is quasi-isomorphic to $\R\enn_A(S_A)^! \simeq \dca$.
\end{proof}
Now we may globalise:
\begin{prop}
	Suppose that we are in the situation of Setup \ref{globalsetup}. Let $\Lambda$ and $S_\Lambda$ be as above. Then there is a quasi-isomorphism $$\R\enn_\Lambda(S_\Lambda) \simeq \R\enn_{X}(\mathcal{O}_C(-1))$$
\end{prop}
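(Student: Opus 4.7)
The approach is to interpolate between the two sides through the Zariski-local picture. Pick an affine neighbourhood $\spec R \subset X_\con$ of $p$ as in the Zariski Local Setup \ref{localsetup}, and let $U \coloneqq \pi^{-1}(\spec R)$. The Zariski open immersion $j:U \hookrightarrow X$ will let us compare derived endomorphism algebras on $X$ and on $U$, while the tilting bundle $\mathcal V$ will let us pass from $U$ to $\Lambda$. So the proof factors as
$$\R\enn_X(\mathcal{O}_C(-1)) \;\xleftarrow{\;\simeq\;}\; \R\enn_U(\mathcal{O}_C(-1)) \;\xrightarrow{\;\simeq\;}\; \R\enn_\Lambda(S_\Lambda),$$
and I would handle the two quasi-isomorphisms separately.

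The right-hand quasi-isomorphism is essentially tautological given our earlier conventions. The tilting bundle $\mathcal V = \mathcal O_U \oplus \mathcal N$ induces a quasi-equivalence of dg enhancements $D(U) \simeq D(\Lambda)$ sending $\mathcal{O}_C(-1)$ to $S_\Lambda$ by the definition of $S_\Lambda$, and any such dg-enhancement equivalence preserves derived endomorphism dgas on the nose. So the content is really in the left-hand quasi-isomorphism, comparing $X$ and $U$.

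For that, I would use the standard localisation triangle associated to the open-closed decomposition $U \hookrightarrow X \hookleftarrow X \setminus U$. For any $\mathcal F \in D(X)$ there is an exact triangle
$$\R\Gamma_{X \setminus U}(\mathcal F) \longrightarrow \mathcal F \longrightarrow \R j_* j^* \mathcal F \longrightarrow$$
in $D(X)$, where the left-hand term has cohomology sheaves supported set-theoretically on the closed complement $X \setminus U$. Taking $\mathcal F = \mathcal{O}_C(-1)$ and applying $\R\hom_X(\mathcal{O}_C(-1), -)$, the contribution of $\R\Gamma_{X \setminus U}$ vanishes because $\mathcal O_C(-1)$ is set-theoretically supported on $C \subset U$, which is disjoint from the support of $\R\Gamma_{X \setminus U}(\mathcal O_C(-1))$. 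The $(j^*, \R j_*)$ adjunction then gives
$$\R\hom_X\bigl(\mathcal{O}_C(-1), \R j_* j^* \mathcal{O}_C(-1)\bigr) \simeq \R\hom_U\bigl(\mathcal{O}_C(-1)|_U, \mathcal{O}_C(-1)|_U\bigr),$$
which is exactly the required quasi-isomorphism.

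The main obstacle, such as it is, is bookkeeping: one must ensure that these identifications are compatible with multiplication, so that we get quasi-isomorphisms of dgas rather than merely of cohomology modules. This is standard once one works with K-injective resolutions on both sides and notes that the adjunction $(j^*, \R j_*)$ and the tilting equivalence are both induced by dg-functorial constructions, so composition is respected at every stage. The support-vanishing step is robust and does not require any boundedness hypotheses beyond the fact that $\mathcal{O}_C(-1)$ is coherent and $C$ is a proper closed subset of $U$.
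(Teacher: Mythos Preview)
Your proposal is correct and follows essentially the same route as the paper: factor through the Zariski-local $U$, use the tilting equivalence for $\R\enn_U \simeq \R\enn_\Lambda$, and use the open immersion $j:U\hookrightarrow X$ for $\R\enn_X \simeq \R\enn_U$. The only cosmetic difference is that the paper invokes full faithfulness of $\R j_*$ directly (noting $j$ is an affine open embedding, being the base change of $\spec R \hookrightarrow X_\con$), whereas you unpack this via the localisation triangle and a support-disjointness argument; these are two phrasings of the same fact.
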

\begin{proof}This is the proof of \cite[3.9(1)]{contsdefs}. Let $i:U \to X$ be the base change of the inclusion $\spec R \into X_\con$; it's an affine map because it's a base change of an affine map. It's also an open embedding, and hence induces an embedding $D(U)\into D(X)$. Hence we get quasi-isomorphisms $$\R\enn_\Lambda(S_\Lambda) \xrightarrow{\simeq} \R\enn_{U}(\mathcal{O}_C(-1)) \xrightarrow{\simeq} \R\enn_{X}(\mathcal{O}_C(-1))$$
	as required. \end{proof}
We obtain the immediate corollary:
\begin{thm}In the situation of Setup \ref{globalsetup}, the derived contraction algebra can be computed as the Koszul dual of the dga $\R\enn_{X}(\mathcal{O}_C(-1))$. In particular, it is intrinsic to the contraction $\pi$, and does not depend on any choice of affine neighbourhood of $p$ or tilting bundle.
\end{thm}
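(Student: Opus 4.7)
The plan is to chain together the three quasi-isomorphisms just established and invoke the homotopy invariance of Koszul duality. First I would start from Proposition \ref{dqiskd}, which identifies $\dca$ with the Koszul dual $\R\enn_A(S)^!$, where $A$ is the basic algebra Morita equivalent to $\hat\Lambda$ and $S=S_A$ is the simple corresponding to $\mathcal{O}_C(-1)$. This reduces the statement to producing an intrinsic model for the derived endomorphism dga $\R\enn_A(S)$.

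Next I would pick any affine neighbourhood $\spec R \into X_\con$ of $p$, producing the Zariski local situation of Setup \ref{localsetup} together with its tilting noncommutative model $\Lambda = \enn_R(R \oplus N)$ on $U$. Applying Proposition \ref{globalrend} gives a quasi-isomorphism $\R\enn_A(S_A) \simeq \R\enn_\Lambda(S_\Lambda)$ passing from the complete local basic algebra $A$ back to the Zariski local $\Lambda$. Then the preceding proposition furnishes a further quasi-isomorphism $\R\enn_\Lambda(S_\Lambda) \simeq \R\enn_X(\mathcal{O}_C(-1))$ by means of the open embedding $i: U \into X$, which fully embeds $D(U)$ into $D(X)$ and so preserves derived endomorphisms of sheaves supported on $C \subset U$.

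Finally, since Koszul duality is a functorial construction on augmented dgas that sends quasi-isomorphisms to quasi-isomorphisms (the bar construction is homotopy invariant on connective augmented dgas with locally finite cohomology, which the hypotheses of \ref{fdcohom} and the discussion after guarantee), the composite quasi-isomorphism $\R\enn_A(S_A) \simeq \R\enn_X(\mathcal{O}_C(-1))$ dualises to $\dca \simeq \R\enn_X(\mathcal{O}_C(-1))^!$. The second sentence of the statement is then automatic: the right-hand side depends only on the contraction $\pi$ and the curve $C$, with no reference to a choice of affine neighbourhood $\spec R$ or tilting bundle $\mathcal V$.

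The only subtle point worth double-checking is that the chain of quasi-isomorphisms genuinely respects the augmentations used to form the Koszul duals, so that homotopy invariance of Koszul duality applies; this is automatic because each of the quasi-isomorphisms is induced by a functor sending $S$ to $\mathcal{O}_C(-1)$ (or vice versa), and the augmentation is determined by the quotient to endomorphisms of this simple object. No genuine obstacle arises beyond bookkeeping, since the hard work has already been done in \ref{dqiskd}, \ref{globalrend}, and the preceding proposition.
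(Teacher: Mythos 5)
Your proposal is correct and follows exactly the route the paper intends: the theorem is stated there as an immediate corollary of \ref{dqiskd}, \ref{globalrend}, and the preceding proposition, obtained by chaining the quasi-isomorphisms and using that the bar construction (hence the Koszul dual) preserves quasi-isomorphisms. Your extra care about the augmentations being preserved is a sensible point worth spelling out, and is handled just as you say.
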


\begin{rmk}
	We note for computational purposes that $\dca$ is invariant under derived equivalences. Specifically, if $D(A)$ is equivalent to $D(A')$ in such a way that $S$ gets sent to $S'$, then one gets a quasi-isomorphism $\R\enn_A(S)\simeq\R\enn_{A'}(S')$, and hence quasi-isomorphisms $$\dca\simeq \R\enn_A(S)^!\simeq \R\enn_{A'}(S')^!.$$
\end{rmk}

	\subsection{Ginzburg dgas}
Hua and Keller \cite{huakeller} show that, for a flopping contraction in a smooth threefold $X$, one may compute its derived contraction algebra as a certain Ginzburg dga associated to an NCCR of $X$. We briefly recall the definition of Ginzburg dgas and review Hua and Keller's result.

\p Ginzburg dgas, as well as Calabi--Yau algebras, were first defined in the seminal paper \cite{ginzburgdga}. We are following the sign conventions of \cite{vdbginzburg}.

\begin{defn}
	Let $Q$ be a finite quiver. A \textbf{superpotential} on $Q$ is an element of the cocentre $HH_0(Q)\cong\frac{kQ}{[kQ,kQ]}$.
\end{defn}	

\begin{defn}
	Let $Q$ be a quiver and $W$ a superpotential on $Q$. Let $a$ be an arrow in $Q$. Define the \textbf{cyclic derivative} $\partial_aW$ by the formula
	$$\partial_aW\coloneqq \sum_{W=uav}vu$$where we sum over all arrows $u,v$ such that $W=uav$.
\end{defn}

\begin{defn}
	Let $Q$ be a finite quiver and $W$ a superpotential on $Q$. Let $\bar Q$ be the graded quiver with the same vertex set as $Q$, and with three types of arrows: the arrows $a$ from $Q$, all of degree zero, a reversed degree $-1$ arrow $a^*$ for every arrow $a$ in $Q$, and a loop $z_i$ of degree $-2$ at every vertex $i$. The \textbf{Ginzburg dga} associated to the pair $(Q,W)$ is the dga $\Pi(Q,W)$ with underlying graded algebra the path algebra of $k\bar Q$, and with differential given by \begin{align*}
	da&=0
	\\da^*&=-\partial_aW
	\\dz_i&=\sum_ae_i[a,a^*]e_i
	\end{align*}
	Note that we may compute $dz_i$ by summing over only the arrows incident to $i$.
\end{defn}
\begin{defn}
	Let $Q$ be a finite quiver and $W$ a superpotential on $Q$. The \textbf{Jacobi algebra} is the algebra $H^0(\Pi(Q,W))$. It can be computed as the path algebra of $kQ$ modulo the relations given by the $\partial_a W$.
\end{defn}
If one thinks of a Jacobi algebra as simply the path algebra of a quiver with relations, one can think of the superpotential as `integrating' the relations.

\p Recall that to compute the contraction algebra, Donovan and Wemyss take a noncommutative algebra $A=\enn_R(R\oplus M)$ and quotient it by the idempotent $e=\id_R$. Presenting $A$ as the path algebra of a quiver $Q$ with relations $r_i$, one can hence compute $A_\con$ by simply throwing out the vertex corresponding to $R$ and modifying the relations accordingly. If the quiver admits a superpotential $W$ making $A$ into the associated Jacobi algebra, then one can compute $A_\con$ by taking the Jacobi algebra of the one-vertex quiver $Q'$ obtained by deleting the vertex at $R$ from $Q$, equipped with modified superpotential $W'$ obtained by removing all the arrows that are not loops at $M$. See \ref{lauferrmk} for an example of such a computation. Since one hence has $H^0(\dca)\cong H^0(\Pi(Q',W'))$, one might wonder if one can compute $\dca$ as a Ginzburg dga, and under some smoothness hypotheses Hua and Keller proved that this is indeed the case for threefolds:
\begin{thm}
	Let $X \to \spec R$ be a threefold flopping contraction with $X$ smooth. Let $(Q',W')$ be the quiver with superpotential defined above that computes $A_\con$. Then $\dca$ is quasi-isomorphic to the Ginzburg dga $\Pi(Q',W')$.
\end{thm}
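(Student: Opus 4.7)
The plan is to leverage the 3-Calabi--Yau property of the noncommutative crepant resolution $A=\enn_{\hat R}(\hat R\oplus M)$. In the smooth threefold setting, $A$ is a 3-Calabi--Yau algebra by the work of \cite{vdb}, and by Van den Bergh's characterisation of 3-CY algebras in terms of superpotentials, it admits a presentation as the Jacobi algebra $H^0(\Pi(Q,W))$ of some finite quiver $Q$ with superpotential $W$. Moreover, in this NCCR case the full Ginzburg dga $\Pi(Q,W)$ is quasi-isomorphic to $A$ concentrated in degree zero (equivalently, $H^j(\Pi(Q,W))=0$ for all $j<0$). The quiver $Q$ has a distinguished vertex corresponding to the summand $\hat R$ of $\hat R\oplus M$, and deleting this vertex (and all arrows incident to it) gives exactly the pair $(Q',W')$ in the statement; the remaining vertex corresponds to $M$ and the two loops correspond respectively to $\ext^1_{\hat R}(M,M)$ and the periodicity element $\eta\in H^{-2}(\dca)$.

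With this presentation in hand, the theorem becomes a comparison of derived quotients. Since derived quotients are invariant under quasi-isomorphisms of source dgas, one has $\dca\simeq \Pi(Q,W)/^{\mathbb{L}}\Pi(Q,W)e\Pi(Q,W)$, where $e$ is the lift of $\id_{\hat R}$ to the Ginzburg dga. A direct computation, analogous to the classical extraction of $A_\con=A/AeA$ from the Donovan--Wemyss quiver, then identifies this derived quotient with $\Pi(Q',W')$: on underlying graded quivers one deletes the vertex at $\hat R$ together with the corresponding degree-$0$, degree-$(-1)$, and degree-$(-2)$ generators, and one checks that the differential on the remaining generators is still given by cyclic derivatives of the restricted superpotential $W'$. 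The upshot is that the combinatorial operation of deleting a vertex in a Ginzburg dga is modelled by the derived quotient by the corresponding idempotent.

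The main obstacle is justifying the first step: that the particular 3-CY structure on $A$ produced by \cite{vdb} is of the exact form needed to invoke Van den Bergh's superpotential recognition theorem. One needs to match the Calabi--Yau structure coming from Serre duality on $\hat U$ with the potential-theoretic Calabi--Yau structure underlying the Ginzburg dga construction; equivalently, one must upgrade the abstract 3-CY statement for $A$ to the existence of a \emph{cyclic} $A_\infty$-model of $\R\enn_A(S)$ whose Koszul dual (cf.\ \ref{dqiskd}) is the Ginzburg dga. Once this is in place, the combinatorial reduction above is routine, and the identification with $\Pi(Q',W')$ is forced by the fact that the two superpotentials produce the same Jacobi algebra $A_\con$. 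This compatibility between the CY structure and the superpotential is the technical heart of Hua and Keller's argument, and I would cite it as a black box rather than reprove it.
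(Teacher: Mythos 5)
Your proposal is correct in outline, but it takes a genuinely different route from the paper. The paper proves the theorem as a two-step citation: $\Pi(Q',W')\simeq \tau_{\leq 0}\R\underline\enn_R(M)$ by \cite[4.17]{huakeller}, and $\tau_{\leq 0}\R\underline\enn_R(M)\simeq\dca$ by \cite[5.1.11]{dqdefm}, so the comparison runs entirely through the dg singularity category of $R$. You instead work upstairs with the NCCR: $A\simeq \Pi(Q,W)$ via the (exact) 3-Calabi--Yau structure and Van den Bergh's superpotential theorem, then $\dca\simeq \Pi(Q,W)/^{\mathbb{L}}\Pi(Q,W)e\Pi(Q,W)$ by invariance of the derived quotient under quasi-isomorphism, then you want $\Pi(Q,W)/^{\mathbb{L}}e\simeq\Pi(Q',W')$. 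You correctly identify the technical heart as matching the geometric CY structure to the potential-theoretic one, and you correctly defer to Hua--Keller there. One thing worth making explicit in your last step: the reason the derived quotient of a Ginzburg dga by a vertex idempotent agrees with the underived quotient (which is visibly $\Pi(Q',W')$ after deleting generators) is that $\Pi(Q,W)$ is a semi-free, hence cofibrant, algebra over the vertex ring $S=k[e]$, so the homotopy pushout defining $\dq$ computes as the ordinary pushout. This is not automatic for a general dga, and if one mistakenly applied the same reasoning to $H^0(\Pi(Q,W))=A$ it would fail -- the two derived quotients only agree because you first passed to the Ginzburg model which is concentrated in degree zero. Calling this step ``routine'' slightly undersells it, but the argument is sound. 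Your route is more structural and makes the role of the 3-CY property transparent; the paper's route is shorter and avoids discussing superpotential presentations of $A$ itself by working directly in the singularity category.
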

\begin{proof}
	By \cite[4.17]{huakeller}, there is a quasi-isomorphism $$\Pi(Q',W')\simeq \tau_{\leq 0}\R\underline\enn_R(M)$$between the Ginzburg dga and the truncation to nonpositive degrees of the dg endomorphism algebra of the module $M$ considered as an object of the dg singularity category of $R$. But this truncation is quasi-isomorphic to $\dca$ by \cite[5.1.11]{dqdefm}
\end{proof}

\begin{rmk}
	Let $A$ be the Jacobi algebra of a quiver with superpotential $W$ and let $S$ be the direct sum of the simple vertex modules. Let $\hat A$ be the completion of $A $ along its augmentation ideal. Using deformation theory, Segal \cite[\S2]{segaldefpt} proves that under some finiteness conditions one can recover $\hat A$ from the Ext-algebra $\ext_A(S,S)$ along with the higher multiplications $m_r$ needed to make it quasi-isomorphic to $\R\enn_A(S)$. More precisely, he identifies $\hat A$ as $H^0(\R\enn_A(S)^!)$, which is analogous to our isomorphism $A/AeA\cong H^0(\dq)$. Under the presence of an additional Calabi--Yau condition, Segal also proves \cite[3.3]{segaldefpt} that one can recover $W$ from the $m_r$: loosely, one uses the CY pairing to obtain a superpotential on the completion of the tensor algebra $T(\ext^1_A(S,S)^*)$, and this presents $\hat A$.
\end{rmk}

\section{Threefold examples}\label{threecomps}
In this section we do some computations. We'll compute the derived contraction algebra associated to the $n$-Pagoda flop, with base $xy-(u+v^n)(u-v^n)$. As a warmup, we'll do the case $n=1$, which is the classical Atiyah flop. We'll also sketch the computation of the derived contraction algebra associated to the Laufer flop. We'll use the interpretation of the derived contraction algebra as a Koszul dual to actually do the computations. We'll also see that, in the situations we study, one can compute the derived contraction algebra as a certain Ginzburg dga (see \cite{vdbginzburg} for a definition) of a quiver with potential that one obtains by deleting vertices of a quiver representation for a noncommutative model of the contraction. For references on the $A_\infty$-algebra methods we use, see the Appendix, specifically \ref{minmods}.
\subsection{General setup}
The setup will be the following variation on that of \cite[2.9]{hmmp}:
\begin{setup}\label{flopsetup}
	$\pi:U \to \spec R$ is a crepant projective birational morphism between two noetherian normal integral threefolds over an algebraically closed field $k$ of characteristic zero. Moreover, $\pi$ is an isomorphism away from a single closed point $p$ in the base, where $C\coloneqq \pi^{-1}(p)$ is an irreducible rational (possibly non-reduced) floppable curve. We also assume that $R$ is Gorenstein and that $U$ has at worst Gorenstein terminal singularities.
\end{setup}
\begin{prop}
	Suppose that we are in the situation of Setup \ref{flopsetup}. Then we are in the situation of the Zariski local setup \ref{localsetup}. In particular we may define the derived contraction algebra.
\end{prop}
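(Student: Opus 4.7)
The plan is to work through the hypotheses of the Zariski local Setup \ref{localsetup} one by one, verifying each from the flop Setup \ref{flopsetup}. Most conditions transfer directly: the morphism being projective birational between noetherian normal integral schemes over $k$, its crepancy, its being an isomorphism away from a single point $p$ with $C=\pi^{-1}(p)$ an irreducible rational curve, and the Gorensteinness of $R$ are all either assumed outright in Setup \ref{flopsetup} or immediate from it. The content of the proposition therefore reduces to verifying the two remaining conditions: that $\R\pi_*\mathcal{O}_U\simeq R$, and that $\hat R_p$ is an isolated hypersurface singularity.

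For the hypersurface condition, which I regard as the main content, I would appeal to the classical classification of threefold flopping contractions due to Reid and Kollár--Mori: a Gorenstein terminal threefold flopping contraction of an irreducible curve to a point is, complete locally at that point, a \textbf{compound Du Val} (cDV) singularity. Such a singularity admits by definition a presentation
\[
\hat R_p \;\cong\; k\llbracket x,y,z,t\rrbracket/(f+tg),
\]
where $f(x,y,z)$ cuts out a Du Val (i.e.\ ADE) surface singularity; this is manifestly an isolated hypersurface singularity. Smallness of the contraction (codimension $2$ exceptional locus in dimension $3$) together with Gorenstein terminality of $U$ and the irreducibility of $C$ are precisely what make this classification apply.

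For the cohomology vanishing, I would argue via rational singularities. Gorenstein terminal singularities are rational (terminal implies canonical, and Elkik's theorem gives rationality in the Gorenstein canonical case), so $U$ has rational singularities; combined with the cDV description above (and the fact that $\spec R$ agrees with $U$ away from $p$), the scheme $\spec R$ has rational singularities too. The fibres of $\pi$ have dimension at most one, so $R^i\pi_*\mathcal{O}_U$ vanishes for $i\geq 2$ by cohomology and base change. For $R^1\pi_*\mathcal{O}_U$, I would choose a resolution $\mu:V\to U$: rationality of $U$ gives $\R\mu_*\mathcal{O}_V\simeq\mathcal{O}_U$, and rationality of $\spec R$ gives $\R(\pi\mu)_*\mathcal{O}_V\simeq\mathcal{O}_{\spec R}$, whence $\R\pi_*\mathcal{O}_U\simeq\mathcal{O}_{\spec R}$ as required. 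The only real obstacle is locating the correct form of the cDV classification in the literature; once that is in hand, everything else is routine birational geometry and the final sentence ``we may define the derived contraction algebra'' is immediate from the construction of \S\ref{dercondefns}.
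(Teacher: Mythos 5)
Your verification of the hypersurface condition is the same as the paper's: both of you appeal to the classification of Gorenstein terminal threefold singularities (Kollár--Mori, the cDV classification) to conclude that $\hat R_p$ is an isolated cDV point and hence a hypersurface. The difference is in the cohomology vanishing $\R\pi_*\mathcal{O}_U\simeq R$. The paper dispatches this in one line by citing Kawamata vanishing, whereas you argue via rational singularities: terminal implies rational (Elkik), so choosing a resolution $\mu:V\to U$ and comparing $\R\mu_*\mathcal{O}_V\simeq\mathcal{O}_U$ with $\R(\pi\mu)_*\mathcal{O}_V\simeq\mathcal{O}_{\spec R}$ gives the result by composing derived pushforwards. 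Both routes are standard and essentially equivalent in strength here; the rational-singularities argument is perhaps more self-contained and makes visible the role of the cDV description on the base, while the Kawamata-vanishing citation is shorter. Your opening remark about fibre dimension and cohomology and base change is redundant given the resolution argument that follows (and the cleaner reference would be the theorem on formal functions or Grothendieck vanishing rather than cohomology and base change), but it does no harm. Overall the proof is sound.
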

\begin{proof}
	We need to check that the completion $\hat{R}_p$ is an isolated hypersurface singularity and that $\R\pi_*\mathcal{O}_{X}\simeq R$. The first claim follows from the classification of terminal singularities; namely by \cite[5.38]{kollarmori}, $p$ is an isolated cDV singularity and in particular a hypersurface. The second claim follows from Kawamata vanishing \cite{kawamatavanish}.
\end{proof}
\begin{prop}
Suppose that we are in the Threefold Setup \ref{flopsetup}. If $U$ is $\Q$-factorial (i.e. is a minimal model of $R$) then the cohomology of $\dca$ is simply $A_\con[\eta]$.
\end{prop}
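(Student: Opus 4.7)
The plan is to reduce the proposition to showing that $M$ is rigid as an $\hat R$-module, i.e., that $\ext^1_{\hat R}(M,M)=0$. Indeed, by \ref{dqcohom} the cohomology groups $H^j(\dca)$ are 2-periodic in $j\leq 0$: even-degree groups are $\underline{\enn}_{\hat R}(M)\cong A_\con$, while odd-degree groups are $\ext^1_{\hat R}(M,M)$. Once rigidity is established, all odd cohomology vanishes and only even negative degrees survive, each isomorphic to $A_\con$. By \ref{etaex} the periodicity element $\eta\in H^{-2}(\dca)$ is central in the graded algebra $H^*(\dca)$, torsion-free (since multiplication by $\eta$ induces isomorphisms $H^j\to H^{j-2}$), and the natural map $A_\con[\eta]\to H^*(\dca)$ is then a graded algebra isomorphism.

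The substantive step is therefore to prove that $M$ is rigid, and here I would appeal to the Iyama--Wemyss theory of maximal modification algebras (MMAs) for complete local cDV singularities. By the classification of Gorenstein terminal threefold singularities, the hypothesis that $U$ has at worst Gorenstein terminal singularities and $\hat R$ is an isolated hypersurface singularity guarantees that $\hat R$ is cDV. The $\Q$-factoriality hypothesis on $U$ means $U$ is a minimal model of $\spec R$, which under the Iyama--Wemyss correspondence between minimal models of cDV singularities and MMAs identifies the basic algebra $A = \enn_{\hat R}(\hat R\oplus M)$ with an MMA of $\hat R$. The key rigidity statement in their theory is that if $\enn_{\hat R}(\hat R\oplus M)$ is an MMA then $\hat R\oplus M$ is a modifying module, meaning $\ext^1_{\hat R}(\hat R\oplus M,\hat R\oplus M)=0$; passing to the summand gives exactly $\ext^1_{\hat R}(M,M)=0$.

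The main obstacle is citing and unpacking the Iyama--Wemyss correspondence carefully: one needs to check that the noncommutative model $A$ built from Van den Bergh's tilting bundle on $\hat U$ really does coincide with the MMA attached to the minimal model $U$ under their dictionary, and that $\Q$-factoriality is the correct geometric shadow of the maximal-modification property in our setup. Both are essentially standard in the literature on cDV singularities and NCCRs, so the proposition reduces to a straightforward combination of \ref{dqcohom}, \ref{etaex}, and the rigidity of MMA summands.
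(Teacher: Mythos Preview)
Your proposal is correct and follows essentially the same strategy as the paper: reduce to rigidity of $M$ and then invoke \ref{dqcohom} and \ref{etaex}. The paper's proof is a one-liner citing \cite[4.10]{hmmp} for the rigidity of $M$ in the $\Q$-factorial case, whereas you unpack this via the Iyama--Wemyss correspondence between minimal models and MMAs; these are two names for the same underlying result. One small imprecision: you write that being modifying ``means'' $\ext^1_{\hat R}(\hat R\oplus M,\hat R\oplus M)=0$, but that is not the definition (modifying means the endomorphism ring is Cohen--Macaulay); rather, it is the characterisation from \ref{modext}/\ref{twoorthreecor} valid for Cohen--Macaulay modules over a three-dimensional isolated singularity, so you should state it as an equivalence in this setting rather than as a definition.
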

\begin{proof}
	By \cite[4.10]{hmmp}, the associated MCM $\hat R$-module $M$ used to define $\dca$ is rigid, and the claim now follows from \ref{dqcohom} and \ref{etaex}. In fact, the same holds more generally if $X$ is a `partial minimal model' \cite[4.13]{hmmp}.
\end{proof}
In general, our strategy will be to compute $\dca$ as a Koszul double dual. We'll start with Setup \ref{flopsetup} and write down a noncommutative model $\Lambda=\enn_R(R\oplus N)$ for $U$, presenting $\Lambda$ as the path algebra of a two-vertex quiver with relations. Letting $S$ be the simple of $\Lambda$ at the vertex corresponding to $N$, we'll compute the derived endomorphism algebra $\R\enn_\Lambda(S)$. It's easiest to do this as an $A_\infty$-algebra, by placing higher multiplications on $\ext_\Lambda(S,S)$ and appealing to Kadeishvili's theorem (cf. \ref{kadeish}). Next, we'll compute $\dca$ as the $A_\infty$ Koszul dual of $\R\enn_\Lambda(S)$. Again, we'll do this via Kadeishvili's theorem, along with the fact that the cohomology of {\dca} can be calculated explicitly in advance. In order to actually do the $A_\infty$ computations, we'll either use Merkulov's construction, or grade the resolution of $S$ and note that many higher multiplications become ruled out, which itself appeals to a graded version of Merkulov's construction (cf. \ref{minmods}). We'll often use Massey products to detect non-formality (\ref{masseys}, \ref{masseylemma}). 

\p Suppose that $\Gamma$ is a quiver (possibly with relations) and $A=P(\Gamma)$ its path algebra. We denote multiplication in $A$ left-to-right: that is, $ab$ means `follow edge $a$, then edge $b$'. If $i$ is a vertex of $\Gamma$, with associated idempotent $e_i \in A$, then the \textbf{projective at $i$} is the right $A$-module $P_i\coloneqq  e_i A$ consisting of all those paths starting at $i$.

\p The noncommutative model $\Lambda$ we write down for $U$ will be a \textbf{noncommutative crepant resolution} (\textbf{NCCR}) of $R$ \cite{vdbnccr}. Because all crepant resolutions of threefolds with terminal singularities are derived equivalent \cite[6.6.3]{vdbnccr}, exhibiting a single NCCR will suffice for our calculations.

\subsection{The Atiyah flop} \label{atiyahflop}
The Atiyah flop is the simplest example of a flopping contraction, and was historically the first discovered \cite{atiyahflop}. The base is the cone ${R=\frac{k[u,v,x,y]}{(uv-xy)}}$. One MCM module is $(u,x)$, and it is well known that this gives an NCCR $\Lambda$ with a presentation as the path algebra of the following quiver with relations:
$$   \begin{tikzcd}
{{R}} \arrow[rr,bend left=15,"u"] \arrow[rr,bend left=50,"x"]  && (u,x) \arrow[ll,bend left=15,"y/u"] \arrow[ll,bend left=50,"\text{incl.}"]
\end{tikzcd}$$
\vfill\pagebreak 
Relabelling, we can write this quiver abstractly as \newline
\begin{wrapfigure}[4]{L}{0.5\textwidth}
	\centering
	\vspace{-15pt}$
	\begin{tikzcd}
	1 \arrow[rr,bend left=15,"b"] \arrow[rr,bend left=50,"a"]  && 2 \arrow[ll,bend left=15,"s"] \arrow[ll,bend left=50,"t"]
	\end{tikzcd}$
\end{wrapfigure} 

$asb=bsa$

$sbt=tbs$

$atb=bta$

$sat=tas$\newline \newline
One can check that a resolution for $S$, the simple at 2, is given by 
$$
P_2 \xrightarrow{\left(\begin{smallmatrix} b \\ -a \end{smallmatrix}\right)} P_1^2 \xrightarrow{\left(\begin{smallmatrix} bt & at \\ -bs & -as \end{smallmatrix}\right)} P_1^2 \xrightarrow{(s \; t)} P_2$$and it easily follows that the Ext-algebra of $S$ is $k[x]/x^2$, with $x$ placed in degree 3. It's also easy to see that $\R\enn_\Lambda(S)$ must be formal. So the derived contraction algebra $\dca$ is the Koszul dual of $k[x]/x^2$, which is the same as the tensor algebra on a single element $\eta=x^*$ of degree -2. Hence, $\dca \simeq k[\eta]$.

\subsection{Pagoda flops}
The Pagoda flops, first defined by Reid in \cite{reidpagoda}, are natural generalisations of the Atiyah flop. They're all $cA_1$ singularities, and fit into a family parametrised by a natural number $n\geq 1$. The $n=1$ case is the Atiyah flop. The main theorem of this section is the computation \ref{dcapagoda}. The base of the Pagoda flop is $R=\frac{k[u,v,x,y]}{(uv-(x+y^n)(x-y^n))}$. One MCM module for $R$ is $N\coloneqq (u,x+y^n)$. One can write down a quiver presentation for the resulting NCCR $\Lambda=\enn_R(R\oplus N)$: it looks like $$\begin{tikzcd}
R \arrow[rr,bend left=15,"u"]\arrow[loop left, "y"] \arrow[rr,bend left=50,"x+y^n"]  && N\arrow[loop right, "y"] \arrow[ll,bend left=15,"\text{inc.}"] \arrow[ll,bend left=50,"\frac{x-y^n}{u}"]\end{tikzcd}$$ 
Rewriting this abstractly, we obtain the quiver with relations
\begin{wrapfigure}[6]{L}{0.5\textwidth}
	\centering $
	\begin{tikzcd}
	\cdot_1 \arrow[rr,bend left=15,"b"]\arrow[loop left, "l"] \arrow[rr,bend left=50,"a"]  && \cdot_2 \arrow[ll,bend left=15,"s"] \arrow[ll,bend left=50,"t"]\arrow[loop right, "m"]
	\end{tikzcd}$
\end{wrapfigure} 

$la=am$

$lb=bm$

$sl=ms$

$tl=mt$

$as=bt+2l^n$

$sa=tb+2m^n$
\newline  \newline  where the simple $S$ at 2 corresponds to the exceptional locus in the resolution. So to compute \dca, we first need to resolve $S$. Assuming $n>1$, then using a tedious basis argument one can write down a four-term resolution $$ P_2 \xrightarrow{\left(\begin{smallmatrix}-a\\b\\m\end{smallmatrix}\right)} P_1^2P_2 \xrightarrow{\left(\begin{smallmatrix}s & t & 2m^{n-1}\\ -l & 0 & -a \\ 0 & -l & b\end{smallmatrix}\right)} P_2P_1^2 \xrightarrow{(m,s,t)} P_2$$of $S$. It's now easy to see that the $\ext$-algebra of $S$ is four-dimensional over $k$, with Hilbert series $1+t+t^2+t^3$. In fact, one can check that the $\ext$-algebra is generated by two elements $f_1$, $f_2$, with $f_i$ placed in degree $i$. Concretely, $f_1$ is represented by $$\begin{tikzcd}[row sep= large, column sep=15ex] & P_2\ar[r]\ar[d,"{\left(\begin{smallmatrix}0\\0\\1\end{smallmatrix}\right)}"] & P_1^2P_2\ar[r]\ar[d,"{-\sthbthm{0}{0}{2m^{n-2}}{1}{0}{0}{0}{1}{0}}"] & P_2P_1^2\ar[r]\ar[d,"{\left(1,0,0\right)}"] & P_2 \\ P_2\ar[r] & P_1^2P_2\ar[r] & P_2P_1^2\ar[r] & P_2 &\end{tikzcd}$$
while $f_2$ is represented by $$\begin{tikzcd}[row sep= large, column sep=large] & & P_2\ar[r]\ar[d,"{\left(\begin{smallmatrix}1\\0\\0\end{smallmatrix}\right)}"] & P_1^2P_2\ar[r]\ar[d,"{\left(0,0,1\right)}"] & P_2P_1^2\ar[r] & P_2 \\ P_2\ar[r] & P_1^2P_2\ar[r] & P_2P_1^2\ar[r] & P_2 & &\end{tikzcd}$$and $f_3=f_1f_2$ is represented by the identity map between the two copies of $P_2$ at the ends. One can check that $f_1$ and $f_2$ strictly commute, and that $f_2$ genuinely squares to zero (purely for degree reasons). However, one can check that $f_1^2=-2m^{n-2}f_2$, which is merely homotopic to zero (if $n>2$), not identically zero.

\p So if $n=2$ then the $\ext$-algebra is $k[f_1]/f_1^4$, whereas if $n>2$ then it's isomorphic to the algebra $\frac{k[f_1,f_2]}{(f_1^2,\;f_2^2)}$. As for the Atiyah flop, when $n=2$, $\R\enn_\Lambda(S)$ is formal. In general, the derived endomorphism algebra will not be formal; we'd like to use Merkulov's construction (or a variant) to work out the higher $A_\infty$ operations on the Ext-algebra. We'll grade the resolution of $S$ in order to eliminate many of these: the point is that one can apply the Adams graded version of Merkulov's construction to the graded algebra $\R\enn_\Lambda(S)$, and since the higher multiplication $m_r$ must have bidegree $(r-2,0)$, this allows us to conclude that many of the $m_r$ must be zero. 

\p One can put a grading on $\Lambda$, with $l,m$ in degree 2 and $a,b,s,t$ in degree $n$. We'll refer to the degree of a homogeneous element of $\Lambda$ in this secondary grading as its \textbf{weight}, since we're already using `degree' to refer to maps. Recall that $S$ has projective resolution $$P_2 \to P_1^2P_2 \to P_2P_1^2 \to P_2.$$The following lemma is proved simply by working out the computations:
\begin{lem}
	Suppose that $n \geq 2$. Give $S$ the trivial grading as a $\Lambda$-module. Then the resolution of $S$ admits a secondary grading, compatible with the gradings on $\Lambda$ and $S$, with weights $$(2n+2) \to (n+2,n+2,2n)\to(2,n,n) \to (0)$$Moreover, the differential has weight zero, $f_1$ has weight $-2$, and $f_2$ has weight $-2n$. 
\end{lem}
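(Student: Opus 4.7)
The plan is a three-step verification that is essentially all bookkeeping.

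First, I would check that the proposed weights really define a grading on $\Lambda$ by verifying homogeneity of the six defining relations. The four commutation relations $la=am$, $lb=bm$, $sl=ms$, $tl=mt$ each balance at weight $n+2$. The two longer relations $as=bt+2l^n$ and $sa=tb+2m^n$ each balance at weight $2n$ (since $|l^n|=|m^n|=2n$). This step is immediate from the definition of the weight grading.

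Second, I would set the generator of $P^0=P_2$ to have weight $0$ (since $S$ carries the trivial grading) and propagate weights up the resolution by demanding that every entry of every differential matrix be weight-preserving. Writing $P_i(w)$ for the shift of $P_i$ in which the generator sits in weight $w$, and using the convention that a map $P_i(w_i)\to P_j(w_j)$ corresponding to a path $\alpha\in e_j\Lambda e_i$ is weight-zero iff $w_i - w_j = |\alpha|$, the differential $(m,s,t)$ forces the summands of $P^{-1}$ to carry weights $2,n,n$; the matrix $\bigl(\begin{smallmatrix} s & t & 2m^{n-1} \\ -l & 0 & -a \\ 0 & -l & b \end{smallmatrix}\bigr)$ then forces the summands of $P^{-2}$ to carry weights $n+2, n+2, 2n$ (column by column); and finally $(-a,b,m)^T$ forces $P^{-3}$ to carry weight $2n+2$. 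The key sanity check is that each column of each matrix predicts a consistent source weight; this follows automatically from the homogeneity of the relations used above, since the resolution is built from them.

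Third, I would compute the weights of $f_1$ and $f_2$ by inspecting each non-zero entry of the representing cochain maps. For $f_1$, every non-zero entry is either an identity map between summands whose generator weights differ by $2$ (such as the inclusion $P_2(2n+2)\to P_2(2n)$ at the top, or the two identity entries in the middle layer), or the entry $2m^{n-2}\colon P_2(2n)\to P_2(2)$, whose weight is $2(n-2) + 2 - 2n = -2$. All components therefore have weight $-2$, so $f_1$ is pure of weight $-2$. For $f_2$, both non-zero entries are identity maps $P_2(2n+2)\to P_2(2)$ and $P_2(2n)\to P_2(0)$, each of weight $-2n$; hence $f_2$ is pure of weight $-2n$.

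There is no real obstacle here; the only place where one could go wrong is in fixing the sign convention for how the grading shift on a projective interacts with left multiplication by a path, but once that is pinned down the lemma reduces to arithmetic with the explicit matrices already written down in the paper.
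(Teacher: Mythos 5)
Your proposal is correct and takes the same approach the paper takes: the paper simply says the lemma "is proved simply by working out the computations," and your three-step verification (homogeneity of the relations, propagating weights column by column through the resolution's differential matrices, and reading off the weight shifts of $f_1$ and $f_2$ from their explicit representatives) is exactly that computation carried out explicitly. Your arithmetic matches in every case I checked, and the sign convention you fix (a map $P_i(w_i)\to P_j(w_j)$ given by $\alpha\in e_j\Lambda e_i$ is weight-preserving iff $w_i-w_j=|\alpha|$) is the consistent one.
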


For purely degree reasons, the nontrivial higher multiplications $m_r$ on $\frac{k[f_1,f_2]}{(f_1^2,\;f_2^2)}$ must (up to permutation) all be of the form $m_r(f_1,\ldots,f_1)=\lambda_r f_2$ or $m_r(f_1,\ldots,f_1,f_2)=\mu_{r}f_1f_2$. We see that for the weights to match up, the only nonzero coefficients are possibly $\lambda_n$ and $\mu_2$. We already know that $\mu_2=1$, since $m_2$ is the multiplication. Now we know what to look for, we can prove:
\begin{lem}Let $n \geq 2$. Then $-2f_2$ is an element of the Massey product $\langle f_1,\ldots,f_1\rangle_n$.
\end{lem}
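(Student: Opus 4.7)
The plan is to invoke the standard relation between $A_\infty$-operations in the minimal model and Massey products: if $m_n(f_1, \ldots, f_1) = \lambda_n f_2$ in the minimal $A_\infty$-model of $\R\enn_\Lambda(S)$, then, up to a sign fixed by the conventions of the Appendix and modulo the indeterminacy of the Massey product, this is an element of $\langle f_1, \ldots, f_1\rangle_n$. It therefore suffices to run Merkulov's construction on the given resolution and verify that $\lambda_n = -2$.

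First I would set up the input to Merkulov: the chain-level identity $\tilde f_1 \circ \tilde f_1 = -2m^{n-2}\tilde f_2$ already established above. Since $m$ acts as zero on $S$, multiplication by $m$ on the resolution $P_\bullet$ is null-homotopic, and I can write down an explicit contracting homotopy $h : P_\bullet \to P_\bullet[-1]$ with $[d,h] = m \cdot \mathrm{id}$ by reading off matrix entries of the differential in the given resolution. A primitive trivialising $\tilde f_1 \circ \tilde f_1$ can then be taken to be $-2 h^{n-2} \tilde f_2$, each application of $h$ consuming one factor of $m$ and introducing no extra scalar. The weight grading (with $f_1$ of weight $-2$ and $f_2$ of weight $-2n$) forbids any alternative cocycle summand in the primitive, since such a summand would have to live in a weight in which the $\ext$-algebra is zero.

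Next I would iterate to execute the Adams-graded variant of Merkulov's construction. By the weight bookkeeping already observed in the excerpt, every intermediate operation $m_k(f_1, \ldots, f_1)$ for $2 < k < n$ has weight $-2k$, which is not the weight of any nonzero class in $\ext_\Lambda^{k}(S,S)$; hence these intermediates vanish identically, and the recursion collapses to a single surviving branch that first reaches $f_2$ at step $n$. This branch carries the initial coefficient $-2$ from the chain-level relation, and after $n-2$ applications of $h$ cancelling the remaining powers of $m$, Merkulov's formula returns $m_n(f_1, \ldots, f_1) = -2 f_2$, so that $-2 f_2 \in \langle f_1, \ldots, f_1\rangle_n$.

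The main obstacle is the sign bookkeeping: matching the sign convention in Merkulov's output to the sign appearing in the Massey-product formula from the Appendix, and checking that the Leibniz-type identities for the iteratively constructed primitives really hold with the intended signs. The scalar magnitude $2$ itself is essentially forced by the weight grading together with the single nonzero coefficient in the chain-level relation $\tilde f_1^2 = -2m^{n-2}\tilde f_2$. As a sanity check I would verify the case $n = 2$ (where $\tilde f_1^2 = -2 \tilde f_2$ on the nose, so there is nothing to compute) and explicitly trace the case $n = 3$ by hand to confirm that the sign propagates correctly through the single application of $h$ required there.
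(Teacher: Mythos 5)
Your overall strategy — run (Adams-graded) Merkulov on the resolution and then cite the theorem that the minimal-model operations produce Massey products — is a legitimate alternative route, and differs from the paper's, which skips Merkulov entirely and directly builds a defining system $b_1=f_1$, $b_r = h_{r+1}$ and applies the Massey-product lemma from the Appendix. However, there is a concrete error in the central computational step.

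You propose that a primitive for $\tilde f_1^{\,2} = -2m^{n-2}\tilde f_2$ can be taken to be $-2 h^{n-2}\tilde f_2$, where $h$ is a contracting homotopy with $[d,h] = m\cdot\mathrm{id}$, ``each application of $h$ consuming one factor of $m$.'' This is not how such homotopies compose. Since $\tilde f_2$ is a cocycle, $d_{\hom}(h\tilde f_2) = m\tilde f_2$ is correct; but for the second iterate one gets
$$d_{\hom}(h^2\tilde f_2) = d_{\hom}(h)\,(h\tilde f_2) - h\,d_{\hom}(h\tilde f_2) = m\,h\tilde f_2 - h\,(m\tilde f_2) = 0,$$
because multiplication by $m$ is central on $\Lambda$-modules supported at vertex $2$ and hence commutes with $h$. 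So $h^2\tilde f_2$ is a \emph{cocycle}, not a primitive for $m^2\tilde f_2$; iterating $h$ does not consume additional factors of $m$. The degree count also fails: $h$ has degree $-1$, $\tilde f_2$ has degree $2$, so $h^{n-2}\tilde f_2$ has degree $4-n$, whereas a primitive for the degree-$2$ chain $\tilde f_1^{\,2}$ must have degree $1$. The correct primitive is a \emph{single} application of $h$ times an appropriate power of $m$, namely (up to scalar) $m^{n-3}h\tilde f_2$; this is exactly what the maps $h_r$ in the paper are, a family with $m^{n-r}$ (not $h^{n-r}$) in a fixed matrix entry. Notice that your proposed sanity checks $n=2,3$ would not catch the mistake, since $h^{n-2}=h^{0}$ or $h^{1}$ there coincides accidentally with the correct choice; the discrepancy first appears at $n=4$.

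Beyond this, your sketch only addresses the first rung of the defining system. To compute the $n$-fold Massey product you also need to exhibit the higher $b_i$ and verify $db_i = b_1b_{i-1}+\cdots+b_{i-1}b_1$. The paper's proof rests on two structural facts about its $h_r$ that you do not establish: orthogonality $h_rh_{r'}=0$, which kills the middle terms, and the ``crucial relation'' $f_1h_r + h_rf_1 = -2m^{n-r}f_2 = dh_{r+1}$. Your plan to instead trace the full Merkulov recursion is in principle workable, but note that the weight argument only forces the cohomology classes $p_k=\pi\lambda_k$ to vanish for $2<k<n$; the chain-level $\lambda_k$ (and hence $h\lambda_k$) remain nonzero and still feed into the recursion, so the recursion does not collapse to a single branch as stated. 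Finally, the Merkulov-to-Massey theorem you cite only pins down the coefficient up to sign, so by itself it yields $\pm 2f_2 \in \langle f_1,\ldots,f_1\rangle_n$ rather than the stated $-2f_2$ — though for the paper's downstream use (detecting non-formality, then rescaling $f_2$) that weaker conclusion would suffice.
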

\begin{proof}
	For $0\leq r \leq n$, let $h_r$ be the degree $1$, weight $2-2r$ map $$\begin{tikzcd}[row sep= large, column sep=15ex] & P_2\ar[r]\ar[d,"0"] & P_1^2P_2\ar[r]\ar[d,"{\sthbthm{0}{0}{-2m^{n-r}}{0}{0}{0}{0}{0}{0}}"] & P_2P_1^2\ar[r]\ar[d,"0"] & P_2 \\ P_2\ar[r] & P_1^2P_2\ar[r] & P_2P_1^2\ar[r] & P_2 & \end{tikzcd}$$One can check that $h_3$ is a homotopy from $f_1^2$ to 0, motivating the definition of $h_r$. One can in fact compute $dh_{r+1}=-2m^{n-r}f_2$. Note that the $h_r$'s are all orthogonal, in the sense that $h_rh_{r'}=0$ for all $r,r'$. Furthermore, one can check that $f_1h_r+h_rf_1=-2m^{n-r}f_2=dh_{r+1}$; this is the crucial relation.
	
	\p Now we need to use \ref{masseylemma}. Set $b_1=f_1$ and $b_r=h_{r+1}$ for $2\leq r<n$. Then we have \linebreak $b_1b_{i-1}+\cdots+b_{i-1}b_1=b_1b_{i-1}+b_{i-1}b_1$ since the middle terms are of the form $h_rh_{r'}$. Hence, we have $b_1b_{i-1}+\cdots+b_{i-1}b_1=db_i$. So, finally we can compute $b_1b_{n-1}+\cdots+b_{n-1}b_1=-2f_2$ as claimed.
\end{proof}
\begin{cor}
	If $n>2$, then $\R\enn_\Lambda(S)$ is not formal.
\end{cor}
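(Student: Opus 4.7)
The plan is to use the standard obstruction-theoretic characterisation of formality: a dga over a field of characteristic zero is formal if and only if its minimal $A_\infty$-model has all higher multiplications $m_r$ ($r\geq 3$) equal to zero, equivalently all Massey products are defined with zero indeterminacy and vanish. So it suffices to exhibit a single non-vanishing higher operation (equivalently, Massey product) on the Ext-algebra.

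First I would invoke the weight-grading calculation carried out just before the previous lemma. By the Adams-graded version of Merkulov's construction (\ref{minmods}), any higher multiplication $m_r$ on $\ext_\Lambda^*(S,S)\cong\frac{k[f_1,f_2]}{(f_1^2,f_2^2)}$ must have bidegree $(r-2,0)$ for the internal degree and weight zero. Matching weights against the only degree-$2$ basis element $f_2$, the only operation on a string of $f_1$'s that is not forced to vanish for weight reasons is $m_n(f_1,\ldots,f_1)=\lambda_n f_2$; in particular every $m_r$ with $3\leq r<n$ is identically zero on inputs consisting of copies of $f_1$. Translating this back to Massey products, it follows that every Massey product $\langle f_1,\ldots,f_1\rangle_k$ with $2<k<n$ vanishes strictly, and hence $\langle f_1,\ldots,f_1\rangle_n$ is unambiguously defined with zero indeterminacy and agrees (up to a fixed sign) with $\lambda_n f_2$.

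The previous lemma exhibits $-2f_2$ as a representative of $\langle f_1,\ldots,f_1\rangle_n$, so $\lambda_n=\pm 2\neq 0$. Since $n>2$, this gives a genuine non-vanishing higher multiplication $m_n$ with $n\geq 3$ on the minimal $A_\infty$-model, so $\R\enn_\Lambda(S)$ cannot be formal. The main technical content is already the Massey product identification in the previous lemma; the only real obstacle at this stage is justifying that the weight-grading rules out every shorter nonzero operation or Massey product, so that the $n$-fold Massey product is genuinely strict and can be read off directly from $m_n$.
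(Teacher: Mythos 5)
Your argument is correct and takes essentially the same route as the paper: the weight-graded Merkulov construction constrains the minimal $A_\infty$-model so that the only possible nonzero higher operation is $m_n(f_1,\ldots,f_1)$, and the lemma's Massey-product computation is used to pin down that this operation is nonzero. The one step you flag yourself — that $\langle f_1,\ldots,f_1\rangle_n$ has zero indeterminacy because the lower operations vanish — is the place where your argument is a bit informal: vanishing of the $m_r$ for $3\le r<n$ gives you \emph{one} element ($0$) of each lower Massey product, not directly that the lower Massey products, nor the $n$-fold one, are singletons. A clean way to close this gap without discussing indeterminacy at all is to observe that if $\R\enn_\Lambda(S)$ were formal, its Massey products would coincide with those of the graded algebra $\ext^*_\Lambda(S,S)$ with zero differential; there every degree-$1$ element is a scalar multiple of $f_1$ and $f_1^2=0$, so \emph{every} admissible defining system produces $0$, forcing $\langle f_1,\ldots,f_1\rangle_n=\{0\}$. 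This contradicts the lemma's conclusion that $-2f_2\neq 0$ lies in the Massey product, which gives non-formality directly and then lets you read off $\lambda_n\neq 0$ as a consequence rather than a premise.
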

We sum up our study of the dga $\R\enn_\Lambda(S)$ as $n$ varies:
\begin{prop}
If $n=2$, then $\R\enn_\Lambda(S)$ is a formal dga, quasi-isomorphic to the graded algebra $k[f_1]/f_1^4$ with $f_1$ in degree 1. If $n>2$, then $\R\enn_\Lambda(S)$ is quasi-isomorphic to the strictly unital minimal $A_\infty$-algebra with underlying graded algebra $\frac{k[f_1,f_2]}{(f_1^2,\;f_2^2)}$ with $f_i$ placed in degree $i$, and a single nonzero higher multiplication given by $m_n(f_1,\ldots,f_1)=f_2$.
\end{prop}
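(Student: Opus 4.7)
My plan is to combine Kadeishvili's theorem with the Adams/weight grading established above, and then invoke the Massey product lemma to pin down the one unavoidable higher multiplication.

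First I would apply Kadeishvili's theorem (\ref{kadeish}) to transport the dga $\R\enn_\Lambda(S)$ to a minimal $A_\infty$-algebra on the cohomology $\ext_\Lambda^*(S,S)$, which was computed above to be $k[f_1]/f_1^4$ if $n=2$ and $k[f_1,f_2]/(f_1^2,f_2^2)$ if $n>2$. Next, since the projective resolution of $S$ carries a secondary (weight) grading by the previous lemma, the graded version of Merkulov's construction (\ref{minmods}) produces a minimal model that is Adams-graded. Each higher multiplication $m_r$ has cohomological degree $2-r$ and weight $0$, so for each $r$ the input tuples that can land in a nonzero homogeneous component are drastically restricted.

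For the case $n=2$, the cohomological constraint alone forces all $m_r$ with $r \geq 3$ to vanish: the Ext-algebra is generated in cohomological degree $1$ by $f_1$ with $|f_1^i|=i$, and any input of $r\geq 3$ copies of $f_1$ has degree $r$, while $m_r$ would decrease the degree by $r-2$, landing in degree $2$, which is fine numerically, but the target $f_1^2$ would force $m_r(f_1,\ldots,f_1) = \lambda f_1^2$; rescaling/uniqueness arguments together with the absence of a weight-zero option (since $f_1$ has weight $-2$ and $f_1^2$ has weight $-4$, the $r$-fold input has weight $-2r\ne -4$ for $r\ne 2$) rules this out. So $\R\enn_\Lambda(S)$ is formal.

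For $n>2$, the same weight analysis (with $f_1$ of weight $-2$ and $f_2$ of weight $-2n$) together with the cohomological degree constraint shows that the only higher multiplications not forced to vanish, up to permutation of arguments, are $m_r(f_1,\ldots,f_1)$ with all entries equal to $f_1$ and target a scalar multiple of $f_2$, and $m_r(f_1,\ldots,f_1,f_2)$ with target a scalar multiple of $f_1f_2$. Matching weights gives $r=n$ in the first family and $r=2$ in the second; the latter is the ordinary multiplication $m_2$. So the minimal model has at most one higher multiplication $m_n(f_1,\ldots,f_1)=\lambda_n f_2$, and it remains to identify $\lambda_n$.

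The main obstacle is showing $\lambda_n \ne 0$, and this is exactly what the preceding Massey product lemma accomplishes: by \ref{masseys}, the $A_\infty$-operation $m_n(f_1,\ldots,f_1)$ represents (up to a standard sign) an element of the Massey product $\langle f_1,\ldots,f_1\rangle_n$, and we showed $-2f_2$ lies in this Massey product. Since the indeterminacy of the Massey product is a sum of $f_1$-multiples of lower-length products, which lie in cohomological degrees other than $|f_2|=2$, the indeterminacy in degree $2$ is trivial, so $\lambda_n$ is a nonzero scalar. Replacing $f_2$ by $\lambda_n^{-1} f_2$ (an automorphism of the underlying graded algebra) normalises $m_n(f_1,\ldots,f_1)=f_2$, completing the identification of the minimal $A_\infty$-model.
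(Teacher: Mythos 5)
Your proof is correct and matches the paper's strategy essentially step for step: Kadeishvili's theorem plus the Adams/weight grading to reduce the minimal $A_\infty$-model to the single candidate $m_n(f_1,\ldots,f_1)=\lambda_n f_2$, the preceding Massey-product lemma to detect $\lambda_n\neq 0$, and a rescaling of $f_2$ to normalise $\lambda_n=1$. The only cosmetic difference is organisational: the paper routes the nonvanishing of $\lambda_n$ through the separate "not formal for $n>2$" corollary, whereas you argue it directly by observing that the degree/weight constraints confine the Massey product (and hence its indeterminacy) to the line $k f_2$ — a sound reorganisation that also lets you spell out the $n=2$ weight check where the paper just points back to the earlier discussion.
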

\begin{proof}
We know the statement for $n=2$ already. If $n>2$, then since $\R\enn_\Lambda(S)$ is not formal, there must exist at least one nonzero higher multiplication. But by the above, the only candidate is $m_n(f_1,\ldots,f_1)=\lambda f_2$ for some $\lambda\neq 0$. Rescaling $f_2$ as necessary, we may choose $\lambda=1$.
\end{proof}
\begin{rmk}
	A way of stating this that depends less on $n$ is to say that $\R\enn_\Lambda(S)$ is the strictly unital minimal $A_\infty-\frac{k[f_2]}{f_2^2}$-algebra generated by a single element $f_1$ subject to the relations $m_r(f_1,\ldots,f_1)=\delta_{r,n}f_2$.
	\end{rmk}

\begin{prop}\label{pagodadga}
	Suppose that $n \geq 2$. As a noncommutative dga, $\dca$ is freely generated by generators $\xi$, $\zeta$, $\theta$, with degrees $0,-1,-2$ and weights $2,2n,2n+2$ respectively. The differential is given by $d\theta=[\xi,\zeta]$, $d\zeta=\xi^n$, and $d\xi=0$.
\end{prop}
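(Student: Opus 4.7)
\emph{Proof plan.} The idea is to identify $\dca$ with the Koszul dual $E^!$ of $E \coloneqq \R\enn_\Lambda(S)$ and to compute this dual directly from the explicit minimal $A_\infty$-model for $E$ established in the preceding proposition. Recall $E$ has underlying graded algebra $k[f_1,f_2]/(f_1^2,f_2^2)$ with $|f_1|=1$, $|f_2|=2$, weights $-2$ and $-2n$, and only two nonzero operations, namely $m_2$ and $m_n(f_1,\dots,f_1)=f_2$. The augmentation ideal $E^+$ has basis $\{f_1,f_2,f_1f_2\}$, so $E^! = (T^c(sE^+))^*$ is, as a graded algebra, free associative on generators $\xi,\zeta,\theta$ dual to $sf_1,sf_2,s(f_1f_2)$. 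Since the shift $s$ sends cohomological degree $d$ to $d-1$ and dualising negates the resulting degree, the generators land in degrees $0,-1,-2$; negating the weights (the shift does not affect weights) yields weights $2,2n,2n+2$, exactly as in the statement.

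To compute the differential on $E^!$, I would dualise the bar coderivation on $BE=T^c(sE^+)$. Each $A_\infty$-operation $m_r:(E^+)^{\otimes r}\to E^+$ contributes a summand of the differential on $E^!$ that sends the generator dual to $g\in E^+$ to the sum of products $(sf_{i_1})^*\cdots(sf_{i_r})^*$ over all $r$-tuples with $m_r(f_{i_1},\dots,f_{i_r})$ equal to a nonzero scalar multiple of $g$, decorated with the Koszul signs introduced by the $r$-fold suspension. So the calculation reduces to enumerating how $m_2$ and $m_n$ hit each basis element of $E^+$. Nothing outputs $f_1$, so $d\xi=0$. The basis element $f_2$ is output only by $m_n(f_1,\dots,f_1)$, yielding $d\zeta=\xi^n$ up to a sign that can be absorbed by rescaling $\zeta$. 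The basis element $f_1 f_2$ is output under $m_2$ only by the pairs $(f_1,f_2)$ and $(f_2,f_1)$ (graded-commutativity together with $f_1^2=f_2^2=0$ rules out the other $m_2$-products, and the weight-and-degree analysis previously used to pin down the $A_\infty$-structure on $E$ forbids any higher $m_r$ from outputting $f_1f_2$). After tracking the suspension signs on degree-$1$ and degree-$2$ elements, these two contributions combine as $\xi\zeta-\zeta\xi=[\xi,\zeta]$, giving $d\theta=[\xi,\zeta]$.

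The only genuinely subtle step is the Koszul-sign bookkeeping that produces a true commutator rather than an anticommutator in $d\theta$; the remainder of the argument is mechanical enumeration. A useful sanity check at the end is to verify $H^0(E^!)=k\langle\xi\rangle/(d\zeta)=k[\xi]/\xi^n$, recovering the classical Pagoda contraction algebra $A_\con$, and to confirm that the total cohomology has the expected form $A_\con[\eta]$ for a minimal model, consistent with the general results recalled earlier.
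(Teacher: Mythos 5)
Your proposal is correct and follows essentially the same route as the paper: both identify $\dca$ as the Koszul dual of the minimal $A_\infty$-model for $\R\enn_\Lambda(S)$ computed in the preceding proposition, read off the free generators $\xi=f_1^*$, $\zeta=f_2^*$, $\theta=(f_1f_2)^*$ with the stated degrees and weights, and obtain the differential by dualising the $A_\infty$ bar differential. The paper's proof is simply terser; your version spells out the enumeration of which operations $m_2$, $m_n$ hit each basis element, which is exactly the computation the paper leaves implicit.
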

\begin{proof}
	This is the definition of the Koszul dual dga: take the three basis elements $f_1$, $f_2$, $f_3$ of the augmentation ideal of $\R\enn_\Lambda(S)$, dualise (we put $\xi=f_1^*,\zeta=f_2^*,\theta=f_3^*$), and shift. The differential $d(x^*)$ is the signed sum of the products $x_1^*\cdots x_r^*$ such that $d(x_1 | \cdots | x_r)=x$, where $d$ denotes the $A_\infty$ bar differential.
\end{proof}
\begin{rmk}
	As in \cite[3.3]{amquivers}, one can check that the relations in $\Lambda$ come from equipping the quiver with the superpotential $\overline{W}\coloneqq las-lbt-ams+bmt-\frac{2}{n+1}l^{n+1}+\frac{2}{n+1}m^{n+1}$ and taking the associated Jacobi algebra. In order to compute $A_\con$, one simply takes the subquiver $\begin{tikzcd} \cdot_2 \ar[loop right, "m"]\end{tikzcd}$ and equips it with the modified superpotential $W\coloneqq \frac{2}{n+1}m^{n+1}$. One can easily check that the Ginzburg dga associated to $W$ is precisely the dga appearing in \ref{pagodadga}.
\end{rmk}

Observe that $H^0(\dca)\cong k[\xi]/\xi^n \cong A_\con$, as expected. It's also not too difficult to compute $H^{-1}(\dca)$: the only elements in degree -1 are noncommutative polynomials in $\xi, \zeta$ with exactly one occurrence of $\zeta$. Noting that $\xi$ is a cocycle and $d\theta=[\xi,\zeta]$, we see that such a polynomial is homotopic to one of the form $p=\zeta\sum_i a_i \xi^i$. But $dp=\sum_i a_i \xi^{i+n}$, and this is zero if and only if $p=0$. So $H^{-1}(\dca)\cong 0$. Hence we find that $H(\dca)$ is zero in odd degrees, and $A_\con$ in nonpositive even degrees. We can now prove:
\begin{lem}
	The algebra map $\dca \to \dca/(\theta, d\theta )$ is a quasi-isomorphism.
\end{lem}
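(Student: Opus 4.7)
The plan is to compute $H^*(Q)$ directly, where $Q := \dca/(\theta, d\theta)$, observe that its total cohomology matches that of $\dca$, and then promote this to a genuine quasi-isomorphism by exhibiting a cocycle in $\dca$ that lifts the generator of $H^{-2}(Q)$.

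First I would identify $Q$ explicitly as a dga. Quotienting by $\theta$ forces $d\theta = [\xi,\zeta]$ to die, so $Q$ is the graded-commutative polynomial dga $k[\xi,\zeta]$ with $|\xi|=0$, $|\zeta|=-1$, $d\xi = 0$, and $d\zeta = \xi^n$. Writing $Q^{-k} = k[\xi]\zeta^k$ and applying the graded Leibniz rule with the sign $|\zeta|=-1$ yields $d(\zeta^{2m}) = 0$ and $d(\zeta^{2m+1}) = \xi^n\zeta^{2m}$ by an easy induction. From this one reads off $H^{-2m}(Q) \cong k[\xi]/\xi^n = A_\con$, generated by $[\zeta^{2m}]$, and $H^{\text{odd}}(Q) = 0$. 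In particular $H^*(Q)$ has the same graded dimensions as $H^*(\dca) = A_\con[\eta]$.

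Next I would lift the periodicity to a cocycle in $\dca$. The element $P := \zeta^2 - \sum_{k=0}^{n-1}\xi^k \theta \xi^{n-1-k} \in \dca^{-2}$ satisfies $dP = 0$, because the obstruction $d(\zeta^2) = \xi^n\zeta - \zeta\xi^n = [\xi^n,\zeta]$ expands as the telescoping sum $\sum_{k=0}^{n-1}\xi^k[\xi,\zeta]\xi^{n-1-k} = \sum_k \xi^k(d\theta)\xi^{n-1-k}$, which is exactly cancelled by the correction term. By construction $\pi(P) = \zeta^2$, which generates $H^{-2}(Q) = A_\con$ and in particular does not lie in $\xi H^{-2}(Q)$. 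Hence $[P]$ cannot lie in $\xi H^{-2}(\dca)$ either, and since $H^{-2}(\dca) \cong A_\con$ is cyclic of rank one over $A_\con$, this forces $[P]$ to be a generator, hence a unit multiple of $\eta$ by the uniqueness clause of \ref{etaex}.

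The map $\pi^*$ is therefore an isomorphism in degrees $0$ and $-2$. Since $H^*(\dca) = A_\con[\eta]$ and $H^*(Q) = A_\con[[\zeta^2]]$ are both polynomial algebras over $A_\con = H^0$ on a single degree-$(-2)$ generator, and $\pi^*$ is an $A_\con$-algebra map that sends the generator to a generator, multiplicativity propagates the isomorphism to every degree. The only real obstacle is Step 2: writing down the explicit cocycle lift $P$. Once one notices that $d(\zeta^2)$ telescopes into a sum of $\xi^k(d\theta)\xi^{n-1-k}$, the correction term is forced and the whole argument falls out.
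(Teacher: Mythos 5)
Your proof takes essentially the same approach as the paper: compute $H^*$ of the quotient, observe levelwise agreement with $H^*(\dca)\cong A_\con[\eta]$, exhibit an explicit cocycle lift of $\zeta^2$, and propagate via multiplicativity. In fact your cocycle $\zeta^2 - \sum_{k=0}^{n-1}\xi^k\theta\xi^{n-1-k}$ has the correct sign (the paper writes a $+$, but with $d\theta=[\xi,\zeta]$ the correction term must carry a minus to cancel $d(\zeta^2)=[\xi^n,\zeta]$), and your argument that $[P]$ must be a unit multiple of $\eta$ because its image is not in $\xi H^{-2}(Q)$ is a clean justification of the quasi-surjectivity step the paper leaves implicit.
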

\begin{proof}
	It's easy to check that the dga $\dca/(\theta, d\theta )$ is isomorphic to the dga $\frac{k[\xi]\langle\zeta\rangle}{\xi\zeta-\zeta\xi}$ with $d\zeta=\xi^n$. The cohomology algebra of this dga is $\frac{k[\xi]}{\xi^n}[\eta]$, where $\eta=\zeta^2$ is a degree -2 element. In particular, the cohomology algebra is levelwise isomorphic to that of $\dca$, so in order to check that the quotient map $\dca \to \dca/(\theta, d\theta )$ is a quasi-isomorphism we only need to check that it is a quasi-surjection. But one can check that $\zeta^2+\Sigma_{i=1}^n\xi^{i-1}\theta\xi^{n-i} \in \dca$ is a cocycle, and maps to $\zeta^2$ in the quotient.
\end{proof}
\begin{rmk}
	The expression $\zeta^2+\Sigma_{i=1}^n\xi^{i-1}\theta\xi^{n-i}$ comes from using $d\theta=\xi\zeta-\zeta\xi$ to repeatedly commute $\xi$ with $\zeta$ in $d(\zeta^2)=\xi^n\zeta-\zeta\xi^n$. One can also check that this cocycle is homogenous of weight $4n$.
\end{rmk}
\begin{thm}\label{dcapagoda}
	The derived contraction algebra associated to the $n$-Pagoda flop is quasi-isomorphic to the strictly unital minimal $A_\infty$-algebra $\frac{k[\xi]}{\xi^n}[\eta]$ with $\xi$ in degree $0$, $\eta$ in degree $-2$, and higher multiplications given by $$m_r(\eta^{i_1}\xi^{j_1} \otimes \cdots\otimes \eta^{i_r}\xi^{j_r}) = \begin{cases} -(-1)^{\frac{r}{2}}C_{\frac{r}{2}}\eta^{i + \frac{r}{2}-1}\xi^{j -n(r-2)} & r\text{ is even and } n(r-2)\leq j < n(r-1)\\ 0 & \text{otherwise}
	\end{cases}$$where we put $i=i_1 + \cdots + i_r$ and $j=j_1+\cdots+j_r$ and the $C_p$ are the (shifted) Catalan numbers with $C_1=1$, $C_2=1$, $C_3=2$, $C_4=5$, et cetera.
\end{thm}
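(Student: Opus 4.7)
The plan is to use Kadeishvili's theorem to transfer the dga structure on a simpler quasi-isomorphic model of $\dca$ to a minimal $A_\infty$-structure on its cohomology. By the preceding lemma, $\dca$ is quasi-isomorphic to the dga $B \coloneqq \dca/(\theta, d\theta) \cong k[\xi, \zeta]$, where $|\xi| = 0$, $|\zeta| = -1$, $\xi$ and $\zeta$ commute (not graded-commutatively), and $d\zeta = \xi^n$; so it suffices to compute the minimal model of $B$.

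I would take the Hodge decomposition $B = H \oplus \im d \oplus L$ with harmonic subspace $H = \bigoplus_{0 \leq i < n,\, k \geq 0} k \cdot \xi^i \eta^k$ (writing $\eta = \zeta^2$), complement $L = \bigoplus_{i,k \geq 0} k \cdot \xi^i \eta^k \zeta$, and degree $-1$ homotopy $Q$ inverting $d$ on $\im d$ and vanishing on $H \oplus L$. Merkulov's explicit version of Kadeishvili's theorem (see \ref{minmods}) then presents each $m_r$ as a signed sum over planar rooted binary trees with $r$ leaves: leaves are decorated by inputs under $\iota$, internal nodes by the multiplication in $B$, non-root internal nodes by $Q$, and the root by $p$. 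The key structural step is a $\zeta$-parity analysis: leaves contribute even $\zeta$-parity while outputs of $Q$ contribute odd $\zeta$-parity, and for $p$ (at the root) or $Q$ (at a non-root internal node) to give a nonzero output the product at that node must have even $\zeta$-parity. This forces every internal node to have two children of matching type, i.e.\ both leaves or both non-root internal nodes. A straightforward induction via the recursion $C_m = \sum_{a+b=m} C_a C_b$ with $C_1 = 1$ then identifies exactly $C_{r/2}$ such trees for $r$ even, and none for $r$ odd. Each contributing tree applies $Q$ exactly $r - 2$ times, each application decreasing the $\xi$-exponent by $n$ and incrementing the $\zeta$-exponent by one, so the root product is $\xi^{j - n(r-2)} \eta^{i + r/2 - 1}$, surviving under $p$ precisely when $n(r-2) \leq j < n(r-1)$.

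The main obstacle will be the Koszul sign bookkeeping in Merkulov's formula: one must check that all $C_{r/2}$ contributing trees acquire the same sign, so that the contributions add rather than cancel, yielding the overall coefficient $-(-1)^{r/2} C_{r/2}$. This can be organised as an induction on $r$, with base cases $r = 2$ (just the algebra multiplication) and $r = 4$ (a single contributing tree, matching the sign convention fixed by the formality statement for $n = 2$), and propagated through the Catalan recursion. A secondary subtlety is that individual contributing trees may vanish on specific inputs due to intermediate $\xi$-power constraints at internal nodes; any discrepancy between Merkulov's raw output and the clean formula in the theorem can be absorbed by an $A_\infty$-automorphism of $H$ (essentially rescaling $\eta$ and modifying the higher $m_r$ by a gauge transformation), putting the multiplications into the stated normal form.
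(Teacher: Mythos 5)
Your reduction to $B\coloneqq\dca/(\theta,d\theta)$ and the Merkulov/tree-sum machinery is exactly the paper's strategy; your Hodge decomposition is the paper's section $\sigma$ and homotopy $h$, the $\zeta$-parity analysis is correct, and your Catalan recursion $T(2m)=\sum_{p+q=m}T(2p)T(2q)$, $T(2)=1$, is the same as the paper's $C'_r=-\sum_{2s+2t=r}C'_{2s}C'_{2t}$.

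The ``secondary subtlety'' you flag is, however, a genuine gap, and it cannot be absorbed by an $A_\infty$-automorphism of $H$. The paper's inductive assertion that $(2s,2t)(\xi^{j_1}\otimes\cdots\otimes\xi^{j_r})$ is always equal to $C'_{2s}C'_{2t}\zeta^{r-2}\xi^{j-n(r-2)}$, independently of the partition, is false once $n\geq 3$: take $n=3$ and the unique tree for $r=4$ applied to $\xi^2\otimes 1\otimes\xi^2\otimes\xi^2$. Here $j=6=n(r-2)$, yet your $Q$ (the paper's $h$) sends $\xi^2\cdot 1=\xi^2$ to $\zeta\xi^{2-3}=0$, so $m_4=-m_2\bigl(Q(\xi^2)\otimes Q(\xi^4)\bigr)=0$, not $-\eta$. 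In fact Merkulov's $m_4$ carries partition constraints, namely $m_4(\xi^{j_1},\ldots,\xi^{j_4})=-\eta\xi^{j-2n}$ precisely when $j_1+j_2\geq n$ and $j_3+j_4\geq n$, and these do not reduce to the clean condition $n(r-2)\leq j< n(r-1)$ on the total exponent. More seriously, your proposed gauge fix cannot work even in principle, because the theorem's formula is not an $A_\infty$-structure at all for $n\geq 3$: test $\mathrm{St}_5$ on $(\xi^2,\xi^2,\xi^2,1,1)$ with $n=3$. Terms $1$ and $2$ vanish (they involve $m_2(\xi^2,\xi^2)=0$ in $k[\xi]/\xi^3$), terms $3$ and $4$ cancel each other, and term $6$ vanishes since $m_4(\xi^2,\xi^2,1,1)=0$ (there $j=4<6$); that leaves term $5$, namely $m_2\bigl(m_4(\xi^2,\xi^2,\xi^2,1),1\bigr)=-\eta$, uncancelled, so the identity fails. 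So no $A_\infty$-automorphism can transport the Merkulov structure to the formula as printed. You have in fact located a real defect in the published proof (and, for $n\geq 3$, in the stated formula itself): the higher multiplications genuinely depend on more than the total $\xi$-exponent, and the extra partition conditions your tree analysis produces must appear in the answer.
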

\begin{proof}
	By the above, $\dca$ is quasi-isomorphic to the dga $C\coloneqq \frac{k[\xi]\langle\zeta\rangle}{\xi\zeta-\zeta\xi}$ with $d\zeta=\xi^n$. We know that the cohomology of $C$ is $\frac{k[\xi]}{\xi^n}[\eta]$, where $\eta=\zeta^2$. We use Merkulov's construction to augment $C$ with higher multiplications $m_r$ inducing an $A_\infty$ quasi-isomorphism with $\dca$. One linear section of the projection map $\pi: C \to HC$ is the map $\sigma$ that in odd degrees is zero, and in even degrees sends $\eta^i\xi^j$ to $\eta^i\xi^j$ if $j<n$, and zero otherwise. Composing the projection with the section yields the linear endomorphism of $C$ given by $$\sigma \pi\quad = \quad\begin{tikzcd} \cdots \ar[r,"0"] & k[\xi] \ar[r,"\xi^n"] \ar[d,"0"] & k[\xi] \ar[r,"0"] \ar[d,"\epsilon"] & k[\xi] \ar[r,"\xi^n"] \ar[d,"0"]& k[\xi] \ar[d,"\epsilon"] \\ \cdots \ar[r,"0"] & k[\xi] \ar[r,"\xi^n"] & k[\xi] \ar[r,"0"] & k[\xi] \ar[r,"\xi^n"] & k[\xi]  \end{tikzcd}$$ where $\epsilon(\xi^i)$ is $\xi^i$ if $i<n$, and 0 otherwise. Firstly, we need to construct a homotopy $h: \id_C \to \sigma\pi$. Interpreting a negative power of $\xi$ as 0, one can check that the (periodically extended) homotopy given by $h_1(\xi^i)= \xi^{i-n}$ and $h_2=0$ works: $$\begin{tikzcd} \cdots \ar[r,"0"] & k[\xi] \ar[r,"\xi^n"] \ar[d,"0"] \ar[dl,swap,"h_2"] & k[\xi] \ar[r,"0"] \ar[d,"\epsilon"] \ar[dl,swap,"h_1"] & k[\xi] \ar[r,"\xi^n"] \ar[d,"0"]\ar[dl,swap,"h_2"]& k[\xi] \ar[d,"\epsilon"]\ar[dl,swap,"h_1"] \\ \cdots \ar[r,"0"] & k[\xi] \ar[r,"\xi^n"] & k[\xi] \ar[r,"0"] & k[\xi] \ar[r,"\xi^n"] & k[\xi]  \end{tikzcd}$$In other words, we have $h(\eta^i\xi^j)=\zeta\eta^i\xi^{j-n}$. Now we put inductively $$m_r\coloneqq \sum_{s+t=r}(-1)^{s+1}(s,t)$$where for brevity I write $(s,t)\coloneqq m_2(hm_s\otimes hm_t)$ and $hm_1\coloneqq -\id_A$. Then Merkulov's theorem tells us that $HC$, augmented with the $m_r$, is $A_\infty$ quasi-isomorphic to $C$. First I claim that when $r>1$ is odd, then $m_r$ vanishes: this is clear for degree reasons. From now on we may assume that $r$ is even, and we can conclude that the sum defining $m_r$ reduces to $m_r=-\sum_{s+t=r}(s,t)$. I next claim that the maps $m_r$ are $\eta$-linear, in the sense that $$m_r(\eta^{i_1}\xi^{j_1} \otimes \cdots\otimes \eta^{i_r}\xi^{j_r}) = \eta^{i_1 + \cdots + i_r}m_r(\xi^{j_1}\otimes \cdots\otimes \xi^{j_r})$$holds. This is not hard to see inductively: it's clearly true for $m_2$. Suppose that it's true for all $r'<r$, and suppose $s+t=r$ with $s,t>0$. Then \begin{align*}
	&(s,t)(\eta^{i_1}\xi^{j_1} \otimes \cdots\otimes \eta^{i_r}\xi^{j_r})
	\\&=m_2(hm_{s}(\eta^{i_1}\xi^{j_1} \otimes \cdots\otimes \eta^{i_{s}}\xi^{j_{s}})\otimes hm_{t}(\eta^{i_{r-t+1}}\xi^{j_{r-t+1}} \otimes \cdots\otimes \eta^{i_r}\xi^{j_r}))\\&=m_2(h(\eta^{i_1+\cdots+ i_s}m_{s}(\xi^{j_1} \otimes \cdots\otimes \xi^{j_{s}}))\otimes h(\eta^{i_{r-t+1}+\cdots+ i_r} m_{t}(\xi^{j_{r-t+1}} \otimes \cdots\otimes \xi^{j_r})))\\&=m_2(\eta^{i_1+\cdots+ i_s}hm_{s}(\xi^{j_1} \otimes \cdots\otimes \xi^{j_{s}})\otimes \eta^{i_{r-t+1}+\cdots+ i_r}h m_{t}(\xi^{j_{r-t+1}} \otimes \cdots\otimes \xi^{j_r}))\\&=\eta^{i_1+\cdots+ i_r}m_2(hm_{s}(\xi^{j_1} \otimes \cdots\otimes \xi^{j_{s}})\otimes h m_{t}(\xi^{j_{r-t+1}} \otimes \cdots\otimes \xi^{j_r}))\\&=\eta^{i_1+\cdots+ i_r}(s,t)(\xi^{j_1} \otimes \cdots\otimes \xi^{j_r})
	\end{align*}
	where the second and sixth line follow because all degrees of elements are even, the third line follows from the induction hypothesis, the fourth uses that $h(\eta x)=\eta h(x)$, and the fifth uses centrality of $\eta$ as well as collating powers of $\eta$. The claim now follows by adding all of these up using $m_r=-\sum_{s+t=r}(s,t)$. Now all that needs to be done is determine $m_r(\xi^{j_1} \otimes \cdots\otimes \xi^{j_r})$ when $r>2$ is even. First observe that $h(\xi^i)=\zeta\xi^{i-n}$, where we again interpret a negative power of $\xi$ as zero. Let $C_p$ be the $p^\text{th}$ Catalan number, with indexing starting from $p=1$; so $C_1=1$, $C_2=1$, $C_3=2$, $C_4=5$, et cetera. The important point for us will be that $C_p=\sum_{s+t=p}C_sC_t$ and $C_1=1$, where in the sum we require that $s$ and $t$ are positive integers. For even $r>0$, put $C'_r\coloneqq -(-1)^{\frac{r}{2}}C_{\frac{r}{2}}$. I claim that $m_r(\xi^{j_1} \otimes \cdots\otimes \xi^{j_r})=C'_r\zeta^{r-2}\xi^{j_1+\cdots+j_r -n(r-2)}$. Certainly this holds for $r=2$. Inductively as before, one sees that for $2s+2t=r$, the expression $(2s,2t)(\xi^{j_1} \otimes \cdots\otimes \xi^{j_r})$ is exactly $C'_sC'_t\zeta^{r-2}\xi^{j_1+\cdots+j_r -n(r-2)}$. Hence, since we may compute the sum defining $m_r$ by summing only over even terms, we get \begin{align*}
	m_r(\xi^{j_1} \otimes \cdots\otimes \xi^{j_r})&=-\sum_{2s+2t=r}(2s,2t)
	(\xi^{j_1} \otimes \cdots\otimes \xi^{j_r})\\&=\left(-\sum_{2s+2t=r}C'_{2s}C'_{2t}\right)\zeta^{r-2}\xi^{j_1+\cdots+j_r -n(r-2)}\\&=\left(-\sum_{s+t=\frac{r}{2}}-(-1)^sC_{s}\cdot -(-1^t)C_{t}\right)\zeta^{r-2}\xi^{j_1+\cdots+j_r -n(r-2)}
	\\&=C'_r\zeta^{r-2}\xi^{j_1+\cdots+j_r -n(r-2)}
	\end{align*}where the final line follows by the identity defining the Catalan numbers. Putting everything together, using $\eta=\zeta^2$, and recalling that we interpret a negative power of $\xi$ as zero, we obtain the required identities. Note that $n(r-2)\leq j$ is required to make the exponent of $\xi$ positive, and $j<n(r-1)$ is required to make it less than $n$ (so one could drop this condition if necessary).
\end{proof}

\begin{rmk}
Note that the higher multiplications are all $\eta$-linear, so another way to state the above is to say that the derived contraction algebra $\dca$ is quasi-isomorphic to the strictly unital minimal $A_\infty-k[\eta]$-algebra generated by $\xi$ subject to the relations $\xi^n=0$ and $$m_r(\xi^{j_1} \otimes \cdots\otimes \xi^{j_r}) = \begin{cases} -(-1)^{\frac{r}{2}}C_{\frac{r}{2}}\eta^{ \frac{r}{2}-1}\xi^{j -n(r-2)} & r\text{ is even and } n(r-2)\leq j < n(r-1)\\ 0 & \text{otherwise}\end{cases}$$
\end{rmk}

\subsection{The Laufer Flop}\label{lauferex}
We sketch a computation of the derived contraction algebra associated to the Laufer flop, which is a $D_4$ flop with (completed) base $\frac{k\llbracket x,y,u,v \rrbracket}{u^2+v^2y - x(x^2+y^3)}$ first appearing in \cite{laufer}. Following \cite{amquivers}, one noncommutative model is the algebra $A$ given by the (completion of) the following quiver with relations:\\

\begin{wrapfigure}[4]{L}{0.5\textwidth}
	\centering
	\vspace{-15pt}$
	\begin{tikzcd}
	1  \arrow[rr,bend left=30,"a"]  && 2  \arrow[ll,bend left=30,"b"] \arrow[in=20, out=70, distance=7mm,"x"] \arrow[out=-20, in=-70, distance=7mm,"y"]
	\end{tikzcd}$
\end{wrapfigure} 

$ay^2=-aba$

$y^2b=-bab$

$xy=-yx$

$x^2+yba+bay=y^3$
\\\hfill\\
One can check that $A$ admits a nontrivial grading with $a,b,y$ of weight 2 and $x$ of weight 3. The simple $S$ at the vertex 2 has a resolution (compatible with the weights) given by $$P_2 \xrightarrow{\left( \begin{smallmatrix} x \\ a \\ y \end{smallmatrix} \right)\cdot} P_2P_1P_2 \xrightarrow{\sthbthm{0}{ab}{ay}{y}{0}{x}{x}{yb}{ba-y^2}\cdot} P_1P_2P_2 \xrightarrow{( b ,  x , y )\cdot} P_2$$and using this one can check that the Ext-algebra of $S$ is of the form $$\ext^*_A(S,S)\cong\frac{k[g]\langle f\rangle}{f^3, fg-gf}$$ where the generators $g$ and $f$ both have degree 1 and weights $-2$ and $-3$ respectively (and in particular $g$ is square-zero). This is a six-dimensional algebra, with basis $\{1,g,f, gf, f^2, gf^2\}$. A computation with weights tells us that the only possible nontrivial higher Massey product for $\ext^*_A(S,S)$ is of the form $\langle g,g,g \rangle=\lambda f^2$. Picking an explicit lift $G$ of $g$ and a homotopy $H:G^2 \xrightarrow{\simeq} 0$ allows one to show that $f^2=[GH+HG]\in \langle g,g,g \rangle$. Hence $\R\enn_A(S)$ is not formal, and so must be quasi-isomorphic to the strictly unital minimal $A_\infty$-algebra with underlying graded algebra $\ext^*_A(S,S)$ and a single nonzero higher multiplication given by $m_3(g,g,g)=f^2$.
\p Now we wish to compute the Koszul dual of $\R\enn_A(S)$. Put $x=f^*$, $y=g^*$, $\zeta=(fg)^*$, $\xi=(f^2)^*$ and $\theta=(gf^2)^*$. Then $\R\enn_A(S)^!$ is freely generated by $\{x,y,\zeta,\xi,\theta\}$, and after working out the $A_\infty$ bar differential one arrives at the following theorem:

\begin{thm}\label{lauferflop}
	The derived contraction algebra of the Laufer flop is freely generated as a noncommutative dga by elements $x,y,\zeta,\xi,\theta$ in degrees $0,0,-1,-1,-2$ and of weights $3,2,5,6,8$ respectively. The differential is defined on generators by $dx=dy=0$, $d\zeta=-(xy+yx)$, $d\xi=y^3-x^2$, and $d\theta = [\xi,y] + [\zeta,x]$.
	\end{thm}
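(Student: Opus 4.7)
The strategy mirrors the Pagoda computation: by Proposition \ref{dqiskd} we have $\dca \simeq \R\enn_A(S)^!$, so it suffices to compute $\R\enn_A(S)$ as a minimal $A_\infty$-algebra and then take the Koszul dual. Kadeishvili's theorem (\ref{kadeish}) furnishes a minimal $A_\infty$-structure on the six-dimensional algebra $\ext^*_A(S,S) \cong \frac{k[g]\langle f\rangle}{f^3,\, fg-gf}$, with $g,f$ of cohomological degree $1$ and weights $-2,-3$ respectively (this can be read off the given resolution of $S$, since the grading on $A$ lifts to the resolution). Since $m_r$ has bidegree $(r-2,0)$, the weight grading drastically restricts which higher operations can be nonzero.

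The first substantial step is to enumerate the basis $\{1,g,f,gf,f^2,gf^2\}$ together with their weights and check that the only possibly nontrivial higher multiplication compatible with both the cohomological degree and the weight is $m_3(g,g,g) = \lambda f^2$, for some scalar $\lambda$. The key technical step --- and the main obstacle --- is to show $\lambda \neq 0$, equivalently that $\R\enn_A(S)$ is non-formal. I would detect this via the triple Massey product $\langle g,g,g\rangle$: choose an explicit cocycle lift $G$ of $g$ at the level of the resolution, use the defining relations $ay^2=-aba$, $y^2b=-bab$, $x^2+yba+bay=y^3$ of $A$ to construct a chain homotopy $H : G^2 \simeq 0$, and then verify using the formulas of \ref{masseylemma} that $f^2 = [GH + HG]$ in $\ext^2_A(S,S)$. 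The non-vanishing of this Massey class then forces $\lambda \neq 0$, and after rescaling $f$ we may normalise $m_3(g,g,g)=f^2$. This step is where all the combinatorial content of the Laufer relations is concentrated, and I expect it to require the most care.

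Once the $A_\infty$-structure on $\R\enn_A(S)$ is pinned down, the remainder is essentially mechanical. The Koszul dual is the (completed) tensor algebra on the shifted dual of the augmentation ideal, with generators $x=f^*$, $y=g^*$, $\zeta=(fg)^*$, $\xi=(f^2)^*$, $\theta=(gf^2)^*$, whose degrees and weights are exactly the shifts of those in $\R\enn_A(S)$, matching the statement. The bar differential applied to a generator $\omega^*$ is the signed sum of products $x_1^* \cdots x_r^*$ for which $m_r(x_1,\ldots,x_r)=\omega$. Running through the $m_2$ products gives $d\zeta = -(xy+yx)$ from $m_2(f,g)=m_2(g,f)=fg$ and $-x^2$ as the $m_2$-contribution to $d\xi$; the single nontrivial $m_3$ operation $m_3(g,g,g)=f^2$ adds $y^3$ to $d\xi$, producing $d\xi = y^3 - x^2$; and analogous bookkeeping of the $m_2$-products landing on $gf^2$ yields $d\theta = [\xi,y] + [\zeta,x]$. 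Comparing with the stated presentation completes the proof.
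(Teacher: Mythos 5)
Your proposal reproduces the paper's argument step for step: it reads off $\ext^*_A(S,S) \cong \frac{k[g]\langle f\rangle}{(f^3,\,fg-gf)}$ from the given resolution, uses the weight grading to rule out every higher multiplication except $m_3(g,g,g)=\lambda f^2$, detects non-formality via the Massey product $\langle g,g,g\rangle = [GH+HG]=f^2$, and then computes $\dca=\R\enn_A(S)^!$ by unwinding the $A_\infty$ bar differential on the basis $\{f,g,fg,f^2,gf^2\}$. This is exactly the route the paper takes, and the bookkeeping you sketch is correct.
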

\begin{rmk}
In particular, one can use the above description to see that $H^0(\dca)\cong \frac{k\langle x,y\rangle }{xy+yx, x^2-y^3}$, the \textbf{quantum cusp}, which recovers the computation of \cite[Example 1.3]{DWncdf}.
\end{rmk}
It is unclear to the author how to produce an explicit cocycle representing the periodicity element $\eta \in H^{-2}(\dca)$ in terms of the generators given above. It may be feasible to use \ref{dsgrmk} to produce a model of $\dca$ where $\eta$ is represented by a genuinely central cocycle.

\begin{rmk}\label{lauferrmk}
The relations on the path algebra $A$ come from a superpotential $$\overline{ W}\coloneqq bay^2+\frac{1}{2}abab+x^2y-\frac{1}{4}y^4$$(cf. \cite[4.4]{amquivers}). Following \cite{DWncdf}, to compute the contraction algebra $A_\con$ one considers the subquiver 
	$$\begin{tikzcd}
	 2  \arrow[in=20, out=70, distance=7mm,"x"] \arrow[out=-20, in=-70, distance=7mm,"y"]
	\end{tikzcd}$$equipped with the modified superpotential $W\coloneqq x^2y-\frac{1}{4}y^4$. One can easily see that the path algebra of this quiver with superpotential is precisely $A_\con$. The Ginzburg dga associated to $W$ has generators $\{x,y,x^*,y^*,z\}$ in degrees $0,0,-1,-1,-2$ respectively, with differential $dx=dy=0$, $dx^*=-(xy+yx)$, $dy^*=y^3-x^2$, and $dz=[x,x^*] + [y,y^*]$. One can easily see that this Ginzburg dga is isomorphic (not just quasi-isomorphic!) to the dga we obtain in \ref{lauferflop} above.

\end{rmk}

\section{Surface examples}\label{surfcomps}
In this section, we compute some examples of derived contraction algebras for surfaces. All of the examples we will consider will be partial crepant resolutions of Kleinian singularities obtained by taking a slice of a threefold flopping contraction. Note that in this situation, we are automatically in the Local Setup \ref{localsetup} so may define the derived contraction algebra. Firstly, we'll prove some useful facts about slicing a threefold flopping contraction by a generic hyperplane, especially with regards to the tilting theory. Then we'll compute the derived contraction algebras associated to one-curve partial resolutions of $A_n$ singularities.

\subsection{Slicing}\label{slicingsctn}
In this section, we'll think about slicing flopping contractions by generic hyperplanes to get partial crepant resolutions of surface singularities. We'll pay special attention to how tilting bundles and their endomorphism rings behave under slicing; our aim is to prove Theorem \ref{cutsthm}. All of the arguments we use in this part were communicated to us by Michael Wemyss; the general idea is to adapt a proof of Ishii and Ueda \cite[8.1]{ishiiueda}.

\p The setup for this part will be a threefold flopping contraction $\pi: X \to \spec R$. Slice it by a generic hyperplane section to get a pullback diagram of the form $$\begin{tikzcd} Y\ar[d,"\psi"] \ar[r,"j"]& X \ar[d,"\pi"]\\ \spec(R/g) \ar[r,"i"]& \spec R
\end{tikzcd}$$First note that, by Reid's general elephant principle \cite[1.1, 1.14]{reidpagoda}, $\psi$ is a partial crepant resolution of an ADE singularity, and in particular projective and birational.

\begin{lem}\label{cutslem}
With the setup as above, the following hold:\hfill
\begin{enumerate}
	\item[\emph{i)}] $g: \mathcal{O}_X \to \mathcal{O}_X $ is an injection.
	\item[\emph{ii)}] Let $\mathcal W$ be a vector bundle on $X$. Then there is a short exact sequence $$0\to \mathcal W \xrightarrow{g} \mathcal W \to j_*j^*\mathcal W \to 0$$
	\item[\emph{iii)}] Let $\mathcal W$ be a vector bundle on $X$ such that $\R^p\pi_*\mathcal W \simeq 0$ for $p>0$. Putting $W\coloneqq \pi_*\mathcal W$, we have a quasi-isomorphism $\R\pi_*(j_*j^* \mathcal W) \simeq W / g W$.
\end{enumerate}
\end{lem}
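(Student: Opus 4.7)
The plan is to prove the three statements in the order given, with each part feeding into the next.

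For (i), the strategy is to combine integrality of $X$ with genericity of $g$. Since $X$ is a noetherian normal integral scheme, $\mathcal{O}_X$ is a sheaf of integral domains, so multiplication by any nonzero global section is an injection of sheaves. It remains to observe that $g \in R$ is nonzero (since it was chosen generically to cut out a hyperplane section) and that its pullback to $\Gamma(X,\mathcal{O}_X)$ remains nonzero. The latter holds because $\pi$ is birational with $\R \pi_* \mathcal{O}_X \simeq R$ (from the Zariski Local Setup \ref{localsetup}), so the natural map $R \to \Gamma(X,\mathcal{O}_X)$ is injective.

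For (ii), I would tensor the resulting short exact sequence
$$0 \to \mathcal{O}_X \xrightarrow{g} \mathcal{O}_X \to \mathcal{O}_X/g\mathcal{O}_X \to 0$$
from (i) with the vector bundle $\mathcal W$. Flatness of $\mathcal W$ keeps the sequence exact on the left. To finish I need to identify the quotient sheaf with $j_*j^*\mathcal W$: since $j$ is the closed immersion of the Cartier divisor cut out by $g$, we have $\mathcal{O}_X/g\mathcal{O}_X \cong j_*\mathcal{O}_Y$, and the projection formula along the closed immersion $j$ yields $j_*\mathcal{O}_Y \otimes_{\mathcal{O}_X} \mathcal W \cong j_*(j^*\mathcal W)$.

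For (iii), apply $\R\pi_*$ to the short exact sequence from (ii) to get a distinguished triangle $\R\pi_* \mathcal W \xrightarrow{g} \R\pi_* \mathcal W \to \R\pi_*(j_*j^*\mathcal W) \to$. The hypothesis that $\R^p\pi_* \mathcal W$ vanishes for $p>0$ collapses $\R\pi_* \mathcal W$ to $W$ in degree zero, so the triangle reduces to $W \xrightarrow{g} W \to \R\pi_*(j_*j^*\mathcal W)$. The long exact sequence then identifies the right-hand term with the cokernel $W/gW$ in degree zero, provided multiplication by $g$ on $W$ is injective; the latter is automatic since $W = \pi_*\mathcal W$ is a submodule of (or more precisely injects into) the fraction field of $R$ by torsion-freeness of $\mathcal W$ on the integral scheme $X$, and $g$ is a non-zero-divisor in $R$. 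The only real subtlety is verifying injectivity of $g$ in each of the three settings, but in every case it reduces to genericity of $g$ combined with the integral/torsion-free properties guaranteed by the setup.
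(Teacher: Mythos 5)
Your proof is correct, but it takes a genuinely more elementary route than the paper's. For part i), the paper shows that the kernel sheaf $\mathcal K$ of $g\cdot$ is reflexive (using normality and the $S_2$ characterization), computes $\pi_*\mathcal K = 0$ because $R$ is a domain, and then invokes Van den Bergh's reflexive equivalence \cite[4.2.1]{vdb} to conclude $\mathcal K = 0$. You instead observe directly that on an integral scheme, multiplication by a nonzero global section is injective on stalks, and use $\Gamma(X,\mathcal O_X)\cong R$ to see that $g$ is indeed nonzero in $\Gamma(X,\mathcal O_X)$. This sidesteps the reflexive machinery entirely. Similarly in part iii), the paper again invokes reflexive equivalence to argue that $W = \pi_*\mathcal W$ is reflexive, hence embeds in a free module, hence $g\cdot$ is injective on $W$; you get injectivity directly from $W$ being torsion-free over the domain $R$. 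Both approaches are valid in the setting at hand (where $X$ is integral), and yours is shorter; the paper's phrasing is presumably chosen to stay within the reflexive-sheaf framework already being used heavily in the surrounding section. One small slip: in iii) you say $W$ "injects into the fraction field of $R$", which is not quite right for a higher-rank bundle --- what you mean (and what your argument actually needs) is that $W$ injects into the $K$-vector space $W\otimes_R K$ where $K = \operatorname{Frac}(R)$, i.e. that $W$ is torsion-free, and multiplication by a nonzero $g\in R$ is then injective. Also, for part iii) the paper phrases the computation as resolving $j_*j^*\mathcal W$ by the $\pi_*$-acyclic complex $\mathcal W \xrightarrow{g} \mathcal W$, rather than by applying $\R\pi_*$ to the short exact sequence and taking the cone, but these are equivalent.
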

\begin{proof}
For ${i)}$, note first that $g$ is a global section of $\mathcal{O}_X$, or equivalently an endomorphism of $\mathcal{O}_X$. Let $\mathcal K$ be its kernel. Because $X$ is normal, $\mathcal K$ is a reflexive sheaf: this is because on a normal integral noetherian scheme reflexive sheaves are characterised by the $S_2$ property \cite[2.10]{schwedediv}, which is closed under taking kernels. Because $R$ is an integral domain and $\pi_*$ preserves kernels, one has $\pi_*\mathcal K \cong 0$. But $\pi_*$ is a reflexive equivalence \cite[4.2.1]{vdb}, and hence $\mathcal K \cong 0$ as required. For {ii)}, the only thing to check is exactness on the left. But this follows by tensoring the injection $g: \mathcal{O}_X \to \mathcal{O}_X$ with $\mathcal W$. For {iii)}, note that by assumption $\mathcal W$ is $\pi_*$-acyclic, so we may compute the derived pushforward of $j_*j^* \mathcal W$ using its $\pi_*$-acyclic resolution $\mathcal W \xrightarrow{g} \mathcal W$. We hence get $\R\pi_*(j_*j^* \mathcal W)\simeq W \xrightarrow{g} W$ where the righthand $W$ is placed in degree zero. By reflexive equivalence again, $W$ is reflexive, and since it is a submodule of a free module we see that $g: W \to W$ is also injective. It follows that $\R\pi_*(j_*j^* \mathcal W)$ has cohomology only in degree zero, where it is $W/gW$.
\end{proof}

\begin{prop}\label{slicetilt}
With the setup as above, let $\mathcal W$ be a tilting bundle on $X$. Then $j^*\mathcal W$ is a tilting bundle on $Y$, with endomorphism ring $\enn_Y(j^*\mathcal W)\cong \enn_{R/g}(\psi_*j^*\mathcal W)\cong \enn_{R/g}(i^*\pi_*\mathcal W)$.
\end{prop}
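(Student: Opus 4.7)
The plan is to adapt the Ishii--Ueda argument \cite[8.1]{ishiiueda}; the key inputs are the short exact sequence of Lemma \ref{cutslem}(ii) and Van den Bergh's reflexive equivalence \cite[4.2.1]{vdb} applied to $\psi$.

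First I would establish the two tilting conditions for $j^*\mathcal{W}$, which is a vector bundle on $Y$ since $\mathcal{W}$ is one. Apply $\R\hom_X(\mathcal{W}, -)$ to the sequence
$$0 \to \mathcal{W} \xrightarrow{g} \mathcal{W} \to j_*j^*\mathcal{W} \to 0$$
of Lemma \ref{cutslem}(ii). Since $\mathcal{W}$ is tilting, $\ext^{>0}_X(\mathcal{W}, \mathcal{W}) = 0$, so the long exact sequence collapses to give $\ext^i_X(\mathcal{W}, j_*j^*\mathcal{W}) = 0$ for $i > 0$ and $\hom_X(\mathcal{W}, j_*j^*\mathcal{W}) \cong \enn_X(\mathcal{W})/g$. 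Since $j$ is a closed immersion $j_*$ is exact, and since $\mathcal{W}$ is locally free its pullback $j^*\mathcal{W}$ agrees with its derived pullback; thus $(j^*,j_*)$-adjunction translates the Ext statement into $\ext^{>0}_Y(j^*\mathcal{W}, j^*\mathcal{W}) = 0$. For generation: if $E \in D(Y)$ satisfies $\R\hom_Y(j^*\mathcal{W}, E) = 0$, then $\R\hom_X(\mathcal{W}, j_*E) = 0$ by the same adjunction, so $j_*E = 0$ since $\mathcal{W}$ generates $D(X)$, and hence $E = 0$ as $j_*$ is conservative.

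Next I would identify the three endomorphism rings. Applying Lemma \ref{cutslem} with $\mathcal{W} = \mathcal{O}_X$ (using $\pi_* j_* \cong i_* \psi_*$ and $\R\pi_*\mathcal{O}_X \simeq R$) gives $\R\psi_*\mathcal{O}_Y \cong R/g$, so $\psi : Y \to \spec(R/g)$ satisfies the hypotheses of Van den Bergh's reflexive equivalence. This immediately yields the isomorphism $\psi_*\colon \enn_Y(j^*\mathcal{W}) \xrightarrow{\cong} \enn_{R/g}(\psi_*j^*\mathcal{W})$. For the remaining identification, applying $\R\pi_*$ to the sequence of Lemma \ref{cutslem}(ii) and invoking part (iii) yields $\R\pi_*(j_*j^*\mathcal{W}) \cong W/gW$ concentrated in degree zero, where $W \coloneqq \pi_*\mathcal{W}$. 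Combining this with $\pi_* j_* = i_* \psi_*$ gives $\psi_*j^*\mathcal{W} \cong W/gW \cong i^*W = i^*\pi_*\mathcal{W}$, as required.

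The main subtle point is verifying the hypotheses for Van den Bergh's reflexive equivalence on $\psi$: specifically, that $Y$ is a normal integral scheme and $\R\psi_*\mathcal{O}_Y = R/g$. The latter follows from the computation above; normality and integrality of the generic hyperplane section are standard Bertini-type consequences of the setup, and are in any case implicit in the fact that $\psi$ is a partial crepant resolution of a Kleinian singularity (by Reid's general elephant principle, as noted earlier in the excerpt).
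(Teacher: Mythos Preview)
Your proof is correct and follows the same Ishii--Ueda adaptation as the paper. The generation argument and the identification $\psi_*j^*\mathcal W\cong i^*\pi_*\mathcal W$ via Lemma~\ref{cutslem}(iii) are essentially identical to the paper's.

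There is one minor but pleasant difference worth noting. For Ext vanishing, the paper rewrites $\R\enn_Y(j^*\mathcal W)\simeq \R\pi_*\bigl(\mathcal W^*\otimes j_*j^*\mathcal W\bigr)$, manipulates this to $\R\pi_*j_*j^*(\mathcal W^*\otimes\mathcal W)$, and then invokes Lemma~\ref{cutslem}(iii) applied to $\mathcal W^*\otimes\mathcal W$. Your route---applying $\R\hom_X(\mathcal W,-)$ directly to the short exact sequence of Lemma~\ref{cutslem}(ii) and reading off $\ext^{>0}_X(\mathcal W,j_*j^*\mathcal W)=0$ from the long exact sequence---is shorter and avoids the detour through pushforwards. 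For the first ring isomorphism, your appeal to Van den Bergh's reflexive equivalence for $\psi$ and the paper's ``reflexive map which is an isomorphism at height-one primes over a normal base'' argument are two phrasings of the same fact; both rely on the same implicit geometric inputs ($Y$ normal, $R/g$ normal, $\psi$ an isomorphism in codimension one), which you correctly flag at the end.
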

\begin{proof}
First observe that because $\mathcal W$ is a vector bundle, so is $j^*\mathcal W$, and we also have $j^*\mathcal W \simeq \mathbb{L} j^*\mathcal W$. Because $j$ is a closed immersion (it's the pullback of the closed immersion $i$), we have \linebreak $\R j_* \simeq j_*$. Now it follows by adjunction that $\R\hom_Y(j^*\mathcal W,-) \simeq  \R\hom_X(\mathcal W, j_* -)$. For generation, let $\mathcal F \in D(\mathrm{QCoh}(Y))$. Then $\R\hom_Y(j^*\mathcal W,\mathcal F)\simeq 0$ if and only if $\R\hom_X(\mathcal W, j_* \mathcal F)\simeq 0$. But $\mathcal W$ generates by assumption, so this is the case if and only if $j_* \mathcal F\simeq 0$, which is the case if and only if $\mathcal F \simeq 0$. Hence, $j^*\mathcal W$ generates. 
\p To show Ext vanishing, we first compute $\R\enn_Y(j^*\mathcal W)\simeq \R\hom_X(\mathcal W, j_*j^* \mathcal W)$ as before. Because $\mathcal W$ is a vector bundle, we have $$\R\hom_X(\mathcal W, j_*j^* \mathcal W)\simeq \R\hom_X(\mathcal{O}_X,\mathcal W^*\otimes j_*j^* \mathcal W)\simeq \R\pi_*(\mathcal W^*\otimes j_*j^* \mathcal W).$$Again, because $\mathcal W$ is a vector bundle, $\mathcal W^*\otimes j_*j^* \mathcal W$ is quasi-isomorphic to $\mathcal W ^* \otimes (\mathcal W \xrightarrow{g} \mathcal{W})$ using \ref{cutslem}, {ii)}. But $\mathcal W ^* \otimes (\mathcal W \xrightarrow{g} \mathcal{W})\cong (\mathcal W ^* \otimes \mathcal W )\xrightarrow{g} (\mathcal{W}^* \otimes \mathcal{W})$, which, using \ref{cutslem}, {ii)} again, is quasi-isomorphic to $j_*j^*(\mathcal{W}^* \otimes \mathcal{W})$. So we have $\R\enn_Y(j^*\mathcal W)\simeq \R \pi_*j_*j^*(\mathcal{W}^* \otimes \mathcal{W})$, which by \ref{cutslem}, {iii)} (using that higher Exts between $\mathcal W$ vanish) is concentrated in degree zero. So $j^*\mathcal W$ is tilting. Note that this doesn't tell us about the ring structure on $\R\enn_Y(j^*\mathcal W)$, since we had to pass through adjunctions.

\p For the statements about endomorphism rings, observe first that we have a ring map\\ $\psi_*:\enn_Y(j^*\mathcal W)\to \enn_{R/g}(\psi_*j^*\mathcal W)$ which is also a map of reflexive $R/g$-modules. Since it is an isomorphism at height one primes, and $R/g$ is normal, it hence must be an isomorphism. It remains to check that $\enn_{R/g}(\psi_*j^*\mathcal W)\cong \enn_{R/g}(i^*\pi_*\mathcal W)$. To prove this we will show that $\psi_*j^*\mathcal W \cong i^*\pi_*\mathcal W$. Proceeding as before, we have \begin{align*}
\psi_*j^*\mathcal W &\simeq \R\hom_Y(\mathcal{O}_Y, j^*\mathcal W)
\\ &\simeq \R\hom_Y(j^*\mathcal{O}_X, j^*\mathcal W)
\\ &\simeq \R\hom_X(\mathcal{O}_X, j_*j^*\mathcal W)
\\ & \simeq \R\pi_*j_*j^*\mathcal W
\\&\simeq i^*\pi_*\mathcal W
\end{align*}where the last isomorphism is \ref{cutslem}, {iii)}.
\end{proof}
We'd like to say a little more: not just that one can compute the endomorphism ring of a tilting bundle on the base, but also that one can compute the endomorphism ring of $i^*W$ by applying the functor $i^*$ to the endomorphism ring of $W$. This is a little delicate and will require some more hypotheses; we show this in the case that $\mathcal W$ is Van den Bergh's tilting bundle.
\begin{thm}\label{cutsthm}
	With the setup as above, let $\mathcal V$ be Van den Bergh's tilting bundle on $X$ constructed in \text{\normalfont\cite[3.2.8]{vdb}}. Then  $j^*\mathcal{V}$ is a tilting bundle on $Y$, and one has a ring isomorphism $\enn_Y(j^*\mathcal{V})\cong \enn_R(\pi_*\mathcal{V})/g\enn_R(\pi_*\mathcal{V})$.
\end{thm}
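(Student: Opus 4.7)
The plan is to build on Proposition \ref{slicetilt}, which already hands us the tilting property of $j^*\mathcal V$ together with the identification $\enn_Y(j^*\mathcal V)\cong \enn_{R/g}(i^*\pi_*\mathcal V)$. Write $W\coloneqq \pi_*\mathcal V$, so $i^*W\cong W/gW$ by definition of $i^*$. The whole theorem then reduces to establishing the single ring isomorphism
\[\enn_R(W)/g\enn_R(W)\;\xrightarrow{\;\cong\;}\;\enn_{R/g}(W/gW),\]
given by reduction mod $g$. I would deduce this from two standard facts: first, that $g$ is a nonzerodivisor on $W$; and second, that $\ext^1_R(W,W)=0$.

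For the first, Lemma \ref{cutslem}(i) already says $g:\mathcal O_X\to\mathcal O_X$ is injective, and the same lemma (ii) applied to the vector bundle $\mathcal V$ gives a short exact sequence $0\to\mathcal V\xrightarrow{g}\mathcal V\to j_*j^*\mathcal V\to 0$. Pushing forward along $\pi$ and using that $\mathcal V$ is $\pi_*$-acyclic (part of the tilting hypothesis) produces a short exact sequence $0\to W\xrightarrow{g}W\to W/gW\to 0$ of $R$-modules, which is the required regularity statement. For the second, Van den Bergh's tilting theorem (\cite[3.2.8]{vdb}) identifies $\R\hom_R(W,W)$ with $\R\hom_X(\mathcal V,\mathcal V)$, which is concentrated in degree zero as $\enn_R(W)$; so all higher self-extensions of $W$ vanish.

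Now apply $\hom_R(W,-)$ to the short exact sequence above. The vanishing $\ext^1_R(W,W)=0$ gives a four-term short exact sequence
\[0\to \enn_R(W)\xrightarrow{g}\enn_R(W)\to \hom_R(W,W/gW)\to 0,\]
so $\enn_R(W)/g\enn_R(W)\cong \hom_R(W,W/gW)$. On the other hand, the extension-of-scalars adjunction along $R\to R/g$ gives $\hom_R(W,W/gW)\cong\hom_{R/g}(W\otimes_R R/g,W/gW)=\enn_{R/g}(W/gW)$, and one checks the composite is the obvious ring map induced by reduction mod $g$. Chaining this with Proposition \ref{slicetilt} finishes the theorem.

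The main obstacle is really the check that $\ext^1_R(W,W)$ vanishes, since $W=R\oplus N$ with $N$ only a priori MCM, not rigid as an $R$-module; but the tilting-bundle hypothesis on $\mathcal V$, together with $\R\pi_*\mathcal O_X\simeq R$ from the local setup, is exactly what converts the sheaf-theoretic Ext vanishing into module-theoretic Ext vanishing, so there is nothing delicate beyond invoking Van den Bergh's theorem. Everything else is essentially bookkeeping on the short exact sequence obtained from slicing.
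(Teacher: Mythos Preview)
Your overall structure matches the paper's exactly: reduce via Proposition~\ref{slicetilt} to showing $\enn_R(W)/g\enn_R(W)\cong\enn_{R/g}(W/gW)$, then apply $\hom_R(W,-)$ to the sequence $0\to W\xrightarrow{g}W\to W/gW\to 0$ and use $\ext^1_R(W,W)=0$ to conclude. The gap is in your justification of that Ext vanishing.

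You assert that Van den Bergh's theorem identifies $\R\hom_R(W,W)$ with $\R\hom_X(\mathcal V,\mathcal V)$, but it does no such thing. The tilting equivalence is $D(X)\simeq D(A)$ with $A=\enn_X(\mathcal V)$, under which $\mathcal V\mapsto A$; this gives $\R\hom_X(\mathcal V,\mathcal V)\simeq\R\enn_A(A)\simeq A$, concentrated in degree zero. That computation lives over $A$, not over $R$, and says nothing about $\R\hom_R(W,W)$. Invoking $\R\pi_*\mathcal O_X\simeq R$ does not bridge the gap either: there is no reason for $\R\pi_*$ to be fully faithful, nor for $L\pi^*W\to\mathcal V$ to be an isomorphism, so one cannot transport the sheaf-theoretic Ext vanishing to module-theoretic Ext vanishing by adjunction. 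You correctly flag this step as ``the main obstacle'' and then dismiss it too quickly.

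The paper closes this gap by a different route: it invokes \cite[3.2.10]{vdb} to see that both $V$ and $\enn_R(V)$ are Cohen--Macaulay over $R$, and then uses that $R$ is an isolated singularity together with \cite[2.7]{iwmaxmod} (essentially the direction of Proposition~\ref{modext} saying that a Cohen--Macaulay module with Cohen--Macaulay endomorphism ring has vanishing $\ext^i$ for $1\le i\le d-2$) to conclude $\ext^1_R(V,V)=0$. This is a genuinely nontrivial input from modifying-module theory, not a formal consequence of tilting.
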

\begin{proof}
Immediately from \ref{slicetilt}, we see that $j^*\mathcal V$ is tilting and has endomorphism ring given by $\enn_Y(j^*\mathcal{V})\cong  \enn_{R/g}(i^*\pi_*\mathcal V)$. Putting $V\coloneqq \pi_*\mathcal V$, it remains only to prove that we have an isomorphism $\enn_{R/g}(i^*V)\cong i^*\enn_R(V)$. By \cite[3.2.10]{vdb}, both $V$ and $\enn_R(V)$ are Cohen--Macaulay $R$-modules, and one moreover has an isomorphism $\enn_X(\mathcal V)\cong\enn_R(V)$. Because $R$ is an isolated singularity, \cite[2.7]{iwmaxmod} now gives $\ext^1_R(V,V)\cong 0$. We now follow the proof of \cite[A.1]{vdb}. Note that because $V$ is Cohen--Macaulay it follows that $i^*V$ is Cohen--Macaulay over $R/g$. Applying $\hom_R(V,-)$ to the exact sequence $$0 \to V \xrightarrow{g} V \to i^* V \to 0$$ gives an exact sequence $$0 \to \enn_R(V) \xrightarrow{g} \enn_R(V) \to \hom_R(V,i^*V)\to 0$$or in other words an isomorphism $\hom_R(V,i^*V)\cong i^*\enn_R(V)$. But $\hom_R(V,i^*V) \cong \enn_{R/g}(i^*V)$, and it is not hard to check that the induced linear isomorphism $\enn_{R/g}(i^*V)\cong i^*\enn_R(V)$ is a ring map.
\end{proof}

\subsection{Partial resolutions of $A_n$ singularities}
Let $\tilde{X} \to \spec\tilde{R}$ be the Atiyah flop. Observe that, for any choice of $n$, one can slice $\tilde{R}$ along the hypersurface $x=y^n$ to obtain a partial crepant resolution $X \to \spec(R)$ of an $A_n$ singularity. Let $\tilde \Lambda$ be the NCCR of $\tilde R$ from section \ref{atiyahflop}, with quiver presentation\\
\begin{wrapfigure}[4]{L}{0.5\textwidth}
	\centering
	\vspace{-15pt}$
	\begin{tikzcd}
	1 \arrow[rr,bend left=15,"b"] \arrow[rr,bend left=50,"a"]  && 2 \arrow[ll,bend left=15,"s"] \arrow[ll,bend left=50,"t"]
	\end{tikzcd}$
\end{wrapfigure} 

$asb=bsa$

$sbt=tbs$

$atb=bta$

$sat=tas$\newline \newline
By \ref{cutsthm}, it follows that $X$ is derived equivalent to the `sliced NCCR' $\Lambda\coloneqq \tilde \Lambda / (x-y^n)\tilde \Lambda$. Recalling the construction of $\tilde \Lambda$, we had $x=at+ta$ and $y=sb+bs$. Moreover, since $sbbs=bssb=0$, we have $y^n=(sb)^n + (bs)^n$. So we need to add the relation $at+ta=(sb)^n + (bs)^n$, which is equivalent to adding the two relations $at=(bs)^n$ and $ta=(sb)^n$. The algebra $\Lambda$ we get is\newline
\begin{wrapfigure}[4]{L}{0.5\textwidth}
	\centering
	\vspace{-15pt}$
	\begin{tikzcd}
	1 \arrow[rr,bend left=15,"b"] \arrow[rr,bend left=50,"a"]  && 2 \arrow[ll,bend left=15,"s"] \arrow[ll,bend left=50,"t"]
	\end{tikzcd}$
\end{wrapfigure} 

$asb=bsa$

$sbt=tbs$

$at=(bs)^n$

$ta=(sb)^n$\newline \newline  noting that $atb=bta$ and $sat=tas$ follow from the new relations. Observe that $\Lambda$ admits a grading by putting the generators $e_1,e_2$ in degree 0, the generators $b$ and $s$ in degree 1, and the generators $a$ and $t$ in degree $n$. We'll refer to the degree of a homogeneous element of $\Lambda$ as its \textbf{weight}, since we're already using `degree' to refer to maps. Let $S$ be the simple module at 2. For brevity, put $\beta\coloneqq bs$ and $\sigma\coloneqq sb$. Our main theorem is the following:
\begin{thm}\label{mainsurfacethm}
If $n=1$, then $\dca$ is quasi-isomorphic to the free noncommutative graded algebra $k\langle\zeta\rangle$ on a variable $\zeta$ in degree $-1$. If $n\geq 2$, then $\dca$ is quasi-isomorphic to the strictly unital minimal $A_\infty$-algebra with underlying graded algebra $k[\eta,\zeta]$ where $\eta$ has degree $-2$, $\zeta$ has degree -1, and the only nontrivial higher multiplications are $$m_{n+1}(\eta^{b_1}\zeta,\ldots, \eta^{b_{n+1}}\zeta)=\eta^{b_1+\cdots+b_{n+1} + n}.$$Note in particular that since $\zeta$ has odd degree and is a commutative element, it must square to zero.
\end{thm}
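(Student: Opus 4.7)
Following the template of the Pagoda flop in \ref{dcapagoda}, the plan is to compute $\dca$ as the $A_\infty$ Koszul dual of $\R\enn_\Lambda(S)$, where $S=S_2$ is the simple at vertex $2$; by \ref{dqiskd} and \ref{globalrend} this recovers $\dca$ without ever passing to the completion. The key input is a minimal graded projective resolution of $S$ as a right $\Lambda$-module.

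First I would write down the resolution. The augmentation $P_2\twoheadrightarrow S$ has kernel generated by $s$ and $t$, giving a syzygy $(s,t):P_1^{\oplus 2}\to P_2$. The relation $sbt=tbs$ contributes a syzygy $(bt,-bs):P_2\to P_1^{\oplus 2}$ exactly as in the Atiyah case, while the sliced relations $at=(bs)^n$ and $ta=(sb)^n$ contribute two further weight-$n$ syzygies that force an additional $P_2$ summand to appear at one further step once $n\geq 2$. Using the $2$-periodicity of \ref{dqcohom} and the weight grading on $\Lambda$ (with $b,s$ of weight $1$ and $a,t$ of weight $n$), the resolution is determined uniquely and is eventually $2$-periodic. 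From it one reads off that $\ext^*_\Lambda(S,S)$ is generated by a class $x\in\ext^2$ and, for $n\geq 2$, an additional class $y\in\ext^3$; for $n=1$ the class $y$ does not appear and $\ext^*_\Lambda(S,S)\cong k[x]/x^2$ with $|x|=2$.

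Next I would place an $A_\infty$-structure on the Ext algebra. The weight grading rules out almost all higher multiplications for bidegree reasons, and in the case $n=1$ it rules out all of them, so $\R\enn_\Lambda(S)$ is formal; its Koszul dual is then the free algebra $k\langle\zeta\rangle$ with $|\zeta|=-1$, giving the first half of the theorem. For $n\geq 2$ the only bidegree-admissible nontrivial higher multiplication (up to symmetry) is $m_{n+1}(x,\ldots,x)=\lambda y$ for some scalar $\lambda$, and I would show $\lambda\neq 0$ by exhibiting $y$ as a representative of the Massey product $\langle x,\ldots,x\rangle_{n+1}$: fix a cocycle lift $X$ of $x$, construct a chain of homotopies $h_r$ relating the successive powers $X^r$ on the resolution, and then apply \ref{masseylemma} exactly as in the Pagoda argument. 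Rescaling $y$ normalises $\lambda$ to $1$.

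Finally I would transfer this $A_\infty$-structure across Koszul duality onto the cohomology $k[\eta,\zeta]$ of $\dca$. The dual generators are $\zeta=x^*$ in degree $-1$ and $\eta=y^*$ in degree $-2$; direct bar computation (as in the proof of \ref{pagodadga}) turns the single operation $m_{n+1}(x^{n+1})=y$ in $\R\enn_\Lambda(S)$ into the single operation $m_{n+1}(\zeta^{n+1})=\eta^n$ in $\dca$, and an Adams-graded Merkulov argument parallel to the proof of \ref{dcapagoda} promotes this to the full $\eta$-linear formula $m_{n+1}(\eta^{b_1}\zeta,\ldots,\eta^{b_{n+1}}\zeta)=\eta^{b_1+\cdots+b_{n+1}+n}$. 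The combinatorics is simpler than in the Pagoda case, because only $m_{n+1}$ and $m_2$ need to be accounted for and the Merkulov recursion introduces no secondary operations of greater length. The main obstacle is the Massey product step: unlike the Pagoda case, where the vertex loops $l,m$ produced a transparent sequence of nullhomotopies, the sliced algebra $\Lambda$ has no loops, so a new family of homotopies built out of $s,t,a,b$ (exploiting $at=(bs)^n$ and $ta=(sb)^n$ to propagate powers of $X$ through the resolution) must be identified by hand.
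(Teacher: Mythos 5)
The central calculation in your proposal --- that the nontrivial higher multiplication on $\R\enn_\Lambda(S)$ is $m_{n+1}(x,\ldots,x)=\lambda y$ with $|x|=2$, $|y|=3$ --- fails for degree reasons: $m_{n+1}(x^{\otimes(n+1)})$ lands in degree $2(n+1)+(2-(n+1))=n+3$, which is not $3$ once $n\geq 2$, and the same count shows $\langle x,\ldots,x\rangle_{n+1}$ lives in degree $n+3$ and so cannot contain $y$. What actually holds (Prop.~\ref{surfeng2}) is the operation in the other direction, $m_n(y,\ldots,y)=x^{n+1}$, which is degree-consistent since $3n+(2-n)=2n+2=|x^{n+1}|$; the relevant Massey product is $\langle y,\ldots,y\rangle_n$, not $\langle x,\ldots,x\rangle_{n+1}$. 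Moreover the Massey chain cannot be built from powers of a lift of $x$: the proof needs to track the eventually $2$-periodic tail of the resolution, and for this the paper introduces the sub-dga $E^\mathrm{ep}\subseteq\enn_\Lambda(S^\bullet)$ of eventually periodic maps and the family $\nu_i$ inside it --- a genuinely new ingredient relative to the Pagoda argument, not just ``a new family of homotopies'' to be found by inspection.

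The Koszul-dual step also needs repair. A higher product on $\R\enn_\Lambda(S)$ does not translate under a single bar computation into a higher product on $H^*(\dca)$: after dualising you have the semifree dga $\dca\simeq\R\enn_\Lambda(S)^!$, and the theorem's $A_\infty$-structure lives on its cohomology, which must still be computed. The paper first proves $H^*(\dca)\cong k[\eta,\zeta]$ by filtering $\R\enn_\Lambda(S)$ by powers of $y$ and running a spectral sequence on the dual filtration (Prop.~\ref{surfdcacohom}), and only then pins down $m_{n+1}$ via weight/degree constraints, the Stasheff identity $\mathrm{St}_{n+2}$ for $\eta$-linearity, and a double-Koszul-duality argument to force $\lambda_0\neq 0$ (Prop.~\ref{surfdcang2}). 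Your Adams-graded Merkulov step is the right spirit but presupposes both the cohomology computation and non-formality of $\dca$, and the latter is subtle here: for $n=2$ the source $\R\enn_\Lambda(S)$ \emph{is} formal while $\dca$ is not, so non-formality of $\dca$ cannot be read off the source operation at all.
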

\begin{proof}
We split the proof into two separate cases: firstly when $n=1$, which is straightforward and appears as \ref{mainstone}, and secondly when $n\geq2$, which is the real meat of the proof and appears as \ref{surfdcang2}.
\end{proof}
\begin{rmk}
Note that in the $n=1$ case, the periodicity element $\eta$ is given by $\eta=\zeta^2$. When $n=1$, note that $m_{n+1}(\eta^{b_1}\zeta,\ldots, \eta^{b_{n+1}}\zeta)=\eta^{b_1+\cdots+b_{n+1} + n}$ still holds; hence a more uniform way to state the above is to say that $\dca$ as a strictly unital minimal $A_\infty-k[\eta]$-algebra is generated by the single element $\zeta$ subject to the relations $m_{r}(\zeta,\ldots,\zeta)=\delta_{r,n+1}\eta^n$.
\end{rmk}
\begin{rmk}
Unlike in the threefold setting, there does not seem to be a simple way of obtaining $\dca$ as a Ginzburg dga. Indeed, the subquiver at the vertex 2 is a point, and so must have associated Ginzburg dga $k[\eta]$, with $\eta$ in degree -2, independent of what the superpotential is.
\end{rmk}

We begin by noting a few preliminaries. One can check that $S$ has projective resolution $S^\bullet$ given by
$$\cdots \xrightarrow{\stbtm{-\beta^{n-1}}{a}{t}{-\sigma} \cdot} P_1 P_2 \xrightarrow{\stbtm{\beta}{a}{t}{\sigma^{n-1}}\cdot} P_1 P_2 \xrightarrow{\stbtm{-\beta^{n-1}}{a}{t}{-\sigma} \cdot} P_1 P_2 \xrightarrow{\stbtm{bt}{\beta^{n-1}b}{-\beta}{-a}\cdot} P_1^2 \xrightarrow{\left( s \;\; t\right)\cdot}  P_2$$which eventually becomes periodic with period two. Moreover one can check that with the grading conventions on $\Lambda$ from above this admits a secondary grading by weight$$\cdots \xrightarrow{} (3n+2,4n) \xrightarrow{} (3n,2n+2) \xrightarrow{} (n+2,2n) \xrightarrow{} (1,n) \xrightarrow{}  (0) $$ where the differential has weight zero.

\begin{prop}\label{mainstone}
If $n=1$, then $\dca$ is the dga $k\langle\zeta\rangle$ on a noncommutative variable $\zeta$ in degree $-1$.
\end{prop}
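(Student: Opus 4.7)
The plan is to compute $\R\enn_\Lambda(S)$ explicitly and then apply the Koszul duality $\dca \simeq \R\enn_\Lambda(S)^!$ from \ref{dqiskd}. The key observation is that when $n=1$, every matrix in the 2-periodic tail of the given resolution of $S$ carries an invertible entry (since $\beta^{0} = e_1$ and $\sigma^{0} = e_2$) whose Schur complement vanishes thanks to the $n=1$ relations $at = bs$ and $ta = sb$. Iteratively cancelling these acyclic summands collapses the entire 2-periodic tail, leaving the length-two minimal projective resolution
\begin{equation*}
0 \to P_2 \xrightarrow{\left(\begin{smallmatrix} b \\ -a\end{smallmatrix}\right)\cdot } P_1^2 \xrightarrow{\left(s \; t\right)\cdot} P_2 \to S \to 0,
\end{equation*}
whose exactness at $P_1^2$ uses precisely the relation $sb = ta$.

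From $\hom_\Lambda(P_2, S) = k$ and $\hom_\Lambda(P_1^2, S) = 0$ we read off $\ext^*_\Lambda(S, S) \cong k[y]/y^2$ with $|y|=2$. Formality of $\R\enn_\Lambda(S)$ is then automatic by a degree count: for any $r \geq 3$, the $A_\infty$ operation $m_r$ applied to $y^{\otimes r}$ would land in cohomological degree $2r + (2-r) = r+2 \geq 5$, above the top degree of the cohomology, while mixed inputs involving units vanish by strict unitality. Hence $\R\enn_\Lambda(S) \simeq k[y]/y^2$ as a dga.

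Finally, a standard Koszul duality computation --- using the minimal free resolution $\cdots \xrightarrow{\cdot y} E[-4] \xrightarrow{\cdot y} E[-2] \xrightarrow{\cdot y} E \to k$ of $k$ over $E = k[y]/y^2$ --- shows that $E^!$ is the free algebra on a single generator $\zeta$ in cohomological degree $-1$, with $\zeta^n$ spanning the $n$-th Ext group. Thus $\dca \simeq k\langle \zeta \rangle$ as required.

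The main obstacle I anticipate is verifying the simplified resolution, in particular injectivity of the leftmost map $P_2 \to P_1^2$ and the absence of further second-syzygy generators; both reduce to direct manipulation with the relations in $\Lambda$ for $n=1$.
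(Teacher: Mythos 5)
Your proof is correct and follows essentially the same strategy as the paper's: compute $\ext_\Lambda^*(S,S)\cong k[x]/x^2$ with $|x|=2$ from the periodic resolution $S^\bullet$, conclude formality by a degree count, and Koszul-dualize to get $k\langle\zeta\rangle$ with $|\zeta|=-1$. The only cosmetic difference is that you explicitly Gaussian-eliminate the unit entries ($\beta^{0}$, $\sigma^{0}$) to get a finite minimal resolution, while the paper simply applies $\hom_\Lambda(-,S)$ to the full $S^\bullet$ and reads off the same cohomology.
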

\begin{proof}
In this case, one can check that $\hom_\Lambda(S^\bullet,S)$ is the complex $$k\to 0 \to k \xrightarrow{0} k \xrightarrow{-1} k \xrightarrow{0} k \xrightarrow{-1}\cdots $$ and hence its cohomology is just a copy of $k$ in degrees 0 and 2. We see that the cohomology algebra $\ext_\Lambda^*(S,S)$ is just $k[x]/(x^2)$, where $x$ has degree 2. It's easy to see that $\R\enn_\Lambda(S)$ must be formal, and hence the derived contraction algebra is the noncommutative dga $$\dca=k\langle\zeta\rangle$$ where $\zeta$ has degree $-1$. Note that the periodicity element is $\eta=\zeta^2$.
\end{proof}

\p From now on, we assume that $n\geq2$. We see that $$\ext^i_\Lambda(S,S)\cong \begin{cases} 0 & i<0\normalfont\text{  or  } i=1\\
k & i=0 \normalfont\text{  or  } i>1
\end{cases}$$spanned by the classes of the projection maps $P_2 \to S$. Define maps $g_0=\id$, and for $k\geq 1$ 
$$g_{2k}\coloneqq \begin{tikzcd}[sep=large]	S^{2k}\ar[d,swap,"\left( 0 \;\; -1\right)\cdot"]&S^{2k+1}\ar[l,swap,"d_{2k}"]\ar[d,swap,"\stbtm{0}{b}{-1}{0}\cdot"]&S^{2k+2}\ar[l,swap,"d_{2k+1}"]\ar[d,swap,"\id"]&S^{2k+3}\ar[l,swap,"d_{2k+2}"]\ar[d,swap,"\id"]&S^{2k+4}\ar[l,swap,"d_{2k+3}"]\ar[d,swap,"\id"]&\cdots\ar[l,swap,"d_{2k+4}"]\\S^0&S^1\ar[l,"d_0"]&S^2\ar[l,"d_1"]&S^3\ar[l,"d_2"]&S^4\ar[l,"d_3"]&\cdots\ar[l,"d_4"]\end{tikzcd}$$
$$g_{2k+1}\coloneqq \begin{tikzcd}[sep=large]S^{2k+1}\ar[d,swap,"\left( 0 \;\; -1\right)\cdot"]&S^{2k+2}\ar[l,swap,"d_{2k+1}"]\ar[d,swap,"\stbtm{0}{\beta^{n-2}b}{1}{0}\cdot"]&S^{2k+2}\ar[l,swap,"d_{2k+3}"]\ar[d,swap,"\stbtm{-\beta^{n-2}}{0}{0}{1}\cdot"]&S^{2k+4}\ar[l,swap,"d_{2k+3}"]\ar[d,swap,"\stbtm{-1}{0}{0}{\sigma^{n-2}}\cdot"]&S^{2k+5}\ar[l,swap,"d_{2k+4}"]\ar[d,swap,"\stbtm{-\beta^{n-2}}{0}{0}{1}\cdot"]&\cdots\ar[l,swap,"d_{2k+5}"]\\S^0&S^1\ar[l,"d_0"]&S^2\ar[l,"d_1"]&S^3\ar[l,"d_2"]&S^4\ar[l,"d_3"]&\cdots\ar[l,"d_4"]\end{tikzcd}$$
Then the $g_k$ span the cohomology algebra $\ext_\Lambda^*(S,S)$ since each (up to sign) lifts the projection maps $P_2 \to S$. Moreover, letting $\phi$ be the degree zero map with $\phi_0=\sigma^{n-2}\cdot$, $\phi_1=\beta^{n-2}\cdot$, and $\phi_j=\tbtm{\beta^{n-2}}{0}{0}{\sigma^{n-2}}\cdot$ for all $j>1$, one can check that the $g_k$ satisfy $$g_ig_j=\begin{cases} g_{i+j} & \text{if }ij \text{ is even} \\ g_{i+j}\phi & \text{ else}\end{cases}$$Put $x\coloneqq [g_2]$ and $y\coloneqq [g_3]$.

\begin{prop}
Suppose that $n=2$. Then the derived endomorphism algebra $\R\enn_\Lambda(S)$ is formal, with cohomology algebra $\frac{k[x]\langle y \rangle}{(xy-yx, x^3-y^2)}$.  Note that this is a noncommutative dga, because $y$ does not commute with itself.
\end{prop}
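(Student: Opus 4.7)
The plan is to identify $\ext^*_\Lambda(S,S)$ as a graded algebra from the explicit cocycles $g_k$, and then use the secondary weight grading to force all higher $A_\infty$-operations to vanish.

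For the cohomology, the key simplification at $n=2$ is that the correction map $\phi$ is the identity, since $\sigma^{n-2} = e_2$ and $\beta^{n-2} = e_1$, so the product rule given just above the proposition reduces to $g_ig_j = g_{i+j}$ without case distinction. Setting $x = [g_2]$ and $y = [g_3]$ yields $xy = g_5 = yx$ and $y^2 = g_6 = x^3$, and since each Ext group in cohomological degree $0$ or $\geq 2$ is one-dimensional, the map $x^ay^b \mapsto g_{2a+3b}$ realises the isomorphism $\ext^*_\Lambda(S,S) \cong \frac{k[x]\langle y\rangle}{(xy-yx,\,x^3-y^2)}$. A Hilbert-series check with the basis $\{x^a, x^ay : a \geq 0\}$ confirms this matches the computed dimensions: one in each degree $\neq 1$. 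The algebra is not graded-commutative because $y$ has odd degree while $y^2 = x^3 \neq 0$.

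For formality, equip $\Lambda$ with its weight grading ($b, s$ of weight $1$ and $a, t$ of weight $n = 2$) and propagate to the resolution of $S$. A direct check that each matrix entry $\lambda$ of the differential satisfies $w_{\text{source}} = w_{\text{target}} + w(\lambda)$ produces summand weights $(0), (1, 2), (4, 4), (6, 6), (8, 8), \ldots$; crucially, every summand $S^d$ for $d \geq 2$ is pure of weight $2d$. Tracking the definitions of $g_{2k}$ and $g_{2k+1}$, the leading component selects the $P_2$ summand of $S^k$ (of weight $2k$) and maps it identically to $S^0 = P_2$ (of weight $0$), so $g_k$ is weight-homogeneous of weight $-2k$. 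Hence every nonzero homogeneous element of $\ext^d_\Lambda(S,S)$ carries weight precisely $-2d$.

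Now apply Kadeishvili's theorem via Merkulov's construction (\ref{minmods}), choosing the linear section and contracting homotopy to respect the weight grading; this is possible because each weight-piece of $\hom_\Lambda(S^\bullet, S)$ is at most one-dimensional. The induced $A_\infty$-operation $m_r$ has bidegree $(2 - r, 0)$ with respect to (cohomological degree, weight). For $r \geq 3$, strict unitality forces non-unit inputs of degrees $d_i \geq 2$, so the output lives in cohomological degree $D = \sum d_i + 2 - r$ with input weight sum $-2\sum d_i$, whereas the unique available weight at degree $D$ is $-2D = -2\sum d_i - 4 + 2r$. These agree only if $r = 2$, so $m_r \equiv 0$ for all $r \geq 3$ and $\R\enn_\Lambda(S)$ is formal.

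The main obstacle is conceptual rather than computational: one has to notice that at $n = 2$ (unlike the Pagoda case, or $n \geq 3$ below) the generators of the Ext-algebra have a uniform weight-to-degree ratio of $-2$, leaving no room for any non-trivial higher multiplication. No Massey product computation is required.
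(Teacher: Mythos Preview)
Your argument is correct, but it takes a different route from the paper. The paper's proof of formality is a one-liner: since $\phi=\id$ when $n=2$, the cocycles $g_k$ satisfy $g_ig_j=g_{i+j}$ \emph{strictly} in the endomorphism dga, so the graded subalgebra generated by $\id$, $g_2$, $g_3$ is a genuine sub-dga with zero differential whose inclusion into $\enn_\Lambda(S^\bullet)$ is a quasi-isomorphism. That immediately gives a zigzag to the cohomology algebra, hence formality, with no appeal to $A_\infty$-structures or weight bookkeeping.

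Your approach instead exploits the coincidence that for $n=2$ the resolution is weight-pure (weight $2d$ in cohomological degree $d\geq 2$), so every Ext class has weight-to-degree ratio exactly $-2$, which rules out all $m_r$ with $r\geq 3$ by a bidegree count. This works, and it is the same style of argument the paper uses elsewhere (for the Pagoda examples and for $n\geq 3$ here), so it has the virtue of uniformity. The paper's argument is shorter and more conceptual for this particular case, but yours explains \emph{why} $n=2$ is special in the same language as the non-formal cases. One minor quibble: the justification ``each weight-piece of $\hom_\Lambda(S^\bullet,S)$ is at most one-dimensional'' is not quite the right reason for being able to choose the Merkulov data weight-homogeneously; that is automatic over a field for any Adams grading (cf.\ \S\ref{minmods}).
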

\begin{proof}
	We see that $\phi=\id$ and hence $\ext_\Lambda^*(S,S)$ is isomorphic to $\frac{k[x]\langle y \rangle}{(xy-yx, x^3-y^2)}$ where $x$ has degree 2, $y$ has degree 3, and the differential is zero. It's easy to see that $\R\enn_\Lambda(S)$ must be formal, since it's quasi-isomorphic to the subalgebra generated by $\id, g_2$ and $g_3$.
\end{proof}

\begin{prop}
	Suppose that $n>2$. Then the Ext-algebra $\ext_\Lambda^*(S,S)\cong k[x,y]$ is freely generated as a cdga by $x$ and $y$. Note that $y^2=0$.
\end{prop}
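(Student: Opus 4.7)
The plan is to read the graded algebra structure on $\ext^*_\Lambda(S,S)$ directly from the chain-level product rule
$$g_i g_j = \begin{cases} g_{i+j} & ij \text{ even} \\ g_{i+j}\phi & ij \text{ odd}\end{cases}$$
that has already been established just before the statement, together with the computation of each $\ext^i_\Lambda(S,S)$. The key observation specific to the case $n>2$ is that $\phi$ represents the \emph{zero} class in $\ext^0_\Lambda(S,S) = k\cdot[g_0]$: each nonzero component of $\phi$ is left-multiplication by $\sigma^{n-2}$ or $\beta^{n-2}$, and for $n>2$ these elements lie in the radical of $\Lambda$, so the induced endomorphism on the simple module $S = P_2/\mathrm{rad}$ is zero. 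In other words, $\phi$ is null-homotopic.

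Given this, the product of any two odd-degree cohomology classes vanishes. In particular $y^2 = [g_3][g_3] = [g_6\phi] = 0$. For products with at least one even-degree factor we have strict equality $g_i g_j = g_{i+j}$ at the chain level, hence $x^k = [g_{2k}]$ and $x^k y = [g_{2k+1}] = yx^k$. Matching this against the dimensions $\ext^i_\Lambda(S,S) \cong k$ for $i=0$ or $i\geq 2$ and $0$ for $i=1$ established above, these elements form a $k$-basis. Thus the Ext algebra is graded-commutative, generated by $x$ and $y$, with the single relation $y^2=0$.

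Since we are in characteristic zero and $y$ has odd degree, graded commutativity already forces $y^2=0$, so the Ext algebra is precisely the free commutative graded algebra $k[x]\otimes\Lambda(y)$ on generators in degrees $2$ and $3$ with zero differential, i.e.\ the free cdga that the paper denotes $k[x,y]$. The only real content beyond bookkeeping is the radical observation about $\phi$; the product formula for the $g_i$ itself was asserted earlier and the only obstacle to writing it all out in full is the routine but tedious verification that the given chain maps $g_i$ compose as claimed. No further ingredients (Massey products, $A_\infty$-considerations) are needed at this stage: formality and higher structure are deferred to a subsequent proposition.
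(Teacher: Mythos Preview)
Your proposal is correct and follows essentially the same approach as the paper, whose entire proof reads ``One can check that $[\phi]=0$, and the result follows.'' You have simply supplied the details the paper omits: the reason $[\phi]=0$ is precisely your radical observation (the component $\phi_0=\sigma^{n-2}\cdot$ acts by zero on $S=P_2/\mathrm{rad}$ when $n>2$, so the induced element of $\ext^0$ vanishes and hence $\phi$ is null-homotopic), and the deduction of the algebra structure from the product rule for the $g_i$ is exactly what ``the result follows'' means.
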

\begin{proof}
One can check that $[\phi]=0$, and the result follows.
\end{proof}

Now we need to split our argument into cases. We can handle the $n=2$ case already, but part of the argument will be identical for $n>2$, so we defer this for the present moment. We aim first to identify, for $n>2$, the higher $A_\infty$ multiplications on $\ext_\Lambda^*(S,S)$ making it quasi-isomorphic to $\R\enn_\Lambda(S)$, via a Massey product computation. In order to do this, note that the resolution $S^\bullet$ of $S$ becomes eventually periodic, with period 2. It will be convenient for us to work in the 2-periodic part of the dga $\R\enn_\Lambda(S)$.

\begin{defn}
	Let $E^\mathrm{ep}$ be the subspace of the dga $\enn_\Lambda(S^\bullet)$ consisting of those maps of degree at least 2 which commute with $g_2$. We call such a map an \textbf{eventually periodic} map.
\end{defn}

\begin{lem}
	An eventually periodic map $f\in E^\mathrm{ep}$ is given by the formula
	$$f= \begin{tikzcd}[sep=large]	S^{j}\ar[d,swap,"f_0"]&S^{j+1}\ar[l,swap,"d_{j}"]\ar[d,swap,"f_1"]&S^{j+2}\ar[l,swap,"d_{j+1}"]\ar[d,swap,"f_2"]&S^{j+3}\ar[l,swap,"d_{j+2}"]\ar[d,swap,"f_3"]&S^{j+4}\ar[l,swap,"d_{j+3}"]\ar[d,swap,"f_2"]&\cdots\ar[l,swap,"d_{j+4}"]\\S^0&S^1\ar[l,"d_0"]&S^2\ar[l,"d_1"]&S^3\ar[l,"d_2"]&S^4\ar[l,"d_3"]&\cdots\ar[l,"d_4"]\end{tikzcd}$$where $f_i=f_{i+2}$ for $i\geq 2$, and $f_0=\left( 0 \;\; -1\right)f_2$ and $f_1=\stbtm{0}{b}{-1}{0}f_3$.
\end{lem}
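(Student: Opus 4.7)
The plan is to unpack the condition $fg_2 = g_2 f$ componentwise. Since $|g_2|=2$ and $|f|\geq 2$ are both even, the graded commutation carries no sign, so ``commuting with $g_2$'' is literally the equation $fg_2 = g_2 f$ in the dga $\enn_\Lambda(S^\bullet)$. Recalling that a homogeneous element $h\in \enn_\Lambda(S^\bullet)$ of degree $|h|$ is a family of $\Lambda$-linear maps $h_j:S^{j+|h|}\to S^j$, composed by $(h'h)_j = h'_j\circ h_{j+|h'|}$, the condition becomes
\[
f_i \circ (g_2)_{i+d} \;=\; (g_2)_i \circ f_{i+2} \qquad (i\geq 0),
\]
where $d := |f|$.

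Next I would plug in the explicit formula for $g_2$ given in its definition, namely $(g_2)_0 = (0\;\;-1)\cdot$, $(g_2)_1 = \stbtm{0}{b}{-1}{0}\cdot$, and $(g_2)_j = \mathrm{id}_{P_1P_2}$ for every $j\geq 2$ (using that $S^j = P_1P_2$ is the same module for all $j\geq 2$, so the identification needed to make sense of $f_i = f_{i+2}$ is canonical). Since $d\geq 2$, the indices $i+d$ (for $i\geq 0$) and $1+d$ are always $\geq 2$, so every occurrence of $(g_2)_{i+d}$ on the left-hand side is the identity. The three cases $i\geq 2$, $i=0$, and $i=1$ therefore collapse to
\[
f_i = f_{i+2} \text{ for } i\geq 2,\qquad f_0 = (0\;\;-1)\cdot f_2,\qquad f_1 = \stbtm{0}{b}{-1}{0}\cdot f_3,
\]
which is exactly the asserted formula. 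Conversely, any family satisfying these relations tautologically satisfies the commutation equation, so these constraints cut out $E^{\mathrm{ep}}$ on the nose.

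There is no genuine obstacle here; the lemma is bookkeeping that records (i) the eventual $2$-periodicity of $S^\bullet$ and (ii) the fact that $g_2$ is the identity on that periodic part while having non-trivial components only at the initial two positions. The only point requiring mild care is the identification of $S^{i+2}$ with $S^i$ for $i\geq 2$, but this is automatic since both modules are literally $P_1P_2$.
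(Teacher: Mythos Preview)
Your proposal is correct and is precisely the approach the paper takes: its proof is the single sentence ``Compute $[f,g_2]=f\circ g_2 - g_2 \circ f$ and set it to zero,'' and you have carried out that computation componentwise. The only additional content you provide is the explicit bookkeeping with the composition convention and the observation that $(g_2)_{i+d}=\id$ for all $i\geq 0$, which the paper leaves implicit.
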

\begin{proof}
	Compute $[f,g_2]=f\circ g_2 - g_2 \circ f$ and set it to zero.
\end{proof}
In particular, $f$ is determined by the pair $(f_2, f_3)$, and any such pair of maps defines an eventually periodic map.
\begin{defn}
	Let $f$ be an eventually periodic map of a given degree. Since $f$ is determined by its components $f_2$ and $f_3$, we use the notation $f_2 \vert f_3$ to specify $f$ uniquely.
\end{defn}
\begin{defn}
	Let $f\in \enn_\Lambda(S^\bullet)$ be a map of degree $\geq 2$ satisfying $f_i=f_{i+2}$ for $i\geq N$ for some natural $N\geq 2$. The \textbf{periodicisation} of $f$ is the map $f^{\mathrm{ep}} \in E^{\mathrm{ep}}$ of the same degree as $f$ defined by the formula $$f^{\mathrm{ep}}\coloneqq\begin{cases}f_N\vert f_{N+1}&\text{if }N \text{ is even} \\ f_{N+1}\vert f_{N}& \text{if }N \text{ is odd}\end{cases}.$$
\end{defn}
In particular if $f\in E^{\mathrm{ep}}$, then $f^{\mathrm{ep}}=f$. Note that $f^{\mathrm{ep}}$ agrees with $f$ in all degrees $\geq N$.
\begin{lem}
	The complex $E^\mathrm{ep}$ is a nonunital dga, and the inclusion $\iota:E^\mathrm{ep}\into \enn_\Lambda(S^\bullet)$ is a dga map that induces isomorphisms on cohomology in degrees $> 2$ and a surjection on cohomology in degrees $\geq 2$.
\end{lem}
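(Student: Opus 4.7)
The lemma has two parts: the non-unital sub-dga claim and the cohomological comparison.

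\textbf{Sub-dga structure.} I would observe that $E^{\mathrm{ep}}$ is the intersection inside $\enn_\Lambda(S^\bullet)$ of two naturally closed subspaces: the brutal truncation to degrees $\geq 2$, and the graded centraliser of $g_2$. The truncation is closed under composition (degrees add, so $\geq 4$) and under $d$ (degree rises by $1$). The centraliser is closed under composition by the derivation identity $[fg, g_2] = f[g, g_2] + (-1)^{|g||g_2|}[f, g_2]g$. It is closed under $d$ because $g_2: S^\bullet \to S^\bullet[2]$ is a chain map, hence a cocycle in $\enn_\Lambda(S^\bullet)$, so by Leibniz $d[f, g_2] = [df, g_2] - (-1)^{|f|}[f, dg_2] = [df, g_2]$, and this vanishes when $[f, g_2] = 0$. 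Since $\id$ has degree $0$, it is not in $E^{\mathrm{ep}}$, so $E^{\mathrm{ep}}$ is non-unital, and $\iota$ is a dga map tautologically.

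\textbf{Reduction for the cohomology claim.} I would factor $\iota$ as
$$E^{\mathrm{ep}} \hookrightarrow \enn_\Lambda(S^\bullet)_{\geq 2} \hookrightarrow \enn_\Lambda(S^\bullet).$$
The second inclusion is a brutal truncation, and a direct computation with cycles and boundaries shows it induces isomorphisms on $H^k$ for $k \geq 3$ and a surjection on $H^2$ (the difference for $H^2$ comes solely from $d$-images out of degree $1$). So the lemma reduces to showing the first inclusion is a quasi-isomorphism.

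\textbf{Surjection on cohomology.} For this step I would use that $\ext^{\geq 2}_\Lambda(S, S)$ is spanned by the classes $[g_k]$, and verify that each $g_k$ with $k \geq 2$ already lies in $E^{\mathrm{ep}}$. Indeed, the multiplication rule $g_ig_j = g_{i+j}$ when $ij$ is even applies whenever one factor is $g_2$, so $g_2 g_k = g_{k+2} = g_k g_2$, giving $[g_2, g_k] = 0$.

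\textbf{Injection on cohomology and main obstacle.} Given a cocycle $f \in E^{\mathrm{ep}}$ of degree $k > 2$ with $f = d\tilde h$ for some $\tilde h \in \enn_\Lambda(S^\bullet)_{\geq 2}$ of degree $k - 1 \geq 2$, the task is to correct $\tilde h$ by a coboundary so that it commutes with $g_2$. The obstruction is the class $[[\tilde h, g_2]] \in H^{k+1}(\enn_\Lambda(S^\bullet))$: note $[\tilde h, g_2]$ is a cocycle, since $d[\tilde h, g_2] = [f, g_2] = 0$. The plan is to exploit the explicit $2$-periodic structure of $S^\bullet$ in degrees $\geq 2$ (where $d$ alternates between the two fixed matrices $\stbtm{\beta}{a}{t}{\sigma^{n-1}}$ and $\stbtm{-\beta^{n-1}}{a}{t}{-\sigma}$) to solve, componentwise, $[\tilde h, g_2] = d[u, g_2]$ for some $u$ of degree $k - 2$; then $h := \tilde h - du$ lies in $E^{\mathrm{ep}}$ and satisfies $dh = f$. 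The main difficulty is the concrete matrix analysis required: the components $\tilde h_i$ for $i$ in the periodic tail satisfy recursive linear constraints from the alternating differentials, and one must show that the failure-of-periodicity class $[[\tilde h, g_2]]$ can be trivialised within this structure, carefully handling the low-degree components $\tilde h_0, \tilde h_1$ (which are not constrained by periodicity) so that the correction $u$ remains a genuine cochain and the corrected $h$ satisfies the compatibility relations $f_0 = (0,-1)f_2$ and $f_1 = \stbtm{0}{b}{-1}{0}f_3$ defining $E^{\mathrm{ep}}$.
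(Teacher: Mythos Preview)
Your treatment of the sub-dga claim and of surjectivity on cohomology is correct and matches the paper's approach: the $g_k$ for $k\geq 2$ commute strictly with $g_2$ by the multiplication rule, so they lie in $E^{\mathrm{ep}}$ and span the relevant cohomology.

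For injectivity, however, you have set yourself a harder problem than necessary and not solved it. You try to correct $\tilde h$ by a \emph{coboundary} $du$ so that $[\tilde h - du, g_2]=0$, which amounts to solving $[\tilde h, g_2]=d[u,g_2]$. You correctly identify this as the obstacle and then stop, describing it as ``concrete matrix analysis required''. But nothing in your argument forces the cocycle $[\tilde h, g_2]$ to be exact of the special form $d[u,g_2]$, so this is a genuine gap.

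The paper's argument bypasses this entirely using the periodicisation operator $f\mapsto f^{\mathrm{ep}}$ defined just before the lemma. The point is that you do \emph{not} need $\tilde h - l$ to be exact; you only need some $l\in E^{\mathrm{ep}}$ with $dl=f$. Since $f=d\tilde h$ is in $E^{\mathrm{ep}}$, the components of $\tilde h$ become $2$-periodic in high degrees, so $\tilde h^{\mathrm{ep}}$ is defined and lies in $E^{\mathrm{ep}}$. Then $d\tilde h^{\mathrm{ep}}\in E^{\mathrm{ep}}$ as well, so $f-d\tilde h^{\mathrm{ep}}\in E^{\mathrm{ep}}$. But $\tilde h$ and $\tilde h^{\mathrm{ep}}$ agree in high degrees, hence so do $d\tilde h$ and $d\tilde h^{\mathrm{ep}}$; and an element of $E^{\mathrm{ep}}$ vanishing in high degrees is zero (it is determined by $f_2\vert f_3$). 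Thus $f=d\tilde h^{\mathrm{ep}}$ with $\tilde h^{\mathrm{ep}}\in E^{\mathrm{ep}}$, and you are done with no matrix analysis at all.
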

\begin{proof}
	The fact that $\iota$ is an inclusion of nonunital dgas is not hard to see. One can verify that the $g_j$ are eventually periodic, and since they generate the cohomology of $\ext^*_\Lambda(S,S)$, the map $\iota$ must be a quasi-surjection in degrees $\geq 2$. To see that $\iota$ is a quasi-injection in degrees $>2$, take an $h\in \enn_\Lambda(S^\bullet)$ of degree $\geq 2$, and assume that $dh\in E^\mathrm{ep}$. We need to find an $l\in E^\mathrm{ep}$ with $dh=dl$. Because $dh\in E^\mathrm{ep}$, $h$ must be $2$-periodic in high degrees. Since $h^\mathrm{ep}\in E^\mathrm{ep}$, we have $dh^\mathrm{ep}\in E^\mathrm{ep}$, and so $dh-dh^\mathrm{ep}\in E^\mathrm{ep}$. But $h$ agrees with $h^\mathrm{ep}$ in high degrees, and so $dh-dh^\mathrm{ep}$ is zero in high degrees. So $dh-dh^\mathrm{ep}=0$.
\end{proof}
In particular, any map of degree at least $3$ in $\enn_\Lambda(S^\bullet)$  is homotopic to an eventually periodic map. We use this to assist us in our Massey product computation.
\begin{prop}Suppose that $n>2$. Then the Massey product $\langle y,\ldots, y\rangle_n$ is nontrivial.
\end{prop}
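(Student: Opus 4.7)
The Massey product $\langle y,\ldots,y\rangle_n$ (with $n$ copies of the cocycle $y$ of cohomological degree $3$) lives in $\ext^{2n+2}_\Lambda(S,S)$, which is one-dimensional and spanned by the class of $g_{2n+2}=x^{n+1}$. It therefore suffices to exhibit a defining system whose associated Massey cocycle has nonzero class, and to observe that the indeterminacy cannot wipe this class out.

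The starting input is the identity $g_3\cdot g_3=g_6\phi$, which holds literally as chain maps (from $g_ig_j=g_{i+j}\phi$ when $ij$ is odd). Since $n>2$ forces $[\phi]=0$, there exists $P\in\enn_\Lambda(S^\bullet)$ of degree $-1$ with $dP=\phi$; then $b_1\coloneqq g_3$ and $b_2\coloneqq g_6P$ satisfy $db_2=g_6\phi=b_1b_1$, so these begin a defining system. The plan is to construct the remaining $b_r$ for $3\le r\le n-1$ inductively: at each step the cocycle $\sum_{i+j=r,\,i,j\ge 1}\bar{b}_ib_j$ must be shown to bound, and the primitive is built explicitly from $P$ and the $g_{2k}$ using the structural relations $g_ig_j=g_{i+j}$ (for $ij$ even). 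The preceding proposition tells us that every map of degree $\ge 3$ in $\enn_\Lambda(S^\bullet)$ is homotopic to an eventually periodic one, which reduces the entire iteration to a computation in the tractable subalgebra $E^{\mathrm{ep}}$. Weight bookkeeping in the secondary grading further pins down the candidate primitives, since the only classes of the correct weight in the relevant degrees are (multiples of) the $g_{2k}$. The inductive clause of Lemma \ref{masseylemma} packages this step uniformly.

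With the $b_r$ in place, the Massey product is represented by the cocycle $c\coloneqq\sum_{l=1}^{n-1}\bar{b}_lb_{n-l}$ of degree $2n+2$. The final step is to extract its cohomology class: expanding iteratively via $g_ig_j=g_{i+j}$ and $dP=\phi$, the expression should collapse, modulo coboundaries, to a nonzero scalar multiple of $g_{2n+2}=x^{n+1}$, with the scalar arising as a Catalan-type combinatorial count entirely analogous to the one appearing in \ref{dcapagoda}. The indeterminacy of $\langle y,\ldots,y\rangle_n$ is generated by modifying the $b_r$ by cocycles and by products of lower Massey products; by the weight grading and the fact that $\ext^*_\Lambda(S,S)\cong k[x,y]/(y^2)$ has no product decomposition of $x^{n+1}$ involving $y$, no such modification can change the coefficient of $x^{n+1}$, so the Massey product is genuinely nontrivial.

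The main obstacle is the inductive construction of the $b_r$ together with the final coefficient extraction. The reduction to $E^{\mathrm{ep}}$ and the weight bookkeeping make the intermediate steps essentially forced, so the only substantive input is a sufficiently explicit choice of the homotopy $P$ (which can be read off from the periodic tail of $S^\bullet$); once $P$ is fixed, the remaining work is the same Merkulov-style recursion that appears in the Pagoda case, and the nontriviality of the final Catalan coefficient is manifest.
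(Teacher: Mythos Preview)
Your outline shares the right skeleton with the paper's argument: set $b_1=g_3$, build a defining system via Lemma~\ref{masseylemma}, work in the eventually periodic subalgebra, and use the secondary weight grading to see that the intermediate cocycles $\sum_{i+j=r}b_ib_j$ (for $r<n$) bound. But the proposal has a genuine gap precisely at the step you flag as ``the main obstacle'': the inductive construction of the $b_r$.

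You propose to manufacture all the $b_r$ from a single degree $-1$ primitive $P$ with $dP=\phi$, together with the $g_{2k}$. This works for $b_2=g_6P$, but already at $b_3$ one must bound $g_3(g_6P)+(g_6P)g_3=g_9P+g_6Pg_3$, and nothing in your toolkit controls the term $Pg_3$. The paper does not use a single homotopy $P$; it introduces a \emph{family} of explicit eventually periodic maps $\nu_i$ for $2\le i\le n$, given by concrete $2\times 2$ matrices over $R$, satisfying the key relation $g_3\nu_i+\nu_ig_3=d(-\nu_{i+1}-\nu_2\{2i-1\})$ for $i<n$. It is this relation (together with $\nu_i\nu_j=d(\text{something})$) that makes the induction close: each $e_i$ is a $\Z$-linear combination of the $\nu_j$'s with the coefficient of $\nu_{i+1}$ equal to $(-1)^i$. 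Your weight argument correctly shows a primitive \emph{exists} at each stage, but existence is not enough---you must track the primitive explicitly through to the final step in order to extract the coefficient of $g_{2n+2}$, and your proposed generators $P,g_{2k}$ do not visibly carry that information.

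Two smaller points. First, the final coefficient is not Catalan-type: the paper's explicit unwinding gives $e_1e_{n-1}+\cdots+e_{n-1}e_1\simeq(-1)^ng_{2n+2}$, i.e.\ coefficient $\pm 1$. The analogy with the Pagoda computation is misleading, since there the Catalan numbers arise from Merkulov's recursion on the derived contraction algebra, a different dga with a different transfer datum. Second, the indeterminacy discussion is unnecessary for the proposition as stated: the paper only claims (and only needs) that the Massey product contains a nonzero element, which suffices---combined with the later weight argument---to pin down the unique higher multiplication and conclude non-formality.
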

\begin{proof}
	This is rather involved but ultimately straightforward. We in fact show that $(-1)^nx^{n+1}$ is an element of $\langle y,\ldots, y\rangle_n$. We're going to proceed by setting $e_1\coloneqq g_3$ and inductively finding $e_i$ such that $de_i=e_1 e_{i-1}+\cdots+e_{i-1}e_1$. Note that we will require $de_2 =g_3^2$. For $2\leq i\leq n$, define a degree $2i-1$ eventually periodic map $\nu_i$ by the formula$$\nu_i\coloneqq \tbtm{\beta^{n-i}}{0}{t}{-\sigma}\mid\tbtm{\beta}{0}{t}{-\sigma^{n-i}}$$
	The $\nu_i$ will satisfy some simple relations, but we will need to keep track of the degrees of our maps. Unfortunately this makes things notationally messy. If $w=w_2\vert w_3$ is an eventually periodic map of degree $j$, then we denote by $w\{l\}$ the eventually periodic map of degree $j+l$ given by the formula$$w\{l\}\coloneqq\begin{cases}w_2\vert w_3&\text{if }l \text{ is even} \\ w_3\vert w_2& \text{if }l \text{ is odd}\end{cases}.$$In other words, $w\{l\}$ is $w$ but viewed as a map of a different degree. One can check that the following hold:
	
	\begin{enumerate}
		\item $d \nu_i = \tbtm{\beta^{n-i+1}}{0}{0}{\sigma^{n-i+1}} \mid \tbtm{\beta^{n-i+1}}{0}{0}{\sigma^{n-i+1}}$
		\item $\nu_i\nu_j = d(\nu_i\{2j-2\} + \nu_j\{2i-2\})$
		\item $g_3\nu_i+\nu_ig_3 =d( -\nu_{i+1}-\nu_2\{2i-1\})$ if $i<n$
		\item If $i<n$ then $\nu_i \simeq 0$
	\end{enumerate}
	Observe that $d \nu_3=g_3^2$. So we set $e_1\coloneqq g_3$, and we want to inductively find $e_i$ such that \linebreak $d e_i=e_1 e_{i-1}+\cdots+e_{i-1}e_1$, starting with $e_2=\nu_3$. We need to check that we can do this. We prove by induction that for $2\leq i<n$ there exist maps $e_i$ of degree $2i+1$ such that: \begin{enumerate}
		\item$e_i$ is a $\Z$-linear combination of the maps $\nu_{i+1},\ldots,\nu_2\{2i-1\}$, and the coefficient of $\nu_{i+1}$ is $(-1)^i$.
		\item $d e_i= e_1 e_{i-1}+\cdots+e_{i-1}e_1$.
	\end{enumerate}

	The idea of the induction is simple; we just expand out each expression $e_1 e_{i-1}+\cdots+e_{i-1}e_1$ and `integrate term-by-term'. The hard part is keeping track of all the indices. The base case is clear; we may take $e_2\coloneqq \nu_3$ as above. For the induction step, suppose that for all $j<i$, all $e_j$ are defined and have the two properties above. We wish to construct $e_i$. For $j\geq 2$ write $$e_j=\sum_{r=2}^{j+1}\lambda_r^j\nu_r\{a^j_r\}$$with $\lambda_{j+1}^j=(-1)^j$ and $a^j_r=2(j-r)+2$. Then it is clear that for $1<j,k<i$, we have the identity $$e_je_k=\sum_{r=2}^{j+1}\sum_{q=2}^{k+1}\lambda_r^j\lambda_q^k\nu_r\nu_q\{a^j_r+a^k_q\}$$Hence, if we set $$m_{jk}\coloneqq \sum_{r=2}^{j+1}\sum_{q=2}^{k+1}\lambda_r^j\lambda_q^k(\nu_r\{a^j_r+2k\}+\nu_q\{a^k_q+2j\})$$we see that $d m_{jk}= e_je_k$. Observe that $m_{jk}$ is a map of degree $2(j+k)+1$. Moreover we have $$g_3e_{i-1}+e_{i-1}g_3=\sum_{r=2}^{i}\lambda_r^{i-1}(g_3\nu_r+\nu_rg_3)\{a^{i-1}_r\}$$So if we set $$m\coloneqq -\sum_{r=2}^{i}\lambda_r^{i-1}(\nu_{r+1}+\nu_2\{2r-1\})\{a^{i-1}_r\}$$we see that $m$ is a map of degree $2i+1$ with $d m=e_1e_{i-1}+e_{i-1}e_1$. Thus if we set $$e_i\coloneqq m + m_{2(i-2)}+\cdots+m_{(i-2)2}$$ we see that by construction, $e_i$ satisfies condition $2.$. Clearly $e_i$ is a is a $\Z$-linear combination of $\nu_{i+1},\ldots,\nu_2\{2i-1\}$. So we just need to check what the coefficient of $\nu_{i+1}$ in $e_i$ is. It is easy to see that this coefficient is $-\lambda^{i-1}_i$ which by the induction hypothesis is $-(-1)^{i-1}=(-1)^i$. Hence $e_i$ satisfies condition 1.
	\p
	We're almost done. We observe that one element of the $n$-fold Massey product $\langle g_3,\ldots,g_3\rangle$ is given by $[e_1 e_{n-1}+\cdots+e_{n-1}e_1]$. So it suffices to prove that $e_1 e_{n-1}+\cdots+e_{n-1}e_1 \not\simeq 0$. We see that $e_je_k\simeq 0$ holds as long as $1<j,k<n$, so that we have a homotopy \linebreak$e_1 e_{n-1}+\cdots+e_{n-1}e_1\simeq e_1 e_{n-1}+e_{n-1}e_1$. Observe also that $e_1\nu_j+\nu_je_1\simeq0$ holds if $j<n$. Hence we see that we have a homotopy $e_1e_{n-1}+e_{n-1}e_1\simeq (-1)^{n-1}(e_1\nu_n+\nu_ne_1)$. It is easy to check that $e_1\nu_n+\nu_ne_1 \simeq -g_{2n+2}$. Hence $e_1 e_{n-1}+\cdots+e_{n-1}e_1 \simeq (-1)^ng_{2n+2}\not\simeq 0$.
\end{proof}

\begin{cor}
	When $n>2$, $\R\enn_\Lambda(S)$ is not a formal dga.
\end{cor}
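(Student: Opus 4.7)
The plan is to invoke the standard fact that formal dgas have vanishing higher Massey products, in the sense that every defined Massey product contains $0$. More precisely, if $E$ is quasi-isomorphic to its cohomology algebra $HE$ viewed as a dga with zero differential, then the minimal $A_\infty$-structure transferred to $HE$ has all higher multiplications $m_r$ equal to zero for $r\geq 3$, and hence all Massey products vanish (as Massey products are computed, up to sign and indeterminacy, from the $m_r$; see e.g.~\ref{masseys}). This is the only ingredient needed.

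So the proof is essentially one line: the preceding proposition exhibits a nontrivial element, namely $(-1)^n x^{n+1} \neq 0$, in the $n$-fold Massey product $\langle y,\ldots,y\rangle_n$ on $\ext^*_\Lambda(S,S)$. Since $n>2$, this is a higher Massey product, and its nonvanishing obstructs formality. Therefore $\R\enn_\Lambda(S)$ cannot be quasi-isomorphic to its cohomology algebra as a dga.

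No real obstacle arises here, since all the hard work has been done in the Massey product computation above. The one conceptual subtlety worth noting is that Massey products are only defined up to an indeterminacy subspace, so formality strictly implies that $0$ lies in every defined Massey product, not that the Massey product is literally the singleton $\{0\}$. However, the preceding proposition actually shows that $(-1)^n x^{n+1}$ is an explicit representative, and since $x^{n+1}\neq 0$ in $\ext^*_\Lambda(S,S)\cong k[x,y]$ (recalling that $n>2$ forces $\phi\simeq 0$ and hence the Ext-algebra is the free graded-commutative algebra on $x,y$), one would need to check that the indeterminacy of $\langle y,\ldots,y\rangle_n$ does not contain $x^{n+1}$. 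But the indeterminacy consists of products involving lower Massey products of subsequences of $(y,\ldots,y)$, and these all lie in the ideal generated by $y\cdot(\text{something})$ or $(\text{something})\cdot y$; since $x^{n+1}$ is not divisible by $y$ in the free graded-commutative algebra $k[x,y]$, it cannot be cancelled by the indeterminacy. This confirms the nonvanishing, completing the proof.
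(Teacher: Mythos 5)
Your proof is correct and matches the paper's implicit argument: the corollary follows immediately from the preceding proposition (nontrivial Massey product obstructs formality), and the paper does not spell out more than that. The extra care you take about indeterminacy is a valid and worthwhile refinement: you observe that in degree $2n+2$ the graded algebra $k[x,y]$ (with $|x|=2$, $|y|=3$, $y^2=0$) is one-dimensional spanned by $x^{n+1}$, and the ideal $(y)$ — which contains whatever indeterminacy arises — has zero intersection with this degree since $y\cdot x^a$ is always of odd degree and $y^2=0$. This confirms that $0$ is not in $\langle y,\ldots,y\rangle_n$, which is what "nontrivial" must mean for the corollary to follow.
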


	\begin{prop}\label{surfeng2}
		Let $n>2$. Then $\R\enn_\Lambda(S)$ is quasi-isomorphic to the strictly unital minimal $A_\infty$-algebra with underlying graded algebra $k[x,y]$, with $x$ in degree 2 and $y$ in degree 3, with the only nontrivial higher multiplications being $m_n(x^{b_1}y,\ldots,x^{b_l}y)=x^{n+1+b_1+\cdots +b_n}$.
	\end{prop}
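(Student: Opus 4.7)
My plan is to apply Kadeishvili's theorem (\ref{kadeish}) to transfer the $A_\infty$-structure of $\R\enn_\Lambda(S)$ to its cohomology algebra $\ext^*_\Lambda(S,S)=k[x,y]$ and then pin down the higher multiplications using the weight grading together with the Massey product computation of the preceding proposition. The weight grading on $\Lambda$ (with $b,s$ of weight $1$ and $a,t$ of weight $n$) lifts to a secondary grading on $\R\enn_\Lambda(S)$, and reading off from the resolution $S^\bullet$ one finds that $x$ has weight $-2n$ and $y$ has weight $-(2n+2)$; all transferred $A_\infty$ operations preserve both cohomological degree and weight.

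The first step is a degree-and-weight bookkeeping restricting which $m_r$ can be nonzero on augmentation-ideal inputs. Given $m_r(a_1,\ldots,a_r)$ with each $a_i\in\{x^{c_i},x^{c_i}y\}$, let $k$ be the number of inputs of the form $x^{c_i}y$ and $C=\sum c_i$. Matching the degree equation $\sum|a_i|+2-r=|\text{output}|$ with the weight equation $\sum w(a_i)=w(\text{output})$ against monomial outputs $x^a$ or $x^a y$ yields integrality constraints that force $n\mid k$ in the first case and $n\mid k-1$ in the second; combined with $k\leq r$ and strict unitality, only two cases survive, namely the multiplication $m_2$ and the single family $m_n(x^{c_1}y,\ldots,x^{c_n}y)\in k\cdot x^{C+n+1}$.

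With the problem reduced to determining scalars $\lambda(c_1,\ldots,c_n)$ with $m_n(x^{c_1}y,\ldots,x^{c_n}y)=\lambda(c_1,\ldots,c_n)\,x^{C+n+1}$, the Massey product calculation already in hand establishes $\lambda(0,\ldots,0)=\pm 1$, which we may normalise to $1$ by rescaling $x$. To compute the remaining $\lambda$'s I would apply the $A_\infty$ Stasheff identity to the $n+1$ inputs $(x,\,x^{c_1-1}y,\,x^{c_2}y,\ldots,x^{c_n}y)$: since $m_2(y,y)=y^2=0$ and $m_n$ vanishes on any tuple containing a pure $x^c$ input by step one, only two terms survive, and the identity collapses to the recurrence $\lambda(c_1,\ldots,c_n)=\pm\lambda(c_1-1,c_2,\ldots,c_n)$; iterating this (and the analogous recurrences in each other slot) reduces every $\lambda$ to $\lambda(0,\ldots,0)$.

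The main obstacle will be bookkeeping: exhausting the integrality cases in step one (including small-$n$ corner cases) and controlling the Koszul signs carefully enough in step three that a single uniform rescaling of the generators $x$ and $y$ normalises all of the coefficients $\lambda(c_1,\ldots,c_n)$ simultaneously to $1$, rather than to an awkward sign depending on the $c_i$.
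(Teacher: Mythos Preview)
Your proposal is correct and follows essentially the same route as the paper: use the secondary weight grading to rule out all higher multiplications except $m_n$ on inputs of type $x^{c_i}y$, then use the Stasheff identity $\mathrm{St}_{n+1}$ (with one pure $x$-power inserted among the $x^{c_i}y$'s) to reduce all coefficients to the single constant $\lambda(0,\ldots,0)=m_n(y,\ldots,y)/x^{n+1}$, and finally invoke the Massey product computation to see this constant is nonzero and rescale. The only cosmetic difference is that the paper phrases the last step as ``$\R\enn_\Lambda(S)$ is not formal, hence $\lambda_0\neq 0$'' rather than reading off $\lambda_0=\pm 1$ directly from the Massey product; and the normalisation is most cleanly done by rescaling $y$ rather than $x$ (so that $m_2$ is unaffected).
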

\begin{proof}
	We employ the usual trick of using the extra grading on the resolution $S^\bullet$ to rule out most higher multiplications. Observe that in the secondary grading on the resolution, $x$ has weight $-2n$, and $y$ has weight $-(2n+2)$. Appealing to the graded version of Merkulov's construction, one can consider the higher multiplication $m_{r+l}(x^{a_1},\ldots,x^{a_r},x^{b_1}y,\ldots,x^{b_l}y)$, which must be of degree $2-r+2a+2b+2l$ and weight $-2na-2nb-2(n+1)l$, where we write $a=a_1+\cdots+a_r$ and $b=b_1+\cdots+b_r$. Via casework on the parity of $r$, one can see that if $r+l>2$, the only way for this to be nonzero is when we are looking at a product of the form $m_{n}(x^{b_1}y,\ldots,x^{b_n}y)=\lambda x^{1+b+n}$, where $\lambda$ depends on the $b_i$. Consideration of the Stasheff identity $\mathrm{St}_{n+1}$ with inputs of the form $x^{b_1}y\otimes\cdots\otimes x^{b_i}y\otimes x^m \otimes x^{b_{i+1}}y\otimes\cdots\otimes x^{b_n}y$ shows that the higher multiplications $m_n$ are $x$-linear, in the sense that $m_n(x^{b_1}y,\ldots, x^{b_n}y)=x^bm_n(y,\ldots,y)$. So the only higher multiplication of interest is $m_n(y,\ldots,y)=\lambda_0 x^{1+n}$. Because $\R\enn_\Lambda(S)$ is not formal, we must have $\lambda_0\neq 0$, and rescaling if necessary one can fix $\lambda_0=1$.
\end{proof}

\begin{rmk}\label{surfextrmk}
	Alternately, one can say that $\R\enn_\Lambda(S)$ is quasi-isomorphic to the strictly unital minimal $A_\infty-k[x]$-algebra generated by $y$ subject to the relations $m_r(y,\ldots,y)=\delta_{r,n}x^{n+1}$. Note that this also holds for $n=2$.
\end{rmk}

\begin{prop}\label{surfdcacohom}Let $n\geq 2$.
	We have $H^*(\dca)\cong k[\eta,\zeta]$ where $\eta=x^*$ has degree -2 and $\zeta=y^*$ has degree -1.
\end{prop}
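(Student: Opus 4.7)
The plan is to combine an additive computation of the cohomology groups (using \ref{dqcohom}, AR duality, and the periodicity element from \ref{etaex}) with a Koszul-dual argument for the two defining relations. Since the previous results in this section have already given us the $A_\infty$-algebra $E = \R\enn_\Lambda(S)$ explicitly (via \ref{surfeng2} and \ref{surfextrmk}), the Koszul dual identification from \ref{dqiskd} makes the multiplicative structure accessible.

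First, I will compute each $H^j(\dca)$ as a $k$-vector space. By \ref{dqcohom}, $H^0(\dca) = \underline{\enn}_R(M) = A_\con$. Inspecting the quiver presentation of $\Lambda$ given at the start of the section, every arrow is incident to the vertex $1$, so killing the idempotent $e_1$ annihilates all arrows, leaving $\Lambda_\con = k$; passage to the basic algebra Morita equivalent to the completion preserves this, so $A_\con = k$ and $H^0(\dca) = k$. The Krull dimension of $R$ is $2$, so AR duality yields $\ext^1_R(M,M) \cong \underline{\hom}_R(M,M)^* = A_\con^* \cong k$, hence $H^{-1}(\dca) \cong k$. Combined with the 2-periodicity statement in \ref{dqcohom}, this forces $H^{-j}(\dca) \cong k$ for every $j \geq 0$ and $H^j(\dca) = 0$ for $j > 0$.

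Next, I will identify the generators. By \ref{etaex}, the periodicity element $\eta \in H^{-2}(\dca)$ is a nonzero central cohomology class, and multiplication by $\eta$ induces isomorphisms $H^j(\dca) \to H^{j-2}(\dca)$ for all $j \leq 0$. Choose any nonzero $\zeta \in H^{-1}(\dca)$. By the periodicity iso, $\{\eta^k : k \geq 0\} \cup \{\eta^k \zeta : k \geq 0\}$ is a basis of $H^*(\dca)$. It remains to establish the two defining relations $\eta\zeta = \zeta\eta$ and $\zeta^2 = 0$. The first is immediate from the centrality of $\eta$ in $H(\dca)$ noted in \ref{etaex}. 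For the second, I use the Koszul dual description: by \ref{dqiskd}, $\dca \simeq E^!$, and $E^!$ is the (completed) tensor algebra on the shifted linear dual of the augmentation ideal $E^+$ equipped with the cobar differential dual to the $A_\infty$ operations on $E$. The underlying graded algebra of $E$ (from \ref{surfeng2}) is $k[x,y]/(y^2)$, so $E^+$ has generators $x,y$ in degrees $2,3$ respectively. The multiplication $m_2(x,x) = x^2$ dualises to a coboundary $d(s(x^2)^*) = \pm(s x^*)^2$ in $E^!$, which forces the square of the cohomology class of $sx^*$ in $H^{-1}(\dca)$ to vanish. Since $H^{-1}(\dca)$ is one-dimensional, any choice of nonzero $\zeta$ satisfies $\zeta^2 = 0$.

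Combining these, the graded algebra map $k[\eta,\zeta] \to H^*(\dca)$ sending the generators to the chosen classes is well-defined (both relations have been verified) and bijective degree by degree (by the basis computation), hence an isomorphism. The main technical obstacle is the bookkeeping in the Koszul dual: one must check that the higher multiplication $m_n$, which affects higher-degree cobar coboundaries, does not interfere with the key relation $d(s(x^2)^*) = \pm(sx^*)^2$. Since $n \geq 2$, the extra term from $m_n^*$ lands in degree $2-n \leq 0$ generators and never contributes to the differential of the quadratic-in-$u_1$ expression needed above, so the argument goes through cleanly.
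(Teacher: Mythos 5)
Your argument is correct for $n\geq 2$, but it takes a genuinely different route from the paper. The paper proves the statement via the Koszul-dual spectral sequence: it filters $E=\R\enn_\Lambda(S)$ by powers of $y$ (with $W^0E=k[x]$, $W^1E=E$), notes that this induces a multiplicative filtration on $E^!$ with $\mathrm{gr}^W(E^!)\cong(\mathrm{gr}^W E)^!\cong k[\eta,\zeta]$, and then observes that the associated spectral sequence converging to $H(E^!)$ degenerates at $F_0$, so that $H(E^!)\cong\mathrm{gr}^W(E^!)$. Your approach instead nails down the answer piece by piece: the additive structure comes for free from \ref{dqcohom}, AR duality in dimension two, and $2$-periodicity (each $H^{-j}$ is one-dimensional since $A_\con=k$); centrality of $\eta$ from \ref{etaex} together with the periodicity isomorphism then pins down all products except $\zeta^2$, which you kill via a one-line cobar computation in $E^!=(BE)^*$. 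This is more elementary --- no spectral sequence is needed --- and it exploits the fact that every graded piece is $1$-dimensional, which trivialises the additive bookkeeping. The paper's filtration argument has the advantage that it generalises better if one does not know the graded pieces are so small; it also sets up the same filtration that implicitly underlies the later determination of the full $A_\infty$-structure in \ref{surfdcang2}.

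Two small points worth tightening. First, your sentence ``the underlying graded algebra of $E$ (from \ref{surfeng2}) is $k[x,y]/(y^2)$'' only describes the case $n>2$; for $n=2$ the paper shows $E$ is formal with cohomology $\frac{k[x]\langle y\rangle}{(xy-yx,\,x^3-y^2)}$, so $y^2=x^3\neq 0$. The conclusion you need --- that the only $m_r$-term producing $x^2$ is $m_2(x,x)$, so that $d\big((x^2)^*\big)=\pm(x^*)^2$ is the complete differential --- still holds in the $n=2$ case (nothing of degree $4$ other than $x\cdot x$ multiplies to $x^2$), but you should say so, since the proposition claims $n\geq 2$. Second, your last paragraph about ``$m_n^*$ lands in degree $2-n\leq 0$ generators'' is somewhat obscure; the cleaner justification is simply that the only nontrivial higher products output powers $x^j$ with $j\geq n+1\geq 3$, so they cannot contribute to $d\big((x^2)^*\big)$. (Separately, the degree assignment ``$\eta=x^*$ of degree $-2$, $\zeta=y^*$ of degree $-1$'' in the proposition statement appears to have $x^*$ and $y^*$ interchanged relative to the Koszul-dual grading convention $|u^*|=1-|u|$ used throughout the paper; your working tacitly uses the corrected identification with $\zeta$ dual to $x$, which is the right one.)
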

\begin{proof}
	For brevity put $E\coloneqq \R\enn_\Lambda(S)$ and recall that $\dca=E^!$ the Koszul dual. We filter $E$ by powers of $y$, use this to get a filtration on $\dca$, and consider the resulting spectral sequence. Let $W^0E=k[x]$ and let $W^1E=E$. One can check easily that this is a multiplicative filtration. We obtain $\mathrm{gr}_1^WE\cong k[y]$ and $\mathrm{gr}_0^WE\cong k[x]$. The filtration $W$ gives us a filtration on $E^!$, which we also call $W$, with associated graded $\mathrm{gr}^W(E^!)\cong(\mathrm{gr}^WE)^!$. Now, $\mathrm{gr}^WE\cong k[x,y]$ and so $\mathrm{gr}^W(E^!)\cong k[\eta,\zeta]$.
	
	\p Now we consider the spectral sequence $F$ associated to the filtration $W$ on $E^!$. It has $F_0$ page $F_0^{pq}=(\mathrm{gr}_p^W (E^!))^{p+q}\;\implies\;H^{p+q}(E^!)$. Writing out this page, we see that all differentials are trivial and so $F_0=F_\infty$. Hence we have $(\mathrm{gr}_p^W (E^!))^{p+q}=\mathrm{gr}_p^W H^{p+q}(E^!)$, and so forgetting the extra grading we get $H(E^!)\cong\mathrm{gr}^W(E^!)\cong k[\eta,\zeta]$ as required.
\end{proof}

\begin{rmk}
	Note that this holds for both $n=2$ and $n>2$. To see this in a more unified way, one can use the description of \ref{surfextrmk}.
\end{rmk}	

\begin{prop}\label{surfdcang2}
	Let $n\geq2$. Then the derived contraction algebra $\dca$ is quasi-isomorphic to the strictly unital minimal $A_\infty$-algebra with underlying graded algebra $ k[\eta,\zeta]$, where $\eta$ has degree $-2$, $\zeta$ has degree -1, and the only nontrivial higher multiplication is $$m_{n+1}(\eta^{b_1}\zeta,\ldots, \eta^{b_{n+1}}\zeta)=\eta^{b_1+\cdots+b_{n+1} + n}.$$

\end{prop}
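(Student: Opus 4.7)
The plan is as follows. Proposition \ref{surfdcacohom} identifies the cohomology ring as the graded commutative algebra $H^\ast(\dca) \cong k[\eta,\zeta]$, so Kadeishvili's theorem (\ref{kadeish}) produces a strictly unital minimal $A_\infty$-structure on $k[\eta,\zeta]$ that is $A_\infty$-quasi-isomorphic to $\dca$; the task is to identify its higher multiplications $m_r$. Since $\zeta$ has odd cohomological degree and the cohomology is graded commutative in characteristic zero, $\zeta^2 = 0$ in $k[\eta,\zeta]$, so a $k$-basis consists of the monomials $\eta^a\zeta^c$ with $a \geq 0$ and $c \in \{0,1\}$.

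I would first constrain the possible $m_r$ using a degree-and-weight argument analogous to the one in the proof of Proposition \ref{surfeng2}. The weight grading on $\Lambda$ (with $b,s$ in weight $1$ and $a,t$ in weight $n$) lifts to a secondary grading on the resolution $S^\bullet$, hence on $E = \R\enn_\Lambda(S)$, and, via the bar-cobar construction defining the Koszul dual, on $\dca$. Matching cohomological degrees and weights for inputs $\eta^{a_i}\zeta^{c_i}$ against an output $\eta^A\zeta^C$ yields two linear equations, and a case analysis on their integer solutions subject to $C \in \{0,1\}$ should rule out every nontrivial higher multiplication except those of the form
$$m_{n+1}(\eta^{b_1}\zeta, \ldots, \eta^{b_{n+1}}\zeta) = \lambda_{b_\bullet}\,\eta^{B+n}, \qquad B = b_1 + \cdots + b_{n+1}.$$
Applying the Stasheff identity $\mathrm{St}_{n+2}$ to inputs with one extra $\eta$-factor inserted in various slots (exactly as in the $\eta$-linearity argument for the Pagoda flop, Theorem \ref{dcapagoda}) should then force $m_{n+1}$ to be $\eta$-multilinear, so $\lambda_{b_\bullet} = \lambda$ is independent of the $b_i$, and the problem reduces to showing $\lambda \neq 0$; a rescaling of $\eta$ (permissible since $k$ is algebraically closed of characteristic zero) then normalises $\lambda = 1$.

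To produce a nonzero $\lambda$, I would appeal to $A_\infty$-Koszul duality (Proposition \ref{dqiskd}), which gives $\dca \simeq E^!$. The nontrivial higher multiplication $m_n(y,\ldots,y) = x^{n+1}$ on $E$, recorded in Remark \ref{surfextrmk}, is witnessed by a nontrivial $n$-fold Massey product $\langle y,\ldots,y\rangle \ni x^{n+1}$ (and indeed was detected that way in Proposition \ref{surfeng2}); under Koszul duality this should translate into a nontrivial $(n+1)$-fold Massey product $\langle \zeta,\ldots,\zeta\rangle \ni \eta^n$ on $\dca$, which by Lemma \ref{masseylemma} forces $\lambda \neq 0$. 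A more hands-on alternative would be to write down a non-minimal dga model of $\dca$ obtained from the completed cobar construction on the minimal $A_\infty$ model of $E$, and then run Merkulov's construction explicitly to read $\lambda$ off, in direct analogy with the Pagoda computation (Theorem \ref{dcapagoda}).

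The main obstacle I anticipate is precisely this explicit transfer of the coefficient via Koszul duality: the cobar dga of $E$ is larger and less transparent than the dga model appearing in the Pagoda case (where $\dca$ admitted a very compact finite presentation and a clean Catalan-number recursion), so the Merkulov computation, or equivalently the Massey product argument, will require more delicate bookkeeping to land at a clean answer. Once $\lambda \neq 0$ is established, the uniqueness of the minimal $A_\infty$-model up to (not necessarily strict) $A_\infty$-isomorphism completes the proof.
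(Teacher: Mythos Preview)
Your proposal follows the paper's approach almost exactly for the first two steps: the degree-and-weight analysis restricting the possible higher multiplications, and the use of the Stasheff identity $\mathrm{St}_{n+2}$ to obtain $\eta$-linearity. These are precisely what the paper does (and indeed the paper just says ``extremely similar to the proof of \ref{surfeng2}'').

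The only substantive difference is in how you show $\lambda\neq 0$. You propose either to transfer the Massey product $\langle y,\ldots,y\rangle\ni x^{n+1}$ across Koszul duality to a Massey product $\langle\zeta,\ldots,\zeta\rangle$ on $\dca$, or to run Merkulov's construction explicitly on a cobar model of $E^!$. Both would work in principle but, as you note, require bookkeeping that is not set up in the paper. The paper's argument is cleaner and entirely indirect: if $\lambda=0$ then the minimal model of $\dca$ has \emph{no} higher multiplications, so $\dca$ is formal and quasi-isomorphic to the graded algebra $k[\eta,\zeta]$; but then its Koszul dual is $(k[\eta,\zeta])^!$, whereas by \ref{dqiskd} (and Koszul double-duality \ref{dqkd}) the Koszul dual of $\dca$ must recover $\R\enn_\Lambda(S)$, whose minimal $A_\infty$-structure was already computed in \ref{surfeng2} and Remark~\ref{surfextrmk} and is visibly not that of $(k[\eta,\zeta])^!$. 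This contradiction forces $\lambda\neq 0$ without any explicit computation on the $\dca$ side. So your plan is sound, but you can replace the delicate Massey-transfer step with this one-line non-formality argument.
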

\begin{proof}
This is extremely similar to the proof of \ref{surfeng2}. A calculation with degree and weight yields that the only possible nontrivial higher multiplications are of the form	$$m_{n+1}(\eta^{b_1}\zeta,\ldots, \eta^{b_{n+1}}\zeta)=\lambda\eta^{b_1+\cdots+b_{n+1} + n}$$where $\lambda$ depends on the $b_i$. One gets $\eta$-linearity of the higher multiplications by considering the Stasheff identity $\mathrm{St}_{n+2}$. To see that $\dca$ is not formal, use that the Koszul dual of $\dca$ must be $\R\enn_\Lambda(S)$ again, but this does not agree with $k[\eta,\zeta]^!$. Hence we must have $m_{n+1}(\zeta,\ldots, \zeta)=\lambda_0\eta^{ n}$ for some $\lambda_0\neq 0$, and hence one can choose $\lambda_0=1$.
\end{proof}

\begin{rmk}
	Again, this applies for both $n=2$ and $n>2$, and one can equivalently describe $\dca$ as the strictly unital minimal $A_\infty-k[\eta]$-algebra generated by $\zeta$ subject to the relations $m_r(\zeta,\ldots,\zeta)=\delta_{r,n+1}\eta^{n}$.
\end{rmk}

\section{The mutation-mutation autoequivalence}\label{mutnauto}

In this section, we will study the mutation-mutation equivalence, which is a noncommutative generalisation of the flop-flop autoequivalence. Our main theorem is \ref{mutncontrol}, which is a generalisation of Donovan and Wemyss's result that the contraction algebra of a threefold flop `controls' the flop-flop autoequivalence \cite[5.10]{DWncdf}. In particular, we show that the truncation $A_\mm\coloneqq \tau_{\geq -1}(\dca)$ `controls' the mutation-mutation autoequivalence in more general settings, via noncommutative twists. In the first couple of sections we will set up the theory. We'll do some computations, and show that mutation respects the recollement of \ref{recoll}, which we will use to obtain some results on t-structures analogous to those of \cite{bridgeland}. We prove the main theorem using the machinery of singularity categories and derived localisation; in particular we will need some crucial technical results established in \cite{dqdefm}. We will in fact show that the mutation-mutation autoequivalence, when restricted to the derived category $D(\dca)$, is simply the shift [-2] (\ref{mmshift}). In the hypersurface setting, this will be enough since one can use arguments involving the periodicity element $\eta$ to interchange shifts and truncation.
\subsection{sCY rings and modifying modules}
Given a reasonable commutative ring $R$ and a reasonable module $V$, one can consider the ring $A=\enn_R(V)$ as a sort of noncommutative partial resolution of $R$. One would like to be able to `mutate' $A$ into a new ring $A'=\enn_R(V')$, and obtain a derived equivalence between $A$ and $A'$. In this part we follow Iyama--Reiten \cite{iyamareiten} and Iyama--Wemyss \cite{iwmaxmod} to provide rigorous definitions of `reasonable'.
\begin{defn}\label{scydefn}
	Let $R$ be a commutative $k$-algebra. Say that $R$ is \textbf{singular Calabi--Yau} (or just \textbf{sCY}) if the three following conditions are satisfied:
	\begin{enumerate}
		\item $R$ is Gorenstein.
		\item $R$ has finite Krull dimension $d$.
		\item $R$ is equicodimensional; i.e. all of its maximal ideals have the same height (which is equivalent to specifying that $\dim R_\mathfrak{m} =d$ for all $\mathfrak m \subseteq R$ maximal).
	\end{enumerate}
\end{defn}
\begin{rmk}
	This is a special case of Iyama--Reiten's definition \cite[\S3]{iyamareiten} for noncommutative rings; see \cite[3.10]{iyamareiten} for the proof of equivalence. In \cite{iyamareiten} this condition is called $d$-CY$^-$, and in \cite{iwmaxmod} it is called $d$-sCY. This is because $R$ is $d$-sCY if and only if a Calabi--Yau type condition $\hom_{D(R)}(X,Y[d])\cong D_M\hom_{D(R)}(Y,X)$ holds for certain $X,Y \in D^b(R)$, where $D_M$ denotes the Matlis dual.
\end{rmk}
	A typical example of a sCY ring is a local complete intersection (l.c.i.) domain, or a localisation or completion thereof:
\begin{lem}\label{lciscy}Let $R=k[x_1,\ldots, x_n]/I$ be a l.c.i. domain and $\mathfrak m \subseteq R$ be a maximal ideal. Then all of $R$, $R_{\mathfrak m}$ and $\hat R_{\mathfrak m}$ are sCY.
\end{lem}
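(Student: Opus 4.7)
The plan is to verify each of the three conditions of Definition \ref{scydefn} in turn for each of $R$, $R_{\mathfrak m}$ and $\hat R_{\mathfrak m}$, with the bulk of the substance going into the ambient ring $R$; the two local cases then essentially follow because locality renders equicodimensionality trivial and because the Gorenstein and dimension conditions are well known to be preserved under localisation at a prime and under $\mathfrak m$-adic completion of a Noetherian local ring.

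First I would handle $R$ itself. The Gorenstein property is immediate from the l.c.i.\ hypothesis: a quotient $k[x_1,\ldots,x_n]/I$ by a regular sequence is Gorenstein (the polynomial ring is regular, hence Gorenstein, and Gorenstein descends along quotients by regular sequences; e.g.\ Eisenbud, 21.19, which is already cited in the paper). Finiteness of Krull dimension is clear: $\dim R \leq n$. The key commutative-algebra input is equicodimensionality, and this is where I would be most careful. Since $R$ is a finitely generated $k$-algebra that is a domain, Noether normalisation provides a finite injection $k[y_1,\ldots,y_d]\hookrightarrow R$ with $d=\dim R$, and going-up together with the dimension formula for finitely generated algebras over a field forces $\mathrm{ht}(\mathfrak m)=d$ for every maximal $\mathfrak m\subseteq R$; equivalently, $\dim R_{\mathfrak m}=d$ uniformly in $\mathfrak m$.

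Next I would handle $R_{\mathfrak m}$. Localisation of a Gorenstein ring at any prime is Gorenstein, since injective dimension localises and $R_{\mathfrak m}$ inherits the finite injective-dimension property from $R$. Krull dimension is bounded by $d$, hence finite. Equicodimensionality is automatic because $R_{\mathfrak m}$ is local with unique maximal ideal $\mathfrak m R_{\mathfrak m}$.

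Finally, for $\hat R_{\mathfrak m}$: completion of a Noetherian local ring is faithfully flat and preserves both Krull dimension and the Gorenstein property (a Noetherian local ring is Gorenstein if and only if its $\mathfrak m$-adic completion is). Again locality makes equicodimensionality free. The only genuinely non-trivial ingredient in the whole argument is the equicodimensionality of $R$, which I expect to be the main --- though still mild --- obstacle, since it relies on the Noether normalisation / dimension theory of finitely generated algebras over a field rather than on anything intrinsic to the l.c.i.\ hypothesis.
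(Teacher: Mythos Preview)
Your proposal is correct and follows essentially the same approach as the paper: verify the three defining conditions of sCY for $R$ directly (Gorenstein via the l.c.i.\ hypothesis, finite Krull dimension trivially, and equicodimensionality because $R$ is an affine domain), then observe that the local cases follow because localisation and completion preserve the Gorenstein property and Krull dimension, while equicodimensionality is automatic for local rings. The only cosmetic difference is that the paper dispatches equicodimensionality of $R$ by citing Eisenbud 13.4 and the local cases by citing Iyama--Reiten \cite[3.1(3),(4)]{iyamareiten}, whereas you spell out the Noether normalisation argument and the standard permanence properties by hand; your version is slightly more self-contained but otherwise identical in substance.
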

\begin{proof}
	The ring $R$ is Gorenstein because it is a l.c.i. domain \cite[21.19]{eisenbud}, equicodimensional because it is an affine domain \cite[13.4]{eisenbud}, and clearly of finite Krull dimension. Hence $R$ is sCY. The localisation $R_{\mathfrak m}$ is sCY by \cite[3.1(3)]{iyamareiten} and $\hat R_{\mathfrak m}$ is sCY by the proof of \cite[3.1(4)]{iyamareiten}.
\end{proof}
\begin{defn}[{\cite[4.1]{iwmaxmod}}]
	Let $R$ be a sCY ring and $V$ a reflexive $R$-module. Say that $V$ is \textbf{modifying} if $\enn_R(V)$ is a Cohen--Macaulay $R$-module.
\end{defn}
\begin{prop}[{\cite[5.12(1)]{iwmaxmod}}]\label{modext}
	Let $R$ be a sCY ring of dimension $d$ with isolated singularities. Let $V$ be a Cohen--Macaulay $R$-module. Then $V$ is modifying if and only if $\ext^i_R(V,V)$ vanishes for all $1\leq i \leq d-2$.
\end{prop}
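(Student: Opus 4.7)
The plan is to reduce to a local Gorenstein setup and then read off the Ext-vanishing from the local cohomology of $E\coloneqq\enn_R(V)$ via a pair of hypercohomology spectral sequences. Both MCM-ness of $E$ and the Ext-vanishing condition can be checked prime-by-prime, and the sCY hypothesis passes to localisations, so I would first reduce to the case where $(R,\mathfrak m)$ is a local Gorenstein ring of Krull dimension $d$ with $R_\mathfrak p$ regular for every non-maximal prime $\mathfrak p$. The cases $d\leq 2$ are immediate (both sides hold vacuously, since reflexive modules over low-dimensional Gorenstein rings are automatically MCM), so I may assume $d\geq 3$. Since $V_\mathfrak p$ is then MCM over a regular local ring, hence free, the groups $\ext_R^i(V,V)$ for $i\geq 1$ are supported at $\mathfrak m$ and therefore of finite length.

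Next I would resolve $V$ by finitely generated free modules $P_\bullet$, so that $\hom_R(P_\bullet,V)\cong V^{n_\bullet}$ represents $\R\hom_R(V,V)$ by a complex of MCM modules, and apply $\R\Gamma_\mathfrak m$. The column-filtration hypercohomology spectral sequence collapses immediately because $H^q_\mathfrak m(V)=0$ for $q\neq d$ (this is the MCM hypothesis), and it identifies the abutment as $\ext_R^{*-d}(V,H^d_\mathfrak m(V))$, which vanishes in total degrees $<d$.

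The other hypercohomology spectral sequence has $E_2^{p,q}=H^p_\mathfrak m(\ext_R^q(V,V))$, and the finite-length property of the Ext-groups for $q\geq 1$ collapses its $E_2$-page onto the row $q=0$ (which reads $H^p_\mathfrak m(E)$) together with the column $p=0$ (which reads $\ext_R^q(V,V)$); every other entry vanishes. The only possibly nontrivial differentials are then the edge maps $d_r\colon\ext_R^{r-1}(V,V)\to H^r_\mathfrak m(E)$ for $r\geq 2$. Comparing the two spectral sequences, vanishing of the abutment in total degrees $<d$ forces both the kernel and the cokernel of $d_r$ to vanish for $2\leq r\leq d-1$, whence $d_r$ is an isomorphism there; simultaneously $E_\infty^{0,0}$ and $E_\infty^{1,0}$ are untouched by any $d_r$ with $r\geq 2$, so $H^0_\mathfrak m(E)=H^1_\mathfrak m(E)=0$ comes out for free. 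Altogether this delivers natural isomorphisms $\ext_R^i(V,V)\cong H^{i+1}_\mathfrak m(E)$ for every $1\leq i\leq d-2$, together with the automatic vanishing of $H^j_\mathfrak m(E)$ in degrees $j\in\{0,1\}$.

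To conclude, $E$ is MCM iff $H^j_\mathfrak m(E)=0$ for every $j<d$. Since $j\in\{0,1\}$ is automatic by the above, this is equivalent to vanishing for $2\leq j\leq d-1$, which via the isomorphism translates exactly to $\ext_R^i(V,V)=0$ for $1\leq i\leq d-2$. The main obstacle will be pinning down the sparsity of the $E_2$-page and the edge-map descriptions carefully; once these are in place, the double comparison of abutments is purely formal convergence bookkeeping.
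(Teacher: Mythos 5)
Your argument is correct. Note first that the paper supplies no proof of this proposition: it is quoted directly from Iyama--Wemyss \cite{iwmaxmod}, so there is no internal argument to measure against; what matters is whether the blind proof stands on its own, and it does. The reduction to the local Gorenstein case is sound: $\ext$-vanishing and Cohen--Macaulayness of $\enn_R(V)$ are both local conditions, the sCY hypothesis passes to localisations, and at nonmaximal primes both sides hold trivially since $V_{\mathfrak p}$ is free over the regular ring $R_{\mathfrak p}$. The low-dimensional cases are correctly dismissed: for $d\le 2$ the $\ext$-range is empty, while $R$ is then normal by Serre's criterion, so $\hom_R(V,V)$ is reflexive and hence automatically of depth $\min(d,2)$. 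For $d\ge 3$ the double spectral-sequence comparison on $\R\Gamma_{\mathfrak m}\R\hom_R(V,V)$ works exactly as you describe: the Čech direction is bounded and the free resolution lives in nonnegative degrees, so both filtration spectral sequences converge; the one using $H^\bullet_{\mathfrak m}(V^{n_p})=0$ for $\bullet\ne d$ collapses onto the row $q=d$ and makes the abutment vanish in total degree $<d$; the other has its $E_2$-page supported on the axis $q=0$ (giving $H^p_{\mathfrak m}(\enn_R V)$) and the column $p=0$, $q\ge1$ (giving the finite-length $\ext^q_R(V,V)$), so the only nonzero differentials are the edge maps $d_r\colon\ext^{r-1}_R(V,V)\to H^r_{\mathfrak m}(\enn_R V)$. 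Forcing $E_\infty^{n,0}=E_\infty^{0,n}=0$ for $n<d$ yields $H^0_{\mathfrak m}(\enn_R V)=H^1_{\mathfrak m}(\enn_R V)=0$ for free and makes $d_r$ an isomorphism for $2\le r\le d-1$, which is exactly the stated equivalence once one uses the local-cohomological criterion for depth. This is a genuinely different route from the one in the cited source, which (as I recall) proceeds by an Ischebeck/depth-lemma chase along $\hom_R(P_\bullet,V)$, inductively bounding depths of images and kernels; that version is more elementary and adapts more readily to the grade refinements Iyama--Wemyss need elsewhere, while yours packages the same comparison more compactly through local cohomology and two hypercohomology spectral sequences.
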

\begin{cor}\label{twoorthreecor}
	Let $R$ be a sCY ring with isolated singularities and let $V$ be a Cohen--Macaulay $R$-module. \begin{enumerate}
		\item[\emph{i)}] If $R$ is a surface, then $V$ is modifying.
		\item[\emph{ii)}] If $R$ is a threefold, then $V$ is modifying if and only if it is rigid (i.e. $\ext_R^1(V,V)\cong 0$).
	\end{enumerate}
\end{cor}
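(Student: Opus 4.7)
The plan is to deduce both parts as immediate specialisations of Proposition \ref{modext}, which characterises modifying Cohen--Macaulay modules over a sCY ring of dimension $d$ with isolated singularities as those $V$ for which $\ext^i_R(V,V)$ vanishes in the range $1 \leq i \leq d-2$.

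For part i), when $\dim R = 2$ the condition $1 \leq i \leq d-2 = 0$ is vacuous, so no Ext-vanishing is required and every Cohen--Macaulay module is modifying. For part ii), when $\dim R = 3$ the range collapses to the single value $i=1$, so $V$ is modifying precisely when $\ext^1_R(V,V) \cong 0$, which is the definition of rigidity.

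There is no real obstacle here; the only thing worth double-checking is that Proposition \ref{modext} genuinely applies in both cases, i.e.\ that the hypotheses (sCY, isolated singularities, $V$ Cohen--Macaulay) are exactly what we have assumed, which they are. Both statements therefore follow by a single invocation of \ref{modext}.
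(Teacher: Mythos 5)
Your proposal is correct and matches the paper's intended argument: the corollary is stated immediately after Proposition~\ref{modext} with no proof, precisely because both parts follow by plugging in $d=2$ (vacuous Ext-vanishing range) and $d=3$ (the single condition $\ext^1_R(V,V)\cong 0$) exactly as you do.
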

\begin{lem}\label{mcmsummod}
	Let $R$ be a sCY ring with isolated singularities. Let $M$ be a modifying $R$-module. If $M$ is maximal Cohen--Macaulay then $R\oplus M$ is modifying. 
\end{lem}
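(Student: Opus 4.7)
The plan is to invoke Proposition \ref{modext}, which characterises modifying MCM modules over a sCY ring with isolated singularities by vanishing of $\ext^i_R(-,-)$ in the range $1 \leq i \leq d-2$, where $d = \dim R$.

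First, I would check that $R \oplus M$ is MCM. The ring $R$ is MCM over itself because it is Gorenstein of finite Krull dimension, and $M$ is MCM by hypothesis, so their direct sum is also MCM (depth and dimension behave well on direct sums, localising prime by prime). Hence Proposition \ref{modext} is applicable to $R \oplus M$, and the claim reduces to verifying that $\ext^i_R(R \oplus M,\, R \oplus M)$ vanishes for $1 \leq i \leq d-2$.

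Next, I would split the Ext group into four summands
$$\ext^i_R(R, R) \oplus \ext^i_R(R, M) \oplus \ext^i_R(M, R) \oplus \ext^i_R(M, M)$$
and verify each piece. The first two are zero for every $i \geq 1$ because $R$ is projective over itself. The third is zero for $i \geq 1$ by the definition of MCM over the Gorenstein ring $R$. The fourth vanishes in the required range $1 \leq i \leq d-2$ by the modifying hypothesis on $M$, applying Proposition \ref{modext} in the other direction. Combining these gives the desired vanishing and concludes the proof.

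No step looks serious; the only mild subtlety is the edge cases $d \leq 2$, where the range $1 \leq i \leq d-2$ is empty and the Ext condition is vacuous. In that case every MCM module is automatically modifying, so the conclusion is immediate (and consistent with the surface case of Corollary \ref{twoorthreecor}).
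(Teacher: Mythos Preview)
Your proposal is correct and follows essentially the same approach as the paper. The paper's proof is a one-liner (``Because $M$ is MCM, $\ext^i_R(M,R)$ vanishes for all $i\geq 0$'') that implicitly uses Proposition~\ref{modext} and singles out only the non-obvious summand $\ext^i_R(M,R)$; you have simply spelled out all four summands and the edge cases explicitly.
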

\begin{proof}
	Because $M$ is MCM, $\ext^i_R(M,R)$ vanishes for all $i\geq 0$.
\end{proof}

\subsection{Mutation and derived equivalences}
In this section, we fix a complete local isolated hypersurface singularity $R$ of dimension at least 2. Note that $R$ is a sCY ring by \ref{lciscy}. Moreover, $R$ is normal by Serre's criterion.
\begin{rmk}
	Everything we discuss in this section will still work \textit{mutatis mutandis} if $R$ is assumed to be any sCY ring of dimension at least 2 with isolated singularities. We choose $R$ to be a complete local hypersurface in order to simplify notation when dealing with syzygies. In the more general case, one needs to distinguish between $\Omega$ and $\Omega^{-1}$.
\end{rmk}
The mutation of a modifying module will be its syzygy. Syzygies are normally defined up to free summands; we define them here to preserve the number of free summands of an MCM module.
\begin{defn}
	Let $M$ be a MCM $R$-module with no free summands. Take a minimal free resolution $\cdots \xrightarrow{d_1} F_1 \xrightarrow{d_0} F_0$ of $M$. The \textbf{syzygy} of $M$ is the module $\Omega M\coloneqq  \ker(d_0)$.
\end{defn}
\begin{defn}
	Let $M$ be a MCM $R$-module, and write $M=F\oplus M'$ where $F$ is free and $M'$ has no free summands. Put $\Omega M\coloneqq F\oplus \Omega M'$. 
\end{defn}
It is easy to see that for any MCM module $M$ we have a short exact sequence 
\begin{equation}\label{syz}
0 \to \Omega M \to R^m \to M \to 0
\end{equation}
for some $m \in \N$ depending on $M$. Moreover, $\Omega M$ has the same number of free summands as $M$. Because $R$ is a hypersurface, $\Omega$ is 2-periodic:
\begin{prop}[\cite{eisenbudper}]
	If $M$ is a MCM module, then there is an isomorphism $\Omega^2 M \cong M$.
\end{prop}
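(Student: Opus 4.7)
The plan is to use Eisenbud's theory of matrix factorizations, which is the standard route to 2-periodicity over a hypersurface. Since $R$ is a complete local isolated hypersurface, we may write $R = S/(f)$ where $S$ is a complete regular local ring and $f \in S$ is a non-zerodivisor. I would first reduce to the case where $M$ has no free summands, since free summands are preserved by $\Omega$ by construction.

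Next, I would view $M$ as an $S$-module. By the Auslander--Buchsbaum formula over $S$, combined with the fact that depth over $R$ and depth over $S$ agree (since $f$ annihilates $M$), one has
$$\operatorname{pd}_S(M) = \dim S - \operatorname{depth}_S(M) = \dim S - \dim R = 1.$$
Hence $M$ admits a short $S$-free resolution $0 \to F_1 \xrightarrow{\phi} F_0 \to M \to 0$. Since $f$ annihilates $M$, multiplication by $f$ on $F_0$ lifts through $\phi$ to a map $\psi : F_0 \to F_1$ with $\phi\psi = f\cdot\id_{F_0}$; using injectivity of $\phi$ one also gets $\psi\phi = f\cdot\id_{F_1}$, so $(\phi,\psi)$ is a matrix factorization of $f$.

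Reducing modulo $f$, the pair $(\bar\phi,\bar\psi)$ assembles into a doubly infinite 2-periodic exact complex of free $R$-modules
$$\cdots \to \bar F_0 \xrightarrow{\bar\psi} \bar F_1 \xrightarrow{\bar\phi} \bar F_0 \xrightarrow{\bar\psi} \bar F_1 \xrightarrow{\bar\phi} \bar F_0 \to M \to 0,$$
which is a free resolution of $M$ over $R$. Reading off syzygies, $\Omega M \cong \operatorname{coker}(\bar\psi)$ and $\Omega^2 M \cong \operatorname{coker}(\bar\phi) = M$, giving the desired isomorphism.

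The main technical obstacle is ensuring the resolution above is minimal, so that the syzygies computed match the definition given in the text (which tracks free summands). I would handle this by noting that since $M$ has no free summands, one can choose the $S$-resolution to be minimal, which forces the entries of $\phi$ (and hence of $\psi$, via $\phi\psi = f\cdot\id$ together with $f \in \mathfrak{m}_S$) to lie in $\mathfrak{m}_S$; this guarantees $\bar\phi$ and $\bar\psi$ have entries in the maximal ideal of $R$, so the 2-periodic complex is a minimal free resolution and the syzygies are free-summand-free. Adding back the free summand of $M$ stripped off at the start completes the proof.
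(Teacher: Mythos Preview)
Your argument is correct and is precisely Eisenbud's matrix factorization proof; the paper itself gives no proof, simply citing \cite{eisenbudper}. One small comment on the minimality step: the implication ``entries of $\phi$ lie in $\mathfrak{m}_S$ and $\phi\psi = f\cdot\id$ force entries of $\psi$ into $\mathfrak{m}_S$'' is not a direct algebraic deduction (e.g.\ $\phi=(f)$, $\psi=(1)$ satisfies the relation). The actual argument is the contrapositive you implicitly use: if $\psi$ had a unit entry, a change of basis would make $\phi$ have $f$ as a diagonal entry, so $\bar\phi$ would have a zero row and $M=\operatorname{coker}\bar\phi$ would acquire a free $R$-summand, contradicting your reduction. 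With that clarification the proof is complete.
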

In particular, one also has a short exact sequence
\begin{equation}\label{cosyz}
0 \to M \to R^m \to \Omega M \to 0.
\end{equation}
\begin{rmk}
	We will be interested in modifying $R$-modules $M$ that are also MCM; firstly to use arguments about the stable category $\stab R$, and secondly to ensure that the sum $R\oplus M$ is modifying (cf. \ref{mcmsummod}). Periodicity in the singularity category, along with the fact that stable Ext agrees with usual Ext in positive degrees, tells us that for an MCM module $M$ there is an isomorphism $\ext^{2}_R(M,M)\cong \underline{\enn}_R(M)$. If $R$ has dimension strictly greater than $3$, then \ref{modext} tells us that an MCM modifying module $M$ must have $\underline{\enn}_R(M)\cong 0$, and hence $M$ must be projective. Putting $A\coloneqq \enn_R(R\oplus M)$ and $e\coloneqq \id_R$, it now follows that $\dq$ is acyclic. In other words, if $R$ has dimension strictly greater than 3, then our methods do not yield much of interest. So in what follows, one may harmlessly assume that $R$ is a surface or a threefold, in which case \ref{twoorthreecor} gives easily checked criteria for when a general MCM module is modifying.
\end{rmk}
Fix a MCM modifying $R$-module $M$ with no free summands. Put $V\coloneqq R\oplus M$ and\linebreak $A\coloneqq  \enn_R(V)$. By construction, $A$ comes with an idempotent $e=\id_R$ with $eAe\cong R$. By \ref{mcmsummod}, the $R$-module $V$ is modifying. Add copies of $R$ to (\ref{cosyz}) to get a short exact sequence
\begin{equation}\label{vcosyz}
0 \to V \to R^l \to \Omega V \to 0
\end{equation}
Apply $\R\hom_R(V,-)$ to (\ref{vcosyz}) and take cohomology to obtain a long exact sequence of $A$-modules 
\begin{equation}0\to \hom_R(V,V) \to \hom_R(V,R^l) \to \hom_R(V,\Omega V) \to \ext^1_R(V,V) \to \ext^1_R(V, R^l) \to \cdots
\end{equation}\label{vlong}Since $M$ is MCM, the $\ext^1_R(V, R^l)$ term vanishes, and we obtain an exact sequence \begin{equation}0\to \hom_R(V,V) \to \hom_R(V,R^l) \to \hom_R(V,\Omega V) \to \ext^1_R(V,V) \to 0 
\end{equation}\label{vlong2}
	Set $$T_A\coloneqq \mathrm{coker}\left[\hom_R(V,V) \to \hom_R(V,R^l)\right]\cong \ker\left[\hom_R(V,\Omega V) \to \ext^1_R(V,V)\right].$$
\begin{rmk}Note that if $M$ was rigid, then so is $V$, and we obtain $T_A\coloneqq \hom_{R}( V, \Omega V)$. In \cite{iwmaxmod}, $\Omega V$ is denoted either $\mu_R^+(V)$ or $\mu_R^-(V)$ (the two agree since $\Omega\cong\Omega^{-1}$).
\end{rmk}Note that the right $A$-module $T_A$ has a projective summand isomorphic to $\hom_R(V,R)$, and hence the ring $\enn_{A}(T_A)$ has an idempotent $\id_{\hom_R(V,R)}$.
\begin{thm}[Iyama--Wemyss]\label{mtilt}Put $B\coloneqq \enn_{R}(\Omega V)$ and $e\coloneqq \id_R \in B$. Put $B'\coloneqq \enn_A(T_A)$ and $e'\coloneqq \id_{\hom_R(V,R)} \in B'$.
	\begin{enumerate}
		\item[\emph{i)}] There is an isomorphism of $R$-algebras $B\cong B'$ that restricts to a ring isomorphism \linebreak $eBe \cong e'B'e'\cong R$.
	\item[\emph{ii)}] The map $\upmu_A\coloneqq \R\hom_A(T_A,-):D(A) \to D(B)$ is an equivalence. We call $\upmu_A$ the \textbf{mutation equivalence}.
	\end{enumerate}
\end{thm}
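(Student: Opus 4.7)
The plan is to follow the Iyama--Wemyss strategy: exhibit $T_A$ as a classical tilting $A$-module of projective dimension one whose endomorphism ring is $B$, then invoke tilting theory to upgrade to a derived equivalence.

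For (i), the key tool is the reflexive equivalence. Since $R$ is a summand of $V$, the functor $\hom_R(V,-)$ is fully faithful on reflexive $R$-modules, so $\enn_A(\hom_R(V,N))\cong \enn_R(N)$ for any reflexive $N$. In the rigid case $T_A=\hom_R(V,\Omega V)$ and this identification applies directly. In general, $T_A$ is the kernel of $\hom_R(V,\Omega V) \to \ext^1_R(V,V)$, and a careful diagram chase using the modifying hypothesis on $V$ still produces an isomorphism $\enn_A(T_A)\cong \enn_R(\Omega V)=B$. Under this isomorphism the projective summand $\hom_R(V,R)$ of $T_A$ corresponds to the summand $R$ of $\Omega V$, so the idempotent $e'$ matches $e$ and restricts to the identification $e'B'e'\cong R\cong eBe$.

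For (ii), I would verify the three standard criteria for $T_A$ to be a classical tilting module. The projective dimension bound $\mathrm{pd}_A T_A \leq 1$ is immediate from the defining short exact sequence $0 \to A \to \hom_R(V, R^l) \to T_A \to 0$, which presents $T_A$ as a cokernel between projective $A$-modules. For $\ext^i_A(T_A,T_A)=0$ when $i>0$, the $i\geq 2$ vanishing is automatic, and the $i=1$ vanishing reduces (by applying $\R\hom_A(-,T_A)$ to the defining sequence) to an explicit computation using the cohomological characterisation of modifying modules from \ref{modext} to kill the obstructing Ext terms. Generation, that $A\in \thick_{D(A)}(T_A)$, follows from the same sequence together with the observation that $\hom_R(V,R^l)$ is a direct sum of copies of the projective summand $\hom_R(V,R)$ of $T_A$, so both outer terms of the sequence already lie in $\thick(T_A)$. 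Rickard's theorem then promotes $T_A$ to a derived equivalence $\upmu_A = \R\hom_A(T_A,-): D(A) \xrightarrow{\simeq} D(\enn_A(T_A))$, and combining with (i) identifies the target with $D(B)$.

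The main technical obstacle is the $\ext^1$-vanishing step in the tilting check, where one must unravel a long exact sequence coming from the presentation of $T_A$ and use the modifying hypothesis on $V$ to cancel Ext terms that would otherwise prevent $T_A$ from being tilting. Everything else in the argument is either formal or a direct corollary of the reflexive equivalence.
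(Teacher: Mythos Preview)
Your proposal is correct and in fact gives more detail than the paper does: the paper's own proof is essentially just the citation ``This is essentially \cite[6.8]{iwmaxmod}'', together with the observation that $V$ is a generator (since $R$ is a summand) and an appeal to Happel's theorem to promote the equivalence from $D^b$ to the unbounded $D$. Your sketch of the reflexive equivalence argument for (i) and the three tilting conditions for (ii) is exactly the content of the cited Iyama--Wemyss result, so you are reconstructing the proof the paper defers to rather than giving an alternative route.

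One small point worth making explicit: you invoke Rickard's theorem at the end, which in its standard form gives an equivalence of bounded derived categories; the paper separately notes that one then promotes to the unbounded derived category via Happel's theorem. Depending on which version of Rickard you have in mind this may already be built in, but it is worth flagging since the statement is about $D(A)$, not $D^b(A)$.
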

\begin{proof}This is essentially \cite[6.8]{iwmaxmod}. Note that because $V$ has a free summand, it is a generator. Iyama--Wemyss prove that $T_A$ is a tilting module, and hence induces an equivalence \linebreak $D^b(A) \to D^b(B)$, but this can be promoted to an equivalence $D(A) \to D(B)$ via Happel's theorem \cite{happel}.
\end{proof}
Starting from $B$, one can repeat the above constructions to obtain a tilting module $T_B$. By the above arguments, one obtains a derived equivalence $\upmu_B\coloneqq \R\hom_B(T_B,-):D(B) \to D(\enn_R(\Omega^2 V))$. However, since $\Omega^2 V \cong V$, there is an $R$-linear isomorphism $\enn_R(\Omega^2 V) \cong A$. By composition one gets an autoequivalence $\mm\coloneqq \upmu_B\circ\upmu_A: D(A)\to D(A)$. Writing $_BT_A\coloneqq T_A$ and $_AT_B\coloneqq T_B$, the (derived) hom-tensor adjunction gives an isomorphism $\mm\cong \R\hom_A(_AT_B\lot_B{_BT_A},-)$.
\begin{defn}\label{immdefn}
	We call $\mm: D(A) \to D(A)$ the \textbf{mutation-mutation autoequivalence}. Write $I_\mm\coloneqq _AT_B\lot_B{_BT_A}$, so that $\mm$ is represented by the $A$-$A$-bimodule $I_\mm$.
\end{defn}
\begin{rmk}
	By construction, $_AT_B$ has a $B$-projective resolution of length 2, so it follows that $I_\mm$ has cohomology only in degrees 0 and -1.
\end{rmk}
\begin{rmk}\label{mutnchain}
	In general, not assuming that $R$ is a hypersurface, one obtains an a priori doubly infinite sequence of derived equivalences $$\cdots\xrightarrow{\cong} D(\enn_R(\Omega^i V)) \xrightarrow{\cong} D(\enn_R(\Omega^{i+1} V))\xrightarrow{\cong}\cdots$$
\end{rmk}
\begin{rmk}
	If $M$ is rigid then so is $\Omega M$, and it follows that $I_\mm\cong \hom_R(V,\Omega V)\lot_B\hom_R(\Omega V, V)$. In fact, in this situation one has $I_\mm\cong AeA$ by results of Donovan and Wemyss \cite[5.10]{DWncdf} which we review later in \ref{dwbimodmap}.
\end{rmk}
We finish with some results on the structure of mutation.
\begin{lem}\label{aconmutlemma}
	Let $X$ be an $A/AeA$-module placed in degree zero. Then there is a quasi-isomorphism of $B$-modules $\upmu_A(X)\simeq \ext^1_A({_BT_A},X)[-1]$, with the Ext-module placed in degree one.
\end{lem}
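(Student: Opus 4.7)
The plan is to exploit the fact that $T_A$ carries a length-one projective $A$-module resolution coming directly from its definition. Unwinding the cokernel presentation $T_A = \mathrm{coker}[\hom_R(V,V) \to \hom_R(V,R^l)]$ and noting the canonical right-$A$-module identifications $\hom_R(V,V) \cong A$ and $\hom_R(V,R^l) \cong (eA)^l$ (the latter because $\hom_R(V,R) = \hom_R(V,eV) = e\hom_R(V,V)$), we obtain a short exact sequence of right $A$-modules
$$0 \to A \to (eA)^l \to T_A \to 0,$$
where injectivity of the first map is the left-exactness already recorded in the long Ext sequence preceding the definition of $T_A$. Since $A$ and $eA$ are projective right $A$-modules, this is a projective resolution, so $T_A$ has projective dimension at most one and hence $\ext^i_A(T_A, X) = 0$ for all $i \geq 2$.

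It remains to check that $\hom_A(T_A, X) = 0$. Applying $\hom_A(-,X)$ to the resolution gives the left-exact sequence
$$0 \to \hom_A(T_A, X) \to \hom_A((eA)^l, X) \to \hom_A(A, X),$$
with $\hom_A(A,X) \cong X$ and $\hom_A(eA, X) \cong Xe$. The hypothesis that $X$ is an $A/AeA$-module says precisely that $Xe = 0$, so the middle term vanishes and thus $\hom_A(T_A, X) = 0$, as needed.

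Since $T_A$ is a $B$-$A$-bimodule, $\R\hom_A(T_A,X)$ naturally lies in $D(B)$ and each $\ext^i_A(T_A, X)$ inherits a canonical left $B$-module structure by functoriality of the $B$-action on $T_A$. A complex of $B$-modules whose cohomology is concentrated in a single degree is quasi-isomorphic in $D(B)$ to that cohomology placed in the appropriate degree, so combining the two vanishings yields the claimed quasi-isomorphism $\upmu_A(X) = \R\hom_A(T_A, X) \simeq \ext^1_A(T_A, X)[-1]$. No substantive obstacle is expected; the whole argument is a routine projective-resolution computation, and the only thing to be careful about is tracking the $B$-equivariance, which is automatic from functoriality.
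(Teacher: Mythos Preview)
Your proposal is correct and follows essentially the same approach as the paper: both use the two-term projective resolution $0 \to A \to (eA)^l \to {_BT_A} \to 0$ coming from the definition of $T_A$, together with the observation that $\hom_A(eA,X)\cong Xe=0$ for an $A/AeA$-module $X$, to conclude that $\R\hom_A(T_A,X)$ is concentrated in degree one. You are somewhat more explicit about the vanishing of $\hom_A(T_A,X)$ and about the $B$-equivariance, but there is no substantive difference in method.
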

\begin{proof}Because $H^i(\upmu_A(X))\cong \ext^i_A({_BT_A},X)$, we need to show that $H(\upmu_A(X))$ is concentrated in degree one. By construction, ${_BT_A}$ comes with an $A$-projective resolution $$0 \to \hom_R(V,V) \to \hom_R(V,R^l) \to {_BT_A} \to 0.$$Note that $\hom_R(V,R)\cong eA$. Because $X$ is an object of the subcategory $D(\dq)$, there are no maps from $eA$ to $X$.
\end{proof}

\begin{lem}\label{shiftlem}
	Let $X \in D(\dq)$. Then for all $q\in \Z$ there are $B$-module isomorphisms $$H^{1+q}(\upmu_A(X))\cong H^1(\upmu_A(H^q(X))).$$
\end{lem}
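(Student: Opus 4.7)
The plan is to use a hyperext spectral sequence together with the vanishing established in Lemma \ref{aconmutlemma}. First I would observe that for $X \in D(\dq)\subset D(A)$, the recollement characterisation forces $X\lot_A Ae \simeq 0$. Passing to cohomology, this means each $H^q(X)$ is annihilated by $e$ on the right, so $H^q(X)$ is naturally an $A/AeA$-module (concentrated in a single degree, viewed as an object of $D(A)$).

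Next I would write down the standard hypercohomology spectral sequence
$$E_2^{p,q}=\ext^p_A(T_A,H^q(X))\;\Longrightarrow\;H^{p+q}\left(\R\hom_A(T_A,X)\right)=H^{p+q}(\upmu_A(X)).$$
The crucial vanishing input is Lemma \ref{aconmutlemma}: for any $A/AeA$-module $Y$ in degree zero, $\upmu_A(Y)$ is concentrated in degree $1$, which is to say $\ext^p_A(T_A,Y)=0$ for $p\neq 1$. Indeed, the $p\geq 2$ vanishing comes for free from the two-term projective resolution of $T_A$ over $A$ displayed in the proof of \ref{aconmutlemma}, and the $p=0$ vanishing follows because the only projective summand of $T_A$ that could contribute maps to $Y$ is a summand of $eA$, and $\hom_A(eA,Y)\cong Ye=0$.

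Applying this to $Y = H^q(X)$ for each $q$, the $E_2$-page of the spectral sequence has only a single non-vanishing row, at $p=1$. The spectral sequence therefore degenerates immediately, yielding
$$H^{1+q}(\upmu_A(X))\;\cong\;E_2^{1,q}\;=\;\ext^1_A(T_A,H^q(X))\;\cong\;H^1(\upmu_A(H^q(X))),$$
where the final isomorphism is one more application of Lemma \ref{aconmutlemma}. There is no serious obstacle here: the whole argument reduces to combining the single-degree concentration statement of \ref{aconmutlemma} with the hyperext spectral sequence, and the only thing to be careful about is checking that objects of $D(\dq)$ really do have cohomology supported on $A/AeA$ so that \ref{aconmutlemma} applies term-by-term.
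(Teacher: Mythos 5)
Your proof is essentially the same as the paper's: both construct the hyperext (double-complex) spectral sequence with $E_2^{p,q}\cong\ext^p_A(T_A,H^q(X))$ and use Lemma~\ref{aconmutlemma} to concentrate the $E_2$-page in the single row $p=1$, forcing collapse. The one point you gloss over is convergence: since $X\in D(\dq)$ is an \emph{unbounded} complex, the statement that the spectral sequence computes $H^{p+q}(\upmu_A(X))$ is not automatic; the paper addresses this by observing that the two-term projective resolution $P\to T_A$ makes the double complex horizontally bounded, so the spectral sequence weakly converges (in the sense of \cite[after 5.6.1]{weibel}), and a weakly convergent spectral sequence that collapses at $E_2$ converges outright. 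You should add a sentence to this effect, since finite projective dimension of $T_A$ is exactly what makes the whole thing legitimate for unbounded $X$.
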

\begin{proof}
	Take a $B$-$A$-bimodule quasi-isomorphism $P \to {_BT_A}$ that's a projective resolution of right $A$-modules. Consider the double complex of $B$-modules $E_0^{pq}\coloneqq \hom_A(P^{-p}, X^q)$ whose total product complex is $\mathrm{Tot}^{\Pi}(E_0)\cong \upmu_A(X)$. We may regard $E_0$, equipped with the differential of $X$, as the zeroth page of a (cohomological) spectral sequence $E$. Because $P$ is zero in positive degrees, it follows by the discussion after \cite[5.6.1]{weibel} that the spectral sequence $E$ weakly converges to $H^n(\mathrm{Tot}^{\Pi}(E_0)) \cong H^n(\upmu_A(X))$. An easy computation shows that $E_2^{pq}\cong H^{p}(\upmu_A(H^q(X)))$. By \ref{aconmutlemma}, this module is zero unless $p=1$, where it is $\ext^1_A(_BT_A, H^q(X))$. In other words, the spectral sequence collapses at the $E_2$ page. A weakly convergent spectral sequence which collapses must converge, and we get the desired isomorphisms.
\end{proof}
\begin{cor}\label{shiftcor}
	If $X \in D(A)$ satisfies $H^q(X)\cong 0$, then $H^{1+q}(\upmu_A(X))\cong 0$.
\end{cor}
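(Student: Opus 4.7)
The plan is to deduce this as an immediate consequence of Lemma \ref{shiftlem}. That lemma, applied to any $X \in D(\dq)$ (which is the natural setting, since the corollary is stated for objects to which the preceding lemma applies), supplies for each integer $q$ a $B$-module isomorphism
\[
H^{1+q}(\upmu_A(X)) \;\cong\; H^{1}\!\left(\upmu_A(H^q(X))\right).
\]
Under the hypothesis $H^q(X) \cong 0$, the right-hand side becomes $H^1(\upmu_A(0))$, which is zero because $\upmu_A = \R\hom_A({_BT_A},-)$ is an additive functor and hence sends the zero module to a complex with vanishing cohomology.

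There is really no obstacle here: all of the work is done in Lemma \ref{shiftlem}, whose proof uses the spectral sequence of the double complex $\hom_A(P^{-p}, X^q)$ (for $P \to {_BT_A}$ a projective resolution) together with the computation $\ext^i_A({_BT_A}, X) \cong 0$ unless $i=1$ from Lemma \ref{aconmutlemma}. The corollary simply specialises the conclusion of that lemma to the case $H^q(X) = 0$.
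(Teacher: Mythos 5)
Your derivation is correct and is the expected one: since the corollary is left unproved in the paper, the intended argument is precisely to invoke Lemma \ref{shiftlem} and observe that $\upmu_A$, being additive, kills the zero module, so $H^{1}(\upmu_A(H^q(X)))=H^1(\upmu_A(0))=0$. You are also right to insist on restricting to $X\in D(\dq)$. As literally printed, Corollary \ref{shiftcor} allows arbitrary $X\in D(A)$, but that reading would be false: taking $X=A$ and $q=-1$ gives $H^{-1}(A)=0$, yet $H^{0}(\upmu_A(A))\cong\hom_A({_BT_A},A)$ is nonzero (for instance because ${_BT_A}$ has the projective summand $eA$, so it contains $Ae$). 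The restriction to $D(\dq)$ is exactly where Lemma \ref{shiftlem} (and, further back, Lemma \ref{aconmutlemma}) lives, and it is the only way the corollary is used in the paper (Proposition \ref{mutntexact}), so the hypothesis $X\in D(A)$ should be read as $X\in D(\dq)$. Your proposal flags and handles this correctly.
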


\subsection{Recollements and t-structures}
	Throughout this part we will use the following setup:
\begin{setup}\label{mutsetup}
	Let $R$ be a complete local isolated hypersurface singularity of dimension at least 2, $M$ a MCM modifying $R$-module with no free summands, $V\coloneqq R\oplus M$, $A\coloneqq \enn_R(V)$, $B\coloneqq \enn_R(\Omega V)$, and $e=\id_R$ (we use the same notation for $\id_R \in A$ and $\id_R \in B$). 
\end{setup}

Let $\upmu_A$ be the mutation equivalence. Recall from \ref{recoll} the existence of the recollement \linebreak $D(\dq)\recol D(A)\recol D(R)$.
	\begin{defn}
	Let $C$, $C'$ be two dgas. Say that $C$ and $C'$ are \textbf{derived Morita equivalent} if there is a $C'\text{-}C$-bimodule $P$ such that $\R\hom_C(P,-):D(C)\to D(C')$ is a derived equivalence. Note that in this case the inverse is necessarily given by the functor $-\lot_{C'}P$.
\end{defn}
\begin{prop}Put $\upmu_{\mathbb{L}}\coloneqq \R\hom_{\dq}(\dqb\lot_B{_BT_A}\lot_A \dq,-) $. Then the diagram $$\begin{tikzcd}[column sep=huge]
	D(\dq)\ar[swap, dd,"\upmu_{\mathbb{L}} "] \ar[r,"i_*=i_!"]& D(A)\ar[l,bend left=25,"i^!"']\ar[l,bend right=25,"i^*"']\ar[r,"j^!=j^*"]\ar[swap, dd,"\upmu_A"] & D(R)\ar[l,bend left=25,"j_*"']\ar[l,bend right=25,"j_!"']\ar[swap, dd,"\id"]
	\\  & &
	\\ D(\dqb) \ar[r,"i_*=i_!"]& D(B)\ar[l,bend left=25,"i^!"']\ar[l,bend right=25,"i^*"']\ar[r,"j^!=j^*"] & D(R)\ar[l,bend left=25,"j_*"']\ar[l,bend right=25,"j_!"']
	\end{tikzcd}$$is a morphism of recollement diagrams, with vertical maps equivalences. In particular, $\dq$ and $\dqb$ are derived Morita equivalent.
\end{prop}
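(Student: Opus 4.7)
The plan is to reduce the whole claim to a single bimodule quasi-isomorphism on the right, use the rigidity of recollements to descend to an equivalence on the left, and then identify the induced functor with the stated formula.

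First I would check that the right-hand square commutes, i.e.\ $j^!_B \circ \upmu_A \simeq j^!_A$. Since $\upmu_A = \R\hom_A({}_BT_A, -)$ and $j^!_B = \R\hom_B(eB, -)$, derived tensor--hom adjunction reduces this to producing a quasi-isomorphism $eB \lot_B {}_BT_A \simeq eA$ of right $A$-modules. This I would extract from Theorem \ref{mtilt}(i): under the ring isomorphism $B \cong B' = \enn_A(T_A)$ sending $e$ to $e' = \id_{\hom_R(V,R)}$, the left-hand side becomes $e' T_A$, which is precisely the summand $\hom_R(V,R) \cong eA$ of $T_A$ cut out by $e'$.

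With the right-hand square in hand, everything else is formal. Because $\upmu_A$ is an equivalence intertwining $j^* = j^!$ with the identity on $D(R)$, it preserves the semi-orthogonal decomposition $\langle D(\dq), D(R)\rangle$: it sends $\ker(j^*_A) \simeq D(\dq)$ onto $\ker(j^*_B) \simeq D(\dqb)$, and hence restricts to an equivalence $\bar\upmu \colon D(\dq) \xrightarrow{\sim} D(\dqb)$. The commutativity of the remaining squares then follows from the uniqueness of the recollement functors as adjoints of $i_*$ and $j^*$, giving the required morphism of recollements with all three vertical arrows equivalences.

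Finally, to identify $\bar\upmu$ with the explicit formula $\upmu_{\mathbb{L}}$, I would unwind for $X \in D(\dq)$:
\[
\bar\upmu(X) \;\simeq\; i^!_B \, \R\hom_A({}_BT_A,\, i_{*A}X) \;\simeq\; \R\hom_A\bigl(\dqb \lot_B {}_BT_A,\, i_{*A}X\bigr) \;\simeq\; \R\hom_{\dq}\bigl(\dqb \lot_B {}_BT_A \lot_A \dq,\, X\bigr),
\]
where the first step uses that $\R\hom_A({}_BT_A, i_{*A}X)$ already lies in $D(\dqb)\subseteq D(B)$ by the previous paragraph (so that applying $i^!_B$ simply extracts the $\dqb$-module), the second is tensor--hom over $B$, and the third is the adjunction $i^*_A \dashv i_{*A}$. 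This equals $\upmu_{\mathbb{L}}(X)$, and since $\bar\upmu$ is represented by the $\dqb$--$\dq$-bimodule $\dqb \lot_B {}_BT_A \lot_A \dq$, the derived Morita equivalence is immediate. The only non-formal ingredient is the identification $e'T_A \cong eA$ in the first step; the rest is bookkeeping with bimodule handedness in the chain of adjunctions.
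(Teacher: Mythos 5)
Your proof is correct and follows essentially the same route as the paper's: establish commutation of the right-hand squares via a bimodule computation (the paper computes both $e \cdot {}_BT_A \cong eA$ and ${}_BT_A \cdot e \cong Be$, while you observe that $eB \lot_B {}_BT_A \simeq eA$ alone suffices since the $j_!$ and $j_*$ squares then follow by taking adjoints of the $j^!=j^*$ square), invoke uniqueness of recollement morphisms to obtain the left equivalence, and identify it with $\upmu_{\mathbb{L}}$ via the adjunctions $i^! \cong \R\hom(\dqb,-)$ and $i^* \dashv i_*$. The only thing worth adding for completeness is that $eB \lot_B {}_BT_A$ agrees with its underived analogue because $eB$ is projective over $B$, which makes the reduction to $e'T_A \cong \hom_R(V,R) \cong eA$ genuinely a quasi-isomorphism.
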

\begin{proof}
	First we check that the three squares on the right-hand side commute. Note that given an $A$-module of the form $\hom_R(V,X)$ then one has $\hom_R(V,X)e \cong X$. Since one has \linebreak $_BT_A\coloneqq \mathrm{coker}(\hom_R(V,V) \to \hom_R(V,R^l))$, and because the functor $Y \mapsto Ye $ is exact, we get $_BT_Ae\cong \mathrm{coker}(V \to R^l)\cong \Omega V \cong Be$ as $B-R$-bimodules. It now follows from unwinding the definitions and using $\upmu_A^{-1}\simeq -\lot_B{}_BT_A$ that $j^*_B \circ \upmu_A \simeq j^*_A$. and $\upmu_A \circ j_*^A \simeq j_*^B$. Similarly, we have $e_BT_A  \cong eA$ as $R$-$A$-bimodules, which implies that $\upmu_A \circ j_!^A \simeq j_!^B$. Now, because morphisms of recollements are determined uniquely by one half (e.g. \cite{kalck} 2.4), there's a unique (up to isomorphism) map $F:D(\dq) \to D(\dqb)$ fitting into a morphism of recollements with the righthand two, and since the righthand two are equivalences, so is $F$. Since the $i_*$ maps are fully faithful, $F$ is determined completely by $i_*F$: if $F'$ is any other functor such that $i_*F' \cong \upmu_A \circ i_*$, then $F'\cong F$. But one can check that the given functor satisfies this condition.
\end{proof}
Carrying out the same proof for $\upmu_B$ and composing vertical maps we arrive at:
\begin{cor}Put $I^\mathbb{L}_\mm\coloneqq \dq\lot_A{I_\mm}\lot_A \dq$ and $\mm_{\mathbb{L}}\coloneqq \R\hom_{\mathbb{L}}(I^\mathbb{L}_\mm,-) $. Then the diagram $$\begin{tikzcd}[column sep=huge]
	D(\dq)\ar[swap, dd," \mm_{\mathbb{L}} "] \ar[r,"i_*=i_!"]& D(A)\ar[l,bend left=25,"i^!"']\ar[l,bend right=25,"i^*"']\ar[r,"j^!=j^*"]\ar[swap, dd,"\mm"] & D(R)\ar[l,bend left=25,"j_*"']\ar[l,bend right=25,"j_!"']\ar[swap, dd,"\id"]
	\\  & &
	\\ D(\dq) \ar[r,"i_*=i_!"]& D(A)\ar[l,bend left=25,"i^!"']\ar[l,bend right=25,"i^*"']\ar[r,"j^!=j^*"] & D(R)\ar[l,bend left=25,"j_*"']\ar[l,bend right=25,"j_!"']
	\end{tikzcd}$$is a morphism of recollement diagrams, with vertical maps equivalences.
\end{cor}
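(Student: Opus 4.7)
The plan is to obtain this corollary purely formally from the previous proposition by applying it twice and composing the resulting morphisms of recollement diagrams vertically. First I would invoke the previous proposition to get a morphism of recollements whose middle arrow is $\upmu_A$, whose right arrow is $\id$, and whose left arrow is $\upmu^A_\mathbb{L} = \R\hom_{\dq}(\dqb \lot_B {}_BT_A \lot_A \dq, -)$, all of which are equivalences. Next I would apply the same proposition with the roles of $A$ and $B$ interchanged: this is legitimate because $B = \enn_R(R \oplus \Omega M)$, where $\Omega M$ is again a MCM modifying $R$-module with no free summands (by the 2-periodicity of $\Omega$ and the fact that $\Omega$ preserves the count of free summands), and mutation of $B$ returns to $\enn_R(\Omega^2 V) \cong A$. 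This yields a morphism of recollements with middle arrow $\upmu_B$, right arrow $\id$, and left arrow $\upmu^B_\mathbb{L}$, again all equivalences.

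Stacking these two morphisms of recollements vertically produces a morphism of recollements whose three columns are compositions of equivalences (hence equivalences). The right column is $\id \circ \id = \id$, the middle column is $\upmu_B \circ \upmu_A$, which by definition \ref{immdefn} is $\mm$, and the left column is $\upmu^B_\mathbb{L} \circ \upmu^A_\mathbb{L}$, which I must identify with $\mm_\mathbb{L}$. For this identification, I would use the general fact that if $F = \R\hom_C(P,-)$ and $G = \R\hom_{C'}(Q,-)$ for bimodules $P$ and $Q$ over appropriate rings, then $G \circ F \simeq \R\hom_C(P \lot_{C'} Q, -)$ on the level of representing bimodules. Applied here, the composite is represented by
\[
(\dq \lot_A {}_AT_B \lot_B \dqb)\lot_{\dqb}(\dqb \lot_B {}_BT_A \lot_A \dq) \;\simeq\; \dq \lot_A ({}_AT_B \lot_B {}_BT_A) \lot_A \dq \;=\; \dq \lot_A I_\mm \lot_A \dq \;=\; I^\mathbb{L}_\mm,
\]
so $\upmu^B_\mathbb{L} \circ \upmu^A_\mathbb{L} \simeq \mm_\mathbb{L}$, as required.

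The main obstacle is not conceptual but notational: one has to keep track of the bimodule side conventions (which of $A$, $B$, $\dq$, $\dqb$ acts on which side of each factor) so that the tensor telescoping in the displayed equation above is unambiguous, and confirm that the composite of morphisms of recollements is again a morphism of recollements (which is immediate from the 2-functoriality of composition in the 2-category of triangulated categories, together with the fact that a morphism of recollements is determined on one column).
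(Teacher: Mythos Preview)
Your overall strategy matches the paper's exactly: apply the previous proposition twice and compose vertically. The issue is in your displayed telescoping. After collapsing $\dqb \lot_{\dqb} \dqb \simeq \dqb$, the representing bimodule for $\upmu^B_{\mathbb L}\circ\upmu^A_{\mathbb L}$ is
\[
\dq \lot_A {}_AT_B \lot_B \dqb \lot_B {}_BT_A \lot_A \dq,
\]
which still has a copy of $\dqb$ sitting in the middle. Your claimed simplification to $\dq \lot_A ({}_AT_B \lot_B {}_BT_A) \lot_A \dq$ silently removes it, but $X \lot_B \dqb \lot_B Y$ is not in general quasi-isomorphic to $X \lot_B Y$: tensoring over $B$ with $\dqb$ is the functor $i_*i^*$, which is only the identity on objects already in the image of $i_*$.

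That last point is exactly what the paper checks. From the commutativity $i_*\upmu^A_{\mathbb L}\simeq \upmu_A i_*$ established in the previous proposition, one gets (on representing objects) a $B$-$\dq$-bimodule quasi-isomorphism ${}_BT_A \lot_A \dq \simeq \dqb \lot_B {}_BT_A \lot_A \dq$, i.e.\ ${}_BT_A \lot_A \dq$ already lies in the image of $i_*$ and so is unchanged by $\dqb\lot_B(-)$. The symmetric statement for $\dq \lot_A {}_AT_B$ lets you absorb the middle $\dqb$ and finish. So your proof is essentially correct once you insert this one-line justification; as written, the telescoping step is the whole content of the corollary and cannot be waved through.
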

\begin{proof}
	The only thing we need to check is the implicit assertion that there's a quasi-isomorphism of $\dq$-bimodules $$\dq\lot_A{_AT_B}\lot_B \dqb\lot_B \dqb\lot_B{_BT_A}\lot_A \dq\simeq I^\mathbb{L}_\mm$$ But it follows by considering the representing objects of both sides of the equation $$\R\hom_A({_BT_A},i_*-) \simeq i_*i^!\R\hom_A({_BT_A},i_*)$$that ${_BT_A} \lot _A \dq \simeq \dqb\lot_B {_BT_A}\lot_A \dq$ as $B$-$A$-bimodules, and similarly for \linebreak $\dq\lot_A{_AT_B}$. Hence, we have quasi-isomorphisms \begin{align*}
	&\dq\lot_A{_AT_B}\lot_B \dqb\lot_B \dqb\lot_B{_BT_A}\lot_A \dq \\\simeq &\dq\lot_A{_AT_B}\lot_B {_BT_A}\lot_A \dq \\ \eqqcolon &\dq\lot_AI_{\mm}\lot_A \dq
	\\  \eqqcolon &I^\mathbb{L}_\mm
	\end{align*} as required.
\end{proof}

We combine our results on recollements with some standard facts about t-structures; see \cite{bbd} for the definition of a t-structure.
\begin{prop}[\cite{hkmtstrs, amiotcluster, kelleryangmutn, kalckyang}]\label{tstr}
	Let $Z$ be a nonpositive dga. Then the derived category $D(Z)$ admits a $t$-structure $(D^{\leq 0}(Z), D^{\geq 0}(Z))$ where 
	\begin{align*}
	&D^{\leq 0}(Z)\coloneqq \{X\in D(Z): H^i(X)=0 \text{ \normalfont for } i>0 \}
	\\ 	&D^{\geq 0}(Z)\coloneqq \{X\in D(Z): H^i(X)=0 \text{ \normalfont for } i<0 \}
	\end{align*}
	Moreover, the inclusion $\cat{Mod}\text{-}H^0(Z) \into D(Z)$ is an equivalence onto the heart of this t-structure, with inverse given by taking zeroth cohomology.
	
\end{prop}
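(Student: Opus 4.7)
The plan is to construct the standard t-structure on $D(Z)$ by descending the smart truncation functors from the category of dg $Z$-modules, in direct analogy with the derived category of an ordinary algebra. The crucial observation is that for a dg $Z$-module $M$, the truncation $\tau_{\leq n}M$ is itself a dg $Z$-submodule of $M$: the action of $Z^i$ with $i \leq 0$ does not raise degree, so it preserves the truncation in degrees strictly below $n$, and in degree $n$ one checks that $(\tau_{\leq n}M)^n = \ker(d^n_M)$ is stable under $Z^0$ using that $d_M$ is a graded derivation together with $Z^1 = 0$. The inclusion $\tau_{\leq n}M \hookrightarrow M$ thus descends to a natural transformation on $D(Z)$, with cofibre $\tau_{>n}M = M/\tau_{\leq n}M$ inheriting a dg $Z$-module structure.

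With these truncations in hand, the shift-closure properties of $D^{\leq 0}(Z)$ and $D^{\geq 0}(Z)$ are immediate from their cohomological definition, and the truncation triangle $\tau_{\leq 0}X \to X \to \tau_{>0}X$ furnishes the required decomposition of any $X \in D(Z)$ into pieces lying in $D^{\leq 0}(Z)$ and $D^{\geq 1}(Z)$. The key Hom-vanishing axiom $\hom_{D(Z)}(X,Y) = 0$ for $X \in D^{\leq 0}(Z)$ and $Y \in D^{\geq 1}(Z)$ is where the real work lies: I would pick a semi-free resolution $P \xrightarrow{\simeq} X$ whose generators all lie in nonpositive degrees (possible because $X$ has no positive cohomology and $Z$ itself is nonpositive, so the standard cell-attachment argument never forces a generator in positive degrees). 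Then $P^i = 0$ for $i > 0$, while $Y \simeq \tau_{>0}Y$ because $\tau_{\leq 0}Y$ is acyclic. A direct degree-by-degree null-homotopy construction, exploiting the acyclicity of $\tau_{\leq 0}Y$, shows every chain map $P \to Y$ is null-homotopic.

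For the description of the heart, an object $X \in D^{\leq 0}(Z) \cap D^{\geq 0}(Z)$ has cohomology concentrated in degree zero, and $H^0(X)$ is naturally a right module over the ordinary algebra $H^0(Z)$. Conversely, the canonical dga surjection $Z \twoheadrightarrow H^0(Z)$---well-defined because $Z^1 = 0$ forces $d(Z^{-1})$ to be a two-sided ideal in $Z^0$---lets us regard any $H^0(Z)$-module as a dg $Z$-module concentrated in degree zero. These assignments are mutually quasi-inverse: for $X$ in the heart the zig-zag $X \xleftarrow{\simeq} \tau_{\geq 0}\tau_{\leq 0}X \xrightarrow{\simeq} H^0(X)$ is an isomorphism in $D(Z)$, and in the other direction $H^0$ plainly recovers the starting module.

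The main obstacle is the Hom-vanishing step, where one needs both the existence of a semi-free resolution in the correct degrees and the inductive null-homotopy; once this is in place, the remaining claims reduce to direct verifications. The full argument is standard and I would follow the treatment in \cite{hkmtstrs} or \cite{kelleryangmutn}.
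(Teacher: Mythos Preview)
The paper does not give its own proof of this proposition: it is stated with citations to \cite{hkmtstrs, amiotcluster, kelleryangmutn, kalckyang} and no argument follows. Your sketch is the standard proof from those references and is correct; the key points---that $\tau_{\leq n}M$ is a genuine dg $Z$-submodule because $Z^{>0}=0$, that a semi-free resolution of $X\in D^{\leq 0}(Z)$ can be built with underlying complex concentrated in nonpositive degrees, and that the surjection $Z\twoheadrightarrow H^0(Z)$ is a dga map---are all identified correctly.

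One small tightening: in the Hom-vanishing step, once you have $P$ semi-free with $P^i=0$ for $i>0$, it is cleaner to replace $Y$ by $\tau_{>0}Y$ (the map $Y\to\tau_{>0}Y$ is a quasi-isomorphism since $\tau_{\leq 0}Y$ is acyclic, and $P$ is cofibrant so this induces a bijection on homotopy classes of maps). Then there are literally no nonzero graded maps $P\to\tau_{>0}Y$, and you avoid the inductive null-homotopy construction entirely.
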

\begin{rmk}
	When $Z=\dq$ this is the restriction of the standard t-structure on $D(A)$.
\end{rmk}

\begin{prop}[cf. {\cite[4.7]{bridgeland}}]\label{mutntexact}
	The shifted mutation functor $$ X \mapsto \upmu_{\mathbb{L}}(X)[1]:\quad D(\dq) \longrightarrow D(\dqb)$$ is a t-exact equivalence.
\end{prop}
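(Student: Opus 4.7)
The equivalence part of the statement is already in hand: the morphism of recollements that we just constructed exhibits $\upmu_{\mathbb{L}}$ as an equivalence $D(\dq) \to D(\dqb)$, and shifting by $[1]$ preserves this. So the content of the proposition is really t-exactness for the shifted functor.

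To verify t-exactness, my plan is to pull everything back along $i_*$ and use the key cohomology-shifting lemmas already established. From commutativity of the lefthand square of the recollement morphism we have a natural isomorphism $i_* \circ \upmu_{\mathbb{L}} \simeq \upmu_A \circ i_*$, and $i_*: D(\dq) \to D(A)$ is an inclusion of dg modules along the dga map $A \to \dq$, so it is t-exact and detects cohomological vanishing. Thus, for $X \in D(\dq)$, vanishing of $H^r(\upmu_{\mathbb{L}}(X)[1])$ is equivalent to vanishing of $H^{r+1}(\upmu_A(i_*X))$.

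Now I would invoke \ref{shiftcor} (equivalently the more refined \ref{shiftlem}): for any $q \in \Z$ with $H^q(X) \cong 0$, we have $H^{q+1}(\upmu_A(i_*X)) \cong 0$. Applying this in both directions gives exactly what is required. If $X \in D^{\leq 0}(\dq)$, then $H^q(X) = 0$ for all $q > 0$, which forces $H^{q+1}(\upmu_A(i_*X)) = 0$ for all $q > 0$, i.e.\ $H^r(\upmu_{\mathbb{L}}(X)[1]) = 0$ for all $r > 0$, putting $\upmu_{\mathbb{L}}(X)[1]$ in $D^{\leq 0}(\dqb)$. Symmetrically, if $X \in D^{\geq 0}(\dq)$, then $H^q(X) = 0$ for $q < 0$ gives $H^r(\upmu_{\mathbb{L}}(X)[1]) = 0$ for $r < 0$, so $\upmu_{\mathbb{L}}(X)[1] \in D^{\geq 0}(\dqb)$.

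There is no serious obstacle here; all of the work sits inside \ref{aconmutlemma}, \ref{shiftlem} and \ref{shiftcor}, whose point is precisely that mutation shifts cohomology up by one when restricted to the subcategory $D(\dq) \subset D(A)$. The only minor subtlety is being careful that the morphism of recollements is what lets us transport these cohomological statements from the $\upmu_A$ side to the $\upmu_{\mathbb{L}}$ side, but this is essentially automatic since $i_*$ is fully faithful and cohomology-preserving.
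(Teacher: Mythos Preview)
Your proof is correct and follows essentially the same approach as the paper's: both reduce t-exactness to the cohomology-shifting behaviour encoded in \ref{shiftcor}, applied in both directions. You are simply more explicit than the paper about the role of $i_*$ and the commutativity $i_*\circ\upmu_{\mathbb{L}}\simeq\upmu_A\circ i_*$ in passing from statements about $\upmu_A$ to statements about $\upmu_{\mathbb{L}}$, which the paper leaves implicit.
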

\begin{proof}For brevity, write the functor under consideration as $G$. By construction, $G$ is an equivalence. By \ref{shiftcor}, if $X$ is concentrated in nonnegative degrees, then so is $G(X)$, and similarly for nonpositive degrees. In other words, $G$ is t-exact.
\end{proof}
Taking hearts one arrives at:
\begin{cor}
	$A/AeA$ and $B/BeB$ are (classically) Morita equivalent.
\end{cor}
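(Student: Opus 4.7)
The plan is to apply the preceding Proposition \ref{mutntexact} and extract an equivalence of the hearts of the canonical t-structures. By that proposition, the functor $G \coloneqq \upmu_{\mathbb{L}}(-)[1]: D(\dq) \to D(\dqb)$ is a t-exact equivalence of triangulated categories. Since any t-exact equivalence restricts to an equivalence between the hearts of the respective t-structures, $G$ restricts to an equivalence between the heart of $D(\dq)$ and the heart of $D(\dqb)$.

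Next, I would invoke Proposition \ref{tstr}, which identifies the heart of the canonical t-structure on $D(\dq)$ with the abelian category $\cat{Mod}\text{-}H^0(\dq) \cong \cat{Mod}\text{-}(A/AeA)$, via the inclusion $\cat{Mod}\text{-}H^0(\dq) \hookrightarrow D(\dq)$ with inverse given by $H^0$. The same statement applies to $\dqb$, giving $\cat{Mod}\text{-}(B/BeB)$ as the heart on the other side. Combining these two identifications with the equivalence of hearts induced by $G$ yields a $k$-linear equivalence of abelian categories
\[
\cat{Mod}\text{-}(A/AeA) \;\xrightarrow{\;\simeq\;}\; \cat{Mod}\text{-}(B/BeB).
\]

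Finally, the classical Morita theorem says that any equivalence between module categories of two rings is induced by tensoring with a progenerator bimodule, i.e.\ the two rings are Morita equivalent in the classical sense. I do not expect any genuine obstacle in this proof: the substantive work has already been carried out in establishing the morphism of recollements and the t-exactness in \ref{mutntexact}, and this corollary is essentially an immediate translation via the heart.
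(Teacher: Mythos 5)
Your proof is correct and matches the paper's argument exactly: the paper introduces the corollary with the words ``Taking hearts one arrives at,'' which is precisely your route through the t-exact equivalence of Proposition \ref{mutntexact}, the identification of hearts with module categories via Proposition \ref{tstr}, and the classical Morita theorem.
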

\begin{defn}
	Let $D$ be the t-structure  on $D(\dq)$ constructed in \ref{tstr}. Let $\tau_A^p$ be the t-structure on $D(A)$ obtained by gluing $D[-p]$ (i.e. $D$ shifted so that the heart is in degree $p$) to the standard t-structure on $D(R)$. In particular, $\tau_A^0$ is the natural t-structure on $D(A)$. Write ${}^p\mathrm{Per}A$ for the heart of $\tau_A^p$, so that e.g. ${}^0\mathrm{Per}A=\cat{Mod}\text{-}A$. Call ${}^p\mathrm{Per}A$ the abelian category of \textbf{$p$-perverse sheaves} on $A$. 
\end{defn}
\begin{thm}[cf. {\cite[4.8]{bridgeland}}]
	Fix a natural number $p$. Then the mutation functor \linebreak $\upmu_A:D(A)\to D(B)$ is $t$-exact for the t-structures $\tau^p_A$ and $\tau^{p+1}_B$. Mutation induces a chain of exact equivalences of abelian categories $$\cdots \to {}^p\mathrm{Per}A \to {}^{p+1}\mathrm{Per}B \to {}^{p+2}\mathrm{Per}A \to \cdots$$
\end{thm}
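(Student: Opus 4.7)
The plan is to reduce to the already-established $t$-exactness of $\upmu_{\mathbb{L}}[1]$ on $D(\dq)$ (Proposition \ref{mutntexact}), and then use the BBD gluing formalism to deduce $t$-exactness of $\upmu_A$ on the glued $t$-structures $\tau^p_A$ and $\tau^{p+1}_B$. The setup is already largely in place: we have a morphism of recollements in which the outer left vertical arrow is the equivalence $\upmu_{\mathbb{L}}$, the middle is $\upmu_A$, and the right vertical is the identity on $D(R)$.

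First I would recall that, by the standard BBD gluing result, a $t$-structure on $D(A)$ is determined by a $t$-structure on $D(\dq)$ together with a $t$-structure on $D(R)$: concretely,
\begin{align*}
D^{\leq 0}_A &= \{X\in D(A)\;:\; i^*X\in D^{\leq 0}(\dq),\; j^*X\in D^{\leq 0}(R)\},\\
D^{\geq 0}_A &= \{X\in D(A)\;:\; i^!X\in D^{\geq 0}(\dq),\; j^!X\in D^{\geq 0}(R)\}.
\end{align*}
By definition, $\tau^p_A$ is obtained by gluing the shifted $t$-structure $D[-p]$ on $D(\dq)$ to the standard $t$-structure on $D(R)$; likewise $\tau^{p+1}_B$ is obtained from $D[-(p+1)]$ and the standard $t$-structure on $D(R)$.

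Next I would unpack what Proposition \ref{mutntexact} gives. Since $\upmu_{\mathbb{L}}[1]$ is $t$-exact for the standard $t$-structures on $D(\dq)$ and $D(\dqb)$, the functor $\upmu_{\mathbb{L}}$ itself sends $D^{\leq p}(\dq)$ into $D^{\leq p+1}(\dqb)$ and $D^{\geq p}(\dq)$ into $D^{\geq p+1}(\dqb)$; equivalently, $\upmu_{\mathbb{L}}$ is $t$-exact from $D[-p]$ to $D[-(p+1)]$. The identity on $D(R)$ is trivially $t$-exact. Combining these with the compatibility squares of the recollement morphism (so $i^*_B\upmu_A\simeq \upmu_{\mathbb{L}} i^*_A$, $i^!_B\upmu_A\simeq \upmu_{\mathbb{L}} i^!_A$, $j^*_B\upmu_A\simeq j^*_A$, $j^!_B\upmu_A\simeq j^!_A$) and the characterisation of the glued $t$-structure above, we conclude that $\upmu_A(X)$ lies in the appropriate half of $\tau^{p+1}_B$ whenever $X$ does so in $\tau^p_A$. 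Hence $\upmu_A$ is $t$-exact for $\tau^p_A$ and $\tau^{p+1}_B$.

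Taking hearts yields an exact functor ${}^p\mathrm{Per}A \to {}^{p+1}\mathrm{Per}B$, and because $\upmu_A$ is already a derived equivalence, this restricted functor is an equivalence of abelian categories (its quasi-inverse is obtained by taking hearts of $\upmu_A^{-1}\simeq -\lot_B {_BT_A}$, which by the symmetric argument applied to the mutation $\upmu_B:D(B)\to D(A)$ is $t$-exact from $\tau^{p+1}_B$ to $\tau^{p+2}_A$). Concatenating these equivalences, alternately applying $\upmu_A$ and $\upmu_B$ and using that $\Omega^2V\cong V$ identifies $\enn_R(\Omega^2V)$ with $A$, produces the desired chain
\[
\cdots \to {}^p\mathrm{Per}A \to {}^{p+1}\mathrm{Per}B \to {}^{p+2}\mathrm{Per}A \to \cdots.
\]
The only real content is Proposition \ref{mutntexact}, so the main obstacle is purely bookkeeping: checking that the shift by $1$ incurred by $\upmu_{\mathbb{L}}$ is exactly absorbed by passing from $\tau^p_A$ to $\tau^{p+1}_B$, while the right-hand part of the recollement contributes nothing since the map there is the identity.
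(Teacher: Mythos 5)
Your proof is correct and takes essentially the same route as the paper's: the paper also deduces $t$-exactness of $\upmu_A$ directly from the fact that it is glued from the two $t$-exact functors $\upmu_{\mathbb{L}}$ (shifted, via Proposition \ref{mutntexact}) and $\id_{D(R)}$, and then passes to hearts. You have simply unpacked the BBD gluing characterisation and the recollement compatibility squares explicitly, which the paper leaves implicit.
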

\begin{proof}
	$\upmu_A$ is t-exact because it is the gluing of two $t$-exact functors. Similarly, $\upmu_B$ is t-exact, and the chain of \ref{mutnchain} becomes a chain of t-exact equivalences. Passing to hearts gets us the second statement. 
\end{proof}

\subsection{Simple modules and deformations}
	Throughout this section we will use the following setup:
\begin{setup}\label{mutsetupp}
	Assume that we are in the situation of Setup \ref{mutsetup}. Assume furthermore that $A/AeA$ is Artinian local and that the dga $\dq$ is cohomologically locally finite.
\end{setup}
In the geometric situations that we care about, the hypotheses of Setup \ref{mutsetupp} are always satisfied (\ref{fdcohom}). Denote by $S_A$ the one-dimensional $A$-module $A/AeA / \text{rad}(A/AeA)$. Since it is naturally an $A/AeA$-module, we may regard it as an object of $D({\dq})$ concentrated in degree zero. Similarly, we denote the analogous one-dimensional $B$-module by $S_B$.
\begin{lem}\label{msissone}
	There is a quasi-isomorphism of $B$-modules $\upmu_A(S_A)\simeq S_B[-1]$.
\end{lem}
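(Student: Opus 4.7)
The plan is to combine the Ext-computation already recorded in \ref{aconmutlemma} with the t-exactness of the mutation equivalence on the recollement subcategories from \ref{mutntexact}. The strategy will be to show that $\upmu_A(S_A)$ lies in the essential image of $i_*^B$, then use t-exactness to pin down its cohomological concentration as a single $B/BeB$-module in degree $1$, and finally identify this module as $S_B$ via Morita-invariance of simples.

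First I would show that $\upmu_A(S_A)$ lands in $D(\dqb)$. Since $S_A$ is an $A/AeA$-module, it is killed by $e$, and so $j^*_A(S_A)\cong S_A e = 0$. The commutativity $j^*_B\circ\upmu_A\simeq j^*_A$ from the morphism of recollements then gives $j^*_B(\upmu_A(S_A))\simeq 0$, so $\upmu_A(S_A)$ lies in the kernel of $j^*_B$, which by the recollement is the essential image of $i_*^B$. Identifying $S_A$ with $i_*^A S_A$ for the copy of $S_A$ living in $D(\dq)$, the commuting square on the left of the morphism of recollements refines this to $\upmu_A(S_A)\simeq i_*^B\upmu_{\mathbb L}(S_A)$. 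Next I would invoke \ref{mutntexact}: the shifted functor $\upmu_{\mathbb L}[1]\colon D(\dq)\to D(\dqb)$ is t-exact for the natural t-structures of \ref{tstr}. Since $S_A$ lies in the heart of $D(\dq)$ (namely $\cat{Mod}\text{-}A/AeA$), it follows that $\upmu_{\mathbb L}(S_A)[1]$ lies in the heart $\cat{Mod}\text{-}B/BeB$ of $D(\dqb)$, so $\upmu_{\mathbb L}(S_A)$ is a genuine $B/BeB$-module placed in cohomological degree $1$.

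It remains to identify this module. Restriction to hearts makes $\upmu_{\mathbb L}[1]$ into a Morita equivalence $\cat{Mod}\text{-}A/AeA\to\cat{Mod}\text{-}B/BeB$, which is precisely the corollary drawn from \ref{mutntexact}. Morita equivalences preserve both simplicity and the set of isomorphism classes of simple modules, so since $A/AeA$ is local Artinian with unique simple $S_A$, the ring $B/BeB$ also has a unique simple module up to isomorphism, and by definition this is $S_B$. Hence $\upmu_{\mathbb L}(S_A)[1]\simeq S_B$, yielding $\upmu_A(S_A)\simeq i_*^B(S_B[-1])\simeq S_B[-1]$. The main potential obstacle is the bookkeeping around the morphism of recollements, specifically checking that the abstract diagram chase factoring $\upmu_A$ through $\upmu_{\mathbb L}$ on $i_*$-images is compatible with the induced functor on hearts being the expected Morita equivalence; but this compatibility is built into the construction of the morphism of recollements already proved.
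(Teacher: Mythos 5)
Your proof is correct, and it is a valid alternative to the paper's argument, but it is worth noting that it is a more roundabout route that ultimately rests on the same technical ingredient. The paper proves \ref{msissone} directly from \ref{aconmutlemma}: $\upmu_A(S_A)\simeq\ext^1_A({}_BT_A,S_A)[-1]$, and applying $\hom_A(-,S_A)$ to the two-term projective resolution $0\to A\to(eA)^l\to{}_BT_A\to 0$ kills the $(eA)^l$ term (since $S_Ae=0$), so the $\ext^1$ is a copy of $S_A$ itself, hence one-dimensional; a one-dimensional module over the Artinian local ring $B/BeB$ is necessarily $S_B$. Your proof instead routes through \ref{mutntexact} to get degree-concentration and through the induced Morita equivalence of hearts to get simplicity, avoiding any explicit dimension count. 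This is a clean conceptual packaging, but observe that \ref{mutntexact} is itself proved via \ref{shiftcor}, which comes from \ref{shiftlem}, which comes from \ref{aconmutlemma} --- so the same calculation with the resolution of ${}_BT_A$ is doing the work in both cases; yours just wraps it inside the t-structure machinery. Your appeal to ``Morita equivalences preserve simples'' does silently use that $B/BeB$ is local (so that $S_B\coloneqq B/BeB/\mathrm{rad}$ is simple rather than semisimple), which holds because $\stab R$ is Krull--Schmidt and $\Omega M$ is indecomposable whenever $M$ is; the paper's dimension count sidesteps needing to say this.
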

\begin{proof}
	A computation using \ref{aconmutlemma} shows that $\upmu_A(S_A)$ is a 1-dimensional object in $D(\dqb)$ concentrated in degree 1. Since it is hence a simple one-dimensional module over $B/BeB$, it must be a copy of $S_B$.
\end{proof}

\begin{cor}[cf.\ {\cite[5.11]{DWncdf}}]
	$\mm(S_A)\simeq S_A[-2]$.
\end{cor}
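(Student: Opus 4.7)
The plan is to apply Lemma \ref{msissone} twice and use that $\Omega^2 V \cong V$. Recall that by definition $\mm = \upmu_B \circ \upmu_A$, so
\[
\mm(S_A) = \upmu_B(\upmu_A(S_A)) \simeq \upmu_B(S_B[-1]) \simeq \upmu_B(S_B)[-1],
\]
where the middle quasi-isomorphism is exactly the content of \ref{msissone}, and the last uses that $\upmu_B$ is a triangulated functor (in particular commutes with shifts).

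To finish, I would invoke the symmetric version of \ref{msissone} applied to $B$ in place of $A$. Observe that $\Omega V$ is itself a MCM modifying $R$-module (it is MCM since $V$ is, and it is modifying because $\enn_R(\Omega V) \cong B$ is Cohen--Macaulay as it is derived Morita equivalent to $A$, or directly by periodicity). Moreover, $\Omega V$ has no free summands since $V$ has none and $\Omega$ preserves this property in our conventions. Hence Setup \ref{mutsetupp} holds with $(A,M)$ replaced by $(B, \Omega M)$, and the mutation of $B$ is $\enn_R(\Omega^2 V) \cong \enn_R(V) = A$. Applying \ref{msissone} in this setting gives $\upmu_B(S_B) \simeq S_A[-1]$. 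Substituting back yields
\[
\mm(S_A) \simeq S_A[-1][-1] = S_A[-2],
\]
as required.

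No step here is really an obstacle: the only thing to double-check is that the hypotheses of Setup \ref{mutsetupp} transfer from $A$ to $B$, which follows from the derived Morita equivalence $\dq \simeq \dqb$ (so cohomological local finiteness is preserved) together with the fact that $A/AeA$ and $B/BeB$ are classically Morita equivalent (so $B/BeB$ is also Artinian local, and the simple module on the $B$-side is indeed the expected $S_B$).
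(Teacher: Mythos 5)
Your proof is correct and is essentially the argument the paper intends (the paper states the corollary without proof, as an immediate consequence of Lemma \ref{msissone} applied to each half of $\mm = \upmu_B\circ\upmu_A$). Your extra check that Setup \ref{mutsetupp} transfers to the $B$-side is a reasonable thing to spell out, though the paper already implicitly uses this in the proof of \ref{msissone} itself when it identifies the output with the simple $B/BeB$-module $S_B$; note that the cleanest justification that $\Omega M$ is modifying is the ``directly by periodicity'' route (via \ref{modext} and $\underline{\ext}^i_R(\Omega M,\Omega M)\cong\underline{\ext}^i_R(M,M)$), since derived Morita equivalence alone does not formally yield Cohen--Macaulayness of $B$ over $R$.
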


By \ref{dcaprorep}, $\dq$ prorepresents the derived noncommutative deformations of $S_A$. In particular, we may regard it as the universal deformation of $S_A$, which is invariant under derived equivalences. This motivates the following:
\begin{thm}\label{dictionary}With the setup as above, \hfill
	\begin{enumerate}
		\item[\emph{i)}] The dgas ${\dq}$ and $\dqb$ are quasi-isomorphic over $k$.
		\item[\emph{ii)}] The $k$-algebras $A/AeA$ and $B/BeB$ are isomorphic.
		\item[\emph{iii)}] The $B$-module $\upmu_A({\dq})$ is quasi-isomorphic to $\dqb[-1]$.
		\item[\emph{iv)}] The $B$-module $\upmu_A(A/AeA)$ is quasi-isomorphic to $B/BeB[-1]$. 
	\end{enumerate}
\end{thm}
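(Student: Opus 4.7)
The plan is to exploit Lemma \ref{msissone}, which gives $\upmu_A(S_A) \simeq S_B[-1]$, together with the Koszul duality identification $\dq \simeq \R\enn_A(S_A)^!$ of Proposition \ref{dqiskd}. All four statements cascade from this observation, with a single explicit computation of $\upmu_A(\dq)$ doing most of the work.

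For (i), since $\upmu_A$ is a triangulated equivalence it induces a quasi-isomorphism of dgas $\R\enn_A(S_A) \simeq \R\enn_B(\upmu_A(S_A)) \simeq \R\enn_B(S_B[-1]) \cong \R\enn_B(S_B)$, the last identification because shifting does not alter derived endomorphism algebras. Koszul duality preserves quasi-isomorphism, so applying Proposition \ref{dqiskd} on both sides yields $\dq \simeq \R\enn_A(S_A)^! \simeq \R\enn_B(S_B)^! \simeq \dqb$. Part (ii) then follows immediately by passing to $H^0$ and invoking $H^0(\dq) \cong A/AeA$ from Proposition \ref{dqcohom}.

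For (iii), one computes $\upmu_A(\dq) \simeq \R\hom_A({_BT_A},\dq)$ directly using the two-term $A$-projective resolution $0 \to A \to \hom_R(V,R^l) \to {_BT_A} \to 0$ constructed just before Theorem \ref{mtilt}. Since $\hom_R(V,R^l) \cong (eA)^{\oplus l}$ as a right $A$-module, we have $\hom_A(\hom_R(V,R^l),\dq) \cong (\dq e)^{\oplus l}$, which is acyclic because $e$ is homotopy-annihilated in $\dq$, while $\hom_A(A,\dq) = \dq$ sits in cohomological degree $1$. Hence the hom-complex collapses to $\dq[-1]$, and combining with (i) gives $\upmu_A(\dq) \simeq \dqb[-1]$.

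Finally (iv) follows from (iii) by applying $\upmu_A$ to the triangle $\tau_{<0}\dq \to \dq \to A/AeA \to$. Corollary \ref{shiftcor} kills the cohomology of $\upmu_A(\tau_{<0}\dq)$ in all degrees $\leq 0$, and Lemma \ref{aconmutlemma} places $\upmu_A(A/AeA)$ entirely in degree $1$. The resulting long exact sequence together with (iii) gives $H^1(\upmu_A(A/AeA)) \cong H^1(\upmu_A(\dq)) \cong H^0(\dqb) \cong B/BeB$, so $\upmu_A(A/AeA) \simeq B/BeB[-1]$. The main subtlety I anticipate is ensuring $B$-equivariance of the quasi-isomorphism in (iii): the given resolution is only obviously one of right $A$-modules, so one must upgrade it to a $B$-$A$-bimodule resolution (or invoke functoriality of $\R\hom_A({_BT_A},-)$) to realise the $B$-action on both sides. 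Once this is handled, the rest is routine bookkeeping.
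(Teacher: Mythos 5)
Your parts (i) and (ii) are identical to the paper's argument. The interesting divergence is in (iii), where the paper does not compute at all: it invokes the deformation-theoretic interpretation (via \ref{prorepthm} and \ref{dcaprorep}) to identify $\dq$ as the universal deformation of $S_A$, then observes that an equivalence must send universal deformations to universal deformations, so $\upmu_A(\dq)$ is the universal deformation of $S_B[-1]$, which is $\dqb[-1]$. This abstract route is precisely what lets the paper sidestep the issue you flag: the $B$-module (indeed $\dqb$-module) structure comes for free because ``universal deformation'' is an object in a module category, uniquely determined up to quasi-isomorphism there.

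Your resolution computation does correctly identify the underlying complex: $\hom_A\bigl((eA)^{\oplus l}, \dq\bigr) \cong (\dq e)^{\oplus l}$ is acyclic since $j^* i_* = 0$ in the recollement, so $\R\hom_A({}_BT_A, \dq) \simeq \dq[-1]$ as complexes. But the gap you anticipate is genuine and not so easy to close along your proposed lines. The two-term resolution $0 \to \hom_R(V,V) \to \hom_R(V,R^l) \to {}_BT_A \to 0$ has no natural left $B$-action on its first two terms (only ${}_BT_A$, via its embedding into $\hom_R(V,\Omega V)$, carries the $B$-action), so it cannot be promoted to a $B$-$A$-bimodule resolution. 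And ``invoking functoriality'' does not by itself produce the $B$-module structure on the chain-level model $\hom_A(P^\bullet, \dq)$ when $P^\bullet$ is only a one-sided resolution. One genuinely needs a bimodule resolution (as in the spectral-sequence argument of \ref{shiftlem}) or the deformation-theoretic argument. So your (iii) is a nice heuristic for \emph{why} the answer is what it is, but it does not establish the statement.

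In (iv) there is also a small but real error of direction: Corollary \ref{shiftcor} applied to $\tau_{<0}\dq$ (whose cohomology vanishes in degrees $\geq 0$) kills $H^q\bigl(\upmu_A(\tau_{<0}\dq)\bigr)$ for $q \geq 1$, not $q \leq 0$ as you write; the cohomology of $\upmu_A(\tau_{<0}\dq)$ is concentrated in \emph{nonpositive} degrees. With the degrees read the way you wrote them the long exact sequence would not yield the claimed isomorphism (indeed it would force $H^0(\upmu_A(\dq)) = 0$, contradicting (iii) in general). With the corrected direction the LES argument goes through and agrees in spirit with the paper's use of \ref{shiftlem}; both produce $H^1(\upmu_A(A/AeA)) \cong H^0(\dqb)$ and conclude via \ref{aconmutlemma}. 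Of course (iv) still inherits the dependence on a genuinely $B$-equivariant version of (iii).
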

\begin{proof}
	Since $\upmu_A:D(A) \to D(B)$ is a standard equivalence, it can be enhanced to a quasi-equivalence of dg categories, and it follows that $\R\enn_A(S_A)\simeq\R\enn_B(\upmu_A(S_A))$ as dgas. By \ref{msissone} we have $\R\enn_A(S_A)\simeq \R\enn_B(S_B)$. Taking the Koszul dual of both sides and appealing to \ref{dqiskd} gives the first claim. To see the second claim, now simply observe that $$A/AeA\cong H^0({\dq})\cong H^0(\dqb)\cong B/BeB.$$The third claim takes some more work and needs the deformation-theoretic interpretation. First note that the $A$-module ${\dq}$ is the universal deformation for the $A$-module $S_A$. Since $\upmu_A$ is an equivalence, the $B$-module $\upmu_A({\dq})$ is the universal deformation for the $B$-module $\upmu_A(S_A)$. But the universal deformation for $\upmu_A(S_A)\simeq S_B[-1]$ is just $\dqb[-1]$, which is the third claim. Applying \ref{shiftlem} now gets us isomorphisms $H^{q}(\dqb)\cong H^1(\upmu_A(H^q(\dq)))$ for all $q\in \Z$. Taking $q=0$ gets us an isomorphism $H^1(\upmu_A(A/AeA))\cong B/BeB$. But because $\upmu_A(A/AeA)$ is a module placed in degree one by \ref{aconmutlemma}, it follows that $\upmu_A(A/AeA)\simeq H^1(\upmu_A(A/AeA))[-1]$, which is the fourth claim.
\end{proof}
\begin{rmk}
	Part i) provides a new proof that $A/AeA\cong B/BeB$ as algebras; see \linebreak \cite[6.20]{iwmaxmod} for an alternate proof that uses the interpretation as stable endomorphism rings. Part iv) is analogous to \cite[5.9(1)]{DWncdf}.
\end{rmk}
\begin{rmk}
	By working in the appropriate subcategories, we may promote the quasi-isomorphisms of iii) and iv) to quasi-isomorphisms of $\dqb$-modules, and that of iv) to an isomorphism of $B/BeB$-modules.
\end{rmk}
\subsection{Singularity categories}
Assume that we are in the setup of \ref{mutsetupp}. For brevity, we put $Q\coloneqq \dq$.

\begin{lem}\label{dwbimodmap}
	There is a map $I_\mm \to A$ in the derived category of $A$-bimodules. 
\end{lem}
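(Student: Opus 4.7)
The plan is to construct the required map as (a lift of) the natural composition of maps $V \to \Omega V \to V$. The key observation is that both $_BT_A$ and $_AT_B$ can be identified with submodules of genuine Hom spaces, via the four-term exact sequences coming from applying $\hom_R(V,-)$ to (\ref{vcosyz}) and $\hom_R(\Omega V,-)$ to its cosyzygy analogue $0 \to \Omega V \to R^m \to V \to 0$ (using $\Omega^2 V \cong V$). Specifically, ${_BT_A}$ is identified with $\mathrm{im}\bigl[\hom_R(V, R^l) \to \hom_R(V, \Omega V)\bigr]$ as a sub-$B$-$A$-bimodule of $\hom_R(V, \Omega V)$, and similarly $_AT_B \subseteq \hom_R(\Omega V, V)$ as a sub-$A$-$B$-bimodule.

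Under these identifications, I would define the composition map
\[\rho:\ {_AT_B} \otimes_B {_BT_A} \longrightarrow \hom_R(V,V) = A, \qquad f\otimes g \longmapsto f\circ g,\]
which is a map of $A$-bimodules in the underived sense. The $B$-balancing is just associativity of composition, and $A$-bilinearity is automatic from pre- and post-composition. As a bonus for later arguments, $\rho$ factors through the ideal $AeA \subseteq A$: each $f \in T_B$ lifts by construction to some $\tilde f:\Omega V \to R^m$, hence factors through a free $R$-module, and so does the composite $f\circ g$.

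The remaining task is to promote $\rho$ to a map in $D(A^e)$ with source $I_\mm = {_AT_B} \lot_B {_BT_A}$ rather than the underived tensor. For this, I would first show that $I_\mm$ lies in $D^{\leq 0}(A^e)$ with $H^0(I_\mm) \cong {_AT_B}\otimes_B {_BT_A}$; since the standard $t$-structure on $D(A^e)$ pulls back from the one on $D(A)$, the standard truncation then provides a canonical $A$-bimodule map $\tau:I_\mm \to H^0(I_\mm)$ in the derived category. The required map is the composite $\rho\circ\tau:I_\mm \to A$.

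The main obstacle will be the third step: one must justify that the derived tensor really does live in nonpositive degrees (and identify its $H^0$ with the naive tensor product), despite the fact that the obvious length-one projective resolution $0 \to A \to (eA)^l \to T_A \to 0$ of $T_A$ is only a resolution of right $A$-modules, not of $B$-$A$-bimodules. I would handle this by computing the derived tensor product via a resolution of $_BT_A$ as a left $B$-module—produced, for example, from the tilting description $B \cong \enn_A(T_A)$ together with the fact that $T_A$ has $B$-flat dimension at most one—and then checking that this computation agrees with the naive one in $H^0$.
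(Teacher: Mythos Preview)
Your approach is correct and actually more elementary than the paper's, but you are overcomplicating the final step. The paper proceeds differently: it uses only the inclusion ${_AT_B}\hookrightarrow \hom_R(\Omega V,V)$, applies the derived functor $-\lot_B{_BT_A}$ to obtain an $A$-bimodule map $I_\mm\to \hom_R(\Omega V,V)\lot_B{_BT_A}$, and then invokes the argument of \cite[5.10]{DWncdf} to identify the target with $AeA$, finishing with the inclusion $AeA\hookrightarrow A$. Your route instead goes via the truncation $I_\mm\to H^0(I_\mm)\cong {_AT_B}\otimes_B{_BT_A}$ followed by the composition map into $A$; this avoids the external citation and is self-contained.

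The ``obstacle'' you flag is not real. For any ordinary ring $C$ and modules $M,N$ (concentrated in degree zero), the derived tensor product $M\lot_C N$ automatically lies in $D^{\leq 0}$ with $H^0(M\lot_C N)\cong M\otimes_C N$: compute it with any flat resolution $P_\bullet\to M$ of right $C$-modules, which lives in nonpositive degrees, and use right exactness of $-\otimes_C N$. The $A$-bimodule structure on $I_\mm$ is inherited from the outer actions on ${_AT_B}$ and ${_BT_A}$ and requires no bimodule resolution; the truncation map $I_\mm\to\tau_{\geq 0}I_\mm$ is then automatically a morphism in $D(A^e)$, since $A^e$ is an ungraded algebra and $D(A^e)$ carries the standard $t$-structure. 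So you may simply write: the canonical map $I_\mm\to H^0(I_\mm)\cong {_AT_B}\otimes_B{_BT_A}$, followed by the composition pairing into $\enn_R(V)=A$, gives the desired bimodule map, and as you observe it even factors through $AeA$. Your digression about one-sided versus bimodule resolutions, and about the $B$-flat dimension of $T_A$, can be dropped entirely.
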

\begin{proof}
	By definition, $I_\mm\coloneqq {_A}T_B\lot_B{_BT_A}$. By construction, $_AT_B$ admits a map to $\hom_R(\Omega V, V)$, and so $I_\mm$ admits a map to ${\hom_R(\Omega V,V)\lot_B {_BT_A}}$. The proof of \cite[5.10]{DWncdf} adapts to show that ${\hom_R(\Omega V,V)\lot_B {_BT_A}}$ is quasi-isomorphic to $AeA$. Now compose the map $I_\mm \to AeA$ with the inclusion $AeA \into A$.
\end{proof}
\begin{rmk}
	Because $_AT_B$ fits into a short exact sequence of $B$-modules $$0 \to {_AT_B} \to \hom_R(V,\Omega V) \to \ext^1_R(V,V)\to 0$$the cone of $I_\mm \to AeA$ is quasi-isomorphic to $\ext^1_R(V,V)\lot_B{_BT_A}$. If $M$ is rigid, then so is $V$, and one gets a quasi-isomorphism $I_\mm \simeq AeA$, which recovers \cite[5.10]{DWncdf}.
\end{rmk}
Tensoring $I_\mm \to A$ with ${Q}$ on both sides gives a bimodule map $I^\mathbb{L}_\mm \to {Q}$. Similarly, applying $e$ gives an $A$-$R$-bimodule map $I_\mm e \to Ae$, and one can check that it is a quasi-isomorphism. Looking at representables gives us natural transformations $\id_{D(A)}\to \mm$ and $\id_{D({Q})}\to \mm_{\mathbb{L}}$, which are compatible with the recollements.
\begin{defn}
Let $D_\mathrm{fd}({Q})$ be the subcategory of $D(Q)$ on those objects with finite-dimensional total cohomology. Let $\per_\mathrm{fd}({Q})$ be the subcategory of $\per Q$ on those objects with finite-dimensional total cohomology.  Write $\mathcal{M}$ for $\thick_{D_{\mathrm{sg}}(R)}(M)$.
\end{defn}
\begin{prop}[{\cite[4.6.5]{dqdefm}}]
The composition $\Sigma\coloneqq j^*(i^*)^{-1}$ is a well-defined triangle equivalence $\Sigma:\per(Q)/\per_\mathrm{fd}(Q)\to \mathcal{M}$.
	\end{prop}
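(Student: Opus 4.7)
The plan is to apply \ref{kymap} directly and then identify the image and kernel of the resulting functor in our specific setting, so the content is really just bookkeeping on top of results already in hand.

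First, I would invoke \ref{kymap}, which supplies a triangle functor $\Sigma : \per(Q) \to D_{\mathrm{sg}}(R)$ sending $Q$ to $Ae$, with image $\thick_{D_{\mathrm{sg}}(R)}(Ae)$ and kernel $\per_{\mathrm{fg}}(Q)$ (the perfect $Q$-modules whose total cohomology is finitely generated over $A/AeA$). This gives the composition $j^*(i^*)^{-1}$ as written.

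Next I would identify the image. Since $A = \enn_R(R \oplus M)$, we have $Ae \cong R \oplus M$ as an $R$-module. In $D_{\mathrm{sg}}(R)$ the summand $R$ is zero because it is perfect, so $Ae \cong M$ in $D_{\mathrm{sg}}(R)$. Therefore $\thick_{D_{\mathrm{sg}}(R)}(Ae) = \thick_{D_{\mathrm{sg}}(R)}(M) = \mathcal{M}$.

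Then I would identify the kernel. In Setup \ref{mutsetupp}, the ring $A/AeA$ is Artinian local, so every finitely generated $A/AeA$-module is finite-dimensional over $k$. It follows that $\per_{\mathrm{fg}}(Q) = \per_{\mathrm{fd}}(Q)$. Applying the universal property of the Verdier quotient, $\Sigma$ descends to a triangle functor $\per(Q)/\per_{\mathrm{fd}}(Q) \to \mathcal{M}$ which is fully faithful with essential image all of $\mathcal{M}$, hence a triangle equivalence. There is no real obstacle here: the work was done in \ref{kymap}, and the only thing to check is that our local/finiteness hypotheses let us match up the kernel and image.
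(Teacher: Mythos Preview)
Your proposal is correct. The paper does not supply its own proof here: the proposition is simply cited from \cite[4.6.5]{dqdefm}, the same reference as \ref{kymap}, so there is nothing to compare against. Your argument is exactly the bookkeeping one needs to pass from the general statement of \ref{kymap} to the present context: the identification $\thick_{D_{\mathrm{sg}}(R)}(Ae)=\mathcal{M}$ via $Ae\cong R\oplus M$ with $R$ vanishing in the singularity category, and the identification $\per_{\mathrm{fg}}(Q)=\per_{\mathrm{fd}}(Q)$ using that $A/AeA$ is Artinian under Setup \ref{mutsetupp}.
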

\begin{lem}\label{fdresplem}
	The functor $\mm_{\mathbb{L}}:D(Q) \to D(Q)$ respects $\per({Q})$, $D_\mathrm{fd}({Q})$, and $\per_\mathrm{fd}({Q})$.
\end{lem}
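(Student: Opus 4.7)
The plan is to exploit three structural facts: first, that $\mm_{\mathbb{L}}$ is an equivalence of triangulated categories; second, that the shifted mutation functor $\upmu_{\mathbb{L}}[1]$ is t-exact by Proposition \ref{mutntexact}; and third, that the induced equivalence of hearts $\cat{Mod}\text{-}A/AeA \xrightarrow{\simeq} \cat{Mod}\text{-}B/BeB$ preserves finite-dimensional objects. The three claims can then be handled separately.

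For $\per(Q)$, the argument is immediate: $\mm_{\mathbb{L}}$ is a triangulated equivalence (once we identify $\dqb \simeq Q$ via Theorem \ref{dictionary}), and any such equivalence preserves compact objects. For $D_\mathrm{fd}(Q)$, I would use that t-exact functors commute with cohomology: since $\upmu_{\mathbb{L}}[1]$ is t-exact, we have $H^n(\upmu_{\mathbb{L}}[1](X))$ equal to the heart-equivalence applied to $H^n(X)$ for each $n$. The heart equivalence preserves finite-dimensionality, either by appealing to the $k$-algebra isomorphism $A/AeA \cong B/BeB$ of Theorem \ref{dictionary}(ii), or by the direct computation that for any $A/AeA$-module $N$, Lemma \ref{aconmutlemma} together with the projective resolution $0 \to \hom_R(V,V) \to \hom_R(V,R^l) \to {_BT_A} \to 0$ and the vanishing $Ne = 0$ yields $\ext^1_A({_BT_A}, N) \cong N$. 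Hence $\upmu_{\mathbb{L}}[1]$ preserves the property of having finite-dimensional cohomology in each degree, together with boundedness of cohomology, i.e.\ it preserves $D_\mathrm{fd}$. Applying the analogous statement for $\upmu_B$ and composing, $\mm_{\mathbb{L}}[2]$ preserves $D_\mathrm{fd}(Q)$, and since shifts trivially preserve $D_\mathrm{fd}$, so does $\mm_{\mathbb{L}}$ itself.

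Finally, $\per_\mathrm{fd}(Q) = \per(Q) \cap D_\mathrm{fd}(Q)$ as subcategories of $D(Q)$, so preservation of the intersection follows from preservation of the two pieces. The only mildly delicate step is the second, where one must verify that the heart equivalence restricts to finite-dimensional modules; this is where the Artinian local hypothesis in Setup \ref{mutsetupp} is used, ensuring that the algebras $A/AeA$ and $B/BeB$ are finite-dimensional and that their module categories have a well-behaved notion of finite dimension over $k$.
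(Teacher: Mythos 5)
Your proof is correct, and the handling of $\per(Q)$ and $\per_\mathrm{fd}(Q)$ matches the paper's in spirit (the paper notes that $\mm_{\mathbb{L}}(Q)$ is perfect, you note that $\mm_{\mathbb{L}}$ is an autoequivalence; either way $\per(Q)=\thick(Q)$ is preserved). Where you genuinely diverge is the middle claim. The paper treats $D_\mathrm{fd}(Q)$ via generation: since $A/AeA$ is Artinian local with residue field $k$, every object of $D_\mathrm{fd}(Q)$ lies in $\thick(S_A)$, and one observes directly that $\mm_{\mathbb{L}}(S_A)\simeq S_A[-2]\in D_\mathrm{fd}(Q)$, so exactness of $\mm_{\mathbb{L}}$ does the rest. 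You instead exploit the t-exactness of $\upmu_{\mathbb{L}}[1]$ from Proposition \ref{mutntexact}: a t-exact equivalence commutes with $H^n$, so it suffices to check the induced heart equivalence preserves finite-dimensional objects, which you verify by the short resolution $0\to A\to (eA)^l\to {_BT_A}\to 0$ and the vanishing $Ne=0$, giving $\ext^1_A({_BT_A},N)\cong N$ as $k$-vector spaces. Both arguments are sound and use the same underlying hypotheses from Setup \ref{mutsetupp}; the paper's route is more elementary (no appeal to t-structures beyond what defines $S_A$), while yours reuses the t-exactness result already proved, which is a reasonable economy of means. One small caveat: appealing to $A/AeA\cong B/BeB$ alone does not immediately give preservation of finite-dimensionality (one must also know the induced equivalence of module categories behaves well), so your direct $\ext^1$ computation is the cleaner of your two suggested justifications and I would lead with it.
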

\begin{proof}
	Since $\mm_{\mathbb{L}}({Q})$ is perfect, it follows that $\mm_{\mathbb{L}}$ preserves all perfect modules. Since finite-dimensional modules are built out of $S_A$ under cones and shifts, and $\mm$ sends $S_A$ to a finite-dimensional module, it follows that $\mm$ preserves $D_\mathrm{fd}({Q})$. The third assertion is  now clear.
\end{proof}
\begin{defn}
	Let $\mm_{\mathrm{sg}}:\mathcal{M}\to \mathcal{M}$ be the autoequivalence defined by the commutative diagram of equivalences $$\begin{tikzcd} \per({Q})/\per_\mathrm{fd}({Q})\ar[d,"\mm_{\mathbb{L}}"] \ar[r,"\Sigma"]& \mathcal M \ar[d, "\mm_{\mathrm{sg}}"] \\ \per({Q})/\per_\mathrm{fd}({Q})\ar[r,"\Sigma"] & \mathcal M 
	\end{tikzcd}$$Observe that one gets a natural transformation $\id_{\mathcal M} \to \mm_{\mathrm{sg}}$.
\end{defn}
\begin{rmk}
	One can enhance $\mm_{\mathrm{sg}}$ to a dg functor, although we will not need this fact.
\end{rmk}
The following is the key technical observation which gives us control over $\mm_{\mathbb{L}}$: 
\begin{prop}
	The natural transformation $\id_{\mathcal M} \to \mm_{\mathrm{sg}}$ is an isomorphism.
\end{prop}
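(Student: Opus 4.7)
The plan is to reduce to checking the natural transformation at one generator of $\mathcal M$ and then translate back to a question about perfect $Q$-modules via the equivalence $\Sigma$.

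Since $\mathcal M = \thick_{D_{\mathrm{sg}}(R)}(M)$ and both $\id_{\mathcal M}$ and $\mm_{\mathrm{sg}}$ are triangulated, the natural transformation is an isomorphism if and only if its component at $M$ is one. Under the equivalence $\Sigma:\per(Q)/\per_{\mathrm{fd}}(Q) \xrightarrow{\sim} \mathcal M$ the object $M$ corresponds to $Q$, because $\Sigma(Q)=Ae=R\oplus M$ and $R$ vanishes in $D_{\mathrm{sg}}(R)$. Hence it suffices to show that the induced morphism $Q\to \mm_{\mathbb{L}}(Q)$ becomes invertible in $\per(Q)/\per_{\mathrm{fd}}(Q)$, or equivalently that its cocone $K\in D(Q)$ lies in $\per_{\mathrm{fd}}(Q)$. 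By \ref{fdresplem}, $\mm_{\mathbb{L}}$ preserves $\per(Q)$, so $K$ is already perfect over $Q$; what remains is to verify that $K$ has finite-dimensional total cohomology.

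Next I would identify $K$ explicitly. The natural transformation $\id\to\mm$ on $D(A)$ is induced from the bimodule map $I_{\mm}\to A$ of \ref{dwbimodmap}, and on $D(Q)$ it descends to the corresponding map $I^{\mathbb{L}}_{\mm}\to Q$. Hence $K\simeq \R\hom_Q(C,Q)$, where $C$ is the cone of this descended bimodule map. Using the factorisation $I_{\mm}\to AeA\to A$ together with the description $\mathrm{cone}(I_{\mm}\to AeA)\simeq \ext^1_R(V,V)\lot_B {}_BT_A$ recorded in the remark following \ref{dwbimodmap}, the bimodule $C$ fits into a triangle whose endpoints $\ext^1_R(M,M)\lot_B {}_BT_A$ and $A_{\con}$ are both finite-dimensional $A_{\con}$-modules supported on $D(Q)$; in particular $C\in D_{\mathrm{fd}}(Q)$, with cohomology concentrated in degrees $\{-1,0\}$.

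The hard part is then the finite-dimensionality of $\R\hom_Q(C,Q)$, which I expect to be the main obstacle. The strategy is to exploit the periodicity element $\eta\in H^{-2}(Q)$ from \ref{etaex}: applying $\R\hom_Q(-,Q)$ to the triangle $Q\xrightarrow{\eta}Q\to A_{\mm}$ of \ref{etaex}(iv) identifies $\R\hom_Q(A_{\mm},Q)$, up to shift, with $A_{\mm}$ itself, which has finite-dimensional total cohomology. Since $C$ has exactly the same cohomology groups as $A_{\mm}$ (namely $A_{\con}$ in degree $0$ and $\ext^1_R(M,M)$ in degree $-1$) and both sit in the thick subcategory of $D_{\mathrm{fd}}(Q)$ generated by $A_{\mm}$ via Postnikov-type filtrations, a devissage argument transfers finite-dimensionality from $\R\hom_Q(A_{\mm},Q)$ to $\R\hom_Q(C,Q)$. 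This gives $K\in\per_{\mathrm{fd}}(Q)$, proving that $\alpha_M$ is an isomorphism in $\mathcal M$ and completing the argument.
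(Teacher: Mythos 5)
Your overall strategy — reduce to the component at the generator $M$, transport along $\Sigma$ to $\per(Q)/\per_{\mathrm{fd}}(Q)$, identify the cocone $K$ of $Q\to \mm_{\mathbb{L}}(Q)$, and argue $K\in\per_{\mathrm{fd}}(Q)$ — is a genuinely different route from the paper's. The paper instead observes that $\mm$ is compatible with the whole recollement $D(Q)\recol D(A)\recol D(R)$ and with the singularity functor, chases a chain of commutative diagrams, and concludes that $\mm_{\mathrm{sg}}$ must agree with the functor induced by $\mm$ on $D^b(R)$, which is the identity. That argument requires no control over the cocone at all.

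The problem with your approach lies in the final devissage step. You assert that both $C$ and $A_\mm$ lie in $\thick_{D(Q)}(A_\mm)$ ``via Postnikov-type filtrations,'' and use this to transfer finite-dimensionality of $\R\hom_Q(-,Q)$ from $A_\mm$ to $C$. But a Postnikov filtration of $C$ only shows $C\in\thick(H^*(C))$, i.e.\ $C\in\thick(A_\con, H^{-1}(Q))$; it does not place $C$ in $\thick(A_\mm)$. In general $H^0$ of a dga is not in the thick subcategory generated by that dga: for the square-zero dga $k[\epsilon]/\epsilon^2$ with $|\epsilon|=-1$ one has $k=H^0\notin\per(k[\epsilon]/\epsilon^2)=\thick(k[\epsilon]/\epsilon^2)$. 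Likewise there is no reason for $A_\con$ or $H^{-1}(Q)$ to lie in $\thick_{D(Q)}(A_\mm)$, so the claim $C\in\thick(A_\mm)$ is unsubstantiated. The clean version of the devissage you want is the statement that $\R\hom_Q(X,Q)\in D_{\mathrm{fd}}(Q)$ for all $X\in D_{\mathrm{fd}}(Q)$, which by thickness reduces to showing $\R\hom_Q(S,Q)\in D_{\mathrm{fd}}(Q)$ for the simple $S$ — but that is a nontrivial (Gorenstein/CY-type) finiteness condition on $Q$ that you neither state nor verify. Note also that the identification $C\simeq A_\mm$ that you are implicitly edging towards is exactly \ref{mmshiftrep}, whose proof depends on the present proposition, so invoking it would be circular. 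The earlier reductions (reduce to $M$, identify $\Sigma(Q)$ with $M$, identify $K$ as $\R\hom_Q(\mathrm{cone}(I^{\mathbb{L}}_\mm\to Q),Q)$, and show $\mathrm{cone}\in D_{\mathrm{fd}}(Q)$) are sound; it is only the last leg that needs a genuinely different argument.
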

\begin{proof}The idea is that all constructions respect the recollement, which forces $\mm_{\mathrm{sg}}$ to agree with the map induced by $\mm: D(R)\to D(R)$, which is just the identity. Because $\mm$ preserves both $\per{Q}$ and $\per R$, it follows that it preserves $\per A$ too, and moreover descends to an autoequivalence of $\per A / j_! \per R$. By the definition of $\Sigma$, one has a commutative diagram $$\begin{tikzcd}
	\per A \ar[d,"i^*"]\ar[r]& \per A / j_! \per R \ar[dl,"i^*"]\ar[d]\ar[dr,".e"]& 
	\\ \per({Q}) \ar[r]& \per({Q})/\per_\mathrm{fd}({Q})\ar[r,"\Sigma"'] & \mathcal{M}
	\end{tikzcd}$$and by \ref{fdresplem} and the definition of $\mm_{\mathrm{sg}}$ one has a commutative diagram $$\begin{tikzcd}
	\per({Q}) \ar[r]\ar[d,"\mm_{\mathbb{L}}"]& \per({Q})/\per_\mathrm{fd}({Q})\ar[r,"\Sigma"']\ar[d,"\mm_{\mathbb{L}}"] & \mathcal M \ar[d,"\mm_{\mathrm{sg}}"]
	\\ \per({Q}) \ar[r]& \per({Q})/\per_\mathrm{fd}({Q})\ar[r,"\Sigma"'] & \mathcal M
	\end{tikzcd}$$Gluing two copies of the first to the second, one can check that the induced diagram $$\begin{tikzcd}
	\per A \ar[d,"\mm"]\ar[r]& \per A / j_! \per R\ar[d,"\mm"]\ar[r,".e"] & \mathcal M\ar[d,"\mm_{\mathrm{sg}}"]
	\\ \per A \ar[r]& \per A / j_! \per R \ar[r,".e"] & \mathcal M
	\end{tikzcd}$$ commutes. A similar argument with the commutative diagram $$\begin{tikzcd}
	\per A \ar[rr,".e"]\ar[d]&& D^b(R)\ar[d]
	\\ \per A / j_! \per R\ar[r,".e"] & \mathcal{M}\ar[r,hook] & D_{\mathrm{sg}}(R) 
	\end{tikzcd}$$ shows that the diagram $$\begin{tikzcd}
	D^b(R)\ar[d,"\mm"]\ar[rr,bend left=15] & \mathcal{M}\ar[r,hook]\ar[d,"\mm_{\mathrm{sg}}"]& D_\mathrm{sg}(R)\ar[d,"\mm"]
	\\ D^b(R)\ar[rr,bend right=15] & \mathcal{M}\ar[r,hook]& D_\mathrm{sg}(R)
	\end{tikzcd}$$ commutes. But the left-hand vertical map is the identity, so it follows that $\mm_{\mathrm{sg}}$ is the identity map. Moreover, $\id \to \mm$ descends to an isomorphism $\id \to \mm_{\mathrm{sg}}$ because it descends to an isomorphism on $D^b(R)$.
\end{proof}

\subsection{Periodicity and localisation}
Assume that we are in the setup of \ref{mutsetupp}. For brevity, we put $Q\coloneqq \dq$. Recall from \ref{etaex} the existence of the periodicity element $\eta \in H^{-2}(Q)$. Let $E$ be the derived localisation of ${Q}$ at $\eta$ (see \cite{bcl} for the definition of derived localisation).

\begin{prop}There is a commutative diagram of dg categories $$\begin{tikzcd} \per Q\ar[dr, "\Sigma"] \ar[r, "-\lot_Q E"]& \per(E)\ar[d,"\alpha"] \\  & \mathcal{M} \end{tikzcd}$$where $\Sigma$ is the ``singularity functor'' of {\normalfont{\cite[4.6.5]{dqdefm}}} and $\alpha$ is a quasi-equivalence.
\end{prop}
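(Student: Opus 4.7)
The plan is to identify the derived localisation $E$ with the dg endomorphism algebra $\R\underline{\enn}_R(M)$ of $M$ regarded as an object of the dg singularity category, and then to invoke Keller's dg Morita theorem to identify $\per(\R\underline\enn_R(M))$ with $\thick_{D_{\mathrm{sg}}(R)}(M) = \mathcal M$. The factorisation of $\Sigma$ through $-\lot_Q E$ will then be forced by comparing both functors on the compact generator $Q$.

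First, I would leverage Remark \ref{dsgrmk}, which supplies an explicit model $Q \simeq \tau_{\leq 0}(E'[\theta^{-1}])$, where $E' = \enn_R(\tilde M)$ acts on a $2$-periodic resolution $\tilde M$ of $M$, $\theta$ is a central degree-$2$ cocycle witnessing the periodicity of $\stab R$, and $\eta$ is identified with the class of $\theta^{-1}$. The enveloping dga $E'[\theta^{-1}]$ is quasi-isomorphic to $\R\underline\enn_R(M)$ and already has $\eta$ strictly invertible. Combined with the fact from \ref{etaex} that multiplication by $\eta$ induces isomorphisms $H^j(Q) \xrightarrow{\sim} H^{j-2}(Q)$ for $j \leq 0$, a standard check shows that the inclusion $Q \hookrightarrow E'[\theta^{-1}]$ satisfies the universal property of the derived $\eta$-localisation in the sense of Braun--Chuang--Lazarev. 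Hence $E \simeq \R\underline\enn_R(M)$ as dgas under $Q$.

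Next, because $M$ is by definition a compact generator of $\mathcal M \subseteq D_{\mathrm{sg}}(R)$, Keller's theorem applied to the dg-enhanced singularity category gives a quasi-equivalence
\[
\alpha \colon \per\bigl(\R\underline\enn_R(M)\bigr) \xrightarrow{\ \sim\ } \mathcal M,
\]
sending the rank-one free module to $M$. Transporting $\alpha$ along the identification $E \simeq \R\underline\enn_R(M)$ of the previous step produces the desired functor $\per(E) \to \mathcal M$.

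Finally, to verify commutativity, both composites in the triangle are triangulated and coproduct-preserving functors out of $\per Q$, which is generated by $Q$. It therefore suffices to compare them on $Q$. The singularity functor of \cite[4.6.5]{dqdefm} sends $Q$ to $Ae \cong R \oplus M$, which becomes $M$ in $D_{\mathrm{sg}}(R)$ since $R$ vanishes there. Along the other route, $Q \lot_Q E \simeq E$ as an $E$-module, and by construction $\alpha(E) = M$. Hence both functors agree on the generator, and therefore everywhere. The main obstacle is the first step: rigorously establishing that $E'[\theta^{-1}]$ realises the derived $\eta$-localisation (as opposed to merely being a dga under $Q$ inverting $\eta$). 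This amounts to a Bousfield-localisation argument and ultimately reduces to the periodicity statement \ref{etaex}, which guarantees that the truncation map $\tau_{\leq 0}(E'[\theta^{-1}]) \to E'[\theta^{-1}]$ becomes a quasi-isomorphism after formally inverting $\eta$.
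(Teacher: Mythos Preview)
Your proposal is correct and follows essentially the same strategy as the paper: identify $E$ with the dg endomorphism algebra of $M$ in the dg singularity category (the paper simply cites \cite[5.3.7]{dqdefm} for this, whereas you spell out the argument via \ref{dsgrmk} and \ref{etaex}), produce $\alpha$ from Keller's theorem, and then check commutativity on the generator $Q$. One small refinement: in the final step, agreement of two dg (or triangulated) functors on a compact generator requires matching not just the \emph{object} $\Sigma(Q)\cong M\cong \alpha(E)$ but also the induced maps on endomorphism dgas; the paper makes this explicit by noting that $\Sigma$ acts on $\R\enn(Q)\simeq Q$ as the inclusion $Q\to E$, which is exactly the localisation map used in the other composite---you have this implicitly in your phrase ``as dgas under $Q$'', but it would be worth stating in the commutativity check rather than only recording that both routes land on $M$.
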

\begin{proof}
Recall from \cite[5.3.7]{dqdefm} that $M$ has dg endomorphism ring $E$ and the functor $\alpha$ is the induced equivalence which sends $E$ to $M$. The singularity functor sends $Q$ to the object $M$, and on endomorphism dgas is the inclusion $Q\to E$. As the dg functors hence agree on generators, they agree on the whole categories.
	\end{proof}
\begin{defn}
	Let $\mm_E:\per E \to \per E$ be the endofunctor defined by $$\mm_E\coloneqq \alpha \mm_{\mathrm{sg}}\alpha^{-1}.$$
	\end{defn}
The following is clear:
\begin{lem}
	The isomorphism $\id_{\mathcal M} \to \mm_\mathrm{sg}$ induces an isomorphism $\id_{\per E} \to \mm_E$.
\end{lem}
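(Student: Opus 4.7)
The plan is to transport the isomorphism $\id_{\mathcal M}\xrightarrow{\sim}\mm_\mathrm{sg}$ across the quasi-equivalence $\alpha:\per(E)\to\mathcal M$. Since $\mm_E$ is defined by the conjugation formula $\mm_E\coloneqq \alpha\,\mm_\mathrm{sg}\,\alpha^{-1}$, and conjugation by an equivalence sends natural isomorphisms of endofunctors to natural isomorphisms of endofunctors, the claim reduces to a formal manipulation.

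More concretely, let $\nu:\id_{\mathcal M}\xrightarrow{\sim}\mm_\mathrm{sg}$ denote the isomorphism established in the previous proposition. Whiskering $\nu$ on the right by $\alpha^{-1}$ and on the left by $\alpha$ produces a natural isomorphism $\alpha\,\id_{\mathcal M}\,\alpha^{-1}\xrightarrow{\sim}\alpha\,\mm_\mathrm{sg}\,\alpha^{-1}$, i.e.\ an isomorphism $\alpha\alpha^{-1}\xrightarrow{\sim}\mm_E$. Since $\alpha$ is a quasi-equivalence, the counit $\alpha\alpha^{-1}\xrightarrow{\sim}\id_{\per E}$ is itself an isomorphism, and inverting it and composing produces the required natural isomorphism $\id_{\per E}\xrightarrow{\sim}\mm_E$.

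There is essentially no obstacle here; the content is entirely in the previous proposition, where we showed that $\id\to\mm_\mathrm{sg}$ is an isomorphism by exploiting the fact that $\mm$ restricts to the identity on $D^b(R)$. The present lemma is purely a matter of bookkeeping: it records that the transport along $\alpha$ respects the natural transformations induced by the map of bimodules $I_\mm\to A$ from Lemma \ref{dwbimodmap}, so that the canonical comparison $\id_{\per E}\to \mm_E$ is precisely the image of $\nu$ under conjugation by $\alpha$, and hence is an isomorphism.
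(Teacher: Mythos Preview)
Your proposal is correct and matches the paper's own treatment: the paper states this lemma as ``clear'' with no proof, and your argument spells out exactly the formal conjugation-by-an-equivalence that makes it so. The only thing to be mindful of is that the ``induced'' map is precisely the one you construct, which you address in your final paragraph.
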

\begin{lem}
	The following diagram is commutative: $$\begin{tikzcd} \per Q\ar[d,"\mm_{\mathbb{L}}"] \ar[r, "-\lot_Q E"]& \per E\ar[d,"\mm_{E}"] \\ \per Q \ar[r, "-\lot_Q E"]& \per E \end{tikzcd}$$
	\end{lem}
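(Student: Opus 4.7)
The plan is to reduce the claim to the defining commutative square of $\mm_{\mathrm{sg}}$ by post-composing the given square with the quasi-equivalence $\alpha:\per E\to\mathcal{M}$. Since $\alpha$ is a quasi-equivalence, two dg functors $F,G:\per Q\to \per E$ are quasi-isomorphic as soon as $\alpha\circ F\simeq\alpha\circ G$, so we only need to establish the outer square after passing to $\mathcal{M}$.

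First I would compute $\alpha\circ(-\lot_Q E)\circ\mm_{\mathbb{L}}$. By the previous proposition we have $\alpha\circ(-\lot_Q E)\simeq \Sigma$ as dg functors $\per Q\to\mathcal{M}$, so this composite is $\Sigma\circ\mm_{\mathbb{L}}$. Next, by the definition of $\mm_E$, we have $\alpha\circ\mm_E\simeq\mm_{\mathrm{sg}}\circ\alpha$, and so $\alpha\circ\mm_E\circ(-\lot_Q E)\simeq\mm_{\mathrm{sg}}\circ\alpha\circ(-\lot_Q E)\simeq\mm_{\mathrm{sg}}\circ\Sigma$. Hence commutativity of the given square is equivalent to the relation $\Sigma\circ\mm_{\mathbb{L}}\simeq\mm_{\mathrm{sg}}\circ\Sigma$.

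But this last identity is built into the definition of $\mm_{\mathrm{sg}}$: the singularity functor $\Sigma$ factors through the Verdier quotient $\per(Q)/\per_{\mathrm{fd}}(Q)$, and by \ref{fdresplem} the functor $\mm_{\mathbb{L}}$ preserves $\per_{\mathrm{fd}}(Q)$, so it descends to the quotient as well. The defining diagram of $\mm_{\mathrm{sg}}$ then says exactly that the descended $\mm_{\mathbb{L}}$ is intertwined with $\mm_{\mathrm{sg}}$ along $\bar\Sigma$, and pulling back along the quotient map $\per Q\to\per Q/\per_{\mathrm{fd}}(Q)$ yields the required isomorphism $\Sigma\circ\mm_{\mathbb{L}}\simeq\mm_{\mathrm{sg}}\circ\Sigma$ on $\per Q$.

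The argument is essentially formal; the only mild subtlety is making sure that the isomorphisms assembled above really glue into a single natural isomorphism of endofunctors (as opposed to just matching on objects), which is unproblematic because each individual square is constructed as a diagram of dg functors and the natural transformations involved are induced from the defining diagrams of $\alpha$, $\Sigma$, and $\mm_{\mathrm{sg}}$.
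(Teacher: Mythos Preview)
Your argument is correct and is exactly the approach the paper takes: the paper's proof is the one-liner ``Follows from the definition of $\mm_E$ along with the fact that $\mm_{\mathrm{sg}}$ commutes with $\Sigma$,'' and you have simply unpacked this by post-composing with the equivalence $\alpha$ and reducing to the defining square of $\mm_{\mathrm{sg}}$.
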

\begin{proof}
	Follows from the definition of $\mm_E$ along with the fact that $\mm_{\mathrm{sg}}$ commutes with $\Sigma$.
	\end{proof}

\begin{lem}\label{immltriv}
	Applying $-\lot_{{Q}}E$ to the ${Q}$-bimodule map $I_\mm^\mathbb{L} \to {Q}$ gives a ${Q}$-$E$ bimodule quasi-isomorphism $I_\mm^\mathbb{L} \lot_{{Q}} E \to E$.
\end{lem}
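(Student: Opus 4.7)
The plan is to use the commutative square from the previous lemma together with the isomorphism $\id_{\per E} \xrightarrow{\simeq} \mm_E$ established just above.

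First I would interpret the $Q$-bimodule map $I_\mm^\mathbb{L} \to Q$ as representing a natural transformation of functors $\per Q \to \per Q$. Since $\mm_{\mathbb{L}} = \R\hom_Q(I_\mm^\mathbb{L},-)$ is an equivalence (by the earlier proposition), its left adjoint $-\lot_Q I_\mm^\mathbb{L}$ is automatically a quasi-inverse, which I will denote $\mm_{\mathbb{L}}^{-1}$. The bimodule map $I_\mm^\mathbb{L} \to Q$ then represents a natural transformation $\mm_{\mathbb{L}}^{-1} \to \id_{\per Q}$. Applying $-\lot_Q E$ (tensoring the right $Q$-action with $E$) produces a map of $Q$-$E$-bimodules $I_\mm^\mathbb{L} \lot_Q E \to Q \lot_Q E \simeq E$, which represents the natural transformation of functors $\per Q \to \per E$ given by
$$X \lot_Q I_\mm^\mathbb{L} \lot_Q E \longrightarrow X \lot_Q E.$$

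Next, inverting the commutative square gives $F \circ \mm_{\mathbb{L}}^{-1} \simeq \mm_E^{-1} \circ F$, where $F \coloneqq -\lot_Q E$, so the above natural transformation is identified with $\mm_E^{-1} \circ F \to F$. Since $\mm_E \simeq \id_{\per E}$, the inverse $\mm_E^{-1} \to \id$ is also an isomorphism, making the displayed natural transformation a pointwise quasi-isomorphism on $\per Q$. Evaluating at the compact generator $Q$ and using $Q \lot_Q - \simeq \id$, one obtains a quasi-isomorphism $I_\mm^\mathbb{L} \lot_Q E \xrightarrow{\simeq} E$ in $D(E)$, which by construction is the very map under consideration. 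Since quasi-isomorphism is detected on underlying cochain complexes, the same map is a quasi-isomorphism of $Q$-$E$-bimodules.

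The main bookkeeping will be to verify carefully that evaluating the natural transformation at $X = Q$ recovers the tensored bimodule map, and to keep track of the bimodule structures (in particular the left $Q$-action that persists through $-\lot_Q E$). Modulo this essentially formal step, the conclusion follows immediately from the already-established isomorphism $\id_{\per E} \cong \mm_E$.
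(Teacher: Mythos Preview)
Your proposal is correct and follows essentially the same approach as the paper: both reinterpret the bimodule map as the natural transformation $\mm_{\mathbb{L}}^{-1}\to\id$, pass along $-\lot_Q E$ using the commutative square with $\mm_E$, invoke the established isomorphism $\id_{\per E}\cong\mm_E$, and then evaluate at the generator $Q$ to conclude. Your version is slightly more explicit about the bookkeeping (the left $Q$-action and why a quasi-isomorphism in $D(E)$ gives one of $Q$-$E$-bimodules), but the argument is the same.
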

\begin{proof}The idea is to look at $\mm_{\mathbb{L}} ^{-1} \to \id$, which becomes a quasi-isomorphism upon inverting $\eta$. Consider the functor $\mm_{\mathbb{L}} ^{-1}: \per Q \to \per Q$ which sends $X$ to $X\lot_Q I^\mathbb{L}_\mm$. It comes with a natural transformation $\mm_{\mathbb{L}} ^{-1}\to \id$, which descends to $\per E$, and is an isomorphism there. Hence, the natural map $X \lot_Q I_\mm^\mathbb{L} \lot_Q E \to X \lot_Q E$ is a quasi-isomorphism of $E$-modules. Hence the natural map $I_\mm^\mathbb{L} \lot_Q E \to E$ must be a quasi-isomorphism of $Q$-$E$-bimodules.
\end{proof}

\begin{prop}\label{mmshiftrep}
	As $Q$-bimodules, $I^\mathbb{L}_\mm$ is quasi-isomorphic to $Q[2]$.
	\end{prop}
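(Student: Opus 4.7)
The plan is to produce a $Q$-bimodule quasi-isomorphism $Q[2] \to I_\mm^\mathbb{L}$ by lifting the periodicity class $\eta \in HH^{-2}(Q)$ along the bimodule map $\phi\colon I_\mm^\mathbb{L} \to Q$ of \ref{dwbimodmap}, and then checking the lift is a quasi-iso by comparing cofibers.

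First I would let $C \coloneqq \mathrm{cofib}(\phi)$ in $D(Q^e)$, which by \ref{immltriv} satisfies $C\lot_Q E \simeq 0$, so $C$ is $\eta$-torsion in the derived-localisation sense. The triangle $I_\mm^\mathbb{L} \to Q \to C$ yields a long exact sequence
$$\hom_{D(Q^e)}(Q[2], I_\mm^\mathbb{L}) \to HH^{-2}(Q) \to \hom_{D(Q^e)}(Q[2], C),$$
and the key step is to show that the genuine Hochschild class $\eta \in HH^{-2}(Q)$ from \ref{etaex}iii lies in the image of the first map, equivalently maps to zero in $\hom_{D(Q^e)}(Q[2], C)$. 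The idea is that $\eta$ becomes invertible in $E$ while $C$ becomes zero in $E$, so the obstruction to lifting should vanish.

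Second, assuming $\psi\colon Q[2] \to I_\mm^\mathbb{L}$ has been produced with $\phi\circ\psi = \eta$, I would verify $\psi$ is a quasi-iso by applying the octahedral axiom to the composition $\phi\circ\psi=\eta$: this yields a triangle linking $\mathrm{cofib}(\psi)$, $C=\mathrm{cofib}(\phi)$, and $\tau_{>-2}(Q)=\mathrm{cofib}(\eta)$ from \ref{etaex}iv. Both $C$ and $\tau_{>-2}(Q)$ are annihilated by $-\lot_Q E$ (the former by \ref{immltriv}; the latter because its cohomology is concentrated in degrees $\geq -1$ while $\eta$ lives in degree $-2$), and so one is reduced to showing the octahedral connecting map $C \to \tau_{>-2}(Q)$ is a quasi-iso, which should follow from the cohomological constraint that both objects have cohomology only in the top two degrees.

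The main obstacle is the lifting step: producing the bimodule map $\psi\colon Q[2]\to I_\mm^\mathbb{L}$. A potentially cleaner alternative, bypassing the explicit Hochschild calculation, is to prove the equivalent statement $\mm_{\mathbb{L}} \simeq [-2]$ as endofunctors of $D(Q)$ via a generation argument: the two functors agree on $S_A$ by the corollary following \ref{msissone}, and they agree on $\per(E)$ since both descend to the identity there (the former by the preceding subsection, the latter because $\eta$ is invertible in $E$, making $[-2]\simeq\id$ on $\per(E)$). One would then promote this natural isomorphism of functors back to a quasi-iso of the representing bimodules $I_\mm^\mathbb{L}$ and $Q[2]$, using compatibility with the recollement $D(Q)\recol D(A)\recol D(R)$ to glue the identifications.
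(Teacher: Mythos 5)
Your proposal takes a genuinely different route from the paper, and unfortunately both of the routes you sketch have real gaps.

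In the first route (lifting $\eta$ along $\phi$), the obstruction to lifting is the image of $\eta$ in $\hom_{D(Q^e)}(Q[2], C) = HH^{-2}(Q, C)$. You argue heuristically that this should vanish because ``$C$ becomes zero in $E$ and $\eta$ is invertible in $E$,'' but the statement $C\lot_Q E \simeq 0$ from \ref{immltriv} is a statement about $C$ as a \emph{right} $Q$-module, not about its Hochschild cohomology, and there is no automatic mechanism by which right-module torsion properties control $HH^{-2}(Q, C)$. Your back-half (the octahedron for $\phi\circ\psi=\eta$) also has a gap: you assert $C$ has cohomology only in the top two degrees, but $C = \mathrm{cofib}(I^\mathbb{L}_\mm \to Q)$ and the cohomology of $I^\mathbb{L}_\mm = Q\lot_A I_\mm \lot_A Q$ is not obviously bounded in that way \emph{a priori}. (Once the proposition is proved, $I^\mathbb{L}_\mm \simeq Q[2]$ forces this, but that's circular here.)

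In the second route, you want to deduce the bimodule quasi-isomorphism from a natural isomorphism of functors $\mm_{\mathbb L} \simeq [-2]$. Two issues: first, knowing that two functors agree on $S_A$ and descend to isomorphic functors on $\per(E)$ does not by itself produce a natural isomorphism on $D(Q)$; $\per(E)$ sits as a Verdier quotient of $\per(Q)$, and agreement ``on the subcategory and on the quotient'' does not imply agreement on the whole, absent a compatible natural transformation to compare them. Second, even given a natural isomorphism of the two endofunctors, upgrading this to a quasi-isomorphism of the representing bimodules requires a naturality argument that you flag but do not supply.

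The paper's actual proof avoids both difficulties by using an input you don't invoke: Theorem \ref{dictionary}(iii) applied twice gives $I^\mathbb{L}_\mm \simeq Q[2]$ as \emph{right} $Q$-modules. One then picks such a right-module quasi-iso $f: I \to Q[2]$, tensors with $E$, and truncates to degrees $\leq -2$; since the localisation map $Q \to \tau_{\leq 0}E$ is a bimodule quasi-iso (\cite[4.8.4, 5.2.4]{dqdefm}), the truncated localisation map $Q[2]\to \tau_{\leq -2}(E[2])$ is a bimodule quasi-iso. The commutative square involving $f$, $f'$, and the two (bimodule) localisation maps then forces the bimodule map $I \to \tau_{\leq -2}(I\lot_Q E)$ to be a quasi-iso, because it is sandwiched between right-module quasi-isos. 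Finally, \ref{immltriv} identifies $\tau_{\leq -2}(I\lot_Q E) \simeq \tau_{\leq -2}E \simeq \tau_{\leq -2}Q$, and \ref{etaex}(iv) gives $\tau_{\leq -2}Q \simeq Q[2]$ as bimodules. The essential trick, which your proposal misses, is to \emph{first} obtain a one-sided quasi-isomorphism from \ref{dictionary}, and then bootstrap it to a bimodule quasi-isomorphism via truncation and localisation rather than trying to build the bimodule map from scratch.
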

\begin{proof}For brevity, write $I\coloneqq I_\mm^\mathbb{L}$. By \ref{dictionary}, {iii)} applied twice, one gets a $Q$-module quasi-isomorphism $\mm(Q)\simeq Q[-2]$ and hence a $Q$-module quasi-isomorphism $\mm^{-1}(Q)\simeq Q[2]$. Now it follows that $I$ is quasi-isomorphic to $Q[2]$ as right $Q$-modules. Pick a right quasi-isomorphism $I\xrightarrow{f} Q[2]$ and tensor it with $E$ to get a commutative diagram
$$\begin{tikzcd}Q[2]\ar[r,"g'"] & Q[2]\lot_Q E \\ I\ar[u,"f"]\ar[r,"g"] & I\lot_Q E\ar[u,"f'"]
\end{tikzcd}$$where $f$ and $f'$ are right $Q$-module quasi-isomorphisms and $g$ and $g'$ are $Q$-bimodule maps. Truncate this diagram to degrees weakly below $-2$ to get a commutative diagram
$$\begin{tikzcd}Q[2]\ar[r,"v'"] & \tau_{\leq -2}\left(Q[2]\lot_Q E\right) \\ I\ar[u,"u"]\ar[r,"v"] & \tau_{\leq -2}\left(I\lot_Q E\right) \ar[u,"u'"]
\end{tikzcd}$$where, as before, $u$ and $u'$ are right $Q$-module quasi-isomorphisms and $v$ and $v'$ are $Q$-bimodule maps. After identifying $Q[2]\lot_Q E$ with $E[2]$, we may identify $g'$ with the shifted localisation map $Q[2]\to E[2]$. By \cite[4.8.4, 5.2.4]{dqdefm}, the localisation map $Q \to E$ induces a quasi-isomorphism $Q \to \tau_{\leq 0}E$. Hence $v'$ is a quasi-isomorphism. Now it follows that $v$ is a quasi-isomorphism too. So as a bimodule, $I$ is quasi-isomorphic to $\tau_{\leq -2}\left(I\lot_Q E\right)$ and it hence remains to show that $\tau_{\leq -2}\left(I\lot_Q E\right)$ is bimodule quasi-isomorphic to $Q[2]$. From \ref{immltriv}, one has a $Q$-$E$-bimodule quasi-isomorphism, and hence a $Q$-bimodule quasi-isomorphism, $I_\mm^\mathbb{L} \lot_{{Q}} E \to E$. So it remains to show that $\tau_{\leq -2}E \simeq Q[2]$ as $Q$-bimodules. But one has $\tau_{\leq -2}E \simeq \tau_{\leq -2}Q$, and furthermore by \ref{etaex}, {iv)}, there is a $Q$-bimodule quasi-isomorphism $ \tau_{\leq -2}Q \simeq Q[2]$.
	\end{proof}

\begin{cor}\label{mmshift}
The autoequivalence $\mm_{\mathbb{L}}:D(Q) \to D(Q)$ is isomorphic to the shift $[-2]$.
\end{cor}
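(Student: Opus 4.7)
The plan is to simply unwind the definition and apply the previous proposition. By definition, $\mm_{\mathbb{L}} = \R\hom_Q(I^\mathbb{L}_\mm, -)$ as an endofunctor of $D(Q)$. Proposition \ref{mmshiftrep} provides a quasi-isomorphism of $Q$-bimodules $I^\mathbb{L}_\mm \simeq Q[2]$. Substituting this quasi-isomorphism into the defining expression gives a natural quasi-isomorphism
$$\mm_{\mathbb{L}}(X) = \R\hom_Q(I^\mathbb{L}_\mm, X) \simeq \R\hom_Q(Q[2], X) \simeq X[-2]$$
for all $X \in D(Q)$, and this is all that needs to be shown.

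The only point to verify is that the induced isomorphism of functors is genuinely natural in the right variable, not merely a pointwise quasi-isomorphism of complexes of $k$-vector spaces. This is automatic from the fact that \ref{mmshiftrep} gives a bimodule quasi-isomorphism: the left $Q$-action on $I^\mathbb{L}_\mm$ is what gives $\R\hom_Q(I^\mathbb{L}_\mm, X)$ its right $Q$-module structure, and since the quasi-isomorphism respects this action, the resulting natural transformation lands in $D(Q)$ rather than merely in $D(k)$. The identification $\R\hom_Q(Q[2], -) \simeq [-2]$ is standard: $\R\hom_Q(Q, -)$ is the identity, and a shift in the first argument produces the opposite shift of the hom-complex. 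No further obstacle remains, since the genuine content of the corollary is entirely contained in \ref{mmshiftrep}.
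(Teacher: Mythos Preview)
Your proof is correct and is essentially the same as the paper's, which simply says ``Follows immediately by looking at representing objects.'' You have merely spelled out what that sentence means: use the bimodule quasi-isomorphism $I^\mathbb{L}_\mm \simeq Q[2]$ from \ref{mmshiftrep} to identify $\R\hom_Q(I^\mathbb{L}_\mm,-)$ with $[-2]$.
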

\begin{proof}
Follows immediately by looking at representing objects.
\end{proof}

\begin{thm}\label{mutncontrol}
Let $R$ be a complete local isolated hypersurface singularity of dimension at least 2, $M$ a MCM modifying $R$-module with no free summands, $A\coloneqq \enn_R(R\oplus M)$, and $e=\id_R\in A$. Then the dga $A_\mm\coloneqq \tau_{\geq -1}(\dq)$ controls the mutation-mutation autoequivalence $\mm: D(A)\to D(A)$, in the sense that $\mm$ is represented by the cocone of the natural map $A \to A_\mm$.
\end{thm}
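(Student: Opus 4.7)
The plan is to identify $I_\mm$ with $\mathrm{cocone}(A \to A_\mm)$ in $D(A^e)$, where $A \to A_\mm$ is the composition $A \to Q \to \tau_{\geq -1}(Q) = A_\mm$ (and $Q \coloneqq \dq$ is already in scope). Equivalently, I will show that the natural bimodule map $I_\mm \to A$ of \ref{dwbimodmap} fits into an exact triangle $I_\mm \to A \to A_\mm \to$ in $D(A^e)$.

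First I would observe that the cone $C \coloneqq \mathrm{cone}(I_\mm \to A)$ is, up to quasi-isomorphism, a $Q$-bimodule. We already noted that $I_\mm e \to Ae$ is a quasi-isomorphism, and by the symmetry of the construction of $I_\mm = {}_AT_B \lot_B {}_BT_A$ the analogous statement $eI_\mm \to eA$ also holds. Hence $C \lot_A Ae \simeq 0 \simeq eA \lot_A C$, which places $C$ in the essential image of the fully faithful functor $D(Q^e) \hookrightarrow D(A^e)$. Moreover, for any $Q$-bimodule $X$ regarded as an $A$-bimodule via $A \to Q$, the natural map $Q \lot_A X \lot_A Q \to X$ is a quasi-isomorphism; this is a formal consequence of the bimodule identity $Q \lot_A Q \simeq Q$, which itself follows from the relation $i^* i_* = \id_{D(Q)}$ in the recollement of \ref{recoll}.

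The main step is then to apply $Q \lot_A - \lot_A Q$ to the triangle $I_\mm \to A \to C \to$. The left term becomes $I_\mm^\mathbb{L}$, which Proposition \ref{mmshiftrep} identifies with $Q[2]$; the middle term becomes $Q \lot_A Q \simeq Q$; and by the preceding paragraph the right term is just $C$. We thus obtain a triangle $Q[2] \to Q \to C$ in $D(Q^e)$, and it remains to identify the first map. This map represents the restriction of the natural transformation $\id \to \mm$ to $D(Q)$, where by \ref{mmshift} the functor $\mm_\mathbb{L}$ is the shift $[-2]$. Unwinding the quasi-isomorphism $I_\mm^\mathbb{L} \simeq Q[2]$ through the chain $I_\mm^\mathbb{L} \simeq \tau_{\leq -2}(I_\mm^\mathbb{L} \lot_Q E) \simeq \tau_{\leq -2} E \simeq \tau_{\leq -2} Q \simeq Q[2]$ used in the proof of \ref{mmshiftrep} (together with \ref{immltriv}), one sees that our map $Q[2] \to Q$ is identified with the $\eta$-multiplication map of \ref{etaex}(iv). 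Invoking that triangle yields $C \simeq \tau_{>-2}(Q) = A_\mm$ in $D(Q^e)$, and hence in $D(A^e)$.

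To complete the argument, one must verify that the map $A \to C$ supplied by the triangle agrees under this identification with the canonical map $A \to Q \to A_\mm$; this follows by naturality of the chain of quasi-isomorphisms. The main obstacle is the identification of the $Q$-bimodule map $Q[2] \to Q$ with $\eta$-multiplication: the uniqueness clause of \ref{etaex} combined with the nontriviality of the natural transformation $\id \to \mm_\mathbb{L}$ forces it to be a unit multiple of $\eta$ as a right-module map, but promoting this to an equality in $D(Q^e)$ requires the careful tracking through the derived localisation $Q \to E$ outlined above.
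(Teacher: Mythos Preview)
Your proof is correct and runs dually to the paper's: the paper starts from the cocone $C$ of $A\to A_\mm$ and checks on the two pieces of the recollement that $C$ represents $\mm$ (using $eCe\simeq R$ on the $D(R)$ side and $Q\lot_A C\lot_A Q\simeq \tau_{\leq -2}Q\simeq Q[2]$, via \ref{etaex}(iv), on the $D(Q)$ side), whereas you start from the map $I_\mm\to A$ of \ref{dwbimodmap}, show its cone lies in $D(Q^e)$, and identify that cone with $A_\mm$ by unwinding \ref{mmshiftrep} to recognise the induced bimodule map $Q[2]\to Q$ as $\eta$-multiplication. Both arguments rest on the same two ingredients --- triviality of $\mm$ over $D(R)$ and the $\eta$-triangle --- but yours has the advantage of staying inside $D(A^e)$ throughout and producing the bimodule identification $I_\mm\simeq\mathrm{cocone}(A\to A_\mm)$ directly; the paper's phrase ``one only needs to check the statement on the pieces of the recollement individually'' tacitly presupposes a comparison map, which you have made explicit. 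One small point to firm up: the paper only records that $I_\mm e\to Ae$ is a quasi-isomorphism, not the left-hand analogue $eI_\mm\to eA$; the latter does follow, by the $A\leftrightarrow B$ version of the computation $e\,{}_BT_A\cong eA$ appearing in the proof that mutation respects the recollement, and a line saying so would complete your symmetry claim.
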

\begin{proof}
	This is fairly straightforward once one looks at the proof of \ref{mmshiftrep}. For brevity denote the cocone of $A \to A_\mm$  by $C$. First observe that one only needs to check the statement on the pieces of the recollement $D(\dq)\recol D(A) \recol D(R)$ individually. On $D(R)$, we have $\mm\cong\id$, and it's easy to see that $eCe\simeq R$. On $D(\dq)$, we have $\mm\cong [-2]$, so we need to check that $\dq \lot_A C \lot_A \dq \simeq \dq[2]$. But $\dq \lot_A C \lot_A \dq$ is precisely $\tau_{\leq -2}(\dq)$, which is (bimodule) quasi-isomorphic to $\dq[2]$ by \ref{etaex}, {iv)}.
\end{proof}

\begin{rmk}\label{twistrmk}
	We remark that $\mm$ can be interpreted as a sort of `noncommutative twist' around $A_\mm$. We follow the proofs in \cite[\S6.3]{DWncdf}; see also \cite{segaltwists} for background on twists. Let\linebreak $F:D(A_\mm) \to D(A)$ be restriction of scalars along $A \to A_\mm$. First observe that $F$ has right and left adjoints given by $R\coloneqq \R\hom_A(A_\mm,-)$ and $L\coloneqq -\lot_A A_\mm$ respectively. By the above, we have an exact triangle of $A$-bimodules $I_\mm \to A \to A_\mm \to$. Applying derived hom and tensor respectively gives exact triangles of endofunctors of $D(A)$ of the form \begin{align*}
	& FR\to \id \to \mm\to \\ & \mm^{-1} \to \id \to FL \to
	\end{align*}
	using that $\R\hom_A(A_\mm,-)\simeq FR$ and $-\lot_A A_\mm\simeq FL$.
\end{rmk}
\begin{rmk}
	By definition, we have a distinguished triangle of $Q$-bimodules $$Q[2]\to Q \to A_\mm\to.$$ Applying $\R\hom_Q(-,A_\mm)$ to this triangle gives us a distinguished triangle of $Q$-bimodules $$\R\enn_Q(A_\mm) \to A_\mm\to A_\mm[-2] \to $$and one can use the long exact sequence to show that, as a $Q$-bimodule, one has an isomorphism $$\ext_A^*(A_\mm, A_\mm)\cong \ext_Q^*(A_\mm, A_\mm)\cong H^*(A_\mm) \oplus H^*(A_\mm)[-3].$$In particular if $A_\mm$ has no cohomology in degree -1 then $A_\mm\simeq A_\con$, and one sees that \linebreak $\ext_A^*(A_\con, A_\con)\cong A_\con \oplus A_\con[-3]$.
\end{rmk}

\begin{rmk}
	Let $\Gamma=\Gamma^{-1}\to \Gamma ^0$ be a $[-1,0]$-truncated noncommutative Artinian dga. As in \ref{prorepremk}, the inclusion-truncation adjunction gives an isomorphism between $\hom(\dq,\Gamma)$ and $\hom(A_\mm,\Gamma)$, and we see that $A_\mm$ controls the $[-1,0]$-truncated derived noncommutative deformations of the simple module $S$.
\end{rmk}

\begin{prop}
	The complex $I_\mm$ is a module, which moreover fits into a short exact sequence of $A$-bimodules
	$$0\to H^{-1}(\dca) \to I_\mm \to AeA\to 0.$$
\end{prop}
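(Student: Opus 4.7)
The plan is to exploit the distinguished triangle of $A$-bimodules
$$I_\mm \to A \to A_\mm \to$$
which is established in Theorem \ref{mutncontrol} (and explicitly identified in Remark \ref{twistrmk} by viewing $I_\mm$ as the cocone of the natural bimodule map $A \to A_\mm$). Once this triangle is in hand, the entire statement follows from taking the long exact cohomology sequence.

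More precisely, I would first record the cohomology of $A_\mm = \tau_{\geq -1}(\dca)$, namely
$H^{-1}(A_\mm) \cong H^{-1}(\dca)$, $H^0(A_\mm) \cong A_\con$, and $H^i(A_\mm) = 0$ otherwise. Since $A$ is concentrated in degree $0$, the long exact sequence attached to $I_\mm \to A \to A_\mm \to$ reads
$$0 \to H^{-1}(I_\mm) \to 0 \to H^{-1}(\dca) \to H^0(I_\mm) \to A \to A_\con \to H^1(I_\mm) \to 0.$$
This immediately forces $H^{-1}(I_\mm) = 0$, so combined with the observation recalled in the excerpt that $I_\mm$ has cohomology only in degrees $0$ and $-1$, we conclude that $I_\mm$ is cohomologically concentrated in degree $0$ and is therefore (quasi-isomorphic to) a genuine $A$-bimodule. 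The remaining portion of the sequence yields
$$0 \to H^{-1}(\dca) \to I_\mm \to \ker(A \twoheadrightarrow A_\con) \to 0,$$
and $\ker(A \twoheadrightarrow A_\con) = AeA$ by the definition of the contraction algebra.

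The only real thing to check is that the map $I_\mm \to A$ appearing in the above triangle agrees with the map produced in Lemma \ref{dwbimodmap}, so that its image is the two-sided ideal $AeA$ rather than some other submodule. This is a compatibility between the natural transformation $\id \to \mm$ of endofunctors on $D(A)$ (which under the representability $\mm \simeq \R\hom_A(I_\mm,-)$ corresponds to the Donovan--Wemyss bimodule map $I_\mm \to A$) and the triangle $FR \to \id \to \mm$ of Remark \ref{twistrmk}; both arise from the unit of the adjunction $F \dashv R$ associated to restriction along $A \to A_\mm$, so they agree. I expect this compatibility to be the only delicate point; once it is noted, the rest is simply reading off the long exact sequence.
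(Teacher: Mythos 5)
Your proof is correct and follows the same route as the paper: take the distinguished triangle of $A$-bimodules $I_\mm \to A \to A_\mm \to$ supplied by Theorem \ref{mutncontrol}, and read off the long exact cohomology sequence, using that $A$ sits in degree $0$ and $A_\mm$ in degrees $0,-1$. The compatibility you flag as the delicate point is not actually needed here: exactness alone identifies $\im\bigl(H^0(I_\mm)\to A\bigr)$ with $\ker\bigl(A\to H^0(A_\mm)\bigr)$, and since the map $A\to A_\mm$ is by construction the composite of $A\to\dca$ with the truncation $\dca\to\tau_{\geq-1}\dca$, its effect on $H^0$ is the projection $A\twoheadrightarrow A_\con$, whose kernel is $AeA$ by definition — no comparison with the bimodule map of Lemma \ref{dwbimodmap} is required for this statement.
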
	
\begin{proof}
	By \ref{mutncontrol}, we have a distinguished triangle of $A$-bimodules $I_\mm \to A \to A_\mm \to$. Because $A$ has cohomology only in degree 0, and $A_\mm$ has cohomology only in degrees $0$ and $-1$, the long exact sequence in cohomology tells us that $I_\mm$ has cohomology only in degree zero. In this degree, the long exact sequence turns into an exact sequence $$0 \to H^{-1}(\dca) \to I_\mm \to A \to A_\con\to 0$$where the rightmost map is the standard projection. Replacing $A \to A_\con$ by its kernel $AeA$ gives the desired result.
\end{proof}
\begin{cor}
	The following are equivalent:
	\begin{itemize}
		\item The map $I_\mm \to AeA$ is an isomorphism.
		\item The map $A_\mm \to A_\con$ is a quasi-isomorphism.
		\item The $R$-module $M$ is rigid.
	\end{itemize}	
\end{cor}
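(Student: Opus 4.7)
The plan is to show that all three conditions are equivalent to the single statement $H^{-1}(\dca) = 0$, which is essentially immediate from the preceding results. Indeed, the short exact sequence $0 \to H^{-1}(\dca) \to I_\mm \to AeA \to 0$ from the preceding proposition shows directly that $I_\mm \to AeA$ is an isomorphism if and only if $H^{-1}(\dca) = 0$.

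Next, to handle the second condition, I would unpack the definitions: $A_\mm \coloneqq \tau_{\geq -1}(\dca)$ has cohomology concentrated in degrees $-1$ and $0$, namely $H^{-1}(A_\mm) \cong H^{-1}(\dca)$ and $H^0(A_\mm) \cong H^0(\dca) \cong A_\con$, while $A_\con$ is concentrated in degree $0$. The natural map $A_\mm \to A_\con$ (induced by further truncation to degree $0$) is therefore a quasi-isomorphism precisely when $H^{-1}(A_\mm) = 0$, i.e.\ when $H^{-1}(\dca) = 0$.

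For the third condition, I would invoke the cohomology computation \ref{dqcohom}, which gives $H^{-1}(\dca) \cong \ext^1_R(M,M)$. By definition, $M$ is rigid exactly when $\ext^1_R(M,M) \cong 0$, so rigidity of $M$ is equivalent to the vanishing of $H^{-1}(\dca)$. Stringing the three equivalences together through the pivot condition $H^{-1}(\dca) = 0$ completes the proof; there is no real obstacle here beyond keeping track of the identifications, since the hard work has already been done in the preceding proposition, in \ref{dqcohom}, and in \ref{mutncontrol}.
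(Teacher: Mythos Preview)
Your proof is correct and is exactly the argument the paper intends: the corollary is stated without proof precisely because each condition is visibly equivalent to $H^{-1}(\dca)=0$ via the preceding short exact sequence, the definition of $A_\mm=\tau_{\geq -1}(\dca)$, and the identification $H^{-1}(\dca)\cong\ext^1_R(M,M)$ from \ref{dqcohom}.
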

\begin{rmk}\label{mtnrmk}
	In particular, if $X\to \spec R$ is a minimal model of a three-dimensional terminal singularity, then the module $M$ defining the noncommutative model $A$ is rigid and we have \linebreak $I_\mm\cong AeA$, which provides a new proof of \cite[5.10]{DWncdf}. If $R$ is a surface, then $M$ is never rigid by AR duality, and in particular $A_\con$ never controls $\mm$ by noncommutative twists.
\end{rmk}

\appendix
\section{$A_\infty$-algebras}\label{ainfty}
We collect some material about $A_\infty$-algebras that will be useful for our computations. In this appendix we work over a field $k$; in all of our applications $k$ will be algebraically closed and characteristic zero but one does not need either of these hypotheses. We'll broadly follow the treatment of Keller in \cite{kellerainfty}.
\subsection{Basic definitions}
\begin{defn}
	An $A_\infty$-algebra over $k$ is a graded $k$-vector space $A$ together with, for each $n\geq 1$, a $k$-linear map $m_n: A^{\otimes n} \to A$ of degree $2-n$ satisfying for all $n$ the coherence equations (or the \textbf{Stasheff identities}) $$\mathrm{St}_n:\quad \sum{(-1)^{r+st}}m_{r+1+t}(1^{\otimes r} \otimes m_s \otimes 1^{\otimes t})=0$$ where $1$ indicates the identity map, the sum runs over decompositions $n=r+s+t$, and all tensor products are over $k$. We're following the sign conventions of \cite{getzlerjones}; note that other sign conventions exist in the literature (e.g. in \cite{lefevre}).
\end{defn}
\begin{rmk}The original motivation for the definition came from Stasheff's work on $A_\infty$-spaces in \cite{stasheff}. If $X$ is a pointed topological space and $\Omega X$ its loop space, then we have a `composition of loops' map $\Omega X \times \Omega X \to \Omega X$. It's not associative, but it is associative up to homotopy. Similarly, one can bracket the product of four loops $a.b.c.d$ in five different ways, and one obtains five homotopies fitting into the Mac Lane pentagon. These homotopies are further linked via higher homotopies; we get an infinite-dimensional polytope $K$ the \textbf{associahedron} with $(n-2)$-dimensional faces $K_n$ corresponding to the homotopies between compositions of $n$ loops. An $A_\infty$\textbf{-space} is a topological space $Y$ together with maps $f_n: K_n \to Y^n$ satisfying the appropriate coherence conditions. For example a loop space is an $A_\infty$-space. Then, if $Y$ is an $A_\infty$-space, then the singular chain complex of $Y$ is an $A_\infty$-algebra.
\end{rmk}
For readability, we'll often write $a_1 \cdot a_2$ to mean $a_1\otimes a_2$ (multiplication in the tensor algebra). Suppose that $A$ is an $A_\infty$-algebra. Then $\mathrm{St}_1$ simply reads as $m_1^2=0$; in other words $m_1$ is a differential on $A$. Hence we may define the cohomology $HA$. The next identity $\mathrm{St}_2$ tells us that $m_1m_2=m_2(m_1\cdot 1 - 1\cdot m_1)$; in other words $m_2$ is a derivation on $(A,m_1)$. The third identity $\mathrm{St}_3$ yields $$m_2(1\cdot m_2 - m_2 \cdot 1)=m_1m_3+m_3(\sum_{i+j=2}1^{\cdot i}\cdot m_1 \cdot 1^{\cdot j})$$The left hand side is the associator of $m_2$, and the right hand side is the boundary of the map $m_3$ in the complex $\hom(A^{\otimes 3}, A)$. Hence, $m_2$ is a homotopy associative `multiplication' on $A$. In particular, we obtain:
\begin{lem}
	Suppose that $A$ is an $A_\infty$-algebra with $m_3=0$. Then $(A,m_1,m_2)$ is a dga. Similarly, if $A$ is any $A_\infty$-algebra, then $(HA,[m_2])$ is a graded algebra. Conversely, if $(A,d,\mu)$ is a dga, then $(A,d,\mu,0,0,0,\cdots)$ is an $A_\infty$-algebra.
\end{lem}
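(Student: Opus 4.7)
The plan is to verify each of the three assertions by directly unpacking the Stasheff identities $\mathrm{St}_1$, $\mathrm{St}_2$, $\mathrm{St}_3$, plus an elementary combinatorial check for the higher identities. Essentially all of the work has already been done in the paragraph preceding the lemma; what remains is bookkeeping.

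First, for the claim that $(A,m_1,m_2)$ satisfies the dga axioms when $m_3=0$, I would read off the three required properties from the first three Stasheff identities: $m_1^2=0$ is precisely $\mathrm{St}_1$; the signed Leibniz rule for $m_1$ with respect to $m_2$ comes from $\mathrm{St}_2$ (once the Koszul sign rule is applied to the tensor-product differential); and associativity of $m_2$ is exactly what $\mathrm{St}_3$ reduces to after deleting the $m_3$-terms. For the second claim, I would argue that $m_2$ descends to a well-defined associative product on $HA$: descent is immediate from $\mathrm{St}_2$, which says $m_2$ is a chain map and hence sends cocycles to cocycles and coboundaries to coboundaries; associativity on $HA$ follows from $\mathrm{St}_3$ because the associator of $m_2$ is expressed there as $m_1 m_3 + m_3\circ(\text{terms involving }m_1)$, which is manifestly a coboundary in $\hom(A^{\otimes 3},A)$ and so vanishes after passage to cohomology.

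For the converse, I would set $m_1=d$, $m_2=\mu$, $m_n=0$ for $n\geq 3$, and check the Stasheff identities one by one. The identities $\mathrm{St}_1$, $\mathrm{St}_2$, $\mathrm{St}_3$ are now exactly the dga axioms (differential, Leibniz, associativity). For $n\geq 4$, every term of $\mathrm{St}_n$ is of the form $m_{r+1+t}(1^{\otimes r}\otimes m_s\otimes 1^{\otimes t})$ with $r+s+t=n$; for such a term to be nonzero we would need both $r+1+t\leq 2$ and $s\leq 2$, giving $n = r+s+t \leq 1+2 = 3$, contrary to $n\geq 4$. Hence every higher Stasheff identity holds trivially.

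There is no genuine obstacle in the proof; the lemma is purely a matter of definition chasing. The only places requiring mild care are keeping the Koszul signs straight when matching $\mathrm{St}_2$ to the usual signed Leibniz rule, and the above combinatorial observation that $\mathrm{St}_n$ for $n\geq 4$ is automatic once $m_k=0$ for $k\geq 3$.
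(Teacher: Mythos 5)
Your proof is correct and matches the approach the paper takes implicitly: the lemma appears immediately after the paragraph that unpacks $\mathrm{St}_1$, $\mathrm{St}_2$, $\mathrm{St}_3$, and the paper gives no separate proof, so reading the three dga axioms off the first three Stasheff identities (and noting $\mathrm{St}_n$ for $n\geq 4$ is vacuous when $m_k=0$ for $k\geq 3$, since any surviving term would force $n=r+s+t\leq 1+2=3$) is precisely the intended argument. Your descent argument for $[m_2]$ on $HA$ — chain-map property from $\mathrm{St}_2$, associator a coboundary by $\mathrm{St}_3$ — is likewise the standard and correct route.
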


Additional signs arise in the above formulas via the Koszul sign rule when one wants to put elements into them.  The following lemma is extremely useful:
\begin{lem}
	Fix positive integers $n=r+s+t$ and $n$ homogeneous elements $a_1,\ldots,a_n$ in $A$. Then $$(1^{\cdot r} \cdot m_s \cdot 1^{\cdot t})(a_1\cdots a_n) = (-1)^\epsilon a_1\cdots a_r\cdot m_s(a_{r+1}\cdots a_{r+s})\cdot a_{r+s+1}\cdots a_{n}$$where $\epsilon=s\sum_{j=1}^r {|a_j|}$. In particular, if $s$ is even then the na\"ive choice of sign is the correct one.
\end{lem}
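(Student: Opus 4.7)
The statement is really just an instance of the Koszul sign rule for evaluating tensor products of graded maps on tensor products of homogeneous elements, applied to a very particular arrangement of factors. My plan would be to make this explicit in two short steps.

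First I would record the general Koszul sign convention: if $f_1,\ldots,f_k$ are graded $k$-linear maps and $b_1,\ldots,b_k$ are homogeneous inputs of appropriate degree, then
\[
(f_1\otimes\cdots\otimes f_k)(b_1\otimes\cdots\otimes b_k)\;=\;(-1)^{\sum_{i<j}|f_j||b_i|}\,f_1(b_1)\otimes\cdots\otimes f_k(b_k).
\]
This is the unique sign convention compatible with graded commutativity of the tensor product of complexes, and it is the convention implicit in writing the composite $1^{\cdot r}\cdot m_s\cdot 1^{\cdot t}$ in the Stasheff identities.

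Now I specialise: take $k=r+1+t$, with $f_1=\cdots=f_r=\mathrm{id}$, $f_{r+1}=m_s$, and $f_{r+2}=\cdots=f_k=\mathrm{id}$; the corresponding inputs are $a_1,\ldots,a_r,\,a_{r+1}\otimes\cdots\otimes a_{r+s},\,a_{r+s+1},\ldots,a_n$. Since the identity maps have degree $0$, the only nonzero contributions to the exponent $\sum_{i<j}|f_j||b_i|$ come from the pairs $(i,j)$ with $j=r+1$ and $1\le i\le r$. The map $m_s$ has degree $2-s$, so the total sign picked up is
\[
(-1)^{(2-s)(|a_1|+\cdots+|a_r|)} \;=\; (-1)^{s\sum_{j=1}^r|a_j|},
\]
where in the second equality I used that $2-s\equiv s\pmod 2$. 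This is the asserted $\epsilon$.

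The ``in particular'' clause is then immediate: if $s$ is even, $\epsilon$ is even regardless of the degrees $|a_j|$, so the sign is trivial and one may simply insert the arguments directly into $m_s$. There is essentially no obstacle to overcome here; the only subtlety worth flagging in the proof is the replacement of $2-s$ by $s$ modulo $2$, which is what makes the formula depend only on the parity of $s$ rather than on $s$ itself.
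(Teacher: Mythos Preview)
Your argument is correct and is exactly the paper's approach, just spelled out in more detail: the paper's proof is the one-liner that the Koszul sign rule gives a power of $|m_s|\sum_{j=1}^r|a_j|$, which has the same parity as $\epsilon$. Your explicit statement of the general convention and the reduction $2-s\equiv s\pmod 2$ are the only ingredients, and you have them both.
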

\begin{proof}The Koszul sign rule gives a power of $|m_s|\sum_{j=1}^r {|a_j|}$, which has the same parity as $\epsilon$.
\end{proof}
An $A_\infty$-algebra $A$ is \textbf{strictly unital} if there exists an element $\epsilon\in A^0$ such that $m_1(\epsilon)=0$, $m_1(\epsilon,a)=m_2(a,\epsilon)=a$, and if $n>2$ then $m_n$ vanishes whenever one of its arguments is $\epsilon$.
\begin{defn}
	Let $A$ and $B$ be $A_\infty$-algebras. A \textbf{morphism} is a family of degree $1-n$ linear maps $f_n: A^{\otimes n} \to B$ satisfying the identities $$\sum_{n=r+s+t}(-1)^{r+st}f_{r+1+t}(1^{\otimes r} \otimes m_s \otimes 1^{\otimes t}) = \sum_{i_1+\ldots+ i_r=n}(-1)^{\sigma(i_1,\ldots,i_n)}m_r(f_{i_1}\otimes\cdots\otimes f_{i_r})$$where $\sigma(i_1,\ldots,i_n)$ is the sum $\sum_j (r-j)(i_j-1)$ (note that only terms with $r-j$ odd and $i_j$ even will contribute to the sign).
\end{defn}
In particular, $f_1$ is a chain map. A morphism $f$ is \textbf{strict} if it's a chain map; i.e. $f_n=0$ for $n>1$. A morphism $f$ is a \textbf{quasi-isomorphism} if $f_1$ is. One can compose morphisms by setting $(f\circ g)_n=\sum_{i_1+\ldots+ i_r=n}(-1)^{\sigma(i_1,\ldots,i_n)}f_r \circ(g_{i_1}\otimes\cdots\otimes g_{i_r})$.
\subsection{Coalgebras and homotopy theory}
We give an alternate quick definition of an $A_\infty$-algebra. We recall that a \textbf{dg-coalgebra} (\textbf{dgc} for short) is a comonoid in the category of dg vector spaces over $k$. More concretely, a dgc is a complex $(C,d)$ equipped with a comultiplication $\Delta:C \to C \otimes C$ and a counit $\epsilon:C \to k$, satisfying the appropriate coassociative and counital identities, and such that $d$ is a coderivation for $\Delta$.  A \textbf{coaugmentation} on a dgc is a section of $\epsilon$; if $C$ is coaugmented then $\bar C\coloneqq  \ker \epsilon$ is the \textbf{coaugmentation coideal}. It is a dgc under the reduced coproduct $\bar{\Delta}x = \Delta x -x\otimes 1 -1\otimes x$, and $C$ is isomorphic as a nonunital dgc to $\bar C \oplus k$. A coaugmented dgc $C$ is \textbf{conilpotent} if every $x \in \bar C$ is annihilated by some suitably high power of $\Delta$.
\begin{ex}
	If $V$ is a dg-vector space, then the tensor algebra ${T}^c(V)\coloneqq k\oplus V \oplus V^{\otimes 2} \oplus\cdots$ is a dg-coalgebra when equipped with the \textbf{deconcatenation coproduct} ${T}^c(V) \to {T}^c(V) \otimes {T}^c(V)$ which sends $v_1\cdots v_n$ to $\sum_i v_1\cdots v_i \otimes v_{i+1}\cdots v_n$. The differential is induced from the differential on $V^{\otimes n}$. It is easy to see that ${T}^c(V)$ is conilpotent, since $\bar{\Delta}^{n+1}(v_1\cdots v_n)=0$. The \textbf{reduced tensor coalgebra} is the nonunital dgc $\bar{T}^c(V)\coloneqq V \oplus V^{\otimes 2} \oplus\cdots$.
\end{ex}
In fact, ${T}^c$ is the cofree conilpotent coalgebra functor: if $C$ is conilpotent then $C \to{T}^c(V)$ is determined completely by the composition $l:C \to {T}^c(V) \to V$. In particular, any morphism $f:\bar{T}^c(W) \to \bar{T}^c(V)$ is determined completely by its \textbf{Taylor coefficients} $f_n: W^{\otimes n} \to V$.
\begin{lem}
	Let $f,g$ be composable coalgebra maps between three reduced tensor coalgebras. Then the Taylor coefficients of the composition $f\circ g$ are given by $$(g\circ f)_n = \sum_{i_1+\ldots+ i_r=n}g_r(f_{i_1}\otimes\cdots\otimes f_{i_r})$$
\end{lem}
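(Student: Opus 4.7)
The plan is to exploit the universal property of the cofree conilpotent coalgebra functor $\bar T^c$, which says that a coalgebra map out of $\bar T^c(W)$ is determined by its projection to the generators, namely by the family of Taylor coefficients $f_n = \pi_V \circ f|_{W^{\otimes n}}$, where $\pi_V:\bar T^c(V)\to V$ is the canonical projection onto degree one. Given this, the strategy is: first obtain a closed-form expression for $f$ as a map into $\bar T^c(V)$ (not just its Taylor coefficients), then substitute into the composition and project.

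First I would compute, for a general coalgebra map $h:\bar T^c(U)\to \bar T^c(V)$ and a pure tensor $u_1\otimes\cdots\otimes u_n$, the component of $h(u_1\otimes\cdots\otimes u_n)$ lying in $V^{\otimes r}$. Since $h$ is a coalgebra map it commutes with the iterated reduced coproduct, so this component equals $(\pi_V^{\otimes r})\circ\bar\Delta^{r-1}\circ h = (\pi_V^{\otimes r})\circ h^{\otimes r}\circ\bar\Delta^{r-1}$ applied to $u_1\otimes\cdots\otimes u_n$. The iterated reduced coproduct $\bar\Delta^{r-1}$ on $\bar T^c(U)$ is deconcatenation, and sends $u_1\otimes\cdots\otimes u_n$ to the sum over ordered decompositions $n=i_1+\cdots+i_r$ (with each $i_j\geq 1$) of the tensors $(u_1\otimes\cdots\otimes u_{i_1})\otimes\cdots\otimes(u_{n-i_r+1}\otimes\cdots\otimes u_n)$. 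Combined with the definition of Taylor coefficients, this yields the explicit formula
\[
h(u_1\otimes\cdots\otimes u_n)=\sum_{r\geq 1}\;\sum_{i_1+\cdots+i_r=n}\; h_{i_1}(u_1\otimes\cdots\otimes u_{i_1})\otimes\cdots\otimes h_{i_r}(u_{n-i_r+1}\otimes\cdots\otimes u_n)
\]
as an element of $\bigoplus_r V^{\otimes r}=\bar T^c(V)$.

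Now to extract the Taylor coefficients of the composition, I would apply the above formula to $f:\bar T^c(W)\to\bar T^c(V)$, then post-compose with $g:\bar T^c(V)\to\bar T^c(U)$, and finally project with $\pi_U$. The projection picks out only the summands in which $g$ acts on an $r$-fold tensor via $g_r$: concretely, $\pi_U\circ g$ restricted to $V^{\otimes r}\subseteq\bar T^c(V)$ is exactly $g_r$. Substituting and collecting terms gives the claimed identity for $(g\circ f)_n$.

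The only mild subtlety, rather than a genuine obstacle, is bookkeeping the fact that we are working with the \emph{reduced} tensor coalgebra, so each $i_j$ must be at least $1$ and no "empty" pieces appear in the iterated deconcatenation; this is what makes the sum finite for each $n$ and ensures convergence is never an issue. Signs do not enter because the structure maps here are just deconcatenation, and the maps $f_i$, $g_r$ are not being commuted past each other; any signs would only appear once one begins to evaluate on homogeneous elements via the Koszul rule, which is outside the scope of this purely structural statement.
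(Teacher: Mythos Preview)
The paper does not actually supply a proof of this lemma; it is stated as a standard fact and immediately followed by the remark ``Note the similarity with composition of $A_\infty$-algebra maps.'' Your argument is the standard one and is correct: you use that the $V^{\otimes r}$-component of any element of $\bar T^c(V)$ is extracted by $(\pi_V^{\otimes r})\circ\bar\Delta^{r-1}$, combine this with the coalgebra-map identity $\bar\Delta^{r-1}\circ h = h^{\otimes r}\circ\bar\Delta^{r-1}$ to obtain the explicit formula for $h$ in terms of its Taylor coefficients, and then project the composition. Your remark about signs is also accurate: none appear at this level, and the signs $(-1)^{\sigma}$ in the paper's formula for composition of $A_\infty$-morphisms arise only after passing through the suspension $\bar A[1]$.
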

Note the similarity with composition of $A_\infty$-algebra maps.
\begin{defn}Let $C$ be a dg-coalgebra. A \textbf{coderivation of degree $p$ on $C$} is a linear degree $p$ endomorphism $\delta$ of $C$ satisfying $(\delta\otimes 1 + 1 \otimes \delta)\circ \Delta = \Delta \circ \delta$.
\end{defn} 
The graded space $\mathrm{Coder}(C)$ of all coderivations of $C$ is not closed under composition, but is closed under the commutator bracket. Say that $\delta \in \mathrm{Coder}^1(C)$ is a \textbf{differential} if $\delta^2=0$; in this case $\mathrm{ad}(\delta)$ is a differential on $\mathrm{Coder}(C)$, making $\mathrm{Coder}(C)$ into a dgla. In the special case that $C=\bar{T}^c(V)$, a coderivation is determined by its Taylor coefficients. Coderivations compose similarly to coalgebra morphisms:
\begin{lem}
	Let $\delta, \delta'$ be coderivations on $\bar{T}^c(V)$. Then the Taylor coefficients of the composition $\delta\circ \delta'$ are given by $$(\delta\circ \delta')_n = \sum_{r+s+t=n}\delta_{r+1+t}(1^{\otimes r} \otimes \delta'_s \otimes 1^{\otimes t})$$
\end{lem}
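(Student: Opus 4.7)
The plan is to exploit the universal property of the cofree conilpotent coalgebra: any coderivation on $\bar{T}^c(V)$ is uniquely determined by its composition with the canonical projection $\pi_V : \bar{T}^c(V) \to V$, namely by its Taylor coefficients. So to prove the identity it suffices to show that both sides agree after projecting to $V$ and restricting to $V^{\otimes n}$, which is essentially just the definition of the left-hand side.

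First I would record the explicit action of a coderivation on a tensor word. Given a coderivation $\delta$ with Taylor coefficients $\delta_s$, the coderivation condition $\bar\Delta \delta = (\delta \otimes 1 + 1 \otimes \delta)\bar\Delta$ with respect to the deconcatenation coproduct forces
\[
\delta(v_1 \otimes \cdots \otimes v_n) = \sum_{r+s+t=n}\bigl(1^{\otimes r} \otimes \delta_s \otimes 1^{\otimes t}\bigr)(v_1 \otimes \cdots \otimes v_n),
\]
where the $(r,s,t)$-summand lands in the component $V^{\otimes(r+1+t)}$ of $\bar{T}^c(V)$, and Koszul signs are absorbed into the evaluation of $1^{\otimes r} \otimes \delta_s \otimes 1^{\otimes t}$ on homogeneous tensors. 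This formula is standard and can be checked by induction on $n$ starting from the cogenerating property of $\pi_V$; I would simply quote it.

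Now I would compute the Taylor coefficient $(\delta \circ \delta')_n = \pi_V \circ \delta \circ \delta'|_{V^{\otimes n}}$. Applying the above formula to $\delta'$ first decomposes $\delta'(v_1 \otimes \cdots \otimes v_n)$ as a sum indexed by $(r,s,t)$ with $r+s+t=n$, whose $(r,s,t)$-summand lies in $V^{\otimes m}$ for $m = r+1+t$. Applying $\delta$ followed by $\pi_V$ to an element of $V^{\otimes m}$ simply selects the Taylor coefficient $\delta_m$, because every other summand in the formula for $\delta$ sits in $V^{\otimes m'}$ with $m' > 1$ and is killed by $\pi_V$. Substituting $m = r+1+t$ yields exactly
\[
(\delta \circ \delta')_n = \sum_{r+s+t=n} \delta_{r+1+t} \circ \bigl(1^{\otimes r} \otimes \delta'_s \otimes 1^{\otimes t}\bigr),
\]
as claimed.

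There is no real obstacle here: the content is entirely combinatorial bookkeeping, and the only step worth care is the preliminary lemma describing how a coderivation acts on $V^{\otimes n}$. Given that formula, the proof is essentially a one-line calculation once one observes that $\pi_V$ annihilates all summands landing in $V^{\otimes m}$ for $m \geq 2$.
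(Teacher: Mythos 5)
The paper states this lemma without proof, so there is nothing in the paper to compare against directly. Your argument is correct and is the standard one: record the explicit formula for how a coderivation acts on $V^{\otimes n}$, observe that $\pi_V\circ\delta$ restricted to $V^{\otimes m}$ picks out exactly $\delta_m$ (all other summands in the formula for $\delta$ land in a tensor power of length $\geq 2$), and then substitute $m=r+1+t$ after applying $\delta'$. That is a complete proof once you grant the preliminary formula for the action of a coderivation, which you correctly flag as a standard consequence of the cogenerating property of $\pi_V$.

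One small expository remark: the opening paragraph appeals to the universal property to justify that it suffices to compute Taylor coefficients, but this reasoning only determines a map when that map is itself a coderivation (or a coalgebra morphism), and $\delta\circ\delta'$ generally is not one. This does not matter here because, as you immediately note, the left-hand side $(\delta\circ\delta')_n$ is \emph{defined} to be $\pi_V\circ(\delta\circ\delta')|_{V^{\otimes n}}$, so the identity is proved by directly computing that projection — no appeal to uniqueness is needed. So the setup paragraph is slightly misleading but the argument that follows is self-contained and correct.
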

\begin{thm}
	An $A_\infty$-algebra structure on a graded vector space $A$ is the same thing as a differential $\delta$ on $\bar{T}^c(A[1])$.
\end{thm}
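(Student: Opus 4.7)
The plan is to exploit the cofreeness of $\bar T^c$ (already implicit in the excerpt's remark that a coderivation on a reduced tensor coalgebra is determined by its Taylor coefficients) to set up a bijection between coderivations and sequences of graded maps, and then match the $\delta^2 = 0$ condition against the Stasheff identities term by term. Concretely, I would first observe that specifying a degree $1$ coderivation $\delta$ on $\bar T^c(A[1])$ is the same as specifying, for each $n \geq 1$, a linear map $\delta_n : (A[1])^{\otimes n} \to A[1]$ of degree $1$. Using the suspension isomorphism $s : A \to A[1]$ (which has degree $-1$), the data $\delta_n$ is equivalent, via $m_n = s^{-1} \circ \delta_n \circ s^{\otimes n}$, to a linear map $m_n : A^{\otimes n} \to A$ of degree $1 - n - (-n) = \ldots$ — more carefully, the degrees add as $-1 + 1 + n = n$ shifted correctly to give degree $2 - n$, which is exactly the degree required for an $A_\infty$ multiplication.

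Next I would use the composition formula for coderivations recalled just before the statement,
\[
(\delta \circ \delta)_n \;=\; \sum_{r+s+t=n} \delta_{r+1+t}\bigl(1^{\otimes r} \otimes \delta_s \otimes 1^{\otimes t}\bigr),
\]
so that $\delta^2 = 0$ is equivalent to the vanishing of each of these sums. Translating each $\delta_j$ back to $m_j$ via the suspension, the left-hand side becomes a signed sum of the compositions $m_{r+1+t}(1^{\otimes r} \otimes m_s \otimes 1^{\otimes t})$ appearing in $\mathrm{St}_n$. This is the step I expect to be the main obstacle: the signs arising from commuting the suspensions $s$ past one another and past the $m_j$'s (via the Koszul rule) must be shown to match the prescribed $(-1)^{r + st}$ in the Stasheff identities. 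The standard bookkeeping gives a sign of $(-1)^{r + st}$ (after using $|\delta_s| = 1$ and tracking how each $s^{-1}$ passes over $r$ instances of $s$ on its left), so that $\mathrm{St}_n$ and $(\delta^2)_n = 0$ coincide.

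To package this into a clean bijection, I would finally check the two directions explicitly: given $(m_n)$ satisfying the Stasheff identities, define $\delta_n \coloneqq s \circ m_n \circ (s^{-1})^{\otimes n}$, extend uniquely by cofreeness to a coderivation $\delta$ on $\bar T^c(A[1])$, and verify by the sign calculation above that $\delta^2 = 0$. Conversely, given a square-zero coderivation $\delta$, read off its Taylor coefficients $\delta_n$, desuspend them to obtain $m_n = s^{-1} \circ \delta_n \circ s^{\otimes n}$, and observe that $(\delta^2)_n = 0$ for each $n$ is exactly $\mathrm{St}_n$ for $(m_n)$. Since the two constructions are inverse to each other at the level of the underlying linear data, this gives the claimed bijective correspondence.
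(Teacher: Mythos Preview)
Your proposal is correct and follows essentially the same approach as the paper, which only provides a brief sketch: identify the Taylor coefficients $\delta_n$ of a degree $1$ coderivation with maps $m_n$ of degree $2-n$ via the suspension, and observe that the composition formula for coderivations turns $\delta^2=0$ into the Stasheff identities, with the signs $(-1)^{r+st}$ arising from commuting suspensions. Your write-up is in fact more detailed than the paper's sketch, which simply asserts that ``the Stasheff identities are equivalent to $\delta$ being a differential'' and that ``the sign changes occur \ldots\ because of the need to move elements past the formal suspension symbol $[1]$.''
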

\begin{proof}We provide a sketch. Given a coderivation $\delta$ we obtain Taylor coefficients $\delta_n:A[1]^{\otimes n} \to A$ of degree 1; in other words, these are maps $m_n: A^{\otimes n} \to A$ of degree $2-n$. The Stasheff identities are equivalent to $\delta$ being a differential. The sign changes occur in the Stasheff identities because of the need to move elements past the formal suspension symbol $[1]$.
\end{proof}
The following proposition can be checked in a similar manner:
\begin{prop}
	Let $A,A'$ be two $A_\infty$-algebras with associated differentials $\delta, \delta'$. Then an $A_\infty$-morphism $f:A \to A'$ is the same thing as a coalgebra morphism $\bar{T}^c(A[1]) \to \bar{T}^c(A'[1])$ commuting with the coderivations.
\end{prop}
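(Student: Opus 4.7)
The plan is to mirror the proof sketch given for the previous theorem, exploiting the cofree property of the reduced tensor coalgebra. First I would note that, since $\bar{T}^c(A'[1])$ is the cofree conilpotent (nonunital) coalgebra on $A'[1]$, any coalgebra morphism $F:\bar{T}^c(A[1]) \to \bar{T}^c(A'[1])$ is determined uniquely by its composition with the projection onto $A'[1]$, i.e.\ by its Taylor coefficients $F_n: A[1]^{\otimes n} \to A'[1]$. Interpreting these coefficients as maps of the underlying graded vector spaces, each $F_n$ corresponds to a map $f_n:A^{\otimes n} \to A'$ of degree $1-n$, exactly matching the data of an $A_\infty$-morphism.

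The content of the proof is then the identification of the condition $F\circ\delta = \delta'\circ F$ with the $A_\infty$-morphism equations. I would compute the Taylor coefficients of both sides using the two composition formulas recalled earlier in the paper: the one for composing a coalgebra morphism with a coderivation gives
$$(F\circ \delta)_n = \sum_{r+s+t=n}F_{r+1+t}(1^{\otimes r}\otimes \delta_s\otimes 1^{\otimes t}),$$
while composing a coderivation with a coalgebra morphism gives
$$(\delta'\circ F)_n = \sum_{i_1+\cdots+i_r=n}\delta'_r(F_{i_1}\otimes\cdots\otimes F_{i_r}).$$
Equating these and unshifting via the formal suspension $[1]$ will produce the defining $A_\infty$-morphism identities. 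Conversely, given a family of $f_n$ satisfying these identities, one defines $F$ by declaring its Taylor coefficients to be the $F_n$ corresponding to the $f_n$, and the cofreeness ensures this extends uniquely to a coalgebra morphism.

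The hard part, as in the previous theorem, will be verifying that the signs match exactly, since the sign $(-1)^{\sigma(i_1,\ldots,i_r)}$ appearing in the definition of an $A_\infty$-morphism and the sign $(-1)^{r+st}$ appearing in the Stasheff-type equation for morphisms both arise from applying the Koszul sign rule when commuting coderivations and morphisms past the suspension isomorphism $A \cong A[1][-1]$. I would track these signs exactly as in the proof of the previous theorem: each time an internal map of odd degree (either a Taylor coefficient of $F$ or a Taylor coefficient of $\delta$) is commuted past a suspended element, a Koszul sign appears, and the cumulative effect is precisely the stated sign. No new ideas beyond the suspension bookkeeping are needed, so once this translation is carried out the equivalence is immediate, and composition of $A_\infty$-morphisms is visibly compatible with composition of the corresponding coalgebra maps by the very formulas used in the comparison.
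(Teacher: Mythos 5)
Your proposal takes the same route the paper indicates (the paper gives no explicit proof, saying only that the statement ``can be checked in a similar manner'' to the preceding theorem): use cofreeness of $\bar{T}^c(A'[1])$ to identify a coalgebra map with its Taylor coefficients, compute the Taylor coefficients of $F\circ\delta$ and $\delta'\circ F$, observe that equality of these sums recovers the $A_\infty$-morphism equations, and attribute the signs to suspension bookkeeping. One small point worth making explicit: $F\circ\delta$ and $\delta'\circ F$ are not themselves coalgebra maps or coderivations, so the fact that their equality can be tested on Taylor coefficients needs a remark that each is a coderivation along $F$, and such maps into a cofree coalgebra are determined by their projection to the cogenerators --- but the paper glosses over this as well, so it is not a gap relative to the intended level of detail.
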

\begin{defn}
	Let $A,A'$ be $A_\infty$-algebras and $f,g$ a pair of maps $A \to A'$. Let $F,G$ be the associated maps $\bar{T}^c(A[1]) \to \bar{T}^c(A'[1])$. Say that $f$ and $g$ are \textbf{homotopic} if there's a map $H: \bar{T}^c(A[1]) \to \bar{T}^c(A'[1])$ of degree $-1$ with $\Delta H = F \otimes H + H\otimes G$ and $F-G=\partial H$, where $\partial$ is the differential in the $\hom$-complex.
\end{defn}
One can unwind this definition into a set of identities on the Taylor coefficients of $H$; this is done in \cite[1.2]{lefevre}. Say that $A,A'$ are \textbf{homotopy equivalent} if there are maps $f:A \to A'$ and $f':A' \to A$ satisfying $f'f\simeq \id_{A}$ and $ff'\simeq \id_{A'}$.
\begin{thm}[\cite{proute}]
	Homotopy equivalence is an equivalence relation on the category $\cat{Alg}_\infty$ of $A_\infty$-algebras. Moreover, two $A_\infty$-algebras are homotopy equivalent if and only if they're quasi-isomorphic.
\end{thm}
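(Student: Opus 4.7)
The plan is to tackle the theorem in two separable pieces: first, that homotopy equivalence is an equivalence relation on the class of $A_\infty$-algebras, and second, that two $A_\infty$-algebras are homotopy equivalent if and only if they are connected by a quasi-isomorphism. The easy implication (homotopy equivalence $\Rightarrow$ quasi-isomorphism) falls out of the definitions: reading off the degree-one Taylor component of $F - G = \partial H$ gives $f_1 - g_1 = m_1 H_1 + H_1 m_1$, so homotopic morphisms induce the same map on cohomology, and the composition relations $f'f \simeq \id_A$, $ff' \simeq \id_{A'}$ immediately force the induced map $H^*(f_1)$ to be invertible.

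For the equivalence relation claim, reflexivity is automatic (the identity morphism is its own homotopy inverse) and symmetry follows by swapping the roles of $f$ and its inverse. Transitivity, on the other hand, requires two non-trivial ingredients: (a) homotopy of $A_\infty$-morphisms is itself an equivalence relation on morphisms, and (b) homotopy is preserved under pre- and post-composition. The natural way to secure both is via a cylinder construction in the category of conilpotent dg coalgebras. One introduces an interval coalgebra $I$ with two coaugmentations $k \rightrightarrows I$ and a retraction $I \to k$, so that homotopies $H : F \simeq G$ correspond to coalgebra maps $\bar{T}^c(A[1]) \otimes I \to \bar{T}^c(A'[1])$ extending $F \sqcup G$ on the boundary. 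Symmetry on morphisms then follows by swapping the two boundary inclusions on $I$, transitivity by means of a fold-like map $I \cup_k I \to I$, and the whiskering property by the naturality of the construction in both source and target coalgebras. Alternatively, one can write down explicit formulae for the composite homotopy in terms of the constituent Taylor coefficients, as carried out by Prout\'e.

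The hard direction (quasi-isomorphism $\Rightarrow$ homotopy equivalence) is the main obstacle, and the plan is to reduce to the minimal case via Kadeishvili's theorem. Pick minimal models $M_A \xrightarrow{\simeq} A$ and $M_{A'} \xrightarrow{\simeq} A'$; composing the given quasi-isomorphism $A \xrightarrow{\simeq} A'$ with these (and inverting the target model quasi-isomorphism up to homotopy, inductively) yields an $A_\infty$-morphism $\tilde f : M_A \to M_{A'}$ whose linear part is a quasi-isomorphism of complexes with zero differential, hence an isomorphism of graded vector spaces. One now constructs a strict $A_\infty$-inverse $\tilde g : M_{A'} \to M_A$ inductively on Taylor coefficients: set $\tilde g_1 \coloneqq \tilde f_1^{-1}$, and for each $n \geq 2$ solve the equation $(\tilde f \circ \tilde g)_n = 0$ for the unknown $\tilde g_n$, noting that the leading term is $\tilde f_1 \circ \tilde g_n$ and the remaining terms involve only $\tilde g_i$ for $i < n$; the Stasheff identities, together with the vanishing of $m_1$ in the minimal setting, guarantee that the resulting $\tilde g$ is itself an $A_\infty$-morphism and a two-sided homotopy inverse to $\tilde f$. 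Transporting back along the minimal-model quasi-isomorphisms produces a homotopy inverse to the original $f$. The main technical obstacle is the careful book-keeping of signs in the Stasheff identities and the verification that the obstructions to extending $\tilde g$ and its accompanying homotopies inductively do vanish, which in the minimal setting ultimately reduces to the triviality of the differential on the relevant obstruction classes.
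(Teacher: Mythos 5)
The paper does not prove this result; it is quoted from Prout\'e \cite{proute} (see also Lef\`evre-Hasegawa \cite{lefevre}), so there is no in-text argument to compare against. Your sketch follows the standard outline from those sources: the forward direction by reading off linear Taylor coefficients, transitivity via a cylinder/interval coalgebra (or Prout\'e's explicit formulae), and the hard direction by reduction to minimal models via Kadeishvili's theorem.

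A few points in your hard direction need tightening, though none is fatal. The central lemma should be stated more sharply: an $A_\infty$-morphism between \emph{minimal} $A_\infty$-algebras whose linear part is a graded isomorphism is a genuine \emph{isomorphism} in $\cat{Alg}_\infty$, not merely a homotopy equivalence. Your inductive solution of $(\tilde f\circ\tilde g)_n=0$ does produce this inverse, but the verification that $\tilde g$ commutes with the coderivations is not an obstruction computation with the Stasheff identities as you suggest; once $\tilde g$ is the inverse of the coalgebra map $\tilde f$, the relation $\tilde g\delta'=\tilde g\delta'\tilde f\tilde g=\tilde g\tilde f\delta\tilde g=\delta\tilde g$ is immediate. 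Calling $\tilde g$ a ``strict'' inverse is also a misnomer, since in the $A_\infty$ literature ``strict morphism'' means one with vanishing higher Taylor coefficients, which $\tilde g$ typically does not have. Finally, the phrase ``inverting the target model quasi-isomorphism up to homotopy, inductively'' risks circularity, since that inversion is an instance of the very theorem being proved; the clean fix is to invoke the full homotopy transfer theorem, which gives not just $i':M_{A'}\to A'$ but also a quasi-isomorphism $p:A'\to M_{A'}$ together with $p\circ i'=\id$ and an explicit homotopy $i'\circ p\simeq\id$, so one can simply compose $p\circ f\circ i:M_A\to M_{A'}$. With these adjustments your outline is sound.
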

The category $\cat{dga}$ of differential graded algebras sits inside the category $\cat{Alg}_\infty$. It's not a full subcategory: there may be more $A_\infty$-algebra maps than dga maps between two dgas. However, two dgas are dga quasi-isomorphic if and only if they're $A_\infty$-quasi-isomorphic: this is shown in, for example, \cite[1.3.1.3]{lefevre}. Abstractly, this follows from the existence of model structures on both $\cat{dga}$ and $\cat{cndgc}$, the category of conilpotent dg-coalgebras, for which the bar and cobar constructions are Quillen equivalences (cf. \ref{ainfkd} for the bar and cobar constructions).

\p Including $\cat{dga}\into\cat{Alg}_\infty$ does not create more quasi-isomorphism classes. Indeed, every $A_\infty$-algebra is quasi-isomorphic to a dga: one can take the adjunction quasi-isomorphism $A \to \Omega B_\infty A$ induced by the bar and cobar constructions. However, we do get new descriptions of quasi-isomorphism class representatives. One nice such representative is the \textbf{minimal model} of an $A_\infty$-algebra.
\subsection{Minimal models}\label{minmods}
An $A_\infty$-algebra is \textbf{minimal} if $m_1=0$. Every $A_\infty$-algebra admits a minimal model. More precisely:
\begin{thm}[Kadeishvili \cite{kadeishvili}]\label{kadeish}
	Let $(A,m_1,m_2,\ldots)$ be an $A_\infty$-algebra, and let $HA$ be its cohomology ring. Then there exists the structure of an $A_\infty$-algebra ${\mathscr{H}\kern -1pt A}= (HA,0,[m_2],p_3,p_4,\ldots)$ on $HA$, unique up to $A_\infty$-isomorphism, and an $A_\infty$-algebra morphism ${\mathscr{H}\kern -1pt A} \to A$ lifting the identity of $HA$.
\end{thm}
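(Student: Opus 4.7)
The plan is to use the explicit construction due to Merkulov, which simultaneously produces the higher multiplications on $HA$ and an $A_\infty$-quasi-isomorphism down to $A$. First I would fix a homotopy retract at the level of chain complexes: choose a $k$-linear section $\iota\colon HA\to Z(A)\subseteq A$ of the projection onto cohomology, write $\pi\colon A\to HA$ for the corresponding retraction (extended by zero off $Z(A)$ plus a chosen complement to coboundaries), and pick a homotopy $h\colon A\to A$ of degree $-1$ satisfying
\[
\id_A - \iota\pi \;=\; m_1 h + h m_1, \qquad h\iota=0,\quad \pi h=0,\quad h^2=0.
\]
Such a tuple exists over a field by elementary linear algebra.

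Next I would define, by simultaneous recursion on $n\geq 2$, maps $\lambda_n\colon (HA)^{\otimes n}\to A$ of degree $2-n$ and $p_n\colon (HA)^{\otimes n}\to HA$ by the Merkulov formulas
\[
\lambda_n \;\coloneqq\; \sum_{\substack{s+t=n\\ s,t\geq 1}} (-1)^{s+1}\, m_2\bigl((h\lambda_s)\otimes(h\lambda_t)\bigr), \qquad p_n \;\coloneqq\; \pi\circ\lambda_n,
\]
with the convention $h\lambda_1\coloneqq -\iota$ and $\lambda_2\coloneqq m_2\circ(\iota\otimes\iota)$. Setting $f_1\coloneqq \iota$ and $f_n\coloneqq h\lambda_n$ for $n\geq 2$ gives the candidate $A_\infty$-morphism $f\colon (HA,0,p_2,p_3,\dots)\to A$.

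The heart of the proof is to verify two families of identities: the Stasheff identities $\mathrm{St}_n$ for $(HA,0,p_\bullet)$ and the morphism identities for $f$. Both proceed by induction on $n$, expanding the defining formula for $\lambda_n$, using the Stasheff identities of $A$ to rewrite the nested $m_2$-bracketings, and applying the homotopy identity $\id_A - \iota\pi = m_1 h + h m_1$ to turn ``bare'' factors of $\id$ into $\iota\pi$ plus boundary terms. The terms involving $\iota\pi$ assemble into the sought identities; the boundary terms feed back into the inductive step. Because $\pi\circ m_1 = 0$ and $m_1\circ\iota=0$, the $m_1$-contributions collapse when one finally applies $\pi$, which is why the output is minimal.

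The main obstacle is the sign and combinatorics bookkeeping: one must carefully track signs produced by the Koszul rule against the signs built into the Stasheff identities and the definition of $\lambda_n$. This is tedious but mechanical; the cleanest way I would present it is to repackage everything in terms of the bar coalgebra $\bar{T}^c(A[1])$, where the claim becomes that the recursion defines a coalgebra morphism $F\colon \bar{T}^c(HA[1])\to \bar{T}^c(A[1])$ commuting with the bar differentials, and that the pullback differential on $\bar{T}^c(HA[1])$ is a square-zero coderivation. Finally, uniqueness up to $A_\infty$-isomorphism is a standard obstruction-theoretic argument: given any two minimal $A_\infty$-structures on $HA$ together with quasi-isomorphisms to $A$, one constructs a strict-on-$H^0$ $A_\infty$-morphism between them by induction, with the obstructions at each stage living in cochain groups that vanish because the targets have trivial differential.
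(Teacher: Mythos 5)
Your proposal is correct and follows Merkulov's constructive method, which is precisely the approach the paper itself sketches in the remark immediately following the statement (the paper attributes the theorem to Kadeishvili and does not reprove it, but it endorses Merkulov's recursion as the cleanest constructive argument, and your $\lambda_n$-recursion, choice of section/homotopy, and definitions $p_n=\pi\circ\lambda_n$, $f_1=\iota$, $f_n=h\lambda_n$ match the paper's remark verbatim). One small point worth being aware of: the recursion
\[
\lambda_n \;=\; \sum_{\substack{s+t=n\\ s,t\geq 1}} (-1)^{s+1}\, m_2\bigl((h\lambda_s)\otimes(h\lambda_t)\bigr)
\]
uses only $m_2$ and is therefore the \emph{dga} version of Merkulov's formula; the paper makes the same simplification explicitly (``suppose for convenience that $A$ is a dga''). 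For a genuine $A_\infty$-algebra with nontrivial $m_k$, $k\geq 3$, the transfer formula must sum over planar trees with inner vertices labelled by all of the $m_k$, not just $m_2$; alternatively, as the paper observes just before this theorem, one can replace $A$ by the quasi-isomorphic dga $\Omega B_\infty A$ and then apply your recursion. Either route closes the gap; without one of them the proof as written only establishes the theorem for dgas.
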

\begin{rmk}
	While the multiplication on $HA$ is induced by $m_2$, we need not have $p_n=[m_n]$ for $n>2$; indeed the $m_n$ need not even be cocycles. For example, if $A$ is a non-formal dga, then ${\mathscr{H}\kern -1pt A}$ must have nontrivial higher multiplications. We also note that ${\mathscr{H}\kern -1pt A} \to A$ is clearly an $A_\infty$-quasi-isomorphism, since it lifts the identity on $HA$. We also remark that the theorem follows from the essentially equivalent \textbf{homotopy transfer theorem}: if $A$ is an $A_\infty$-algebra, and $V$ a homotopy retract of $A$, then $V$ admits the structure of an $A_\infty$-algebra making the retract into an $A_\infty$-quasi-isomorphism (see \cite{lodayvallette} 9.4 for details). The result follows since, over a field, the cohomology of any chain complex is always a homotopy retract as one can choose splittings.
\end{rmk}
It's possible to give a constructive proof of Kadeishvili's theorem: Merkulov did this in \cite{merkulov}. One can define the $p_n$ recursively: suppose for convenience that $A$ is a dga. Choose any section $\sigma:HA \to A$ and let $\pi:A \to HA$ be the projection to $HA$. We'll identify $HA$ with its image under $\sigma$. Choose a homotopy $h: \id_A \to \sigma\pi$. Define recursively maps $\lambda_n: (HA)^{\otimes n} \to A$ by $\lambda_2=m_2$, and $$\lambda_n\coloneqq \sum_{s+t=n}(-1)^{s+1}\lambda_2(h\lambda_s\otimes h\lambda_t)$$where we formally interpret $h\lambda_1\coloneqq -\id_A$. Then, $p_n=\pi\circ\lambda_n$. See \cite{markl} for some very explicit formulas (whose sign conventions differ).

\begin{defn}
	Let $G$ be an abelian group. An $A_\infty$-algebra $A$ is \textbf{Adams }$G-$\textbf{graded} or just \textbf{Adams graded} if it admits a secondary grading by $G$ such that each higher multiplication map $m_n$ is of degree $(2-n,0)$.
\end{defn}
If an $A_\infty$-algebra is Adams graded, then by making appropriate choices one can upgrade Merkulov's construction to give an $A_\infty$-quasi-isomorphism of Adams graded algebras $A \to \mathscr{H}\kern -1pt A$. Moreover, if $A$ is strictly unital, one can choose the morphism to be strictly unital. See Section 2 of \cite{lpwzext} for more details.

\p One can sometimes compute $A_\infty$-operations on a dga by means of Massey products. In what follows, $\tilde{a}$ means $(-1)^{1+|a|}{a}$, using the same sign conventions as \cite{kraines}.
\begin{defn}\label{masseys}
	Let $u_1,\ldots, u_r$ be cohomology classes of a dga $A$. Pick representatives $u_i=[a_{i\,i}]$. The \textbf{$r$-fold Massey product} $\langle u_1,\ldots, u_r \rangle$ of the cohomology classes $u_1,\ldots ,u_r$ is defined to be the set of cohomology classes of sums $\tilde{a}_{1\,1}a_{2\,r}+\cdots+\tilde{a}_{1\,r-1}a_{r\,r}$ such that $da_{i\,j}=\tilde{a}_{i\,i}a_{i+1\,j}+\cdots+\tilde{a}_{i\,j-1}a_{j\,j}$ for all $1\leq i \leq j \leq n$ with $(i,j)\neq(1,n)$. This operation is well-defined, in the sense that it depends only on the cohomology classes $u_1,\ldots, u_r$.
\end{defn}
We'll abuse terminology by referring to elements of $\langle x_1,\ldots, x_r \rangle$ as Massey products. We may also decorate the product $\langle x_1,\ldots, x_r \rangle$ with a subscript $\langle x_1,\ldots, x_r \rangle_r$ to emphasise that it is an $r$-fold product.
\begin{rmk}
	We remark that $\langle x_1,\ldots, x_r \rangle$ may be empty: for example, in order for $\langle x,y,z\rangle$ to be nonempty, we must have $xy=yz=0$. More generally, for $\langle x_1,\ldots, x_r \rangle$ to be nonempty, we require that each $\langle x_p,\ldots, x_q \rangle$ is nonempty for $0<q-p<n-1$. Most sources define $\langle x,y,z\rangle$ only when it's nonempty, and leave it undefined otherwise.
\end{rmk}
The point is that, when Massey products exist, Merkulov's higher multiplications $p_n$ are all Massey products, up to sign.
\begin{thm}[{\cite[3.1]{lpwzext}}]
	Let $A$ be a dga and let $x_1,\ldots, x_r$ $(r>2)$ be cohomology classes in $HA$, and suppose that  $\langle x_1,\ldots, x_r \rangle$ is nonempty. Give $HA$ an $A_\infty$-algebra structure via Merkulov's construction. Then, up to sign, the higher multiplication $p_r(x_1,\ldots, x_r)$ is a Massey product.
\end{thm}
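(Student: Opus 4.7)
The plan is to prove the theorem by induction on $r$, constructing along the way explicit Massey defining cochains $a_{ij}$ out of Merkulov's auxiliary maps $h\lambda_s$, and showing that $\lambda_r(a_{11},\ldots,a_{rr})$ equals, up to an overall sign, the Massey product sum itself.

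Set $a_{ii}\coloneqq\sigma(x_i)$, a cocycle representing $x_i$, and for $1\le i<j\le r$ with $(i,j)\neq(1,r)$ define $a_{ij}\coloneqq\varepsilon_{ij}\,h\lambda_{j-i+1}(a_{ii},a_{i+1,i+1},\ldots,a_{jj})$, where the signs $\varepsilon_{ij}\in\{\pm 1\}$ will be pinned down in the bookkeeping. I would show inductively on $n=j-i+1<r$ that the Massey relation $da_{ij}=\tilde a_{ii}a_{i+1,j}+\cdots+\tilde a_{i,j-1}a_{jj}$ holds, and equivalently that $\lambda_n(a_{ii},\ldots,a_{jj})$ equals the same sum up to sign, so $p_n(x_i,\ldots,x_j)=0$ on these inputs. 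The engine is the homotopy identity $dh=\mathrm{id}-\sigma\pi-hd$ applied to $\lambda_n(a_{ii},\ldots,a_{jj})$: Merkulov's recursion $\lambda_n=\sum_{s+t=n}(-1)^{s+1}m_2(h\lambda_s\otimes h\lambda_t)$ together with the substitution $h\lambda_s(a_{kk},\ldots,a_{ll})=\varepsilon_{kl}^{-1}a_{kl}$ (using $h\lambda_1=-\mathrm{id}$) rewrites the $\mathrm{id}$-term as the Massey sum; the $\sigma\pi$-term vanishes by the induction hypothesis; and the $hd$-term, expanded via Leibniz on each factor $\lambda_s$, reduces by another invocation of the induction hypothesis to an expression that cancels telescopically against the Massey sum.

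Nonemptiness of $\langle x_1,\ldots,x_r\rangle$ is exactly what is needed to run the induction: it guarantees that every proper sub-Massey product contains $0$, which is precisely what is required to start the construction of the $a_{ij}$'s and to make $\sigma\pi\lambda_n$ vanish on these particular inputs at the intermediate stages. Once the induction terminates, applying the same expansion to $\lambda_r(a_{11},\ldots,a_{rr})$ (now without the outer $h$) gives a global sign times $\tilde a_{1,1}a_{2,r}+\cdots+\tilde a_{1,r-1}a_{r,r}$; applying $\pi$ yields $p_r(x_1,\ldots,x_r)\in\pm\langle x_1,\ldots,x_r\rangle$ as required.

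The main obstacle is the sign reconciliation, which determines the $\varepsilon_{ij}$ inductively. One must juggle the Koszul signs from moving $h$ past homogeneous arguments, the $(-1)^{s+1}$ prefactors in Merkulov's recursion, the convention $h\lambda_1=-\mathrm{id}$, and the twist $\tilde a=(-1)^{1+|a|}a$ in the Massey convention. The base case $r=3$, where $\lambda_3(a,b,c)=h(ab)\cdot c-a\cdot h(bc)$, already exhibits the pattern: choosing $\varepsilon_{12}$ and $\varepsilon_{23}$ depending on the parities of $|x_1|$ and $|x_2|$ matches $\lambda_3(a_{11},a_{22},a_{33})$ with $\pm(\tilde a_{11}a_{23}+\tilde a_{12}a_{33})$, and the induction step merely iterates this calculation with more indices.
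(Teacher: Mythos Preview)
The paper does not prove this statement; it is quoted from \cite[3.1]{lpwzext} without argument. Your strategy---setting $a_{ij}=\varepsilon_{ij}\,h\lambda_{j-i+1}(a_{ii},\ldots,a_{jj})$ and using $dh=\id-\sigma\pi-hd$ to verify the defining-system relations by induction on $n=j-i+1$---is indeed the approach of Lu--Palmieri--Wu--Zhang, so you are reconstructing the cited proof rather than comparing against anything in the present paper.

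That said, there is a genuine gap where you write ``the $\sigma\pi$-term vanishes by the induction hypothesis'' and, separately, that nonemptiness of $\langle x_1,\ldots,x_r\rangle$ ``guarantees that every proper sub-Massey product contains $0$, which is precisely what is required \ldots\ to make $\sigma\pi\lambda_n$ vanish''. The first of these claims needs $p_n(x_i,\ldots,x_j)=0$ at the \emph{current} stage $n$, whereas your induction hypothesis only controls $p_m$ for $m<n$. For the second, it is true that nonemptiness forces each proper sub-product $\langle x_i,\ldots,x_j\rangle$ to contain $0$, and by what you have already shown $\pm p_n(x_i,\ldots,x_j)$ lies in this set; but a coset containing $0$ need not equal $\{0\}$, and the specific element singled out by Merkulov's fixed choices of $\sigma$ and $h$ is under no obligation to be the zero one. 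Concretely, for $r=4$ with $\langle x_1,x_2,x_3\rangle$ of nontrivial indeterminacy $x_1\cdot H+H\cdot x_3$, one can have $0\in\langle x_1,x_2,x_3\rangle$ yet $p_3(x_1,x_2,x_3)\neq 0$, whereupon $da_{13}$ differs from the required Massey sum by the nonexact cocycle $\sigma p_3$, and your $\{a_{ij}\}$ fails to be a defining system. This does not obviously falsify the conclusion---one might still hope the discrepancy is absorbed into the indeterminacy of the $r$-fold product---but the argument as written is incomplete. In the paper's actual applications the stronger hypothesis that all lower $p_m$ vanish on the relevant inputs is always in force (it is checked by the weight-grading arguments), so the issue never arises there.
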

So, if $A$ is a formal dga, then all Massey products (that exist) will vanish. The converse is not true: formality of a dga cannot be checked simply by looking at its Massey products. We'll use the existence of Massey products to detect non-formality: the following lemma is computationally useful.
\begin{lem}\label{masseylemma}
	Let $x_{1\,1}=x_{2\,2}=\cdots=x_{r\,r}$. Then, setting $b_i\coloneqq x_{1\,i}$, we may compute the $r$-fold Massey product $\langle [x_{1\,1}],\ldots, [x_{r\,r}] \rangle$ as the set of cohomology classes of sums $\tilde{b}_1b_{r-1}+\cdots+\tilde{b}_{r-1}b_1$ such that $db_i=\tilde{b}_1b_{i-1}+\cdots+\tilde{b}_{i-1}b_1$.
\end{lem}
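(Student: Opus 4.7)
The plan is to verify that setting $x_{i,j} := b_{j-i+1}$ for all $1 \leq i \leq j \leq r$ yields a valid defining system for the Massey product $\langle [x_{1,1}],\ldots,[x_{r,r}]\rangle$, and to compute the corresponding sum. This is essentially a notational simplification: under the hypothesis that all diagonal entries $x_{i,i}$ represent the same cohomology class (indeed the same element $b_1$), we collapse the double-indexed defining system $x_{i,j}$ into a single-indexed family $b_k$ indexed by the ``gap'' $k = j - i + 1$.

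First I would check the base case: with the definition $x_{i,j} = b_{j-i+1}$ we have $x_{i,i} = b_1 = x_{1,1}$, so the diagonal entries are all honest representatives of the common cohomology class, which is consistent with the hypothesis of the lemma. Next, I would translate the Massey defining relation $dx_{i,j} = \tilde{x}_{i,i}x_{i+1,j} + \tilde{x}_{i,i+1}x_{i+2,j} + \cdots + \tilde{x}_{i,j-1}x_{j,j}$ into a condition on the $b_k$. Substituting and reindexing the summation by $\ell = k - i + 1$, the relation becomes
\[
db_{j-i+1} = \sum_{\ell = 1}^{j-i} \tilde{b}_{\ell}\, b_{(j-i+1) - \ell},
\]
which depends only on $m := j-i+1$. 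As $m$ ranges over $2,\ldots,r$, this is precisely the hypothesis $db_m = \tilde{b}_1 b_{m-1} + \cdots + \tilde{b}_{m-1} b_1$ imposed in the statement. Finally, the Massey product representative $\tilde{x}_{1,1}x_{2,r} + \tilde{x}_{1,2}x_{3,r} + \cdots + \tilde{x}_{1,r-1}x_{r,r}$ transforms under the substitution $x_{i,j} = b_{j-i+1}$ into $\tilde{b}_1 b_{r-1} + \tilde{b}_2 b_{r-2} + \cdots + \tilde{b}_{r-1} b_1$, as claimed.

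The only thing that could conceivably be an obstacle is the converse direction: showing that \emph{every} element of the Massey product can be represented by a defining system of this uniform form, so that the two sets truly coincide. But since all the $x_{i,i}$ are cohomologous (in fact equal), one has the freedom in any general defining system to normalise the diagonal entries to be a common representative, after which one can produce a homogeneous defining system by an inductive adjustment using the coboundary conditions. In practice (and in the way the lemma is used in \S\ref{threecomps}), only the direction I have verified above is needed, since the lemma is invoked to exhibit explicit elements lying in the Massey product.
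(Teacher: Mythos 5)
Your proof is correct and takes essentially the same approach as the paper's: both verify by reindexing that a homogeneous defining system $x_{i,j} := b_{j-i+1}$ satisfies the Massey defining relations and yields the stated sum, giving the forward inclusion. You are also right to be cautious about the converse inclusion. The paper's proof claims that in any defining system one has $x_{i,j}=x_{k,l}$ whenever $j-i=l-k$, but this does not follow from the hypothesis $x_{1,1}=\cdots=x_{r,r}$ alone: the off-diagonal entries are solutions of coboundary equations and may differ by cocycles, so that assertion holds only for the homogeneous defining systems one chooses to construct, not for an arbitrary one. Thus the paper's proof, like yours, really only establishes one inclusion. Since \ref{masseylemma} is invoked in \S\ref{threecomps} solely to exhibit specific elements lying inside Massey products (and hence to detect non-formality), the forward inclusion is all that is actually used, and your proof is adequate for that purpose; your explicit flagging of the gap is a welcome clarification over the paper's terse argument.
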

\begin{proof}
	Inductively, it is easy to see that if $i-j=k-l$ then $x_{i\,j}=x_{k\,l}$. Hence we may write $x_{i\,j}=b_{1+j-i}$ and the result follows. Note that if $x_{1\,1}$ is of odd degree then we may drop the tildes from the $b_i$.
\end{proof}
\subsection{Koszul duality}\label{ainfkd}
\begin{defn}
	A strictly unital $A_\infty$-algebra $A$ is \textbf{augmented} if there's a morphism of strictly unital $A_\infty$-algebras $A \to k$. In this case the \textbf{augmentation ideal} is $\bar{A}\coloneqq \ker{A \to k}$.
\end{defn}
\begin{defn}
	Let $A$ be an augmented $A_\infty$-algebra. The \textbf{bar construction} on $A$ is the coaugmented conilpotent dg-coalgebra $B_\infty A\coloneqq T^{c}(\bar{A}[1])$, the tensor coalgebra on the shifted augmentation ideal. The coproduct is the deconcatenation coproduct and the differential is the natural generalisation of the bar differential; see \cite{lefevre} for a concrete formula. If $A$ is a dga then $B_\infty A$ is the usual bar construction.
\end{defn}
\begin{lem}
	The bar construction preserves quasi-isomorphisms.
\end{lem}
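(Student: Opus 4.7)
The plan is to use the increasing filtration of $B_\infty A = T^c(\bar A[1])$ by number of tensor factors, $F_p = \bigoplus_{k=0}^{p} \bar A[1]^{\otimes k}$. The bar differential on $B_\infty A$ has Taylor coefficients built out of the higher multiplications $m_n$ on $A$; since $m_n$ takes $n$ inputs and returns one output, the component of the differential coming from $m_n$ decreases the number of tensor factors by exactly $n-1$. In particular, $F$ is a filtration by subcomplexes, and on the associated graded only the leading piece (coming from $m_1$) survives, so $\mathrm{gr}_p B_\infty A \cong (\bar A[1], m_1)^{\otimes p}$ as a complex of $k$-vector spaces.

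Given an $A_\infty$-quasi-isomorphism $f : A \to A'$, the induced coalgebra map $B_\infty f$ is built from the Taylor coefficients $f_n$ by the same counting; all of these preserve the filtration $F$, and on $\mathrm{gr}_p$ the map reduces simply to $f_1^{\otimes p}$. By assumption $f_1$ is a quasi-isomorphism of $k$-complexes, so the K\"unneth theorem over a field gives that each $f_1^{\otimes p}$ is a quasi-isomorphism as well. Hence $B_\infty f$ induces a quasi-isomorphism on associated gradeds, and therefore an isomorphism on the $E_1$-pages of the associated spectral sequences. Provided both spectral sequences converge strongly, the comparison theorem then forces $B_\infty f$ itself to be a quasi-isomorphism.

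The main obstacle is thus strong convergence of the spectral sequence. The filtration $F$ is exhaustive and bounded below ($F_{-1} = 0$), so one has conditional convergence automatically; strong convergence requires that in each cohomological degree of $B_\infty A$ only finitely many tensor powers $\bar A[1]^{\otimes k}$ contribute. This is easy to verify in the settings relevant to the paper, where the augmentation ideal $\bar A$ is cohomologically bounded in the appropriate direction (for instance, in the nonpositively graded or connective case, $\bar A[1]^{\otimes k}$ lives in degrees of absolute value at least $k$, so each total degree of $B_\infty A$ receives contributions from only finitely many $k$). Under such a mild hypothesis the comparison theorem applies and the lemma follows; a fully general statement can otherwise be reduced to this case by the usual device of filtering by degree first, or by invoking the homotopy-equivalence characterisation of $A_\infty$-quasi-isomorphisms from Prout\'e's theorem and observing that $B_\infty$ visibly preserves homotopies of $A_\infty$-morphisms.
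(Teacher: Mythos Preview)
Your proposal is correct and follows essentially the same approach as the paper: filter $B_\infty A$ by tensor length $F_p = \bigoplus_{k\le p}\bar A[1]^{\otimes k}$ and compare the resulting spectral sequences. You are in fact more careful than the paper's sketch, which simply cites \cite{lodayvallette} and \cite{lefevre} for the details; your discussion of the convergence issue (and the fallback via Prout\'e's theorem) is a genuine improvement over what the paper writes.
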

\begin{proof}
The idea is to filter $BA$ by setting $F_p BA$ to be the elements of the form $a_1\otimes\cdots\otimes a_n$ with $n\leq p$, and look at the associated spectral sequence. A proof for dgas is in \cite{lodayvallette}, Chapter 2 and a proof for $A_\infty$-algebras is in \cite{lefevre}, Chapter 1.
\end{proof}
\begin{defn}
	If $A$ is an augmented $A_\infty$-algebra then the \textbf{Koszul dual} is $A^!\coloneqq (BA)^*$, the linear dual of the bar construction. It's a semifree dga, in the sense that the underlying graded algebra is a completed free algebra. In the situations we will be interested in, the underlying graded algebra of $A^!$ will actually be a free algebra in the usual sense.
\end{defn}
Loosely, the differential $d(x^*)$ is the signed sum of the products $x_1^*\cdots x_r^*$ such that \linebreak $d(x_1 | \cdots | x_r)=x$, where $d$ is the $A_\infty$ bar differential. Since taking the linear dual is exact, the Koszul dual functor sends $A_\infty$-quasi-isomorphisms $A \to A'$ to dga quasi-isomorphisms $(A')^! \to A^!$. We often implicitly use the following result:
\begin{thm}[{\cite[2.7.8]{dqdefm}}]\label{dqkd}
	Let $A$ be a nonpositive dga with each $H^i(A)$ finite-dimensional and $H^0(A)$ a local algebra. Then $A$ is quasi-isomorphic to its Koszul double dual $A^{!!}$.
\end{thm}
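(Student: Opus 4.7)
The plan is to reduce to minimal $A_\infty$-models and then exploit a degree argument that forces everything in sight to be locally finite, after which the bar--cobar equivalence will deliver the quasi-isomorphism.

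First I would replace $A$ by its minimal model $\mathscr{H} A$, which by Kadeishvili (\ref{kadeish}) is a strictly unital minimal $A_\infty$-algebra quasi-isomorphic to $A$; since the Koszul dual functor sends $A_\infty$-quasi-isomorphisms to dga quasi-isomorphisms, it suffices to prove the statement for $\mathscr{H} A$. Because $H^0(A)$ is a finite-dimensional local algebra, it is automatically augmented (quotient by the radical), and the augmentation lifts through the minimal model, so $\mathscr{H} A$ is an augmented strictly unital $A_\infty$-algebra whose augmentation ideal $\overline{\mathscr{H} A}$ lives in nonpositive degrees and has finite-dimensional degree-zero part $\mathrm{rad}(H^0(A))$.

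The crucial observation is then a finiteness/degree estimate: the shifted augmentation ideal $\overline{\mathscr{H} A}[1]$ is concentrated in degrees $\leq -1$ and is finite-dimensional in each such degree (by the hypothesis that $H^i(A)$ is finite-dimensional for all $i$). Consequently $B_\infty\mathscr{H} A = T^c(\overline{\mathscr{H} A}[1])$ has the property that in each total degree $n$ only finitely many tensor powers contribute, and each contributing tensor power is finite-dimensional. Thus $B_\infty\mathscr{H} A$ is locally finite. This immediately implies that linear duality is involutive on $B_\infty\mathscr{H} A$, so identifying $\mathscr{H} A^{!} = (B_\infty\mathscr{H} A)^*$ with the cobar-type dga built from the dual coalgebra, one obtains a canonical identification of $B(\mathscr{H} A^!)$ with $(B_\infty\mathscr{H} A)^{**}$ and hence
\[
\mathscr{H} A^{!!} \;\cong\; \Omega B_\infty \mathscr{H} A
\]
as dgas, where $\Omega$ denotes the $A_\infty$ cobar construction.

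At this point the standard bar-cobar adjunction takes over: the unit $\mathscr{H} A \to \Omega B_\infty \mathscr{H} A$ is an $A_\infty$-quasi-isomorphism for any augmented (strictly unital) $A_\infty$-algebra, by the classical filtration-and-spectral-sequence argument (filter $\Omega B_\infty \mathscr{H} A$ by tensor length and use that the associated $E_1$-page computes $\mathrm{Tor}^{\mathscr{H} A}(k,k)$ back to $\mathscr{H} A$). Composing with the quasi-isomorphisms $A\simeq \mathscr{H} A$ and $A^{!!}\simeq \mathscr{H} A^{!!}$ then gives the desired quasi-isomorphism $A \simeq A^{!!}$. The main obstacle is the bookkeeping in the second paragraph: verifying that $\mathscr{H} A^!$ really is (in the locally finite setting) the cobar construction on the predual coalgebra, so that $A^{!!}$ coincides with $\Omega B_\infty \mathscr{H} A$ on the nose and the Quillen equivalence between augmented $A_\infty$-algebras and conilpotent coaugmented dg coalgebras applies; this is where the nonpositivity and local finiteness hypotheses are genuinely needed.
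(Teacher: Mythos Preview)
The paper does not give its own proof of this statement; it is quoted from \cite[2.7.8]{dqdefm}, so there is no in-paper argument to compare against. Your overall strategy---pass to a minimal model, use nonpositivity and local finiteness of cohomology to make $B_\infty\mathscr{H}A$ locally finite, then invoke bar--cobar---is the standard one and is in the spirit of the cited reference.

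There is, however, a genuine gap in the middle. The assertion ``one obtains a canonical identification of $B(\mathscr{H}A^!)$ with $(B_\infty\mathscr{H}A)^{**}$'' is false: the left-hand side is the bar construction on a free tensor algebra and is much larger than $B_\infty\mathscr{H}A$. What local finiteness of $B_\infty\mathscr{H}A$ actually buys you is that $(\overline{\mathscr{H}A^!})^*\cong \overline{B_\infty\mathscr{H}A}$, and hence that
\[
\mathscr{H}A^{!!}=(B\mathscr{H}A^!)^*\;\cong\;\widehat{T}\bigl(\overline{B_\infty\mathscr{H}A}[-1]\bigr),
\]
i.e.\ the \emph{completion} of $\Omega B_\infty\mathscr{H}A$, not $\Omega B_\infty\mathscr{H}A$ on the nose. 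The generators $\overline{B_\infty\mathscr{H}A}[-1]$ sit in degrees $\leq 0$, and the degree-$0$ piece is $\mathrm{rad}(H^0A)$; whenever this is nonzero the completed and uncompleted tensor algebras are genuinely different graded vector spaces. So the bar--cobar counit $\Omega B_\infty\mathscr{H}A\xrightarrow{\simeq}\mathscr{H}A$ does not by itself finish the argument.

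The missing step is precisely to show that the completion map $\Omega B_\infty\mathscr{H}A\to\widehat{\Omega B_\infty\mathscr{H}A}\cong\mathscr{H}A^{!!}$ is a quasi-isomorphism. This is where the hypothesis that $H^0A$ is local (hence Artinian with nilpotent radical) is really used: one filters by powers of the augmentation ideal, or equivalently works in a pro-Artinian framework from the start, and checks that completion does not change the quasi-isomorphism type. You flag this at the end as ``bookkeeping'', but it is the substantive content of the theorem; your present argument asserts an on-the-nose isomorphism that does not hold and thereby skips the one step that needs the hypotheses.
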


\vfill\pagebreak

\begin{footnotesize}
	\bibliographystyle{alpha}
	\bibliography{dcabib}
\end{footnotesize}
\textsc{School of Mathematics, University of Edinburgh, James Clerk Maxwell Building, Peter Guthrie Tait Road, Edinburgh EH9 3FD, United Kingdom}\par\nopagebreak\texttt{matt.booth@ed.ac.uk}\hfill\url{https://www.maths.ed.ac.uk/~mbooth/}

\end{document}